\newcommand{\disk}{\ensuremath{\mathbb{D}} } 
\newcommand{\sphere}{\bar{\Bbb{C}}} 
\newcommand{\riem}{\Sigma}  
\renewcommand{\Bbb}[1]{\ensuremath{\mathbb{#1}}}
\newglossaryentry{bergman}{%
name=\ensuremath{\mathcal{A}},
    description={Bergman space}
}
\newglossaryentry{Abw}{%
name=\ensuremath{\mathcal{A}_\mathrm{bw}},
    description={bridgeworthy harmonic oneforms}
}
\newglossaryentry{Aharm}{%
name=\ensuremath{\mathcal{A}_{\mathrm{harm}}},
    description={Harmonic Bergman space}
}
\newglossaryentry{Ahm}{%
name=\ensuremath{\mathcal{A}_{\mathrm{hm}}},
    description={Complex linear span of harmonic measures}
}
\newglossaryentry{exactA}{%
name=\ensuremath{\mathcal{A}^\mathrm{e}},
    description={Space of exact forms}
}
\newglossaryentry{seform}{%
name=\ensuremath{\mathcal{A}^{{\mathrm{se}}}},
    description={Semi-exact forms}
}
\newglossaryentry{peform}{%
name=\ensuremath{\mathcal{A}_{\mathrm{harm}}^{\mathrm{pe}}},
    description={Piecewise exact harmonic forms}
}
\newglossaryentry{annulus}{%
name=\ensuremath{\mathbb{A}_{a,b}},
    description={Annulus with inner radius $a$ and outer radius $b$}
}
\newglossaryentry{gota}{%
name=\ensuremath{ \mathrm{\textgoth{A}} },
    description={Forms with prescribed periods}
}
\newglossaryentry{Bphi}{%
name=\ensuremath{ \mathbf{B}(\phi)},
    description={Boundary map}
}
\newglossaryentry{cf}{%
name=\ensuremath{\mathbf{C}_{f}},
    description={right-composition with $f$}
}
\newglossaryentry{cl}{%
name=\ensuremath{\text{cl}},
    description={Closure of a set}
}
\newglossaryentry{D}{%
name=\ensuremath{\mathcal{D}},
    description={Dirichlet space}
}
\newglossaryentry{Dbw}{%
name=\ensuremath{\mathcal{D}_\mathrm{bw}},
    description={bridgeworthy harmonic functions}
}
\newglossaryentry{Dir}{%
name=\ensuremath{\mathbf{Dir}},
    description={Solution map to the Dirichlet problem}
}
\newglossaryentry{Dharm}{%
name=\ensuremath{\mathcal{D}_{\mathrm{harm}}},
    description={Harmonic Dirichlet space}
}
\newglossaryentry{Dhom}{%
name=\ensuremath{\dot{\mathcal{D}}},
    description={Dirichlet space modulo constants}
}
\newglossaryentry{harmeasure}{%
name=\ensuremath{d\omega_{k}},
    description={Harmonic measure}
}
\newglossaryentry{E}{%
name=\ensuremath{\mathbf{E}},
    description={Data to solution map}
}
\newglossaryentry{greenb}{%
name=\ensuremath{G_\Sigma},
    description={Green's function of a bordered Riemann surface $\Sigma$}
}
\newglossaryentry{greenc}{%
name=\ensuremath{\mathscr{G}},
    description={Green's function of a compact Riemann surface}
}
\newglossaryentry{bounce}{%
name=\ensuremath{\mathbf{G}_{U,\riem}},
    description={Bounce operator}
}
\newglossaryentry{grunsk}{%
name=\ensuremath{\mathbf{Gr}_{f}},
    description={Grunsky operator}
}
\newglossaryentry{sobolev}{%
name=\ensuremath{H^s},
    description={Sobolev space}
}
\newglossaryentry{homsobolev}{%
name=\ensuremath{\dot{H}^s},
    description={Homogeneous Sobolev space}
}
\newglossaryentry{sobolevconf}{%
name=\ensuremath{{H}^{1}_{\mathrm{conf}}},
    description={Conformal Sobolev space}
}
\newglossaryentry{Dbvaluescomp}{%
name=\ensuremath{\mathcal{H}'(\partial_k \riem)},
    description={Dirichlet boundary values for one forms}
}
\newglossaryentry{Dbvalues}{%
name=\ensuremath{\mathcal{H}'(\partial\riem)},
    description={Dirichlet boundary values for one forms}
}
\newglossaryentry{BVexactcomp} {%
name=\ensuremath{\dot{H}'(\partial_k \riem)},
    description={Boundary values with exact representative}
}
\newglossaryentry{BVexact} {%
name=\ensuremath{\dot{H}'(\partial \riem)},
    description={Boundary values with exact representative}
}
\newglossaryentry{crop}{%
name=\ensuremath{\mathbf{J}_{1}^{q}},
    description={Cauchy-Royden operator}
}
\newglossaryentry{rcrop}{%
name=\ensuremath{\mathbf{J}_{1,k}^{q}},
    description={Restricted Cauchy-Royden operator}
}
\newglossaryentry{Jdot}{%
name=\ensuremath{\dot{\mathbf{J}}_1},
    description={Cauchy-Royden operator on $\dot{\mathcal{D}}$}
}
\newglossaryentry{bergmank}{%
name=\ensuremath{K},
    description={Bergman kernel}
}
\newglossaryentry{schifferk}{%
name=\ensuremath{L},
    description={Schiffer kernel}
}
\newglossaryentry{rest}{%
name=\ensuremath{\mathbf{R}},
    description={Restriction operator}
}
\newglossaryentry{harmrest}{%
name=\ensuremath{\mathbf{R}^{\mathrm{h}}},
    description={Harmonic restriction operator}
}
\newglossaryentry{S}{%
name=\ensuremath{\mathbf{S}},
    description={The Schiffer comparison operator}
}
\newglossaryentry{Sharm}{%
name=\ensuremath{\mathbf{S}_{k}^{\mathrm{h}}},
    description={Harmonic Schiffer operator}
}
\newglossaryentry{theta}{%
name=\ensuremath{\Theta},
    description={Map}
}
\newglossaryentry{Tmix}{%
name=\ensuremath{\mathbf{T}_{\riem_{j},\riem_{k}}},
    description={Schiffer operator}
}
\newglossaryentry{T}{%
name=\ensuremath{\mathbf{T}},
    description={Schiffer comparison operator}
}
\newglossaryentry{overfare}{%
name=\ensuremath{\mathbf{O}},
    description={Overfare operator}
}
\newglossaryentry{doto}{%
name=\ensuremath{\dot{\mathbf{O}}},
    description={Overfare operator on $\dot{\mathcal{D}}$}
}
\newglossaryentry{exacto}{%
name=\ensuremath{\mathbf{O}^{\mathrm{e}}_{2,1}},
    description={Exact overfare operator}
}
\newglossaryentry{ohat}{%
name=\ensuremath{\hat{\mathbf{O}}},
    description={Operator}
}
\newglossaryentry{augo}{%
name=\ensuremath{\mathbf{O}^{\mathrm{aug}}},
    description={Augmented overfare operator}
}
\newglossaryentry{oprime}{%
name=\ensuremath{\mathbf{O}'},
    description={Operator}
}
\newglossaryentry{oprimedot}{%
name=\ensuremath{\dot{\mathbf{O}}'},
    description={Operator}
}
\newglossaryentry{pcap}{%
name=\ensuremath{\mathbf{P}_{\mathrm{cap}}}, description={Projection operator}
}
\newglossaryentry{period}{%
name=\ensuremath{\mathbf{\Upsilon}},
    description={Period map}
}
\theoremstyle{plain}
        \newtheorem{theorem}{Theorem}[section]
        \newtheorem{lemma}[theorem]{Lemma}
        \newtheorem{proposition}[theorem]{Proposition}
        \newtheorem{corollary}[theorem]{Corollary}
\theoremstyle{definition}
        \newtheorem{definition}[theorem]{Definition}
        \newtheorem{example}{Example}[section]
\theoremstyle{remark}
    \newtheorem{remark}[theorem]{Remark}
\numberwithin{equation}{section} 
\numberwithin{figure}{section} 
\author[E. Schippers]{Eric Schippers}
\author[W. Staubach]{Wolfgang Staubach}
\address{\newline
       Eric Schippers \newline
       Machray Hall, Dept. of Mathematics,
   University of Manitoba, \newline Winnipeg, MB
   Canada R3T 2N2}
       \email{eric.schippers@umanitoba.ca}
\address{\newline
       Wolfgang Staubach \newline
       Department of  Mathematics, Uppsala University, \newline
       S-751 06 Uppsala, Sweden}
       \email{wulf@math.uu.se}
\keywords{Scattering, Bordered Riemann surfaces surfaces, Schiffer operators, Quasicircles, Bounded zero mode quasicircles, Cauchy-Royden operator, Fredholm index, Conformally nontangential limits, Conformal Sobolev spaces}
\subjclass{30F15, 30F30, 35P99, 51M15}
 \title{Scattering theory on Riemann surfaces I: Schiffer operators, cohomology, and index theorems}
\begin{document}
\begin{abstract} 
 We consider a compact Riemann surface $\mathscr{R}$ with a complex of non-intersecting Jordan curves, whose complement is a pair of Riemann surfaces with boundary, each of which may be possibly disconnected. 
 We investigate conformally invariant integral operators of Schiffer, which act on $L^2$ anti-holomorphic one-forms on one of these surfaces with boundary and produce holomorphic one-forms on the disjoint union. These operators arise in potential theory, boundary value problems, approximation theory, and conformal field theory, and are closely related to a kind of Cauchy operator.  

 We develop an extensive calculus for the Schiffer and Cauchy operators, including a number of adjoint identities for the Schiffer operators. In the case that the Jordan curves are quasicircles, we derive a Plemelj-Sokhotski jump formula for Dirichlet-bounded functions. We generalize a theorem of Napalkov and Yulmukhametov, which shows that a certain Schiffer operator is an isomorphism for quasicircles. Finally, we characterize the kernels and images, and derive index theorems for the Schiffer operators, which will in turn connect conformal  invariants to topological invariants.  

\end{abstract}

\maketitle

\begin{section}{Introduction}
\begin{subsection}{Literature and statement of results}

{This paper, together with its sequel \cite{Schippers_Staubach_scattering_IV}, is concerned with the global analytic aspects of a kind of scattering process as manifested in integral operators of Schiffer.  
 The scattering process occurs on a compact Riemann surface separated into two pieces (which themselves might not be connected) by a collection of Jordan curves. Alternatively, we can think of a compact surface obtained by sewing together several surfaces, and the Jordan curves are the seams. This situation arises in both the Teichm\"uller theory of bordered surfaces and two-dimensional conformal field theory. {A non-technical exposition of some aspects of this scattering theory can be found in \cite{Schippers_Staubach_Carlos_paper}.}

One considers functions or one-forms which are separately harmonic on the pieces, and share boundary values on the seams. The function (or one-form) on one side of the surface is obtained from the function (or one-form) on the other side using a mapping which we refer to as "overfare", which is the basis of this particular kind of scattering theory. Teichm\"uller theory requires that the seams can be quasicircles, and much evidence exists that quasicircles are analytically natural for the scattering theory \cite{Schippers_Staubach_Grunsky_expository}.  In \cite{Schippers_Staubach_scattering_I,Schippers_Staubach_scattering_II} it was shown that the overfare process is well-defined and bounded for quasicircles.  Furthermore, in order that the results be applicable to Teichm\"uller theory and conformal field theory, it is necessary that these seams can be quasicircles. } 
 
The cornerstone of the paper at hand is the theory of certain integral operators of Schiffer, which appear in association to the scattering process on account of their relation to solutions of the boundary value problems arising in overfare. These operators are integral operators on holomorphic and anti-holomorphic one-forms, whose integral kernels are the two possible second derivatives of Green's function, often called the Bergman and Schiffer kernels.

The precise definition is as follows. Let $\mathscr{R}$ be a compact Riemann surface  split into two surfaces $\riem_1$ and $\riem_2$ by a collection of Jordan curves. Let $\mathscr{G}(w;z,q)$ be Green's function of $\mathscr{R}$ (the fundamental harmonic function with logarithmic singularities at $z$ and $q$ of opposite weight, defined up to an additive constant).  We have, denoting the Bergman space of holomorphic one-forms on $\riem_k$ by $\mathcal{A}(\riem_k)$ for $k=1,2$, 
\begin{align*}
  \mathbf{T}_{1,k}:\overline{\mathcal{A}(\riem_1)} & \rightarrow \mathcal{A}(\riem_k) \\
  \overline{\alpha} & \mapsto \iint_{\riem_1} \partial_w \partial_z \mathscr{G}(w;z,q) \wedge_w \overline{\alpha(w)}.  
\end{align*} 
The two choices of $k$ are obtained by restricting $z$ to $\riem_k$. 
If $k=1$, this has a singularity and can be regarded as a Calder\'on-Zygmund singular integral operator. We also have the operator
\begin{align*}
    \mathbf{S}_{1}: {\mathcal{A}(\riem_1)} & \rightarrow \mathcal{A}(\mathscr{R}) \\
  \overline{\alpha} & \mapsto \iint_{\riem_1} \overline{\partial}_w \partial_z \mathscr{G}(w;z,q) \wedge_w \overline{\alpha(w)}. 
\end{align*}
We may of course place ${\riem}_2$ in the role taken by ${\riem}_1$ above. 
These were investigated extensively by M. Schiffer with various co-authors \cite{BergmanSchiffer} \cite{Schiffer_first}, in relation to potential theory and conformal mapping,  eventually culminating in a comparison theory of domains \cite{Courant_Schiffer}. The Schiffer kernel is closely related to the so-called fundamental bidifferential and figures in the geometry of function spaces on Riemann surfaces  \cite{Eynard_notes}, \cite{Schiffer_Spencer}.  

By a striking result of V. Napalkov and R. Yulmukhametov \cite{Nap_Yulm}, if $\mathscr{R}$ is the Riemann sphere, and $\riem_1$ and $\riem_2$ are the two complementary components of a Jordan curve $\Gamma$ on the sphere, then the Schiffer operator $\mathbf{T}_{1,2}$ is an isomorphism if and only if $\Gamma$ is a quasicircle. {This is closely related to the fact that functions can be approximated in the Dirichlet seminorm by Faber series precisely for domains bounded by quasicircles; see \cite{Schippers_Staubach_Grunsky_expository} for an overview.} The authors showed in \cite{Schippers_Staubach_Plemelj} that, for a compact Riemann surface divided in two by a quasicircle,   $\mathbf{T}_{1,2}$ is an isomorphism on the orthogonal complement of anti-holomorphic one-forms on $\mathscr{R}$. This was further generalized by M. Shirazi to the case of many curves where all but one of the components is simply connected in \cite{Shirazi_thesis}, \cite{Schippers_Shirazi_Staubach}.  The boundedness of overfare plays a central role in the formulation and proof of this fact.
{This extension of the isomorphism theorem was used by the authors and Shirazi to show that one-forms on a domain in a Riemann surface bounded by quasicircles can be approximated in $L^2$ on a larger domain \cite{Schippers_Shirazi_Staubach}. Approximability theorems  for general $k$-differentials with respect to the conformally invariant $L^2$ norm and less regular boundaries were obtained by N. Askaripour and T. Barron \cite{AskBar,AskBar2} using very different methods. So far as we know, these were the first results for nested domains on Riemann surfaces in the $L^2$ setting}.
\\

In this paper, we characterize the kernel and image of $\mathbf{T}_{1,2}$ in the case of a Riemann surface split by a complex of quasicircles. This has been applied to prove $L^2$ approximability of one-forms on bordered Riemann surfaces by Faber series, by the first author and Shirazi \cite{Schippers_Shirazi_Faber}. The results in the present paper are derived using an extended Plemelj-Sokthoski jump formula, which is in turn based on a relation between the Schiffer operators and a generalization of the Cauchy operator originating with H. Royden \cite{Royden} which we call the Cauchy-Royden operator. As quasicircles are not rectifiable, we are required to define the Cauchy-Royden integral using curves which approach the boundary. In the sphere with one curve, the authors showed that the resulting Plemelj-Sokhotski jump decomposition is an isomorphism if and only if the curve is a quasicircle.  The analytic issues in those papers, as in this one, are resolved by the fact that the limiting integral is the same from both sides up to constants. This in turn is a consequence of the anchor lemmas and boundedness of the bounce operator. The equality of the limiting integral from both sides is also a key geometric tool; in combination with the bounded overfare it allows one to find preimages of elements of the image of $\mathbf{T}_{1,2}$. \\

We further use this  to investigate the cohomology of the images of $\mathbf{T}_{1,1}$, $\mathbf{T}_{1,2}$ and $\mathbf{S}_1$. In particular we show that for any anti-holomorphic one form $\overline{\alpha}$ in $\riem_1$, $\mathbf{T}_{1,2} \overline{\alpha}$ and $\overline{\mathbf{S}}_1 \overline{\alpha}$ are in the same cohomology class. This simple fact is surprisingly versatile. Along with the characterization of the kernels and images of $\mathbf{T}_{1,2}$ mentioned above, we also show that in the case that $\riem_1$ and $\riem_2$ are connected, the Fredholm index of $\mathbf{T}_{1,2}$ is $g_1-g_2$ where $g_1$ and $g_2$ are the genuses of $\riem_1$ and $\riem_2$. This index theorem relates a conformal invariant (the index of $\mathbf{T}_{1,2}$) to the topological invariant $g_1-g_2$. \\

Finally, we derive a number of new identities for Schiffer operators and their adjoints, as well as extend identities obtained earlier in \cite{Schippers_Staubach_Plemelj} to the case of a compact surface split by a complex of curves.
These identities play a central role in the scattering theory. It should be mentioned that one of these identities is a reformulation and significant generalization of an norm identity of Bergman and Schiffer for planar domains \cite{BergmanSchiffer}. This identity can be used to derive generalizations of the Grunsky inequality and characterize boundary values of holomorphic one-forms, as will be seen in the sequel \cite{Schippers_Staubach_scattering_IV}.
\\

We remark that this paper is one part of a longer work \cite{Schippers_Staubach_scattering_arxiv} establishing the scattering theory of one-forms on Riemann surfaces, which we have divided into four parts. The first two papers \cite{Schippers_Staubach_scattering_I,Schippers_Staubach_scattering_II} established the analytic basis for the scattering process in terms of boundary value problems for $L^2$ harmonic functions and one-forms. We intend for these four papers to pave the way for the investigation of geometric and representation theoretic aspects of Teichm\"uller theory and conformal field theory. \\

{{\bf{Acknowledgements.}} The first author was partially supported by the National Sciences and Engineering Research Council of Canada. The second author is grateful to Andreas Strömbergsson for partial financial support through a grant from Knut and Alice Wallenberg Foundation.
 The authors are also grateful to Rigun\dh\, Staubach for preparing the figures in this manuscript. 
}
\end{subsection}
\begin{subsection}{Outline}
 Section \ref{se:preliminaries} collects definitions and material required throughout the paper. This includes Riemann surfaces with border, spaces of functions and forms and their norms, boundary values of harmonic functions of finite Dirichlet energy, the overfare process and main theorems derived in \cite{Schippers_Staubach_scattering_I}, Green's function, and some basic results about the harmonic measures (one-forms which vanish on the boundary). 

 Section \ref{se:Schiffer_Cauchy} contains the foundational results about the Schiffer and Cauchy-Royden operators. 
 In Section \ref{subsec:defSchiffer} we define the Schiffer operators, and an associated integral operator which we call the Cauchy-Royden operator. We show that these are bounded, and derive a simple set of relations between the Schiffer and Cauchy-Royden operators. We derive a kind of Plemelj-Sokhotski jump formula which mixes overfare and the Cauchy-Royden operator. This is the main tool in investigating the effect of the operators on cohomology classes, as well as the investigation of their kernels and images.  In Section \ref{se:Schiffer_adjoint_identities} we derive a number of identities for the adjoints of these operators, which play a role in this paper and in the proof of the unitarity of the scattering operator {in part II}.  In Section \ref{se:Schiffer_on_harmonic_measures} we establish the action of the Schiffer operators on harmonic measures, which also plays a role in the determination of the kernels and images of the Schiffer and Cauchy-Royden operators in this paper, and in the scattering theory {in part II}. 

  Section \ref{se:index_cohomology} contains geometric and algebraic results about the Schiffer operators introduced in Subsection \ref{subsec:defSchiffer}.  We establish the isomorphism theorem generalizing those of Napalkov and Yulmukhametov \cite{Nap_Yulm} and Shirazi \cite{Shirazi_thesis}.  We give a characterization of the image and kernel of $\mathbf{T}_{1,2}$, and use this to prove an index theorem for this operator in the case that $\riem_1$ and $\riem_2$ are connected, and in the case that $\riem_2$ is of genus $g$ with $n$ boundary curves capped by $n$ simply connected domains.  This index theorem relates the conformally invariant index to purely topological quantities.
 
  We investigate the effect of the Schiffer operators $\mathbf{T}_{j,k}$ and $\mathbf{S}_k$ on cohomology in Section \ref{se:Schiffer_cohomology}. The main tool is the ``overfared'' jump formula, which is used to prove Theorem \ref{th:Schiffer_cohomology} which says that certain linear combinations of the Schiffer operators produce exact forms. Together with the fact that $\mathbf{S}_k \mathbf{R}_k$ is an isomorphism, this completely characterizes the effect of $\mathbf{T}_{j,k}$ on cohomology classes. We also prove the generalization of the aforementioned isomorphism theorem in the case of capped surfaces. In Section \ref{se:Schiffer_kernel_image} we determine the kernel and image of the operator $\mathbf{T}_{1,2}$ in greater generality. These results will also play a central role in in the construction of the generalized period matrix in {part II}. Once this is accomplished, we prove the index theorems in Section \ref{th:index_and_examples}, which relate the index of the Schiffer operators to topological invariants.  Although the cokernel and kernel are conformally invariant, it is open question whether the cokernel and kernel are topological invariants. We conclude with an example which might suggest a way to investigate this question.
\end{subsection}
\begin{subsection}{Assumptions throughout the paper}
 \label{se:assumptions_scattering_section} 
 In order to prevent needless repetition, we will use consistent notation throughout the paper for the Riemann surfaces. 
 The following assumptions
 will be in force throughout Section \ref{se:Schiffer_Cauchy}. Additional hypotheses are added to the statement of each theorem where necessary.
 \begin{enumerate}
     \item $\mathscr{R}$ is a compact Riemann surface;
     \item $\Gamma = \Gamma_1 \cup \cdots \cup \Gamma_n$ is a collection of quasicircles in $\mathscr{R}$, which have pair-wise empty intersection;
     \item $\Gamma$ separates $\mathscr{R}$ into Riemann surfaces $\riem_1$ and $\riem_2$.
 \end{enumerate} 
 By ``separates'', we mean that $\mathscr{R} \backslash \Gamma$ consists of two Riemann surfaces $\riem_1$ and $\riem_2$, where for any $\Gamma_k$ both $\riem_1$ and $\riem_2$ bound $\Gamma_k$. That is, $\riem_1$ and $\riem_2$ are on opposite sides of each curve $\Gamma_k$ for $k=1,\ldots,n$.
 The precise meaning of ``separates'' is given ahead in Definition \ref{de:separating_complex}.
 It is not assumed a priori that $\riem_1$ or $\riem_2$ is connected, although we will sometimes add the assumption as necessary.
  
  We will furthermore assume that the ordering of the connected components of $\partial \riem_1$ and $\partial \riem_2$ is such that $\partial_k \riem_1 = \partial_k \riem_2 = \Gamma_k$ as sets for $k=1,\ldots,n$.    
\end{subsection}
\end{section}
\begin{section}{Preliminaries} \label{se:preliminaries}

\begin{subsection}{Bordered surfaces and quasicircles}

 We first establish some notation and terminology regarding Riemann surfaces and curves within them. Details can be found in \cite{Schippers_Staubach_scattering_I}. 
 
In what follows we let $\mathbb{C}$ denote the complex plane, $\gls{annulus}$ denote the annulus $ \{ z;\, a<|z|<b \}$, and $\disk = \{ z \in \mathbb{C} : |z|<1 \}$ denote the unit disk in the plane.

One of our main objects of study is a particular type of bordered Riemann surface which is defined as follows:
\begin{definition}\label{defn:gn-type surface}
We say that $\riem$ is a \emph{bordered Riemann surface of type} $(g,n),$ if it is bordered (see e.g. \cite{Ahlfors_Sario}), the border has $n$ connected components, each of which is homeomorphic to $\mathbb{S}^1$, and its double $\riem^d$ is a compact surface of genus $2g + n-1$.  
 \end{definition}
Visually, a bordered surface of type $(g,n)$ is a $g$-handled surface bounded by $n$ simple closed curves.  
 We order the connected components of the border and label them accordingly, so that $\partial \riem = \partial_1 \riem \cup \cdots \cup \partial_n \riem$.  The borders can be identified with analytic curves in the double $\riem^d$, and we denote the union $\riem\cup \partial \riem$ by $\text{cl}(\riem)$.  
 
 Finally, we observe that borders are conformally invariant.  That is, if $\riem_1$ and $\riem_2$ are bordered surfaces, then any biholomorphism $f:\riem_1 \rightarrow \riem_2$ extends to a homeomorphism of the borders.  In fact, $f$ extends to a biholomorphism between the doubles $\riem_1^d$ and $\riem_2^d$ which takes $\partial \riem_1$ to $\partial \riem_2$.  Finally, if only one of the two surfaces has a border, say $\riem_1$, then one can endow $\riem_2$ with a border using $f$. In particular, there is a unique maximal border structure.  
 \begin{remark}
  Note that if $\riem$ has type $(g,n)$, the border structure is maximal, since $\riem^d$ is a compact surface.  
 \end{remark}
 
 \begin{definition}
 We say that a homeomorphic image $\Gamma$ of $\mathbb{S}^1$ is a \emph{strip-cutting Jordan curve} if it is contained in an open set $U$ and there is a biholomorphism $\phi:U \rightarrow \mathbb{A}_{r,R} $ for some annulus 
 \[  \mathbb{A}_{r,R} \subset \mathbb{C}, \ \ \  r<1<R,  \] in such a way that $\phi(\Gamma)$ is isotopic to the circle $|z|=1.$  We call $U$ a doubly-connected neighbouhood of $\Gamma$ and $\phi$ a doubly-connected chart. 
 \end{definition}
 \begin{remark}
  If $\Gamma$ is a strip-cutting curve, by shrinking $\mathbb{A}_{r,R} $, we can assume that (1) $\phi$ extends biholomorphically to an open neighourhood of $\text{cl} \,(U)$, (2), that the boundary curves of $U$ are themselves strip cutting (in fact analytic), and  (3) that $\Gamma$ is isotopic to each of the boundary curves (using $\phi^{-1}$ to provide the isotopy).  
 \end{remark}
 
 \begin{definition}
     We say that a Jordan curve $\Gamma$ in in a Riemann surface is a {\it quasicircle} if it is contained in an open set $U$ and there is a biholomorphism $\phi:U \rightarrow V \subset \mathbb{C}$ such that $\phi(\Gamma)$ is a quasicircle in $\mathbb{C}$. 
 \end{definition}
 \begin{remark} \label{re:analytic_curve_stripcutting}
  A quasicircle (and in particular, an analytic Jordan curve), is by definition strip-cutting. 
 \end{remark}

 Throughout the paper we consider \emph{nested Riemann surfaces}.  That is, we are 
 given a type $(g,n)$ bordered surface $\riem$, another Riemann surface $\mathscr{R}$ which is compact, and a holomorphic inclusion map $\iota( \riem )\subset \mathscr{R}$.  Assume that the closure of $\riem$ is compact in $R$, and furthermore the boundary consists of $n$ closed strip-cutting Jordan curves, which do not intersect.  In that case, the inclusion map $\iota$ extends homeomorphically to a map from the border to the strip-cutting Jordan curves.  Thus $\partial \riem$ is in one-to-one correspondence with its image under the homeomorphic extension of $\iota$, and in fact the image is the boundary of $\iota (\riem)$
 in the ordinary topological sense.  For this reason, we will not notationally distinguish $\riem$ from $\iota (\riem)$.  We will also use the notation $\partial \riem$ for both the boundary of $\iota (\riem)$ in $\mathscr{R}$ and the abstract border of $\riem$, and denote both closures by $\text{cl}\, (\riem)$.

 In fact, it is not necessary to assume that $\riem$ is bordered. That is, if $\riem \subseteq \mathscr{R}$ is bounded by strip-cutting Jordan curves in $\mathscr{R}$, then it can be shown that $\riem$ is bordered and the borders can be bijectively identified with the boundary curves as above (see \cite[Theorem 2.8]{Schippers_Staubach_scattering_I}).  

 \begin{remark}
   The embedding of the border $\partial \riem$ in $\mathscr{R}$ need not be regular.  That is, the inclusion map does not extend to a smooth or analytic map from $\partial \riem$ onto its image under inclusion $\iota$, unless the image consists of smooth or analytic curves. 
 \end{remark}
 
 By another application of \cite[Theorems 3.3, 3.4 Sect 15.3]{ConwayII}, it is easily shown that if $\riem_1\subset \mathscr{R}_1$ and $\riem_2 \subset \mathscr{R}_2$
 satisfy the conditions above, and $f:\riem_1 \rightarrow \riem_2$ is a biholomorphism, then $f$ extends continuously to a map taking each Jordan curve in $\partial \riem_1$ homeomorphically to one of the Jordan curves of $\partial \riem_2$. \\

 \end{subsection}

 \begin{subsection}{Function spaces and holomorphic and harmonic forms}
  In this paper, we will denote positive constants in the inequalities by $C$ whose 
value is not crucial to the problem at hand. The value of $C$ may differ
from line to line, but in each instance could be estimated if necessary.  Moreover, when the values of constants in our estimates are of no significance for our main purpose, then we use the notation $a\lesssim b$ as a shorthand for $a\leq Cb$. If $a\lesssim b$ and $b\lesssim a$ then we write $a\approx b.$  \\
 
 On any Riemann surface,  define the dual of the almost 
 complex structure,  $\ast$ in local coordinates $z=x+iy$,  by 
 \[  \ast (a\, dx + b \, dy) = a \,dy - b \,dx. \]
This is independent of the choice of coordinates.
It can also be computed in coordinates that for any complex function $h$ 
\begin{equation} \label{eq:Wirtinger_to_hodge}
    2 \partial_z h = dh + i \ast dh.
\end{equation}

\begin{definition}
 We say a complex-valued function $f$ on an open set $U$ is \emph{harmonic} if it is $C^2$ on $U$ and $d \ast d f =0$. We say that a complex one-form $\alpha$ is harmonic if it is $C^1$ and satisfies
 both $d\alpha =0$ and $d \ast \alpha =0$. 
\end{definition}
  Equivalently, harmonic one-forms are those which can be expressed locally as $df$ for some harmonic function $f$. Harmonic one-forms and functions must of course be $C^\infty$. \\
  
   Denote complex conjugation of functions and forms with a bar, e.g. $\overline{\alpha}$.
 A holomorphic one-form is one which can be written in coordinates as $h(z)\,dz$ for a holomorphic function $h$, while an anti-holomorphic one-form is one which can be locally written $\overline{h(z)}\, d\bar{z}$ for a holomorphic function $h$.  
 
Denote by $L^2(U)$ the set of one-forms $\omega$ on an open set $U$ which satisfy
\[   \iint_U \omega \wedge \ast \overline{\omega} < \infty  \]
(observe that the integrand is positive at every point, as can be seen by writing the expression in local coordinates).  
This is a Hilbert space with respect to the inner product
\begin{equation} \label{eq:form_inner_product}
 (\omega_1,\omega_2) =  \iint_U \omega_1 \wedge \ast \overline{\omega_2}.
\end{equation}

\begin{definition}\label{defn:bergman spaces}
The \emph{Bergman space of holomorphic one forms} is 
\begin{equation}
    \gls{bergman}(U) = \{ \alpha \in L^2(U) \,:\, \alpha \ \text{holomorphic} \}.
\end{equation} 
 The anti-holomorphic Bergman space is denoted $\overline{\mathcal{A}(U)}$.   We will also denote 
\begin{equation}
    \gls{Aharm}(U) =\{ \alpha \in L^2(U) \,:\, \alpha \ \text{harmonic} \}.
\end{equation}
\end{definition}

Observe that $\mathcal{A}(U)$ and $\overline{\mathcal{A}(U)}$ are orthogonal with respect to the inner product \eqref{eq:form_inner_product}.  In fact we have the direct sum decomposition
\begin{equation}\label{direct sum decomposition}
 \mathcal{A}_{\mathrm{harm}}(U) = \mathcal{A}(U) \oplus \overline{\mathcal{A}(U)}.    
\end{equation}      
If we restrict the inner product to 
 $\alpha, \beta \in \mathcal{A}(U)$ then since $\ast \overline{\beta} = i \overline{\beta}$, we have   
\[  (\alpha,\beta) = i \iint_U \alpha \wedge \overline{\beta}.      \]

Denote the projections induced by this decomposition by 
\begin{align}  \label{eq:hol_antihol_projections_Bergman}
  \mathbf{P}_U: \mathcal{A}_{\mathrm{harm}}(U) & \rightarrow \mathcal{A}(U) \nonumber \\
  \overline{\mathbf{P}}_U : \mathcal{A}_{\mathrm{harm}}(U) & \rightarrow  \overline{\mathcal{A}(U)}.
\end{align}

Let $f: U \rightarrow V$ be a biholomorphism. We denote the pull-back of $\alpha \in \mathcal{A}_{\mathrm{harm}}(V)$
under $f$ by $f^*\alpha.$ 
Explicitly, if $\alpha$ is given in local coordinates $w$ by $a(w)\, dw + \overline{b(w)} \, d\bar{w}$ and $w=f(z),$ then the pull-back is given by 
\[   f^* \left( a(w)\, dw + \overline{b(w)} \,d\bar{w} \right)= a(f(z)) f'(z)\, dz + \overline{b(f(z))} \overline{f'(z)}\, d\bar{z}.   \]
The Bergman spaces are all conformally invariant, in the sense that if $f:U \rightarrow V$ is a biholomorphism, then $f^*\mathcal{A}(V) = \mathcal{A}(U)$ and this preserves the inner product.  Similar statements hold for the anti-holomorphic and harmonic spaces.\\ 

\begin{definition}\label{def: exact holo and harm forms}
 We define the space $\gls{exactA}_{\mathrm{harm}}(U)$ as the subspace of exact elements of $\mathcal{A}_{\mathrm{harm}}(U)$, and similarly for $\mathcal{A}^\mathrm{e}(\riem)$ and $\overline{\mathcal{A}^\mathrm{e}(\riem)}$.  
\end{definition}

{We also consider one-forms which have zero boundary periods, which we call semi-exact.
\begin{definition} \label{de:semi_exact}
 Let $\riem$ be a bordered surface of type $(g,n)$. We say that an $L^2$ one-form $\alpha \in \mathcal{A}_{\mathrm{harm}}(\riem)$ is semi-exact if for any simple closed curve $\gamma$ homotopic to a boundary curve $\partial_k \riem$, 
 \[ \int_{\gamma}  \alpha =0. \]
 The class of semi-exact one-forms on $\riem$ is denoted $\mathcal{A}^{\mathrm{se}}_{\mathrm{harm}}(\riem)$. The holomorphic and anti-holomorphic semi-exact one-forms are denoted by $\gls{seform}(\riem)$  and 
 $\overline{\mathcal{A}^{\mathrm{se}}(\riem)}$ respectively.
\end{definition}}

The following spaces also play significant roles in this paper.
\begin{definition}\label{defn:dirichlet spaces}
The \emph{Dirichlet spaces of functions} are defined by 
\begin{align*}
   \gls{Dharm}(U)& = \{ f:U \rightarrow \mathbb{C}, f \in C^2(U), \,:\, 
   df\in L^2 (U)\,\,\,\mathrm{and}\,\, \, d\ast df =0 \},\\
   \gls{D}(U)& = \{ f:U \rightarrow \mathbb{C} \,:\, 
    df \in \mathcal{A}(U) \}, \ \text{and} \\
    \overline{\mathcal{D}(U)} & = \{ f:U \rightarrow \mathbb{C} \,:\, 
    df \in \overline{\mathcal{A}(U)} \}. \\
\end{align*}
\end{definition}
We also consider homogeneous Dirichlet spaces $\dot{\mathcal{D}}_{\mathrm{harm}}(U)$, $\dot{\mathcal{D}}(U)$, and $\dot{\overline{\mathcal{D}}}(U)$ obtained by quotienting by constants. More precisely, we say two functions $f,g \in \mathcal{D}(U)$ are equivalent if they differ by a locally constant function.  The homogeneous space $\dot{\mathcal{D}}(U)$ is the quotient of $\mathcal{D}(U)$ with respect to this equivalence relation. The same procedure can be applied to ${\overline{\mathcal{D}}}(U)$ and $\mathcal{D}_{\mathrm{harm}}(U)$ to obtain $\dot{\overline{\mathcal{D}}}(U)$ and $\dot{{\mathcal{D}}}_{\mathrm{harm}}(U)$.

We can define a degenerate inner product on $\mathcal{D}_{\mathrm{harm}}(U)$ by 
\[   (f,g)_{\mathcal{D}_{\mathrm{harm}}(U)} = (df,dg)_{\mathcal{A}_{\mathrm{harm}}(U)},   \] 
where the right hand side is the inner product (\ref{eq:form_inner_product}) restricted to elements of $\mathcal{A}_{\mathrm{harm}}(U)$.  The inner product can be used to define a seminorm on $\mathcal{D}_{\mathrm{harm}}(U)$, by letting $$\Vert f\Vert^2_{\mathcal{D}_{\mathrm{harm}}(U)}:=(df,df)_{\mathcal{A}_{\mathrm{harm}}(U)}.$$ 
This is a norm on the homogeneous spaces.

We note that if one defines the \emph{Wirtinger operators} via their local coordinate expressions
\[   \partial f = \frac{\partial f}{\partial z}\, dz,  \ \ \ 
   \overline{\partial} f =  \frac{\partial f}{\partial \bar{z}}\, d \bar{z}, \]
then the aforementioned inner product can be written as
\begin{equation} \label{eq:inner_product_with_dbar_and_d}
 (f,g)_{\mathcal{D}_{\mathrm{harm}}(U)} =i \iint_{U} \left[ \partial f \wedge \overline{\partial  g} -  \overline{\partial} f \wedge 
 \partial \overline{g} \right].    
\end{equation}
Although this implies that $\mathcal{D}(U)$ and $\overline{\mathcal{D}(U)}$ are orthogonal, there is no direct sum decomposition into $\mathcal{D}(U)$ and $\overline{\mathcal{D}(U)}$.  This is because in general there exist exact harmonic one-forms whose holomorphic and anti-holomorphic parts are not exact.  

Observe that the Dirichlet spaces are conformally invariant in the same sense as the Bergman spaces.  That is, if $f: U \rightarrow V$ is a biholomorphism then 
\begin{equation*}
 \gls{cf} h = h \circ f
\end{equation*} 
satisfies
\[  \mathbf{C}_f :\mathcal{D}(V) \rightarrow \mathcal{D}(U) \]
and this is a seminorm preserving bijection. On the homogeneous spaces it is an isometry. Similar statements hold for the anti-holomorphic and harmonic spaces. 

We also note that if $h \in \mathcal{D}(U)$ and $\tilde{h}(z) = h \circ \phi^{-1}(z)$ is the expression for $h$ in local coordinates $z= \phi(w)$ in an open set $\phi(U) \subseteq \mathbb{C}$,  then we have the local expression
\[  (h,h)_{\mathcal{D}(U)} = \iint_{\phi(U)}  |\tilde{h}'(z)|^2 dA_z   \] 
where $dA$ denotes Lebesgue measure in the plane. 
 Similar expressions hold for the other Dirichlet spaces.\\

\end{subsection}
\begin{subsection}{Harmonic measures} \label{ harmonic measures}
We start with the definition of harmonic measure in the context of bordered Riemann surfaces.
\begin{definition}\label{def:harmonic measure}
Let $\omega_k$, $k=1,\ldots,n$ be the unique harmonic function which is continuous 
 on the closure of $\riem$ and which satisfies
 \begin{equation*}
  \omega_k = \left\{ \begin{array}{ll} 
    1 & \mathrm{on}\,\,\, \partial_k \riem \\ 0 & \mathrm{on}\,\,\, \partial_j \riem, \ \  j \neq k.  \end{array}  \right.   
 \end{equation*}
 The one-forms $\gls{harmeasure}$ are the {\it harmonic measures}.  We denote the complex linear span of the harmonic measures by $\gls{Ahm}(\riem).$ Moreover we define $\ast \mathcal{A}_{\mathrm{hm}}(\riem) = \{ \ast \alpha : \alpha \in \mathcal{A}_{\mathrm{hm}}(\riem) \}.$
\end{definition}

By definition any element of $\mathcal{A}_{\mathrm{hm}}(\riem)$ is exact, and its anti-derivative $\omega$ is constant on each boundary curve. On the other hand, the elements of $\ast \mathcal{A}_{\mathrm{hm}}(\riem)$ are all closed but not necessarily exact. Elements of $\mathcal{A}_{\mathrm{hm}}(\riem)$ and $\ast \mathcal{A}_{\mathrm{hm}}(\riem)$ extend real analytically to the border, in the sense that they are restrictions to $\riem$ of harmonic one-forms on the double.  In particular they are square-integrable, which explains our choice of notation above. Thus to summarize:
 
\begin{proposition} \label{pr:harmonic_measures_L2}
 Let $\riem$ be a bordered surface of type $(g,n)$.  Then $\mathcal{A}_{\mathrm{hm}}(\riem) \subseteq \mathcal{A}_{\mathrm{harm}}^{\mathrm{e}}(\riem)$ and 
 $\ast \mathcal{A}_{\mathrm{hm}}(\riem) \subseteq \mathcal{A}_{\mathrm{harm}}(\riem)$.  
\end{proposition}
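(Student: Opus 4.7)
The plan is to exploit the fact that $\riem$ sits inside its double $\riem^d$, a compact Riemann surface in which $\partial\riem = \partial_1\riem \cup \cdots \cup \partial_n\riem$ consists of real analytic curves (this is built into the definition of a bordered surface of type $(g,n)$). I will show that each harmonic measure $\omega_k$ extends harmonically across $\partial\riem$ into an open neighborhood of $\mathrm{cl}(\riem)$ in $\riem^d$; the $L^2$ conclusions then follow by compactness.

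First I would use the local anti-holomorphic involution $\sigma_j$ associated with $\partial_j\riem$ (from the double construction), which fixes $\partial_j\riem$ pointwise. Since $\omega_k$ is continuous on $\mathrm{cl}(\riem)$ and takes the constant value $c_{jk} \in \{0,1\}$ on $\partial_j\riem$, the Schwarz reflection principle produces an extension
\[
   \widetilde{\omega}_k(p) := 2c_{jk} - \omega_k(\sigma_j(p))
\]
for $p$ on the far side of $\partial_j\riem$. The extended function is continuous across $\partial_j\riem$ by the defining boundary condition, and harmonic on the reflected side because $\omega_k \circ \sigma_j$ is harmonic in the reflected conformal coordinate. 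Carrying this out at each of the finitely many disjoint components $\partial_j\riem$ yields a harmonic function on an open neighborhood $U \supset \mathrm{cl}(\riem)$ in $\riem^d$ extending $\omega_k$.

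Once the extension is in hand, both $d\omega_k$ and $\ast d\omega_k$ are real analytic one-forms on $U$. Since $\mathrm{cl}(\riem) \subset U$ is compact, both forms are uniformly bounded pointwise on $\mathrm{cl}(\riem)$, so their $L^2$ norms on $\riem$ with respect to the inner product \eqref{eq:form_inner_product} are finite. The form $d\omega_k$ is closed by inspection and co-closed because $d\ast d\omega_k = 0$ is precisely the statement that $\omega_k$ is harmonic; it is exact by definition. For the Hodge dual, $d(\ast d\omega_k) = 0$ again by harmonicity of $\omega_k$, and $d\ast(\ast d\omega_k) = -d(d\omega_k) = 0$ using $\ast\ast = -\mathrm{id}$ on one-forms. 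Hence $d\omega_k \in \mathcal{A}_{\mathrm{harm}}^{\mathrm{e}}(\riem)$ and $\ast d\omega_k \in \mathcal{A}_{\mathrm{harm}}(\riem)$; taking complex linear spans gives the two claimed inclusions.

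The only mildly delicate point is ensuring that the finitely many local reflections at the disjoint components $\partial_j\riem$ patch to a single harmonic extension on a common neighborhood $U$. This is not a real obstacle because the $\partial_j\riem$ are pairwise disjoint and the reflection near $\partial_j\riem$ is determined intrinsically by the conformal structure on the double together with the constant boundary value $c_{jk}$; shrinking each reflected collar if necessary, the extensions agree with $\omega_k$ on $\riem$ and hence glue to a well-defined harmonic extension. Everything else is a routine compactness and Hodge-$\ast$ calculation.
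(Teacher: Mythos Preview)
Your proof is correct and follows the same approach the paper sketches just before the proposition: extend each $\omega_k$ harmonically across $\partial\riem$ by Schwarz reflection in the double $\riem^d$, so that $d\omega_k$ and $\ast d\omega_k$ are restrictions of harmonic one-forms defined past the border, and then invoke compactness of $\mathrm{cl}(\riem)$ for square-integrability. The paper compresses this into one sentence, whereas you have written out the reflection and Hodge-$\ast$ verifications explicitly.
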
 

 \begin{definition}\label{periodmatrix}
 The \emph{boundary period matrix} $\Pi_{jk}$ of a non-compact surface $\riem$ of type $(g,n)$ is defined by
 \[  \Pi_{jk} := \int_{\partial \riem} \omega_j  \ast d\omega_k = \int_{\partial_j \riem} \ast d \omega_k.   \]
\end{definition} 
 \begin{theorem} \label{th:period_matrix_invertible} If we let $j,k$ run from $1$ to $n$, omitting one fixed value $m$ say, then the resulting matrix
  $\Pi_{jk}$ is symmetric and positive definite.  
 \end{theorem}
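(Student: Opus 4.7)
The plan is to express the quadratic form associated with $\Pi_{jk}$ as the squared Dirichlet seminorm of a linear combination of harmonic measures, and then use the fact that $\sum_{k=1}^n \omega_k \equiv 1$ to identify precisely the one-dimensional degeneracy that is killed by removing a single index.

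First I would establish symmetry of the full $n \times n$ matrix $(\Pi_{jk})$. The key is Green's identity: since $\omega_j$ and $\omega_k$ are harmonic and smooth up to the boundary (extending real-analytically to the double), and since $d(\omega_j \ast d\omega_k) = d\omega_j \wedge \ast d\omega_k$, Stokes' theorem gives
\[
  \int_{\partial\riem} \omega_j \ast d\omega_k \;=\; \iint_\riem d\omega_j \wedge \ast d\omega_k \;=\; (d\omega_j, d\omega_k)_{\mathcal{A}_{\mathrm{harm}}(\riem)},
\]
which is manifestly symmetric in $j,k$. The equality $\int_{\partial\riem}\omega_j \ast d\omega_k = \int_{\partial_j \riem}\ast d\omega_k$ in the definition follows from the boundary conditions $\omega_j = \delta_{ij}$ on $\partial_i \riem$, so both formulations in the definition agree and give $\Pi_{jk} = \Pi_{kj}$.

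Next I would prove positive semidefiniteness by polarizing the Green's identity above. For any real coefficients $c_1,\ldots,c_n$, let $u := \sum_{k=1}^n c_k \omega_k \in \mathcal{D}_{\mathrm{harm}}(\riem)$. Then the computation above yields
\[
  \sum_{j,k=1}^n c_j c_k \Pi_{jk} \;=\; \bigl(d u,\, d u\bigr)_{\mathcal{A}_{\mathrm{harm}}(\riem)} \;=\; \|du\|^2 \;\geq\; 0,
\]
so the full matrix is symmetric and positive semidefinite. The degeneracy is exactly characterized by $du \equiv 0$, i.e.\ $u$ constant on $\riem$. Since $u$ takes the boundary value $c_k$ on $\partial_k\riem$, constancy forces $c_1 = c_2 = \cdots = c_n$. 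Conversely, the constant vector $(1,\ldots,1)$ does lie in the kernel because $\sum_{k=1}^n \omega_k \equiv 1$ on $\riem$ (the left-hand side is harmonic with boundary value $1$ on each $\partial_k\riem$, so by uniqueness of the Dirichlet problem equals $1$). Thus the kernel is exactly the one-dimensional span of $(1,\ldots,1)$.

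Finally, for the restricted matrix with $j,k$ ranging over $\{1,\ldots,n\}\setminus\{m\}$: if $\sum_{j,k \neq m} c_j c_k \Pi_{jk} = 0$, then setting $c_m := 0$ in the argument above forces $c_1 = \cdots = c_n$, and combined with $c_m = 0$ this yields $c_k = 0$ for all $k \neq m$ as well. Hence the restricted matrix is strictly positive definite. The step requiring the most care is the justification of applying Stokes' theorem on $\riem$, which rests on the real-analytic extension of the $\omega_k$ to the border (via the double), as noted in the discussion preceding Proposition \ref{pr:harmonic_measures_L2}; beyond that, the argument is a direct consequence of the defining boundary values and the identity $\sum_k \omega_k = 1$.
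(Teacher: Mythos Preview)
Your proof is correct and is the standard argument for this classical fact. The paper itself states Theorem~\ref{th:period_matrix_invertible} in the preliminaries without proof, treating it as known background; your argument via Stokes' theorem, the identification $\Pi_{jk} = (d\omega_j, d\omega_k)_{\mathcal{A}_{\mathrm{harm}}(\riem)}$, and the observation that the kernel of the full matrix is spanned by $(1,\ldots,1)$ because $\sum_k \omega_k \equiv 1$, is exactly the expected one. One small remark: your kernel identification uses that $du=0$ forces $u$ to be a single constant on $\riem$, which tacitly assumes $\riem$ is connected; this is indeed built into Definition~\ref{defn:gn-type surface} (the double has a well-defined genus $2g+n-1$), so the assumption is legitimate, but it would not hurt to say so explicitly.
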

 
 Thus $\Pi_{jk}$, $j,k = 1,\ldots\hat{m},\ldots,n$ is an invertible matrix, and we can specify $n-1$ of the boundary periods of elements of $\ast \mathcal{A}_{\mathrm{hm}}(\riem)$.
 \begin{corollary}  \label{co:boundary_periods_specified_starmeasure}
  Let $\riem$ be of type $(g,n)$ and $\lambda_1,\ldots,\lambda_n \in \mathbb{C}$ be such that $\lambda_1 + \cdots + \lambda_n =0$.  Then there is an $\alpha \in \ast \mathcal{A}_{\mathrm{hm}}(\riem)$ such that 
  \begin{equation}\label{periodjaveln}
    \int_{\partial_k \riem} \alpha = \lambda_k   
  \end{equation}   
  for all $k=1,\ldots,n$.  
 \end{corollary}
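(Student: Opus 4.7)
The plan is to parameterize elements of $\ast\mathcal{A}_{\mathrm{hm}}(\riem)$ and reduce the problem to solving a linear system whose matrix is exactly the period matrix studied in Theorem \ref{th:period_matrix_invertible}. By the definition of $\mathcal{A}_{\mathrm{hm}}(\riem)$, every candidate $\alpha$ has the form $\alpha = \sum_{k=1}^n c_k \ast d\omega_k$ for complex constants $c_k$, and the required period conditions become the linear system
\[
  \sum_{k=1}^n \Pi_{jk}\, c_k = \lambda_j, \qquad j=1,\ldots,n,
\]
where $\Pi_{jk} = \int_{\partial_j\riem} \ast d\omega_k$ is the boundary period matrix of Definition \ref{periodmatrix}.

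The key structural observation I would make is that $\sum_{k=1}^n \omega_k \equiv 1$ on $\mathrm{cl}(\riem)$, since the left-hand side is harmonic and has boundary value $1$ on each $\partial_j \riem$. Taking $\ast d$ gives $\sum_k \ast d\omega_k = 0$, which both shows that the generator $\ast d\omega_m$ is redundant (so one may set $c_m = 0$ without loss of generality) and produces the row-sum identity $\sum_{j=1}^n \Pi_{jk} = 0$ for each $k$ (obtained by integrating over all of $\partial \riem$). Dually, the symmetry of $\Pi_{jk}$ from Theorem \ref{th:period_matrix_invertible} yields the column-sum identity $\sum_{k=1}^n \Pi_{jk} = 0$, which is essential for the compatibility with the hypothesis $\sum_k \lambda_k = 0$.

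I would then fix an index $m$, set $c_m = 0$, and solve the reduced $(n-1)\times(n-1)$ linear system
\[
  \sum_{k \neq m} \Pi_{jk}\, c_k = \lambda_j, \qquad j \in \{1,\ldots,n\}\setminus\{m\},
\]
which has a unique solution because, by Theorem \ref{th:period_matrix_invertible}, the reduced period matrix is positive definite and hence invertible. It remains to verify the omitted equation at $j=m$. Using the row-sum identity $\Pi_{mk} = -\sum_{j \neq m} \Pi_{jk}$ and the solved equations above, one computes
\[
  \sum_{k \neq m} \Pi_{mk} c_k \;=\; -\sum_{k \neq m} \sum_{j \neq m} \Pi_{jk} c_k \;=\; -\sum_{j \neq m} \lambda_j \;=\; \lambda_m,
\]
where the last equality uses the hypothesis $\sum_k \lambda_k = 0$. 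Hence $\alpha = \sum_{k \neq m} c_k \ast d\omega_k$ lies in $\ast\mathcal{A}_{\mathrm{hm}}(\riem)$ and realizes the prescribed boundary periods.

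The only nontrivial ingredient is Theorem \ref{th:period_matrix_invertible}, which is already available; beyond that, the main conceptual point is the identity $\sum_k \omega_k = 1$, and the compatibility condition $\sum_k \lambda_k = 0$ is precisely what makes the singular full period matrix's null direction irrelevant. I do not anticipate any genuine obstacle.
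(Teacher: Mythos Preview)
Your proof is correct and follows essentially the same approach that the paper has in mind: parametrize $\ast\mathcal{A}_{\mathrm{hm}}(\riem)$ by the $\ast d\omega_k$, reduce to the $(n-1)\times(n-1)$ linear system governed by the boundary period matrix, and invoke Theorem~\ref{th:period_matrix_invertible} to solve it; the omitted $m$th equation then follows from the fact that closed forms have boundary periods summing to zero (your ``row-sum'' identity) together with $\sum_k\lambda_k=0$. The paper treats this as an immediate consequence of the invertibility statement and does not spell out the verification of the $m$th equation, but your explicit check is exactly what is intended.
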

\end{subsection}
\begin{subsection}{Green's functions} 
Another basic notion which is of fundamental importance in our investigations is that of Green's functions.
 
 \begin{definition}\label{defn:greens function} Let $\riem$ be a type $(g,n)$ surface.
For fixed $z \in \riem$, we define \emph{Green's function of} $\riem$ to be a function $\gls{greenb}(w;z)$ such that 
 \begin{enumerate}
     \item for a local coordinate $\phi$ vanishing at $z$ the function $w\mapsto G_\Sigma(w;z) + \log|\phi(w)|$ is harmonic in an open neighbourhood
     of $z$;
     \item $\lim_{w \rightarrow \zeta} G_\Sigma (w;z) =0$ for any $\zeta \in \partial \riem$.   
 \end{enumerate}
 \end{definition}
  That such a function exists, follows from \cite[II.3 11H, III.1 4D]{Ahlfors_Sario}, considering $\riem$ to be a subset of its double $\riem^d$.\\  
\begin{definition}
 For compact surfaces $\mathscr{R}$, one defines the \emph{Green's function} $\gls{greenc}$ (see e.g. \cite{Royden}) as the unique function
 $\mathscr{G}(w,w_0;z,q)$ satisfying 
 \begin{enumerate}
  \item $\mathscr{G}$ is harmonic in $w$ on $\mathscr{R} \backslash \{z,q\}$;
  \item for a local coordinate $\phi$ on an open set $U$ containing $z$, $\mathscr{G}(w,w_0;z,q) + \log| \phi(w) -\phi(z) |$ is harmonic 
   for $w \in U$;
  \item for a local coordinate $\phi$ on an open set $U$ containing $q$, $\mathscr{G}(w,w_0;z,q) - \log| \phi(w) -\phi(q) |$ is harmonic 
   for $w \in U$;
  \item $\mathscr{G}(w_0,w_0;z,q)=0$ for all $z,q,w_0$.  
 \end{enumerate}
\end{definition}

 The existence of such a function is a standard fact about Riemann surfaces, see for example \cite{Royden}.  It satisfies the following identities:
 \begin{align}
  \mathscr{G}(w,w_1;z,q) & = \mathscr{G}(w,w_0;z,q) - \mathscr{G}(w_1,w_0;z,q) \label{eq:g_w_0_dependence} \\
  \mathscr{G}(w_0,w;z,q) & = - \mathscr{G}(w,w_0;z,q) \label{eq:g_interchange_w} \\
  \mathscr{G}(z,q;w,w_0) & = \mathscr{G}(w,w_0;z,q).  \label{eq:g_interchange_both}
 \end{align}
 In particular, $\mathscr{G}$ is also harmonic in $z$ where it is non-singular.
 
 \begin{remark}
  The condition (4) involving the point $w_0$ simply determines an arbitrary additive constant, and is not of any interest in the paper. 
 This is because by the property (\ref{eq:g_w_0_dependence}), $\partial_w \mathscr{G}$ is independent
 of $w_0$, and only such derivatives enter in the paper.  For this reason, we usually leave $w_0$ out of the expression for $\mathscr{G}$.
 \end{remark}

 Green's function is conformally invariant.  That is, if $\riem$ is of type $(g,n)$, and $f:\riem \rightarrow \riem'$ is conformal, then 
 {\begin{equation} \label{eq:Greens_conf_inv_gntype}
  G_{\riem'}(f(w);f(z)) = G_{\riem}(w;z). 
 \end{equation}}
 
 Similarly if $\mathscr{R}$ is compact and $f:\mathscr{R} \rightarrow \mathscr{R}'$ is a biholomorphism, then 
 \begin{equation} \label{eq:Greens_conf_inv_compact}
   \mathscr{G}_{\mathscr{R}'}(f(w),f(w_0);f(z);f(q)) = \mathscr{G}_{\mathscr{R}}(w,w_0;z,q). 
 \end{equation}

 These follow from uniqueness of Green's function; in the case of type $(g,n)$ surfaces, one also needs the fact that a biholomorphism extends to a homeomorphism of the boundary curves. 

 \end{subsection}
\begin{subsection}{ Sobolev spaces via harmonic measure and Green's functions}

     The harmonic measure and the Green's function can be used to define a conformally invariant Sobolev space that is of great significance in our investigations. 
 Let $d\omega_k$ be the harmonic measures given in Definition \ref{def:harmonic measure}. For a collar neighbourhood $U_k$ of $\partial_k \riem$ and $h_k \in \mathcal{D}_{\mathrm{harm}}(U_k)$, 
 we can fix a simple closed analytic curve $\gamma_k$ which is isotopic to $\partial_k \riem$, and define
 \begin{equation}\label{defn:integration over rough}
  \int_{\partial_k \riem}  h_k \ast d \omega_k: =  \iint_{V_k} dh_k \wedge \ast d\omega_k + \int_{\gamma_k} h_k \ast d \omega_k  
 \end{equation}   
 where $V_k$ is the region bounded by $\partial_k \riem$ and $\gamma_k$.  Here $\partial_k \riem$ is oriented positively with respect to $\riem$ and $\gamma_k$ has the same orientation as $\partial_k \riem$ (this is independent of $\gamma_k$).
 Equivalently, for a curves $\Gamma_r = \phi^{-1}(|z|=r)$ defined by a collar chart $\phi$, 
 \[   \int_{\partial_k \riem}  h_k \ast d \omega_k  = \lim_{r \nearrow 1} \int_{\Gamma_r}  h_k \ast d \omega_k.  \]
Now given $h_k \in \mathcal{D}_{\mathrm{harm}}(\riem)$ we set
   \begin{equation}\label{harmonisk matt}
    \mathscr{H}_k  := \int_{\partial_k \riem} h_k \ast d\omega_k.    
   \end{equation} 
\begin{definition}\label{def:norm on H1confU}
Let $\riem$ be a bordered surface of type $(g,n)$ and 
  let $U_k \subseteq \riem$ be collar neighbourhoods of $\partial_k \riem$ for $k=1,\ldots,n$. Set $U = U_1 \cup \cdots \cup U_n$. By ${H}^{1}_{\mathrm{conf}}(U)$ we denote the harmonic Dirichlet space $\mathcal{D}_{\mathrm{harm}}(U)$ endowed with the norm
  \begin{equation}\label{defn:Dconf norm}
 \| h \|_{H^1_{\mathrm{conf}}(U)} := \Big(\| h \|^2_{\mathcal{D}_{\mathrm{harm}}(U)} + 
  \sum_{k=1}^{n}   |\mathscr{H}_k |^2\Big)^{\frac{1}{2}} 
 \end{equation} 
 for $n>1$.  
 In the case that $n=1$,  fix a point $p\in \riem \setminus U_1$ and let $G_\Sigma$ be the Green function of $\Sigma$ given in Definition \ref{defn:greens function}. Then define instead 
  \begin{equation} \label{eq:constant_hk_definition_onecurve}
    \mathscr{H}_1 := \int_{\partial_1 \riem} h_1 \ast d G_{\riem}(w,p).
  \end{equation}

   For the Riemann surface $\riem$, assuming that $\riem$ is connected, we need only one boundary integral to obtain a norm.  If $n >1$, we can choose any fixed boundary curve $\partial_n \riem$ say, and define the norm 
 \begin{equation}\label{equivalent sobolev norm}  
     \| h \|_{{H}^1_{\mathrm{conf}}(\riem)} := \left( \| h \|^2_{\mathcal{D}_{\mathrm{harm}}(\riem)} + |\mathscr{H}_n |^2 \right)^{1/2},
 \end{equation}  
where any of the $\mathscr{H}_k$ could in fact be used in place of $\mathscr{H}_n$ and the result is an equivalent norm. In the case that $n=1$ we use (\ref{eq:constant_hk_definition_onecurve}) to define $\mathscr{H}_1$.
 \end{definition}
 \end{subsection}
\begin{subsection}{Boundary values and the bounce and the overfare operators}
We also consider a space of boundary values of Dirichet-bounded harmonic functions. In the following, let $\Gamma$ be a border of a Riemann surface $\riem$, which is homeomorphic to $\mathbb{S}^1$. For any such $\Gamma$ there is a biholomorphism 
 \[  \phi:U \rightarrow \mathbb{A}_{r,1} \]
 where $U$ is an open set in $\riem$ bounded by two Jordan curves, one of which is $\Gamma$, and $\mathbb{A}_{r,1}$ is an annulus $r<|z|<1$. We call this a collar chart of $\Gamma$. Such a $\phi$ extends homeomorphically to a map from $\Gamma$ onto $\mathbb{S}^1$; in fact, viewing $\Gamma$ as an analytic curve in the double of $\riem$, $\phi$ has a biholomorphic extension to an open neighbourhood of $\Gamma$. 
 
 Given any $h \in \mathcal{D}_{\mathrm{harm}}(\riem)$, $h$ has ``conformally non-tangential'' or ``CNT'' boundary values on $\Gamma$. That is, given a collar chart $\phi$ of $\Gamma$ , $h \circ \phi^{-1}$ has non-tangential boundary values except possibly on a Borel set of logarithmic capacity zero in $\mathbb{S}^1$; we define the CNT boundary value of $h$ at $p$ to be the non-tangential boundary value of $h \circ \phi^{-1}(p)$.  Define a null set on $\Gamma$ to be the image of a logarithmic set of capacity zero under $\phi^{-1}$. Let $\mathcal{H}(\Gamma)$ denote the set of functions on $\Gamma$ arising as CNT boundary values of elements of $\mathcal{D}_{\mathrm{harm}}(\riem)$, where we say two functions are the same if they agree except possibly on a null set.  
 It can be shown that $\mathcal{H}(\Gamma)$ is independent of the choice of collar chart.  The definition also extends immediately to finite collections of distinct borders $\Gamma = \Gamma_1 \cup \cdots \cup \Gamma_n$.  It can also be shown that $\mathcal{H}(\Gamma)$ is the set of CNT boundary values of elements of $\mathcal{D}_{\mathrm{harm}}(U)$ for any collar neighbourhood $U$ of $\Gamma$. 
 
 Treating $\Gamma$ as an analytic curve in the double, we can define the Sobolev space $H^{1/2}(\Gamma)$. Every element of $H^{1/2}(\Gamma)$ uniquely defines an element of $\mathcal{H}(\Gamma)$. 

 We also define the homogeneous space $\dot{\mathcal{H}}(\Gamma)$ of elements of $\mathcal{H}(\Gamma)$ mod functions which are constant on each connected component of $\Gamma$. Every element of the homogeneous Sobolev space $\dot{H}^{1/2}(\Gamma)$ defines a unique element of $\dot{\mathcal{H}}(\Gamma)$.  

 Finally, we will require a result regarding an operator we call the ``bounce operator''. This operator takes a Dirichlet bounded harmonic function defined in collars of the boundary to Dirichlet bounded functions on the surface with the same boundary values. 
  Let $\riem$ be a bordered surface of type $(g,n)$ as above, and
  let $U_k \subseteq \riem$ be collar neighbourhoods of $\partial_k \riem$ for $k=1,\ldots,n$. 
 \begin{definition}[Bounce operator] \label{defn:bounce op}
 Set $U = U_1 \cup \cdots \cup U_n$ and let $h:U \rightarrow \mathbb{C}$ be the function whose restriction to $U_k$ is $h_k$ for each $k=1,\ldots,n$.  We define 
  \begin{align*}
   \mathbf{G}_{U,\riem}: \mathcal{D}_{\mathrm{harm}}(U) & \rightarrow \mathcal{D}_{\mathrm{harm}}(\riem) \\ h & \mapsto H
  \end{align*}
  where $H$ is the element of $\mathcal{D}_{\mathrm{harm}}(\riem)$ with CNT boundary values agreeing with $h$ up to a null set, which exists and is unique by Theorem \cite[Theorem 3.10]{Schippers_Staubach_scattering_I}. 
 \end{definition}
  By conformal invariance of CNT limits, the bounce operator is conformally invariant, that is, if $f:\riem \rightarrow \riem'$ is a biholomorphism and $f(U)=U'$, then 
  
  \begin{equation} \label{eq:bounce_conformally_invariant}
   \gls{bounce} \mathbf{C}_f = \mathbf{C}_f \mathbf{G}_{U',\riem'}.
  \end{equation}

\begin{theorem}[Boundedness of bounce operator] \label{th:bounce_bounded} 
  Let $\riem$ and $U_k$ be as above for $k=1,\ldots,n$.
  Then $\mathbf{G}_{U,\riem}$ is bounded from ${H}^{1}_{\mathrm{conf}}(U)$ to ${H}^{1}_{\mathrm{conf}}(\riem)$.
 \end{theorem}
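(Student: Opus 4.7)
The plan is to decompose the bounce operator via boundary values on $\partial \riem$. Given $h \in \mathcal{D}_{\mathrm{harm}}(U)$, write $\hat h \in \mathcal{H}(\partial \riem)$ for its CNT boundary values. By the definition of the bounce operator, $H := \mathbf{G}_{U,\riem} h$ is the unique element of $\mathcal{D}_{\mathrm{harm}}(\riem)$ whose CNT boundary values agree with $\hat h$ up to a null set. It therefore suffices to bound, in terms of $\|h\|_{H^1_{\mathrm{conf}}(U)}$, the two contributions to $\|H\|_{H^1_{\mathrm{conf}}(\riem)}$: the Dirichlet seminorm $\|H\|_{\mathcal{D}_{\mathrm{harm}}(\riem)}$ and the boundary functional $\mathscr{H}_n(H)$ (or $\mathscr{H}_1(H)$ if $n=1$).

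For the boundary functional, I use the limiting description following (\ref{defn:integration over rough}), namely $\mathscr{H}_k = \lim_{r \nearrow 1} \int_{\Gamma_r} h \ast d\omega_k$, taken over curves $\Gamma_r$ inside the collar that approach $\partial_k \riem$. Since $\ast d\omega_k$ is a smooth closed one-form in a neighbourhood of $\partial_k \riem$, and $H$ and $h$ have identical CNT boundary values on $\partial_k \riem$ by construction, the two limits coincide. Hence $\mathscr{H}_k(H)=\mathscr{H}_k(h)$ for each $k$; in the case $n=1$, the same argument with $\ast dG_\riem(\cdot,p)$ in place of $\ast d\omega_1$ yields $\mathscr{H}_1(H)=\mathscr{H}_1(h)$. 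Thus the boundary functional contribution is controlled directly by a corresponding term in $\|h\|_{H^1_{\mathrm{conf}}(U)}$.

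To control the Dirichlet seminorm I construct an explicit competitor. In each collar $U_k$, fix an analytic Jordan curve $\gamma_k$ isotopic to $\partial_k \riem$, and let $V_k \subset U_k$ denote the region between $\partial_k \riem$ and $\gamma_k$. Define $\tilde H:\riem\to\mathbb{C}$ to equal $h$ on $\bigcup_k V_k$ and to equal the harmonic extension of $h|_{\gamma_k}$ into $\riem \setminus \bigcup_k V_k$. Because $h$ is harmonic in a neighbourhood of $\gamma_k$, the trace $h|_{\gamma_k}$ is real-analytic, the two pieces of $\tilde H$ agree continuously on $\gamma_k$, and $\tilde H$ lies in the global Dirichlet class on $\riem$. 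Standard interior regularity for harmonic functions on the collar, together with control of the additive constant on each $\gamma_k$ by the boundary functional $\mathscr{H}_k(h)$, gives $\|d\tilde H\|^2_{\mathcal{A}_{\mathrm{harm}}(\riem)} \lesssim \|h\|^2_{H^1_{\mathrm{conf}}(U)}$. Since $\tilde H$ has the same CNT boundary values as $H$, the Dirichlet minimization property of the harmonic extension (as established in part I of this series) gives $\|dH\|_{\mathcal{A}_{\mathrm{harm}}(\riem)} \le \|d\tilde H\|_{\mathcal{A}_{\mathrm{harm}}(\riem)}$, completing the bound.

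The main obstacle lies in handling additive constants when passing between $h$ on $U$ and traces on the curves $\gamma_k$. The Dirichlet seminorm of $h$ by itself controls the boundary values only modulo locally constant functions, so the constants on each $\gamma_k$ arising in the construction of $\tilde H$ must be controlled separately. This is precisely the role of the $\sum_k |\mathscr{H}_k(h)|^2$ term in $\|h\|_{H^1_{\mathrm{conf}}(U)}$: via the formula (\ref{defn:integration over rough}), the $\mathscr{H}_k(h)$ pin down the constant part of the relevant boundary traces, making the $H^{1/2}(\gamma_k)$ norms of the competitor's boundary data controllable. Verifying that this bookkeeping closes with the intended norm on the right-hand side is the delicate part of the argument.
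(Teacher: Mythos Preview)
The paper does not prove this theorem here; it is stated as a preliminary result imported from \cite{Schippers_Staubach_scattering_I}, so there is no in-paper proof to compare against. Your outline is a reasonable strategy and is close in spirit to how such results are proved, but one step is not justified as written.

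The claim $\mathscr{H}_k(H)=\mathscr{H}_k(h)$ does not follow merely from ``$H$ and $h$ have the same CNT boundary values.'' The quantity $\mathscr{H}_k$ is defined as a limiting integral over curves $\Gamma_r$ inside the collar, and on those curves $H$ and $h$ are genuinely different functions; equality of pointwise non-tangential limits on $\partial_k\riem$ does not by itself force equality of these interior integrals. What is actually needed is the second anchor lemma \cite[Lemma~3.15]{Schippers_Staubach_scattering_I} (used elsewhere in the paper, e.g.\ in Proposition~\ref{prop:J_agrees_with_bounce}): since $\ast d\omega_k$ extends real-analytically across $\partial_k\riem$ into the double, its holomorphic and anti-holomorphic parts lie in $\mathcal{A}$ of a two-sided collar, and the anchor lemma then gives that the limiting boundary integral depends only on the CNT boundary values. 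You should cite this explicitly rather than assert the equality of limits.

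The competitor argument is sound in outline. One remark: the Dirichlet energy of the harmonic extension into $\riem\setminus\bigcup_k V_k$ is insensitive to adding the \emph{same} constant to all boundary data, so the issue is controlling the \emph{differences} of the mean values of $h$ on the various $\gamma_k$ when $\riem$ is connected with $n>1$ boundary components. Your observation that the formula \eqref{defn:integration over rough} together with $\Pi_{kk}=\int_{\gamma_k}\ast d\omega_k>0$ recovers each mean value from $\mathscr{H}_k(h)$ and $\|dh\|_{L^2(U_k)}$ is correct and is exactly what makes the $H^1_{\mathrm{conf}}(U)$ norm the right input. The Dirichlet minimization step for rough boundaries also needs to be cited from part~I rather than taken for granted.
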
 

\end{subsection}

\begin{subsection}{Overfare of functions}
 We will consider a compact Riemann surface $\mathscr{R}$ separated into two ``pieces'' $\riem_1$ and $\riem_2$ by a collection of quasicircles. It is possible for one or both of $\riem_1$ and $\riem_2$ to be disconnected. The precise definition of ``separating'' is the following.
  \begin{definition}  \label{de:separating_complex}
 Let $\mathscr{R}$ be a compact Riemann surface, and let $\Gamma_1,\ldots,\Gamma_m$ be a collection of  quasicircles in $\mathscr{R}$.  Denote $\Gamma = \Gamma_1 \cup \cdots \cup \Gamma_m$.  We say that $\Gamma$ \emph{separates} $\mathscr{R}$ into $\riem_1$ and $\riem_2$ if 
 \begin{enumerate}
     \item there are doubly-connected neighbourhoods $U_k$ of $\Gamma_k$ for $k=1,\ldots,n$   such that $U_k \cap U_j$ is empty for all $j \neq k$, 
     \item one of the two connected components of $U_k \backslash \Gamma_k$ is in $\riem_1$, while the other is in $\riem_2$; 
     \item $\mathscr{R} \backslash \Gamma = \riem_1 \cup \riem_2$;
     \item $\mathscr{R} \backslash \Gamma$ consists of finitely many connected components;
     \item $\riem_1$ and $\riem_2$ are disjoint.
 \end{enumerate}
 \end{definition}
 In this case we also call $\Gamma$ a separating complex of quasicircles. 
 
 As was shown in in \cite[Proposition 3.33]{Schippers_Staubach_scattering_I}, it turns out that one can identify the borders $\partial \riem_1$ and $\partial \riem_2$ pointwise with $\Gamma$. 

 Now fix a single quasicircle $\Gamma_k$ in the complex. Let $U_1$ and $U_2$ be collar neighbourhoods of $\Gamma_k$ in $\riem_1$ and $\riem_2$ respectively. In principle the set of CNT boundary values of harmonic functions in $\mathcal{D}_{harm}(U_1)$ might not agree with the set of boundary values of harmonic functions in $\mathcal{D}_{harm}(U_2)$.  In fact, it was shown in \cite{Schippers_Staubach_scattering_I} that they agree, so that we can denote this $\mathcal{H}(\Gamma_k)$.  

 More can be said.
  \begin{theorem}[Existence of overfare]\label{thm:bounded_overfare_existence}  Let $\mathscr{R}$ be a compact Riemann surface and let $\Gamma = \Gamma_1 \cup \cdots \cup \Gamma_m$ be a collection of  quasicircles separating $\mathscr{R}$ into $\riem_1$ and $\riem_2$. 
  Let $h_1 \in \mathcal{D}_{\mathrm{harm}}(\riem_1)$.  There is a $h_2 \in \mathcal{D}_{\mathrm{harm}}(\riem_2)$ whose \emph{CNT} boundary values agree with those of $h_1$ up to a null set, and this $h_2$ is unique.   
  \end{theorem}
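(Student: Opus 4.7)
The plan is to realize $h_2$ by a two-step construction. First, extract CNT boundary values of $h_1$ on $\Gamma$ and transfer them to the $\riem_2$ side using the side-independence of the boundary-value space $\mathcal{H}(\Gamma)$. Second, apply the bounce operator to pass from a collar neighborhood $U_2$ of $\partial \riem_2$ to all of $\riem_2$. Both ingredients are supplied in the preliminaries: the equality of CNT boundary-value spaces on the two sides of each $\Gamma_k$ was cited from \cite{Schippers_Staubach_scattering_I} in the paragraph preceding the theorem statement, and the bounce operator, together with its existence and uniqueness clause, is given in Definition \ref{defn:bounce op} and Theorem \ref{th:bounce_bounded}.

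To carry out existence, I would fix disjoint collar neighborhoods $U_1$ of $\partial \riem_1$ in $\riem_1$ and $U_2$ of $\partial \riem_2$ in $\riem_2$, each being a disjoint union of collars around the individual quasicircles $\Gamma_k$. The restriction of $h_1$ to $U_1$ lies in $\mathcal{D}_{\mathrm{harm}}(U_1)$ and its CNT boundary values on $\Gamma$ coincide with those of $h_1$ up to a null set. By the equality of CNT boundary-value sets from the two sides, there exists $\tilde h_2 \in \mathcal{D}_{\mathrm{harm}}(U_2)$ realizing these same boundary values. Setting $h_2 := \mathbf{G}_{U_2,\riem_2}\tilde h_2 \in \mathcal{D}_{\mathrm{harm}}(\riem_2)$ and invoking the defining property of the bounce operator yields an element whose CNT boundary values agree with those of $\tilde h_2$, hence with those of $h_1$, up to a null set. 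Uniqueness is then immediate: if $h_2,h_2' \in \mathcal{D}_{\mathrm{harm}}(\riem_2)$ both match $h_1$ on $\partial \riem_2$ up to a null set, then restricting both to any collar neighborhood of $\partial \riem_2$ and applying the uniqueness clause in Definition \ref{defn:bounce op} forces $h_2 = h_2'$.

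The substantive analytic work is not in the present theorem but in the preparatory theory of CNT boundary values for Dirichlet-bounded harmonic functions across quasicircles, developed in \cite{Schippers_Staubach_scattering_I}. Because quasicircles can fail to be rectifiable and need not admit tangents anywhere, one cannot meaningfully speak of classical boundary values; conformally defined non-tangential limits are the correct substitute, and the key nontrivial point is that the set of such boundary values does not depend on which side of the quasicircle is used. Once those results are assumed, the present theorem is a short assembly, and the main obstacle of the proof is really the bookkeeping of null sets between the two sides of $\Gamma$, already packaged in the bounce operator.
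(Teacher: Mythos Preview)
Your proof is correct. The paper does not actually prove this theorem; it is stated in the preliminaries as a result imported from \cite{Schippers_Staubach_scattering_I}, and your assembly of the side-independence of $\mathcal{H}(\Gamma)$ together with the bounce operator (whose existence and uniqueness clause already encodes \cite[Theorem 3.10]{Schippers_Staubach_scattering_I}) is exactly the natural reconstruction from the ingredients the paper provides.
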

  
    This theorem motivates the definition of the following operator which plays an important role in the scattering theory that is developed here.
  \begin{definition}\label{defn:overfare operator} 
Thus we can define the \emph{overfare operator} $\gls{overfare}_{\riem_1,\riem_2}$ by   
  \begin{align*}
   \mathbf{O}_{\riem_1,\riem_2} : \mathcal{D}_{\mathrm{harm}}(\riem_1) & \rightarrow 
   \mathcal{D}_{\mathrm{harm}}(\riem_2) \\
   h_1 & \mapsto h_2    
 \end{align*}
  \end{definition}
 One obviously has that
 \[  \mathbf{O}_{\riem_2,\riem_1} \mathbf{O}_{\riem_1,\riem_2} = \text{Id} \]
 and of course one can switch the roles of $\riem_1$ and $\riem_2$.  
 
 The overfare operator is conformally invariant.  That is, if $f:\mathscr{R} \rightarrow \mathscr{R}'$ is a biholomorphism and we set $f(\riem_k) = \riem_k'$ for $k=1,2$ then it follows immediately from conformal invariance of CNT limits that
 \begin{equation} \label{eq:overfare_conformally_invariant}
  \mathbf{O}_{\riem_1,\riem_2} \mathbf{C}_f = \mathbf{C}_f \mathbf{O}_{\riem_1',\riem_2'}.   
 \end{equation} 
 We use the abbreviated notation $\mathbf{O}_{1,2}$ and $\mathbf{O}_{2,1}$ when the underlying surfaces are clear from context.

 This operator is bounded under certain conditions \cite{Schippers_Staubach_scattering_I}. We summarize the main results here.one must assume that the originating surface is connected. 
 \begin{theorem}[Bounded overfare theorem for general quasicircles]\label{thm:bounded_overfare_Dirichlet} Let $\mathscr{R}$ be a compact Riemann surface and let $\Gamma = \Gamma_1 \cup \cdots \cup \Gamma_m$ be a collection of quasicircles separating $\mathscr{R}$ into $\riem_1$ and $\riem_2$.  Assume that $\riem_1$ is connected. 
  There is a constant $C$ such that 
     \[  \| \mathbf{O}_{1,2} h \|_{\mathcal{D}_{\mathrm{harm}}(\riem_2)}  \leq C \| h \|_{\mathcal{D}_{\mathrm{harm}}(\riem_1)}  \]
     for all $h \in \mathcal{D}_{\mathrm{harm}}(\riem_1)$.  
  \end{theorem}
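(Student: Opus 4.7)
The plan is to reduce the global boundedness to a local estimate near each quasicircle $\Gamma_k$ and then reassemble the pieces using the bounce operator of Theorem \ref{th:bounce_bounded}. Concretely, I would fix disjoint doubly-connected neighbourhoods $V_k$ of $\Gamma_k$ in $\mathscr{R}$ with biholomorphisms $\phi_k : V_k \to \mathbb{A}_{r_k,R_k}$ for which $\gamma_k := \phi_k(\Gamma_k)$ is a quasicircle in the annulus. Setting $U_k^{(j)} := V_k \cap \riem_j$, we obtain collars of $\Gamma_k$ in each $\riem_j$, and by conformal invariance of the Dirichlet seminorm it suffices to produce, for each $k$, a harmonic function $\tilde H_k$ on the $\riem_2$-component of $\mathbb{A}_{r_k,R_k} \setminus \gamma_k$ whose CNT boundary values on $\gamma_k$ coincide with those of $\tilde h := (\phi_k^{-1})^* h$ and whose Dirichlet energy is controlled by that of $\tilde h$.

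For this planar step I would use that a quasicircle $\gamma_k$ in $\mathbb{C}$ admits a $K$-quasiconformal reflection $R_k$ fixing $\gamma_k$ pointwise. After adjusting $V_k$ so that $R_k$ sends a fixed collar of $\gamma_k$ on the $\riem_1$-side bijectively onto a collar on the $\riem_2$-side, the composition $\tilde h \circ R_k^{-1}$ is a Dirichlet-bounded function on the $\riem_2$-side collar whose CNT boundary values on $\gamma_k$ agree with those of $\tilde h$, and whose Dirichlet norm is bounded by $K$ times that of $\tilde h$, since a $K$-quasiconformal change of variables distorts the Dirichlet integral by at most a factor of $K$. Extending this function in any Dirichlet-bounded way to the remainder of the $\riem_2$-side and then harmonizing via the Dirichlet problem yields a harmonic $\tilde H_k$ with the correct CNT trace on $\gamma_k$ and $\|\tilde H_k\|_{\mathcal{D}_{\mathrm{harm}}} \leq C_k \|\tilde h\|_{\mathcal{D}_{\mathrm{harm}}}$ by the Dirichlet principle. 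Pulling back under $\phi_k$ and assembling over $k$ produces $H \in \mathcal{D}_{\mathrm{harm}}(U^{(2)})$ on $U^{(2)} := \bigcup_k U_k^{(2)}$ with $\|H\|_{\mathcal{D}_{\mathrm{harm}}(U^{(2)})} \leq C \|h\|_{\mathcal{D}_{\mathrm{harm}}(\riem_1)}$ and CNT boundary values on $\Gamma$ matching those of $h$.

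Because $\riem_1$ is connected, $h$ is determined modulo a single additive constant, which I can fix so that the boundary-integral quantities $\mathscr H_k$ entering the $H^1_{\mathrm{conf}}(U^{(2)})$ norm of $H$ are controlled by the Dirichlet seminorm of $h$. Applying the bounce operator $\mathbf{G}_{U^{(2)},\riem_2}$, bounded into $H^1_{\mathrm{conf}}(\riem_2)$ by Theorem \ref{th:bounce_bounded}, produces a harmonic function on $\riem_2$ whose CNT boundary values on $\Gamma$ agree with those of $H$ and hence with those of $h$. By the uniqueness in Theorem \ref{thm:bounded_overfare_existence} this function is exactly $\mathbf{O}_{1,2} h$, and chaining the bounds yields the claimed inequality. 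The hard part will be the planar reflection/extension step, in particular verifying that $R_k$ preserves CNT limits and distorts Dirichlet energy controllably; the connectedness of $\riem_1$ enters precisely at the final normalization, ensuring that a single scalar suffices to absorb the $H^1_{\mathrm{conf}}$ boundary contributions into the Dirichlet seminorm.
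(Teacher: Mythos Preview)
The paper does not actually prove this theorem; it is quoted from the companion paper \cite{Schippers_Staubach_scattering_I} as part of the preliminaries (note the sentence immediately preceding the statement: ``This operator is bounded under certain conditions \cite{Schippers_Staubach_scattering_I}. We summarize the main results here.''). So there is no in-paper proof to compare against.

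Your outline is nonetheless the natural route and is in the spirit of the argument in \cite{Schippers_Staubach_scattering_I}: localise to planar collars via doubly-connected charts, cross the quasicircle by a quasiconformal reflection (which fixes $\gamma_k$ pointwise and distorts the Dirichlet integral by at most the dilatation), harmonise in the collar, and reassemble with the bounce operator of Theorem~\ref{th:bounce_bounded}. Your reading of where connectedness of $\riem_1$ enters is correct: on a connected $\riem_1$ a single additive normalisation controls all point evaluations, and hence all of the boundary functionals $\mathscr{H}_k$ appearing in the $H^1_{\mathrm{conf}}(U^{(2)})$ norm, by the Dirichlet seminorm.

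The step you flag as ``hard'' genuinely is. That $\tilde h\circ R_k^{-1}$ inherits the CNT boundary values of $\tilde h$ on $\gamma_k$ is not automatic, since a quasiconformal map need not send non-tangential approach regions to non-tangential ones; one must use that $R_k$ restricts to the identity on $\gamma_k$ together with the intrinsic (side-independent) description of $\mathcal{H}(\gamma_k)$ and null sets --- this is exactly the anchor-lemma machinery of \cite{Schippers_Staubach_scattering_I}. Also, when you ``harmonise'' in the collar you have two boundary components; the clean formulation is to minimise Dirichlet energy subject only to the prescribed CNT trace on $\gamma_k$ (free on the inner boundary), so that both the energy bound and the correct trace on $\gamma_k$ survive. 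With these points made precise your scheme goes through.
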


 If $\riem_1$ is connected and $c$ is a constant, then $\mathbf{O}_{\riem_1,\riem_2} c$ is also constant on $\riem_2$ so the operator
 \begin{equation} \label{eq:O_dot_definition}
 \gls{doto}_{\riem_1,\riem_2} : \dot{\mathcal{D}}_{\mathrm{harm}}(\riem_1) \rightarrow  \dot{\mathcal{D}}_{\mathrm{harm}}(\riem_2) 
 \end{equation}
 is well-defined.  

 \begin{remark}
 Note that it if the originating surface (say $\riem_1$) is not connected, this is not well-defined. For example, if $\riem_1$ consists of $n$ disks and $\riem_2$ is connected, then by choosing $h$ to be constant on the connected components of $\riem_1$, the overfare $\mathbf{O}_{1,2} h$ can be an arbitrary element of $\mathcal{A}_{\mathrm{harm}}(\riem_2)$.  
 \end{remark} 
 \begin{corollary}
  Let $\mathscr{R}$ be a compact Riemann surface, separated by quasicircles into $\riem_1$ and $\riem_2$.  Assume that $\riem_1$ is connected.  Then $\dot{\mathbf{O}}_{\riem_1,\riem_2}$ is bounded with respect to the Dirichlet norm.  
 \end{corollary}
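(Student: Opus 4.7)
The approach is essentially bookkeeping: I just need to verify that the bound from Theorem \ref{thm:bounded_overfare_Dirichlet} passes to the quotient spaces. The key observation is that the quantity already used in that theorem, namely $\| h \|_{\mathcal{D}_{\mathrm{harm}}(\riem_k)}^2 = (dh,dh)$, is a seminorm which vanishes exactly on locally constant functions, and by construction this is precisely what becomes the norm on $\dot{\mathcal{D}}_{\mathrm{harm}}(\riem_k)$ after passing to the quotient.

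First I would note that $\dot{\mathbf{O}}_{\riem_1,\riem_2}$ is well-defined: since $\riem_1$ is assumed connected, a constant $c$ on $\riem_1$ has CNT boundary values identically equal to $c$, and the unique element of $\mathcal{D}_{\mathrm{harm}}(\riem_2)$ with these boundary values (on each connected component of $\riem_2$) is constant, so that equivalence classes modulo locally constant functions map to equivalence classes under $\mathbf{O}_{\riem_1,\riem_2}$. This is the content of (\ref{eq:O_dot_definition}).

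Next, given $[h] \in \dot{\mathcal{D}}_{\mathrm{harm}}(\riem_1)$ with representative $h$, the norm on the homogeneous space is by definition
\[
\| [h] \|_{\dot{\mathcal{D}}_{\mathrm{harm}}(\riem_1)}^2 = (dh,dh)_{\mathcal{A}_{\mathrm{harm}}(\riem_1)} = \| h \|_{\mathcal{D}_{\mathrm{harm}}(\riem_1)}^2,
\]
and similarly for $\riem_2$. Applying Theorem \ref{thm:bounded_overfare_Dirichlet} to the representative $h$ yields
\[
\| \dot{\mathbf{O}}_{\riem_1,\riem_2}[h] \|_{\dot{\mathcal{D}}_{\mathrm{harm}}(\riem_2)} = \| \mathbf{O}_{\riem_1,\riem_2} h \|_{\mathcal{D}_{\mathrm{harm}}(\riem_2)} \leq C \| h \|_{\mathcal{D}_{\mathrm{harm}}(\riem_1)} = C \| [h] \|_{\dot{\mathcal{D}}_{\mathrm{harm}}(\riem_1)},
\]
with the same constant $C$ as in the theorem. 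Since the right-hand side is independent of the choice of representative, the inequality is well-posed on the quotient and the proof is complete.

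There is no real obstacle here; the corollary is a direct consequence of the fact that the Dirichlet seminorm descends unchanged to the norm on the homogeneous space, and that the overfare operator respects the equivalence relation of differing by a locally constant function when the source surface is connected. The entire content of the statement lies in the already-proved Theorem \ref{thm:bounded_overfare_Dirichlet} and in (\ref{eq:O_dot_definition}).
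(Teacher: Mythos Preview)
Your proposal is correct and matches the paper's intended reasoning: the corollary is stated without proof in the paper, as it follows immediately from Theorem~\ref{thm:bounded_overfare_Dirichlet} together with the well-definedness of $\dot{\mathbf{O}}_{\riem_1,\riem_2}$ noted in (\ref{eq:O_dot_definition}). You have simply written out the straightforward passage to the quotient that the paper leaves implicit.
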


 For more regular quasicircles, constants in the overfare process can be controlled.   
 A BZM (bounded zero mode) quasicircle $\gamma$ in $\mathscr{R}$ are quasicircles with the following property. Given an open neighbourhood $U$ of $\gamma$ and a conformal map $\phi:U \rightarrow V$ where $V$ is a doubly-connected domain in $\sphere$, let $\gamma'=\phi(\gamma)$ and denote the connected components of the complement by $\Omega_1$ and $\Omega_2$.  A BZM quasicircle is one such that overfare $\mathbf{O}_{\Omega_1,\Omega_2}$ is bounded from $H^1_{\mathrm{conf}}(\Omega_1)$ to $H^1_{\mathrm{conf}}(\Omega_2)$.  For such quasicircles one can remove the assumption of connectedness. 

 \begin{theorem}[Bounded overfare theorem for BZM quasicircles]\label{thm:bounded_overfare_conf}  Let $\mathscr{R}$ be a compact Riemann surface and let $\Gamma = \Gamma_1 \cup \cdots \cup \Gamma_m$ be a collection of \emph{BZM} quasicircles separating $\mathscr{R}$ into $\riem_1$ and $\riem_2$. 
  There is a constant $C$ such that 
     \[  \| \mathbf{O}_{1,2} h \|_{H^1_{\mathrm{conf}}(\riem_2)}  \leq C \| h \|_{H^1_{\mathrm{conf}}(\riem_1)}  \]
     for all $h \in \mathcal{D}_{\mathrm{harm}}(\riem_1)$.  
  \end{theorem}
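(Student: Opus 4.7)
The strategy is to factor the global overfare through a purely collar-local overfare across each $\Gamma_k$, then patch back to $\riem_2$ via the bounce operator. Choose pairwise disjoint doubly-connected neighbourhoods $W_k$ of $\Gamma_k$, and set $U_k^i := W_k \cap \riem_i$ and $U^i := U_1^i \cup \cdots \cup U_m^i$ for $i = 1, 2$; by the separation hypothesis each $U_k^i$ is a connected collar of $\Gamma_k$ in $\riem_i$.

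My plan is to prove boundedness of each arrow in
\[
H^1_{\mathrm{conf}}(\riem_1)\xrightarrow{R}H^1_{\mathrm{conf}}(U^1)\xrightarrow{\mathbf{O}_{U^1,U^2}}H^1_{\mathrm{conf}}(U^2)\xrightarrow{\mathbf{G}_{U^2,\riem_2}}H^1_{\mathrm{conf}}(\riem_2),
\]
where $R$ is restriction to $U^1$, and then to identify the composite with $\mathbf{O}_{1,2}$. For $R$: the Dirichlet seminorm is monotone, while by formula (\ref{defn:integration over rough}) each boundary functional $\mathscr{H}_k$ is determined only by $h$ in a collar of $\Gamma_k$, so $\mathscr{H}_k(h|_{U^1}) = \mathscr{H}_k(h)$. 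The equivalence-of-norms statement in Definition \ref{def:norm on H1confU} (any single $\mathscr{H}_k$ together with the Dirichlet seminorm yields an equivalent norm on $\riem_1$) then bounds $\sum_k |\mathscr{H}_k|$ by $\|h\|_{H^1_{\mathrm{conf}}(\riem_1)}$. The second arrow is obtained by applying the BZM hypothesis to each $\Gamma_k$ separately and summing. The third arrow is Theorem \ref{th:bounce_bounded}.

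The composite $\mathbf{G}_{U^2,\riem_2}\circ\mathbf{O}_{U^1,U^2}(h|_{U^1})$ lies in $\mathcal{D}_{\mathrm{harm}}(\riem_2)$, and since local overfare and bounce each preserve CNT boundary values on $\Gamma$ up to null sets, it has the same CNT boundary values as $h$ on $\Gamma$. Uniqueness in Theorem \ref{thm:bounded_overfare_existence} identifies the composite with $\mathbf{O}_{1,2}h$, completing the estimate.

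The main technical subtlety will be reconciling the norms: the BZM hypothesis is formulated with an $H^1_{\mathrm{conf}}$ norm built from the harmonic measures of a doubly-connected model in $\sphere$ (equivalently, from the local harmonic measures of each $U_k^i$ viewed as a standalone doubly-connected region), while the bounce and restriction steps use norms built from the global harmonic measures $\omega_k$ of $\riem_i$. The bridge is equivalence of these two candidate $H^1_{\mathrm{conf}}$ norms on each $\mathcal{D}_{\mathrm{harm}}(U_k^i)$: both candidate boundary functionals are bounded linear forms on $\mathcal{D}_{\mathrm{harm}}(U_k^i)$ which are non-vanishing on constants, whence a routine abstract argument (splitting a function into its constant mean-value and a $\mathcal{D}$-bounded remainder) yields the equivalence, and this equivalence then propagates through the chain above.
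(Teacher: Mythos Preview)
The paper does not prove this theorem here: it is stated in the preliminaries as a result imported from the authors' earlier paper \cite{Schippers_Staubach_scattering_I}. So there is no in-paper proof to compare against, and your sketch must be judged on its own.

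Your factorisation $R$, local overfare, then bounce is the natural strategy and is essentially correct. The identification of the composite with $\mathbf{O}_{1,2}$ via CNT boundary values and the uniqueness in Theorem \ref{thm:bounded_overfare_existence} is clean. A few points deserve sharpening. First, the middle arrow $\mathbf{O}_{U^1,U^2}$ is not an operator defined anywhere in the paper; you are implicitly building it for each $k$ as the composite $\phi_k^{-1}\circ (\text{restrict to }\phi_k(U_k^2))\circ \mathbf{O}_{\Omega_1,\Omega_2}\circ \mathbf{G}_{\phi_k(U_k^1),\Omega_1}\circ \phi_k$, and you should say so, since this is where both the BZM hypothesis and an additional application of Theorem \ref{th:bounce_bounded} (on the sphere side) actually enter. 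Second, your norm-equivalence claim is correct but its justification needs one more ingredient than you state: to show that each boundary functional is bounded with respect to the norm built from the other, you need an interior estimate of the type $|h(p_0)|\lesssim \|h\|_{\mathcal{D}}+|\mathscr{H}_k(h)|$ for a fixed interior point $p_0$ (this is exactly \cite[Lemma 3.21]{Schippers_Staubach_scattering_I}, invoked several times elsewhere in the present paper), after which your mean-plus-remainder splitting goes through. Third, when $\riem_1$ is disconnected you should argue component by component, since the $H^1_{\mathrm{conf}}(\riem_1)$ norm is then a sum over components and the equivalence-of-norms statement in Definition \ref{def:norm on H1confU} is formulated for connected $\riem$.

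With these clarifications your argument is complete; it is very likely the same localise-and-bounce scheme used in \cite{Schippers_Staubach_scattering_I}, since that is the evident way to exploit a curve-by-curve hypothesis like BZM.
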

\end{subsection}
\end{section}

\begin{section}{Schiffer and Cauchy-Royden operators} \label{se:Schiffer_Cauchy}
\begin{subsection}{Definitions of Schiffer and Cauchy operators}\label{subsec:defSchiffer} 
 In this section, we define the Schiffer and Cauchy-Royden operators, and prove their boundedness. We derive identities relating them, and demonstrate their bounedness. We also show that for quasicircles, the limiting integrals in the definition of the Cauchy-Royden operator are the same from either side, up to constants -- this point is analytically subtle, because quasicircles need not be rectifiable. This allows the formulation of a Plemelj-Sokhotski type result for quasicircles. 

Denote by $\mathscr{G}$ Green's function of $\mathscr{R}$, and let $G_{\Sigma_k}$ be Green's functions of $\riem_k$, $k=1,2$.   Here, if $\riem_k$ has more than one connected component, then $G_{\Sigma_k}$ stands for the function whose restriction to each connected component is the Green's function of that component.

  First, we define the Schiffer operators.  
 To that end, we need to define certain bi-differentials, which will be the integral kernels of the Schiffer operators. 
 
 \begin{definition}
  For a compact Riemann surface $\mathscr{R}$ with Green's function $\mathscr{G}(w,w_0;z,q),$ the \emph{Schiffer kernel} is defined by
 \[  \gls{schifferk}_{\mathscr{R}}(z,w) =  \frac{1}{\pi i} \partial_z \partial_w \mathscr{G}(w,w_0;z,q),    \]
 and the \emph{Bergman kernel} is given by
 \[  \gls{bergmank}_{\mathscr{R}} (z,w) = - \frac{1}{\pi i} \partial_z \overline{\partial}_{{w}} \mathscr{G}(w,w_0;z,q).    \]

 For a non-compact surface $\riem$ of type $(g,n)$ with Green's function $g_\riem$, we define
 \[  L_\riem(z,w) =   \frac{1}{\pi i} \partial_z \partial_w G_\riem(w,z),  \]
 and 
 \[  K_\riem (z,w) = - \frac{1}{\pi i} \partial_z \overline{\partial}_{{w}}G_\riem(w,z).  \]
 \end{definition}

  The kernel functions satisfy the following:
 \begin{enumerate} 
  \item[$(1)$] $L_{\mathscr{R}}$ and $K_{\mathscr{R}}$ are independent of $q$ and $w_0$.  
  \item[$(2)$] $K_{\mathscr{R}}$ is holomorphic in $z$ for fixed $w$, and anti-holomorphic in $w$ for fixed $z$.
  \item[$(3)$] $L_{\mathscr{R}}$ is holomorphic in $w$ and $z$, except for a pole of order two when $w=z$.
\item[$(4)$] $L_{\mathscr{R}}(z,w)=L_{\mathscr{R}}(w,z)$.  
  \item[$(5)$] $K_{\mathscr{R}}(w,z)= - \overline{K_{\mathscr{R}}(z,w)}$.  
 \end{enumerate}

For non-compact Riemann surfaces $\riem$ with Green's function, $(2)-(5)$ hold with $L_{\mathscr{R}}$ and $K_{\mathscr{R}}$ replaced by $L_\riem$ and $K_\riem$. Moreover for any vector $v$ tangent to $\Gamma^{w}_{\varepsilon}$ at a point $z$, we have
  \begin{equation} \label{eq:level_curve_identity}
   \overline{K_\riem(z,w)}(\cdot,v) = -L_\riem(z,w)(\cdot,v).
  \end{equation} 
   Note that here we can treat the boundary as an analytic curve in the double, so that it makes sense to consider vectors tangent to the boundary.
 Also, the well-known reproducing property of the Bergman kernel holds, i.e.
 \begin{equation}  \label{eq:Bergman_reproducing}
  \iint_\riem K_\riem(z,w) \wedge h(w) = h(z),
 \end{equation}
 for $h \in A(\riem)$ \cite{Royden}.

Another basic fact about the kernels above is that they are conformally invariant.  That is, for a compact surface $\mathscr{R}$ and a biholomorphism $f:\mathscr{R} \rightarrow \mathscr{R}'$ we have 
 \begin{align} \label{eq:kernels_conformally_invariant_compact}
  (f^* \times f^*)  \; L_\mathscr{R'} & = L_{\mathscr{R}}  \nonumber \\
  (f^* \times f^*) \;  K_\mathscr{R'} & = K_{\mathscr{R}} 
 \end{align}
 and similarly, for surfaces $\riem$, $\riem'$ of type $(g,n)$ and a biholomorphism
 $f:\riem \rightarrow \riem'$, 
 \begin{align} \label{eq:kernels_conformally_invariant_gntype}
  (f^* \times f^*)  \; L_{\riem'} & = L_{{\riem}}  \nonumber \\
  (f^* \times f^*)\;  K_{\riem'} & = K_{{\riem}}.  
 \end{align}
 These follow immediately from conformal invariance of Green's function 
 (\ref{eq:Greens_conf_inv_compact},\ref{eq:Greens_conf_inv_gntype}).\\ 
 
 \begin{definition}\label{def: restriction ops}
For $k=1,2$ define the \emph{restriction operators}
 \begin{align*}
  \mathbf{R}_{\riem_k}:\mathcal{A}(\mathscr{R}) & \rightarrow \mathcal{A}(\riem_k) \\
  \alpha & \mapsto \left. \alpha \right|_{\riem_k}
 \end{align*}
 and 
 \begin{align*}
  \mathbf{R}^0_{\riem_k}: \mathcal{A}(\riem_1 \cup \riem_2) & \rightarrow \mathcal{A}(\riem_k) \\
    \alpha & \mapsto \left. \alpha \right|_{\riem_k}.
 \end{align*}
 \end{definition}
 
 It is obvious that these are bounded operators. In a similar way, we also define the restriction operator
 \[  \gls{harmrest}_{\riem_k} : \mathcal{A}_{\mathrm{harm}}(\mathscr{R}) \rightarrow \mathcal{A}_{\mathrm{harm}}(\riem_k).   \] 

Having the Bergman and Schiffer kernels and the restriction operators at hand, we can now define the Schiffer operators as follows.
\begin{definition}
For $k=1,2$, we define the {\it Schiffer comparison operators} by
 \begin{align*}
  \gls{T}_{\riem_k}: \overline{\mathcal{A}(\riem_k)} & \rightarrow \mathcal{A}(\riem_1 \cup \riem_2)  \\
  \overline{\alpha} & \mapsto \iint_{\riem_k} L_{\mathscr{R}}(\cdot,w) \wedge \overline{\alpha(w)}.
 \end{align*}
\\
and 
\begin{align*}
 \gls{S}_{\riem_k}: \mathcal{A}(\riem_k) & \rightarrow \mathcal{A}(\mathscr{R}) \\
  \alpha & \mapsto \iint_{\riem_k}  K_{\mathscr{R}}(\cdot,w) \wedge \alpha(w).
\end{align*}
The integral defining $\mathbf{T}_{\riem_k}$ is interpreted as a principal value integral whenever $z \in \riem_k$.  
Also, we define for $j,k \in \{1,2\}$
 \begin{equation}\label{defn: T sigmajsigmak}
     \gls{Tmix} = \mathbf{R}^0_{\riem_k} \mathbf{T}_{\riem_j}: \overline{\mathcal{A}(\riem_j)} \rightarrow \mathcal{A}(\riem_k). 
 \end{equation}   
\end{definition} 
 \begin{theorem} \label{th:Schiffer_operators_bounded}  $\mathbf{T}_{\riem_k}$, $\mathbf{T}_{\riem_j,\riem_k}$, and $\mathbf{S}_{\riem_k}$ are bounded for all $j,k =1,2$.  
 \end{theorem}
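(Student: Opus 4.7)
The plan is to first dispose of $\mathbf{T}_{\riem_j,\riem_k}$ by noting that by definition it is the composition $\mathbf{R}^0_{\riem_k}\mathbf{T}_{\riem_j}$. Since the restriction $\mathbf{R}^0_{\riem_k}:\mathcal{A}(\riem_1\cup\riem_2)\to\mathcal{A}(\riem_k)$ is norm-nonincreasing, boundedness of $\mathbf{T}_{\riem_j,\riem_k}$ will follow from that of $\mathbf{T}_{\riem_j}$. Thus it suffices to establish boundedness of $\mathbf{T}_{\riem_k}$ and $\mathbf{S}_{\riem_k}$.

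For $\mathbf{S}_{\riem_k}$ the argument is short. I would first observe that the Bergman kernel $K_{\mathscr{R}}$ is \emph{smooth} on all of $\mathscr{R}\times\mathscr{R}$: near the diagonal $\mathscr{G}$ differs from $\log|\phi(w)-\phi(z)|$ by a smooth function, and the mixed derivative $\partial_z\bar{\partial}_w$ annihilates this log-singularity, since $\log(\phi(w)-\phi(z))$ is holomorphic in $w$ off the diagonal while its complex conjugate is anti-holomorphic in $z$. Consequently the kernel of $\mathbf{S}_{\riem_k}$ is bounded on the compact product $\mathscr{R}\times\mathrm{cl}(\riem_k)$, so $\mathbf{S}_{\riem_k}$ is Hilbert--Schmidt and, in particular, bounded. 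The output is holomorphic in $z$ by holomorphy of $K_{\mathscr{R}}$ in its first variable.

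For $\mathbf{T}_{\riem_k}$ the analysis is more delicate, because $L_{\mathscr{R}}$ has a genuine pole of order two at $w=z$. My strategy is a localize-and-compare argument. Fix a finite coordinate cover $\{(U_i,\phi_i)\}$ of $\mathrm{cl}(\riem_k)$, together with a smooth cutoff $\chi$ supported in a small neighbourhood of the diagonal lying inside $\bigcup_i U_i\times U_i$, and split $L_{\mathscr{R}}=\chi L_{\mathscr{R}}+(1-\chi)L_{\mathscr{R}}$. The defining property of $\mathscr{G}$ gives, in each chart $U_i$,
\[
L_{\mathscr{R}}(z,w)-\frac{1}{2\pi i}\frac{\phi_i'(z)\phi_i'(w)\,dz\wedge dw}{(\phi_i(w)-\phi_i(z))^2}=(\text{smooth}),
\]
so the near-diagonal piece splits further into a local Beurling kernel plus a smooth remainder, while $(1-\chi)L_{\mathscr{R}}$ is smooth on $\mathrm{cl}(\riem_k)\times\mathrm{cl}(\riem_k)$. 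All smooth contributions define Hilbert--Schmidt operators, as their kernels are bounded on a set of finite area. Each local Beurling contribution, pulled back by $\phi_i$ and extended by zero to $\mathbb{C}$, is exactly the classical Beurling transform $\mathcal{B}f(\zeta)=-\pi^{-1}\,\mathrm{p.v.}\!\iint(\eta-\zeta)^{-2}f(\eta)\,dA_\eta$, which is bounded on $L^2(\mathbb{C})$ by Calder\'on--Zygmund theory; pullback and extension are isometric on the relevant $L^2$-norms. Summing the finitely many bounded pieces yields the sought $L^2$-bound.

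The principal obstacle is the last step: properly matching the local singular behaviour of $L_{\mathscr{R}}$ to the Beurling kernel so that classical CZ theory applies off-the-shelf. This is a direct computation from the defining properties of $\mathscr{G}$ together with the local-coordinate formula displayed above, with the remainder terms unambiguously smooth. Once the $L^2$-bound is in hand, the remaining task is to confirm $\mathbf{T}_{\riem_k}\bar{\alpha}\in\mathcal{A}(\riem_1\cup\riem_2)$: for $z\in\riem_j$ with $j\ne k$ this follows by differentiating under the integral, using $\partial_{\bar{z}}L_{\mathscr{R}}(z,w)=0$ off the diagonal (a consequence of $\mathscr{G}$ being harmonic in $z$); for $z\in\riem_k$ it is the standard fact that the Beurling transform of an anti-holomorphic $L^2$-form is holomorphic in the distributional sense, and the smooth remainder operators preserve holomorphy in $z$ because $\partial_{\bar{z}}L_{\mathscr{R}}$ vanishes where the kernel is smooth.
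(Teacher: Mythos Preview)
Your proposal is correct and follows essentially the same approach as the paper: the paper also observes that in local coordinates $L_{\mathscr{R}}$ equals the Beurling kernel $d\zeta\,d\eta/\pi(\zeta-\eta)^2$ plus a smooth (holomorphic) bidifferential and then invokes Calder\'on--Zygmund theory, deduces boundedness of $\mathbf{T}_{\riem_j,\riem_k}$ from that of $\mathbf{T}_{\riem_j}$ via $\mathbf{R}^0_{\riem_k}$, and handles $\mathbf{S}_{\riem_k}$ by noting that $K_{\mathscr{R}}$ is nonsingular on the compact $\mathscr{R}$. Your version is simply a more explicit rendering of the same argument, with the localization and Hilbert--Schmidt remainders spelled out.
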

 \begin{proof}
The operator $\mathbf{T}_{\Sigma_k}$ is defined by integration against the $L_{\mathscr{R}}$-Kernel which in local coordinates $\zeta = f(z)$, $\eta = f(w)$ is given by 
\[  (f \times f)^*L_{\mathscr{R}}(\zeta,\eta) = \frac{d\zeta \, d \eta}{\pi(\zeta-\eta)^2} + \alpha(\zeta,\eta) \] where $\alpha$ is a holomorphic bi-differential. Since the singular part of the kernel is a Calder\'on-Zygmund kernel we can use the theory of singular integral operators to conclude that the operators with kernel $L_\mathscr{R}(z,w)$ are bounded on $L^2$.  The same proof applies to $L_{\riem}$. The boundedness of $\mathbf{T}_{\riem_j, \riem_k}$ follows from this and the fact that $\mathbf{R}
^0_{\riem_k}$ is also bounded.\\
That the operator $\mathbf{S}_{\Sigma_k}$ is bounded and its image is $\mathcal{A}(\mathscr{R}),$  can be seen from the fact that the kernel $K_{\mathscr{R}}(., w)$ is holomorphic in $w$ and $\mathscr{R}$ is compact.
 \end{proof}
 
{\bf{Notation.}} As in the case of the overfare operator $\mathbf{O}$, we will use the notations 
 \[  \mathbf{S}_k, \ \ \mathbf{T}_{j,k}, \ \ \mathbf{T}_k, \ \ \mathbf{R}_{k}, \ \  \mathbf{P}_k=\mathbf{P}_{\riem_k}, \]
 wherever the choice of surfaces $\riem_1$ and $\riem_2$ is clear from context.\\
 
For any operator $\mathbf{M}$, we define the complex conjugate operator by 
 \[  \overline{\mathbf{M}} \overline{\alpha} = \overline{\mathbf{M} \alpha}  \]
 So for example 
 \[   \overline{\mathbf{T}}_{1,2}:\mathcal{A}(\riem_1) \rightarrow \overline{\mathbf{A}(\riem_2)} \]
 and similarly for $\overline{\mathbf{R}}_{\riem_k}$, etc.  
 
 The restriction operator is conformally invariant by conformal invariance of Bergman space of one-forms. 
 By (\ref{eq:kernels_conformally_invariant_compact}), the operators $\mathbf{T}$ and $\mathbf{S}$ are also conformally invariant.  Explicitly, if $f:\mathscr{R} \rightarrow \mathscr{R}'$ is a biholomorphism between compact surfaces, and we denote
 $\riem_k'= f(\riem_k)$ for $k=1,2$, then 
 \begin{align}  \label{eq:Schiffer_operators_conformally_invariant}
   f^* \; \mathbf{R}_{\riem_k'} & = \mathbf{R}_{\riem_k} \; f^* \nonumber \\
   f^*  \; \mathbf{R}^0_{\riem_k'} & = \mathbf{R}^0_{\riem_k} \; f^* \nonumber \\
   f^* \; \mathbf{T}_{\riem_k'} & = \mathbf{T}_{\riem_k}\; f^* \\
   f^* \; \mathbf{T}_{\riem_j',\riem_k'} & = \mathbf{T}_{\riem_j,\riem_k} \; f^* \nonumber \\ 
   f^* \; \mathbf{S}_{\riem_k'} & = \mathbf{S}_{\riem_k} \; f^*. \nonumber
 \end{align}
 
 The following basic lemma which we will used frequently in this paper, is crucial in establishing some of the forthcoming identities concerning Schiffer and Bergman kernels.
\begin{lemma} \label{le:limiting_circle_Schiffer_identity}  Fix 
 a  point $z$ and local coordinates $\phi$ near $z$.  Let $\gamma_r$ be a curve such that $|\phi(w)-\phi(z)|=r$.  Then 
 for any holomorphic one-form $\alpha$ defined near $z$, and fixed $q$, we have 
 \[   \lim_{r \searrow 0} \int_{\gamma_r,w} \frac{1}{\pi i} \partial_z \mathscr{G}(w;z,q) \overline{\alpha(w)}=0    \]
 and
 \[   \lim_{r \searrow 0} \int_{\gamma_r,w} \frac{1}{\pi i} \partial_z \mathscr{G}(w;z,q)  {\alpha(w)}= \alpha(z).    \]
 Similarly for $z \in \riem_k$ we have 
 \[   \lim_{r \searrow 0} \int_{\gamma_r,w} \frac{1}{\pi i} \partial_z {G}_{\Sigma_k}(w;z) \overline{\alpha(w)}=0    \]
 and
 \[   \lim_{r \searrow 0} \int_{\gamma_r,w} \frac{1}{\pi i} \partial_z {G}_{\Sigma_k}(w;z)  {\alpha(w)}= \alpha(z).    \]
\end{lemma}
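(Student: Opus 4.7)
The plan is to reduce all four limits to a single elementary local computation built around the log-singularity of Green's function, and then invoke (essentially) the residue theorem. There is no global input needed: since $\gamma_r$ shrinks to $z$, only the behaviour of $\mathscr{G}$ (or $G_{\Sigma_k}$) in a coordinate patch about $z$ enters.

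Working in the local coordinate $\phi$, the defining property of Green's function gives the expansion
\[  \mathscr{G}(w;z,q) \;=\; -\log|\phi(w)-\phi(z)| \;+\; h(w;z,q), \]
with $h$ harmonic in $w$ in an open neighbourhood of $z$ (and jointly smooth in $(w,z)$ there, for fixed $q$). Applying $\partial_z$ annihilates the antiholomorphic half of the log term and yields
\[  \frac{1}{\pi i}\,\partial_z \mathscr{G}(w;z,q) \;=\; \frac{\phi'(z)\,dz}{2\pi i\,(\phi(w)-\phi(z))} \;+\; H(w,z)\,dz, \]
with $H$ continuous on a neighbourhood of the diagonal $w=z$. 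Since $|\gamma_r|=O(r)$ while $H$ and $\alpha$ are uniformly bounded near $z$, the contribution of the remainder $H(w,z)\,dz$ to either integral is $O(r)$ and hence disappears in the limit. So everything reduces to the principal-part integral with kernel $1/(\phi(w)-\phi(z))$.

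Parametrize $\gamma_r$ by $w(t)=\phi^{-1}\!\bigl(\phi(z)+re^{it}\bigr)$, $t\in[0,2\pi]$, so that $\phi(w(t))-\phi(z)=re^{it}$ and $w'(t)=ire^{it}/\phi'(w(t))$. For the holomorphic test form $\alpha=a(w)\,dw$ the pullback to $[0,2\pi]$ is $a(w(t))\,w'(t)\,dt$, and the principal part of the integrand simplifies to
\[ \frac{\phi'(z)\,a(w(t))}{2\pi\,\phi'(w(t))}\,dt\cdot dz, \]
which in the limit $r\searrow 0$ integrates to $a(z)\,dz=\alpha(z)$; this is just a disguised residue/Cauchy-formula computation. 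For the antiholomorphic form $\overline\alpha=\overline{a(w)}\,d\bar w$ the pullback is $\overline{a(w(t))\,w'(t)}\,dt$, and the analogous substitution produces an extra factor of $e^{-2it}$ in the integrand, so the integral is annihilated by $\int_0^{2\pi} e^{-2it}\,dt=0$, yielding $0$ in the limit.

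Finally, the statements for $G_{\Sigma_k}$ follow by the identical computation, since $G_{\Sigma_k}$ admits the same expansion $G_{\Sigma_k}(w;z)=-\log|\phi(w)-\phi(z)|+\tilde h(w;z)$ with $\tilde h$ harmonic in $w$ near $z$; no use was made above of the global compactness of $\mathscr{R}$ or of the auxiliary point $q$ beyond the local form of the singularity. The only subtlety in the whole proof is the bookkeeping of the $e^{it}$ and $\phi'$ factors under the change of variables, and confirming that the coordinate-dependent pieces $\phi'(z)/\phi'(w(t))$ tend to $1$ as $r\searrow 0$ so that the final answer is the invariant object $\alpha(z)$.
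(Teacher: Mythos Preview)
Your proof is correct and follows essentially the same approach as the paper: both extract the local expansion of Green's function around its logarithmic singularity, isolate the principal part $\tfrac{1}{2\pi i}\,\tfrac{d\eta}{\zeta-\eta}$ (in the paper's coordinates $\zeta=\phi(w)$, $\eta=\phi(z)$), discard the regular remainder in the limit, and then evaluate the two integrals by an explicit polar/residue computation, noting that the $\overline{\alpha}$ case produces an oscillatory factor that integrates to zero. Your version is somewhat more explicit about the $O(r)$ estimate for the remainder and the parametrization, but the argument is the same.
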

\begin{proof}
 In coordinates denote $\phi(w)=\zeta$ and $\phi(z)=\eta$.  We have, writing $\alpha(w) = h(\zeta)d\zeta$ (with $h$ holomorphic)
 and observing that  
 \[ \partial_z \mathscr{G}(w;z,q) = G_\eta(\zeta) d\eta  + \frac{1}{2}\frac{1}{\zeta-\eta}d\eta    \]
 where $G_\eta(\zeta)$ is non-singular at $\eta$, 
 \begin{align*}
  \lim_{r \searrow 0} \int_{\gamma_r,w} \frac{1}{\pi i} \partial_z \mathscr{G}(w;z,q)  {\alpha(w)} & = 
     \lim_{r \searrow 0} \int_{|\zeta-\eta|=r,\zeta} \frac{1}{2 \pi i} \frac{d\zeta}{\zeta-\eta} h(\zeta) d\eta = h(\eta) d\eta 
     \\ & = \alpha(z).
 \end{align*}
 Similarly 
 \begin{align*}
  \lim_{r \searrow 0} \int_{\gamma_r,w} \frac{1}{\pi i} \partial_z \mathscr{G}(w;z,q) \overline{\alpha(w)} & = 
     \lim_{r \searrow 0} \int_{|\zeta-\eta|=r,\zeta} \frac{1}{2 \pi i} \frac{d\overline{\zeta}}{\zeta-\eta} \overline{h(\zeta)} d {\eta} 
     \\ & = 0
 \end{align*}
 by writing a power series expansion of $h$ and integrating in polar coordinates.  The proof for $g_k$ is identical. 
\end{proof}

 We will frequently use the following identity, which we refer to as \emph{Schiffer's identity}.
 \begin{theorem}  \label{th:Schiffer_vanishing_identity}  Let $\riem$ be a bordered surface of type $(g,n)$.  
  For all $\overline{\alpha} \in \overline{\mathcal{A}(\riem)}$ 
  \[   \iint_{\riem,w}  L_\riem(z,w) \wedge \overline{\alpha(w)}=0. \]
 \end{theorem}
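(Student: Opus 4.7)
My plan is to use Stokes' theorem on $\riem$ with a small coordinate disk excised around the singular point $w=z$, combined with the two defining properties of $G_\riem$: it vanishes on $\partial\riem$, and it has a logarithmic singularity on the diagonal.

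The essential algebraic step is to rewrite the integrand as a $d_w$-exact form away from $w=z$. Fix $z\in\riem$ and set
\[
  \omega(w,z):=\partial_z G_\riem(w,z)\cdot\overline{\alpha(w)},
\]
a bi-form smooth on $\riem\setminus\{z\}$ (of $w$-bidegree $(0,1)$ and $z$-bidegree $(1,0)$). Since $\overline{\alpha}$ is antiholomorphic, hence $d_w$-closed, Leibniz gives $d_w\omega=d_w(\partial_z G_\riem)\wedge\overline{\alpha}$; the $\overline{\partial}_w\partial_z G_\riem\wedge\overline{\alpha}$ contribution vanishes because two $d\overline{w}$'s collide on a Riemann surface. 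Therefore
\[
  d_w\omega=\partial_w\partial_z G_\riem(w,z)\wedge\overline{\alpha(w)}=\pi i\,L_\riem(z,w)\wedge\overline{\alpha(w)}.
\]

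Next I would fix a coordinate chart $\phi$ near $z$, let $D_\varepsilon=\{w:|\phi(w)-\phi(z)|<\varepsilon\}$ with boundary $\gamma_\varepsilon$, and apply Stokes' theorem to $\omega$ on $\riem\setminus D_\varepsilon$. The result is
\[
  \iint_{\riem\setminus D_\varepsilon}L_\riem(z,w)\wedge\overline{\alpha(w)}=\frac{1}{\pi i}\int_{\partial\riem}\partial_z G_\riem(w,z)\,\overline{\alpha(w)}-\frac{1}{\pi i}\int_{\gamma_\varepsilon}\partial_z G_\riem(w,z)\,\overline{\alpha(w)}.
\]
The boundary integral over $\partial\riem$ is zero: since $G_\riem(\cdot,z)\equiv 0$ on $\partial\riem$, the $z$-derivative $\partial_z G_\riem(w,z)$ also vanishes for $w\in\partial\riem$, the differentiation being justified because $\partial\riem$ is an analytic curve in the double of $\riem$ and $G_\riem$ extends smoothly across it. The circle integral tends to $0$ as $\varepsilon\searrow 0$ by Lemma \ref{le:limiting_circle_Schiffer_identity} applied to $G_\riem$ and the holomorphic form $\alpha$. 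Letting $\varepsilon\searrow 0$, the left-hand side converges by definition to the principal value integral $\iint_{\riem}L_\riem(z,w)\wedge\overline{\alpha(w)}$, which is therefore zero.

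No step is a serious obstacle; the only point requiring care is the principal value interpretation at $w=z$. Excising $D_\varepsilon$ is precisely the regularization needed for the double pole of $L_\riem$, and the anchor-type vanishing in Lemma \ref{le:limiting_circle_Schiffer_identity} ensures that nothing of the small-circle contribution survives in the limit. Granted that lemma and the exactness identity above, the argument reduces to the classical Stokes calculation for the Schiffer kernel.
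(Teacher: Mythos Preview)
Your proof is correct and follows essentially the same approach as the paper: embed $\riem$ in its double so $\partial\riem$ is analytic, apply Stokes' theorem with a small coordinate disk excised around $z$, use the vanishing of $G_\riem$ (hence of $\partial_z G_\riem$) on $\partial\riem$ for the outer boundary term, and invoke Lemma \ref{le:limiting_circle_Schiffer_identity} for the small-circle term. Your write-up is in fact a bit more explicit about the exactness identity $d_w\omega=\pi i\,L_\riem\wedge\overline{\alpha}$ than the paper's, but the argument is the same.
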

 \begin{proof} Let $\riem$ be embedded in its double $\riem^d$, so that the boundary is an analytic curve.  Fixing $z \in \riem$ and applying Stokes' theorem we then have
 \[ \iint_{\riem} L_\riem(z,w) \overline{\alpha(w)} = \int_{\partial \riem} \frac{1}{\pi i} \partial_z G_{\riem}(z;w) \overline{\alpha(w)} - \lim_{r \searrow 0}\int_{\gamma_{r,w}}
 \frac{1}{\pi i} \partial_z G_{\riem}(z;w) \overline{\alpha(w)}    \]
 with $\gamma_{r,w}$ as in Lemma \ref{le:limiting_circle_Schiffer_identity}. 
 The claim now follows from Lemma \ref{le:limiting_circle_Schiffer_identity} and the fact that for any fixed $z$, $\partial_z g_k(z;w)$ vanishes for all $w \in \partial \riem$.
 \end{proof} 
 This implies that  
 \begin{equation}  \label{eq:nonsingular_Schiffer}
  \mathbf{T}_{1,1} \alpha(z)= \iint_{\riem_1,w} \left(L_\mathscr{R}(z,w) - L_{\riem_1}(z,w) \right) 
  \wedge \overline{\alpha(w)}.  \end{equation}
 This desingularizes the kernel function, and will be useful below in some of the proofs. Incidentally, it also gives a direct way to see that the principal value integral defining $\mathbf{T}_{1,1}$ is independent of the choice of local coordinate, even though the omitted disks in the integral depend on this choice.

\begin{example} \label{ex:sphere_kernels}
 If $\mathscr{R}$ is the Riemann sphere $\sphere$, we have
 \[  \mathscr{G}(w,\infty;z,q) = - \log{ \frac{|w-z|}{|w-q|}}.     \]
 Thus
 \[  K_{\sphere}(z,w) =0  \]
 and
 \[  L_{\sphere}(z,w) = - \frac{1}{2 \pi i} \frac{dw \,dz}{(w-z)^2}.  \]
 So the Schiffer operators are given by, for $\overline{\alpha(z)} = \overline{h(z)}d \bar{z}$,
 \[  \ \mathbf{T}_1  \, \overline{\alpha} (z) =
    \frac{1}{\pi} \iint_{\riem_1} \frac{\overline{h(w)}}{(w-z)^2} \frac{d\bar{w} 
    \wedge d w}{2i}  \cdot dz  \]
 and $\mathbf{S}_k=0$, $k=1,2$.    
 
 By the uniformization theorem, if $\mathscr{R}$ is a compact surface of genus zero, it is biholomorphic to $\sphere$.  Thus by conformal invariance of the Schiffer kernels \eqref{eq:kernels_conformally_invariant_compact} we see that $K_{\mathscr{R}}=0$ and $\mathbf{S}_k=0$.   
 \end{example}    
 \begin{example} \label{ex:disk_kernels}    
  For $\riem = \disk$, we have
 \[  \mathscr{G}(z,w) = - \log{\frac{|z-w|}{|1-\bar{w}z|}}.  \]
 So
 \[  L_{\disk}(z,w) = \frac{-1}{2\pi i} \frac{dw \, dz}{(w-z)^2}    \]
 and 
 \[  K_{\disk}(z,w) = \frac{1}{2\pi i}  \frac{d\overline{w} \, dz}{(1-\bar{w}z)^2}. \]
 
 For a M\"obius transformation $M$, we can verify the identities
 \[   \frac{M'(w) M'(z)}{(M(w)-M(z))^2} = \frac{1}{(z-w)^2}     \]
 and 
 \[  \frac{\overline{M'(w)} M'(z)}{(1-\overline{M(w)}M(z))^2} = \frac{1}{(1-\bar{w}z)^2}. \]
 By conformal invariance of the Schiffer kernels (\ref{eq:kernels_conformally_invariant_gntype}) we see that for any disk or half plane $U$
 \[  L_{U}(z,w) = \frac{-1}{2\pi i} \frac{dw \, dz}{(w-z)^2}    \]
 and 
 \[  K_{U}(z,w) = \frac{1}{2\pi i}  \frac{d\overline{w} \, dz}{(1-\bar{w}z)^2}. \]
\end{example}    
 
 Next we consider a kind of Cauchy operator defined using Green's function.  This operator involves integrals over the separating quasicircles, which are not in general rectifiable.  So we define the integral using limits along analytic curves which approach the quasicircle.  This is well-defined by the Anchor Lemmas \cite[Lemmas 3.14 and 3.15]{Schippers_Staubach_scattering_I}. Furthermore for quasicircles, up to constants, this limit does not depend on the side from which the curve is approached.  This significant fact, which depends on the bounded overfare theorem, is one of the motivations for the use of quasicircles throughout the paper.    
 We now define the Cauchy operators.
   
   \begin{definition}
   Let $A = A_1 \cup \cdots \cup A_n$ be a union of non-intersecting collar neighbourhoods of $\Gamma$ in $\riem_1$. For $q \in \mathscr{R} \backslash \Gamma$ and $h \in \mathcal{D}_{\mathrm{harm}}(A)$  define, for $z \in \mathscr{R} \backslash \Gamma$, the \emph{Cauchy-{Royden} operator} by
  \begin{align}  \label{eq:J_definition}
      \gls{crop}(\Gamma) h(z) = {-} {\frac{1}{\pi i}} \int_{\partial \riem_1} \partial_w \mathscr{G}(w;z,q) h(w) = {-} {\frac{1}{\pi i}} \sum_{k=1}^n 
      \int_{\partial_k \riem_1} \partial_w \mathscr{G}(w;z,q) h(w),
  \end{align} 

and the \emph{restricted Cauchy-Royden operators} by
\begin{equation}
    \gls{rcrop}(\Gamma)=\mathbf{J}_1^q(\Gamma) h |_{\riem_k}
\end{equation}
where, as will be shown later,  $\mathbf{J}_1^q(\Gamma):\mathcal{D}_{\mathrm{harm}}(\riem_1) \rightarrow \mathcal{D}_{\mathrm{harm}}(\riem_1 \cup \riem_2)$ and $\mathbf{J}_{1,k}^q(\Gamma): \mathcal{D}_{\mathrm{harm}}(\riem_1) \rightarrow \mathcal{D}_{\mathrm{harm}}(\riem_k).$

\end{definition} 
    Note that by Definition \ref{de:separating_complex} and \cite[Proposition 2.17]{Schippers_Staubach_scattering_I} non-intersecting collections of collar charts exist, and the integral exists by the first anchor lemma \cite[Lemma 3.14]{Schippers_Staubach_scattering_I}. 
  
 The Cauchy operator is closely related to the Schiffer operators, as the following theorem shows. 
 { \begin{theorem}  \label{th:jump_derivatives}  
  For all $h \in \mathcal{D}_{\mathrm{harm}}(\riem_1)$ and any $q \in \mathscr{R} \backslash \Gamma$, 
   \begin{align*}
    \partial \mathbf{J}_{1} ^q(\Gamma)h(z)   & = \mathbf{T}_{1,2} \overline{\partial} h(z),\, \, \, z\in \riem_2  \\
    \partial \mathbf{J}_{1}^q(\Gamma)  h(z) & = \partial h + \mathbf{T}_{1,1} \overline{\partial} h,\, \, \, z\in \riem_1   \\
    \overline{\partial} \mathbf{J}_{1}^q(\Gamma) h(z) & = \overline{\mathbf{S}}_1 \overline{\partial} h(z),\, \, \, z\in \riem_1 \cup \riem_2 
   \end{align*}
  \end{theorem}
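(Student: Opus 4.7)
The plan is to rewrite $\partial \mathbf{J}_1^q(\Gamma) h$ and $\overline{\partial} \mathbf{J}_1^q(\Gamma) h$ as boundary integrals against the Schiffer and Bergman kernels, and then apply Stokes' theorem in the $w$-variable to turn these boundary integrals into area integrals over $\riem_1$.

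For $z$ separated from $\partial \riem_1$, I would first differentiate under the integral sign in the defining formula for $\mathbf{J}_1^q(\Gamma) h(z)$; this is justified by the Anchor Lemmas \cite[Lemmas 3.14, 3.15]{Schippers_Staubach_scattering_I} together with dominated convergence. Since $\mathscr{G}$ is real, one has $\partial_z \partial_w \mathscr{G}(w;z,q) = \pi i\, L_{\mathscr{R}}(z,w)$ and $\overline{\partial}_z \partial_w \mathscr{G}(w;z,q) = \pi i\, \overline{K_{\mathscr{R}}(z,w)}$, yielding
\[
 \partial \mathbf{J}_1^q(\Gamma) h(z) = -\int_{\partial \riem_1}\! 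L_{\mathscr{R}}(z,w)\, h(w), \qquad \overline{\partial} \mathbf{J}_1^q(\Gamma) h(z) = -\int_{\partial \riem_1}\! \overline{K_{\mathscr{R}}(z,w)}\, h(w).
\]
Both $L_{\mathscr{R}}(z,\cdot)$ and $\overline{K_{\mathscr{R}}(z,\cdot)}$ are $(1,0)_w$-forms, holomorphic in $w$ away from the diagonal $w=z$ (for $L$) or everywhere (for $\overline{K}$). Consequently $\partial_w h \wedge L_{\mathscr{R}}(z,w) = 0$ (wedge of two $(1,0)_w$-forms) and the kernels are $d_w$-closed in their regularity regions, so
\[
 d_w\bigl[h(w)\cdot L_{\mathscr{R}}(z,w)\bigr] = \overline{\partial}_w h \wedge L_{\mathscr{R}}(z,w),
\]
and analogously for $\overline{K_{\mathscr{R}}}$.

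Since $\partial \riem_1$ is a quasicircle and hence not in general rectifiable, Stokes' theorem must be applied on subregions bounded by inward-approaching analytic curves $\Gamma^{\epsilon}$, and the resulting identity passed to the limit using the Anchor Lemmas. For the $\overline{\partial}$ identity, $\overline{K_{\mathscr{R}}}$ is smooth for all $(z,w)$ (the logarithmic singularity of $\mathscr{G}$ at $w=z$ contributes nothing to $\partial_z \overline{\partial}_w$), so Stokes produces
\[
 \overline{\partial} \mathbf{J}_1^q(\Gamma) h(z) = \iint_{\riem_1} \overline{K_{\mathscr{R}}(z,w)} \wedge \overline{\partial}_w h(w) = \overline{\mathbf{S}}_1 \overline{\partial} h(z),
\]
uniformly in $z \in \riem_1 \cup \riem_2$. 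For the $\partial$ identity with $z \in \riem_2$, the kernel $L_{\mathscr{R}}(z,\cdot)$ has no singularity on $\riem_1$, so the same argument delivers $\partial \mathbf{J}_1^q(\Gamma) h(z) = \mathbf{T}_{1,2} \overline{\partial} h(z)$.

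The case $z \in \riem_1$ for $\partial \mathbf{J}_1^q(\Gamma) h$ requires handling the $(w-z)^{-2}$-singularity of $L_{\mathscr{R}}$. I would excise a small coordinate disk $|\phi(w)-\phi(z)| < r$ with boundary $\gamma_r$ and apply Stokes on the remaining region, which introduces an additional residual term $\int_{\gamma_r} L_{\mathscr{R}}(z,w) h(w)$. A local coordinate computation, expanding $h$ harmonically near $z$ as $h(z) + rh_z e^{i\theta} + rh_{\overline z} e^{-i\theta} + O(r^2)$ on the circle and using the local form $\pi i L_{\mathscr{R}}(z,w) = -\tfrac12 (\phi(w)-\phi(z))^{-2} d\phi(w)\, d\phi(z) + \text{regular}$, shows (in direct analogy with Lemma \ref{le:limiting_circle_Schiffer_identity}) that $\lim_{r \to 0} \int_{\gamma_r} L_{\mathscr{R}}(z,w) h(w) = -\partial h(z)$. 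Combining this with the Stokes identity yields $\partial \mathbf{J}_1^q(\Gamma) h(z) = \partial h(z) + \mathbf{T}_{1,1} \overline{\partial} h(z)$. The principal obstacle throughout is the rigorous justification of Stokes' theorem given the non-rectifiability of the quasicircle boundary; this is resolved by the Anchor Lemmas and the definition of the boundary integral as a limit along approaching analytic curves, together with the fact that $\overline{\partial}_w h \in \overline{\mathcal{A}(\riem_1)}$ is $L^2$ on $\riem_1$ so the passage to the limit in the area integral is controlled.
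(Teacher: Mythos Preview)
Your proof is correct and proceeds by a route close to but distinct from the paper's. The paper applies Stokes' theorem \emph{first} (to $\mathbf{J}_1^q h$ itself, before differentiation), desingularizing when $z \in \riem_1$ by subtracting $\partial_w G_{\riem_1}(w,z)$ so that the kernel becomes non-singular and one can then differentiate safely under the resulting area integral; the residual $G_{\riem_1}$ term produces $h(z)$ via its reproducing property, hence $\partial h$ after differentiation. You instead differentiate in $z$ first and only then apply Stokes, handling the diagonal singularity by a direct residue computation against $L_{\mathscr{R}}$. Your order of operations has a pleasant side effect: since $L_{\mathscr{R}}$ and $K_{\mathscr{R}}$ are independent of $q$, the pole of $\partial_w \mathscr{G}$ at $w=q$ disappears after the $z$-derivative, so you automatically avoid the paper's separate case analysis for $q \in \riem_1$. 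The paper's approach, on the other hand, makes the justification of differentiation under the integral entirely routine (the kernel being non-singular after the $G_{\riem_1}$ subtraction), whereas your appeal to ``Anchor Lemmas plus dominated convergence'' for interchanging $\partial_z$ with the limiting boundary integral deserves one more sentence: the cleanest argument is that the approximating integrals over $\Gamma_\epsilon$ are harmonic in $z$ and converge locally uniformly on compacta of $\riem_1 \cup \riem_2$ (the difference being an area integral over a shrinking strip, controlled by $\|\overline{\partial} h\|_{L^2}$), so their derivatives converge as well.
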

  }
  \begin{remark}
   There is a sign error in \cite{Schippers_Staubach_Plemelj}, {which is corrected here.}
  \end{remark}
  \begin{proof}
    { Assume first that $q \in \riem_2$. The first claim follows from the application of the Stokes theorem to \eqref{eq:J_definition} and the fact that the integrand is non-singular.  Similarly for $q,z \in \riem_2$, the third claim follows 
    from the same reasoning.  
    
    The second claim also follows from Stokes theorem, namely if $\Gamma_\varepsilon$ are curves given by $|w-z|=\varepsilon$ in local coordinates, positively oriented with respect to $z$,
    \begin{align} \label{eq:add_q_temp}
     \partial \mathbf{J}_{1} ^q(\Gamma)h(z) & = \partial_z \left( - \frac{1}{\pi i} \lim_{\varepsilon \searrow 0}
     \int_{\Gamma_\varepsilon} (\partial_w \mathscr{G}(w;z,q) - \partial_w G_{\Sigma_1}(w,z) )\,h(w) \right) \nonumber \\
     & \ \ \ \   -   \partial_z \lim_{\varepsilon \searrow 0} \frac{1}{\pi i} \int_{\Gamma_\varepsilon}
     \partial_w G_{\Sigma_1} (w,z) \,h(w) \nonumber\\
     & = \partial_z \left(  \frac{1}{\pi i}  
     \iint_{\riem_1} (\partial_w \mathscr{G}(w;z,q) - \partial_w G_{\Sigma_1}(w,z) )\wedge_w \overline{\partial} h(w) \right) \nonumber \\
     & \ \ \ \  - \partial_z \lim_{\varepsilon \searrow 0} \frac{1}{\pi i} \int_{\Gamma_\varepsilon}
     \partial_w G_{\Sigma_1} (w,z) \,h(w) \nonumber\\
     & = \frac{1}{\pi i}  
     \iint_{\riem_1} (\partial_z \partial_w \mathscr{G}(w;z,q) - \partial_z \partial_w G_{\Sigma_1}(w,z) )
      \wedge_w \overline{\partial} h(w)  + \partial h(z) 
    \end{align}
    where we have used the harmonicity of $h$. Derivation under the integral sign in the first term is justified by the fact that the integrand
    of the first term is non-singular and holomorphic in $z$ for each $w\in \riem_1$, and that $$\iint_{\riem_1, w}|( \partial_w \mathscr{G}(w;z,q) - \partial_w G_{\Sigma_1}(w,z) ) \wedge_w \overline{\partial}_{{w}} h(w)|$$ is locally bounded in $z$. 
    
    Similarly removing the singularity using $\partial_w G_{\riem_1}$, and then using the harmonicity of $h$ and Stokes' theorem yield that
    \begin{align*}
      \overline{\partial} \mathbf{J}_{1} ^q(\Gamma) h(z) & = - \overline{\partial}_{{z}} \frac{1}{\pi i} \lim_{\varepsilon \searrow 0}
     \int_{\Gamma_\varepsilon} ( \partial_w \mathscr{G}(w;z,q) -  \partial_w G_{\Sigma_1}(w,z) )\,h(w)  + \overline{\partial} h(z) \\
     & =  \frac{1}{\pi i} \iint_{\riem_1}(\overline{\partial}_{{z}} \partial_w \mathscr{G}(w;z,q) - \overline{\partial}_{{z}} \partial_w G_{\Sigma_1}(w,z) ) \wedge_w \overline{\partial}_{{w}} h(w)  + \overline{\partial} h(z).
    \end{align*}
    The third claim now follows by observing that the second term in the integral is just $- \overline{\partial} h$ because the integrand is just the complex conjugate of the Bergman kernel.
    
    Now assume that $q \in \riem_1$.  We show the second claim in the theorem.  We argue as in equation (\ref{eq:add_q_temp}), except that we must also add 
    a term $\partial_w G_{\Sigma_1}(w;q) h(w)$.  We obtain instead
    \[ \partial \mathbf{J}_{1} ^q(\Gamma)h(z)=  \frac{1}{\pi i} 
     \iint_{\riem_1} (\partial_z \partial_w \mathscr{G}(w;z,q) - \partial_z \partial_w G_{\Sigma_1}(w;z) )
      \wedge_w \overline{\partial}_{{w}} h(w)  + \partial_z \left( h(z) + h(q) \right)  \]
     and the claim follows from $\partial_z h(q) =0$. The remaining claims follow similarly.} 
  \end{proof}
  
  Combining this with Theorem \ref{th:Schiffer_operators_bounded}, we obtain
  \begin{theorem}   \label{th:jump_bounded_Dirichlet}
   $\mathbf{J}_1^q(\Gamma):\mathcal{D}_{\mathrm{harm}}(\riem_1) \rightarrow \mathcal{D}_{\mathrm{harm}}(\riem_1 \cup \riem_2)$ 
   is bounded with respect to the Dirichlet seminorm. 
  \end{theorem}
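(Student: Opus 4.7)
The plan is to reduce the claim to the already-established boundedness of the Schiffer operators by means of the differential formulas in Theorem \ref{th:jump_derivatives}. Setting $F := \mathbf{J}_1^q(\Gamma) h$, the direct sum decomposition \eqref{direct sum decomposition} applied on each component gives an orthogonal splitting
\[
  \|F\|_{\mathcal{D}_{\mathrm{harm}}(\riem_1 \cup \riem_2)}^2
  = \sum_{k=1}^{2} \Bigl( \|\partial F\|_{\mathcal{A}(\riem_k)}^2 + \|\overline{\partial} F\|_{\overline{\mathcal{A}(\riem_k)}}^2 \Bigr),
\]
so it suffices to bound each of the four summands by $C\|h\|_{\mathcal{D}_{\mathrm{harm}}(\riem_1)}$.

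For the holomorphic parts, Theorem \ref{th:jump_derivatives} gives $\partial F = \partial h + \mathbf{T}_{1,1} \overline{\partial} h$ on $\riem_1$ and $\partial F = \mathbf{T}_{1,2} \overline{\partial} h$ on $\riem_2$. The triangle inequality, together with the trivial bounds $\|\partial h\|_{\mathcal{A}(\riem_1)}, \|\overline{\partial} h\|_{\overline{\mathcal{A}(\riem_1)}} \le \|h\|_{\mathcal{D}_{\mathrm{harm}}(\riem_1)}$ and the boundedness of $\mathbf{T}_{1,1}$ and $\mathbf{T}_{1,2}$ from Theorem \ref{th:Schiffer_operators_bounded}, yields the required estimates. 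For the antiholomorphic parts, the identity $\overline{\partial} F = \overline{\mathbf{S}}_1 \overline{\partial} h$ on $\riem_1 \cup \riem_2$ exhibits $\overline{\partial} F$ as the restriction to $\riem_1 \cup \riem_2$ of an element of $\overline{\mathcal{A}(\mathscr{R})}$, so its $L^2$ norm on each $\riem_k$ is dominated by its norm on $\mathscr{R}$, which in turn is controlled by $\|\overline{\partial} h\|_{\overline{\mathcal{A}(\riem_1)}}$ via the boundedness of $\overline{\mathbf{S}}_1$.

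The one step requiring a small check is that $F$ actually lies in $\mathcal{D}_{\mathrm{harm}}(\riem_1 \cup \riem_2)$ rather than merely in some larger function space. Harmonicity on each component is immediate from Theorem \ref{th:jump_derivatives}, since on each $\riem_k$ the differential $dF = \partial F + \overline{\partial} F$ is exhibited as the sum of a holomorphic and an antiholomorphic one-form; square-integrability of $dF$ follows from the four estimates just described. No step presents a serious obstacle here, since essentially all of the analytic difficulty was absorbed into Theorem \ref{th:jump_derivatives} and Theorem \ref{th:Schiffer_operators_bounded}; the only thing to emphasize is that the constant may depend on $q$, although in fact $dF$ is independent of $q$ because $\partial_z\partial_w \mathscr{G}$ and $\partial_z\overline{\partial}_w \mathscr{G}$ are, which makes the resulting seminorm bound $q$-uniform.
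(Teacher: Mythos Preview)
Your proof is correct and follows exactly the approach the paper intends: the paper's own proof is the single sentence ``Combining this with Theorem \ref{th:Schiffer_operators_bounded}, we obtain,'' and you have simply unpacked that sentence by writing out the orthogonal decomposition of $dF$ and invoking the boundedness of $\mathbf{T}_{1,1}$, $\mathbf{T}_{1,2}$, and $\overline{\mathbf{S}}_1$ term by term. Your additional remarks on harmonicity and $q$-independence are accurate refinements but not required for the argument.
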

  Of course, the roles of the surfaces $\riem_1$ and $\riem_2$ can be switched.\\

  It follows from conformal invariance of Green's functions (\ref{eq:Greens_conf_inv_compact},\ref{eq:Greens_conf_inv_gntype}) and Dirichlet space that the Cauchy-Royden operator $\mathbf{J}$ is conformally invariant.  That is,
  if $f:\mathscr{R} \rightarrow \mathscr{R}'$ is a biholomorphism between compact surfaces, $\Gamma' = f(\Gamma)$, and $\riem_k' = f(\riem_k)$ for $k=1,2$, then 
  \begin{equation} 
    \mathbf{C}_f \mathbf{J}_k(\Gamma') = \mathbf{J}_k (\Gamma)  \mathbf{C}_f 
  \end{equation}
  which of course implies the same for $\mathbf{J}_{j,k}(\Gamma)$ and $\mathbf{J}_{j,k}(\Gamma')$ for $j,k=1,2$.

  The operator $\mathbf{J}_1^q$ is in fact bounded with respect to the $H^1_{\text{conf}}$-norm.  
  \begin{theorem} \label{th:J_bounded_Hconf}
   $\mathbf{J}^q_{1,k}(\Gamma):H^1_{\mathrm{conf}}(\riem_1) \rightarrow H^1_{\mathrm{conf}}(\riem_k)$ is bounded for $k=1,2$. 
  \end{theorem}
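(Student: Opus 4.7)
The plan is to leverage Theorem \ref{th:jump_bounded_Dirichlet}, which already furnishes the Dirichlet-seminorm bound $\|\mathbf{J}_{1,k}^q h\|_{\mathcal{D}_{\mathrm{harm}}(\riem_k)} \lesssim \|h\|_{\mathcal{D}_{\mathrm{harm}}(\riem_1)} \leq \|h\|_{H^1_{\mathrm{conf}}(\riem_1)}$. In view of Definition \ref{def:norm on H1confU}, what remains is to bound the trace contributions $\mathscr{H}_j(\mathbf{J}_{1,k}^q h)$ associated to each boundary curve of $\riem_k$ (the single-boundary variant using $\ast dG_{\riem_k}(\cdot,p)$ is entirely analogous).

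For each $j$ I would fix once and for all an analytic curve $\gamma_j \subset \riem_k$ isotopic to $\partial_j \riem_k$, lying strictly in the interior of $\riem_k$ and bounded away from $\Gamma$, and express $\mathscr{H}_j(\mathbf{J}_{1,k}^q h)$ via the area-plus-line formula \eqref{defn:integration over rough}. The area piece is controlled by Cauchy--Schwarz using the Dirichlet bound on $\mathbf{J}$ and the finite Dirichlet energy of $\omega_j$ from Proposition \ref{pr:harmonic_measures_L2}. For the line integral, the key is a uniform pointwise estimate $|\mathbf{J}_{1,k}^q h(z)| \lesssim \|h\|_{H^1_{\mathrm{conf}}(\riem_1)}$ for $z \in \gamma_j$. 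Since $q$ is fixed and each $\gamma_j$ is at positive distance from $\Gamma$, the kernel $\partial_w \mathscr{G}(w;z,q)$ is a smooth one-form in $w$ on $\text{cl}(\riem_1)$, with sup-norm uniformly bounded as $z$ ranges over $\gamma_j$. Stokes' theorem, applied exactly as in the derivation of Theorem \ref{th:jump_derivatives}, yields
\[
\mathbf{J}_{1,k}^q h(z) = \frac{1}{\pi i}\iint_{\riem_1}\partial_w \mathscr{G}(w;z,q) \wedge \overline{\partial} h(w) + R(z),
\]
where $R(z)$ collects the residue contributions at $q$ (if $q\in\riem_1$) and at $z$ (if $k=1$), each given by $\pm h$ evaluated at a fixed interior point of $\riem_1$; the exact signs follow from Lemma \ref{le:limiting_circle_Schiffer_identity}.

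The area integral is bounded uniformly by the sup-norm of the smooth kernel times $\|\overline{\partial} h\|_{L^2(\riem_1)}$, hence by $\|h\|_{\mathcal{D}_{\mathrm{harm}}(\riem_1)}$. The main obstacle is thus the residue term $R(z)$: since the Dirichlet seminorm is degenerate on constants, controlling a pointwise value of $h$ genuinely requires the full $H^1_{\mathrm{conf}}$ norm. The estimate follows from the standard interior inequality $|h(z_0) - h_B| \lesssim \|dh\|_{L^2(B)}$ on a ball $B$ compactly contained in $\riem_1$ around $z_0$ (with $h_B$ the mean of $h$ over $B$), combined with the trace term in \eqref{defn:Dconf norm} or \eqref{eq:constant_hk_definition_onecurve}, which pins down the additive constant and yields $|h(z_0)| \lesssim \|h\|_{H^1_{\mathrm{conf}}(\riem_1)}$ uniformly over any compact interior subset. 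Integrating the resulting uniform bound $|\mathbf{J}_{1,k}^q h(z)| \lesssim \|h\|_{H^1_{\mathrm{conf}}(\riem_1)}$ against the smooth form $\ast d\omega_j$ on the compact curve $\gamma_j$ closes the estimate on the line integral, and hence on $\mathscr{H}_j(\mathbf{J}_{1,k}^q h)$.
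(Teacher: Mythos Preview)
Your overall strategy---reduce to a pointwise estimate on a compact interior set via the Dirichlet bound and the formula \eqref{defn:integration over rough}---is sound and close to the paper's. The case $k=2$ (with $z\in\gamma_j\subset\riem_2$ and $q\in\riem_2$) works as you say: the kernel $\partial_w\mathscr{G}(w;z,q)$ is then holomorphic on a neighbourhood of $\text{cl}(\riem_1)$, hence in $\mathcal{A}(\riem_1)$ with norm bounded uniformly over $z\in\gamma_j$, and Cauchy--Schwarz closes the estimate.

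The gap is in the case $k=1$. When $z\in\gamma_j\subset\riem_1$, the kernel $\partial_w\mathscr{G}(w;z,q)$ has a simple pole at $w=z$, so it is \emph{not} a smooth one-form on $\text{cl}(\riem_1)$ as you assert, and your sup-norm bound is unavailable. Worse, the singular part behaves like $dw/(w-z)$, and $|w-z|^{-1}$ is \emph{not} locally $L^2$ in two real dimensions, so the kernel is not even in $\mathcal{A}_{\mathrm{harm}}(\riem_1)$ and a direct Cauchy--Schwarz bound on the area integral also fails. Your displayed identity for $\mathbf{J}_{1,1}^q h(z)$ is correct as an absolutely convergent integral (since $\overline\partial h$, being anti-holomorphic, is locally bounded), but the estimate you claim for it is not.

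The paper handles this by subtracting off $\partial_w G_{\riem_1}(w;p)$ before applying Stokes: the residue of the subtracted term contributes exactly $h(p)$, and the difference $\partial_w\mathscr{G}(w;p,q)-\partial_w G_{\riem_1}(w;p)$ is non-singular at $w=p$. The nontrivial point is that this difference lies in $\mathcal{A}_{\mathrm{harm}}(\riem_1)$---established in Lemma~\ref{le:difference_Greens_in_Dirichlet}---after which Cauchy--Schwarz gives the bound. (The paper also streamlines the reduction by invoking \cite[Lemma~3.21]{Schippers_Staubach_scattering_I} to reduce to a single point $p$ per component rather than a curve, but that is cosmetic.) Your argument can be repaired either by this subtraction, or by splitting off a small ball around $z$ and using an interior $L^\infty$ bound on the anti-holomorphic form $\overline\partial h$ there; as written, however, the $k=1$ case is incomplete.
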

  Note that strictly speaking, this is not stronger than Theorem \ref{th:jump_bounded_Dirichlet}, since that theorem shows that the $H^1_{\mathrm{conf}}$-norm is not necessary to control the Dirichlet norm of the output. 
  
  The proof requires a lemma.   
  \begin{lemma} \label{le:difference_Greens_in_Dirichlet} Let $g_1$ denote Green's function of $\riem_1$ 
   for  $k=1$ and $\mathscr{G}$ denote Green's function of $\mathscr{R}$.  Then for any fixed $p \in \riem_1$ and $q \in \riem_2$
   \[  \partial_w \mathscr{G}(w,w_0;p,q) - \partial_w G_{\Sigma_1}(w;p) \in \mathcal{A}_{\mathrm{harm}}(\riem_1).     \]
   If $q \in \riem_1$ then 
    \[  \partial_w \mathscr{G}(w,w_0;p,q) - \partial_w G_{\Sigma_1}(w;p) + \partial_w G_{\Sigma_1}(w;q) \in \mathcal{A}_{\mathrm{harm}}(\riem_1).     \]
   The same holds with $1$ and $2$ switched. 
  \end{lemma}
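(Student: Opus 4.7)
My plan is to establish the statement in two steps: (i) that the expression defines a holomorphic one-form on $\riem_1$ by cancellation of logarithmic singularities, and (ii) that this one-form lies in $L^2(\riem_1)$.

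For step (i), recall that $\partial h$ is a holomorphic one-form whenever $h$ is harmonic, so both $\partial_w\mathscr{G}(\cdot,w_0;p,q)$ and $\partial_w G_{\Sigma_1}(\cdot;p)$ are holomorphic away from their logarithmic singularities. By the defining properties of the Green's functions, $\mathscr{G}(w,\cdot;p,q)$ and $G_{\Sigma_1}(w;p)$ each equal $-\log|\phi(w)-\phi(p)|$ plus a harmonic function near $p$, so in local coordinates their $\partial_w$ both contain the singular term $-\tfrac{1}{2(w-p)}\,dw$, which cancels in the subtraction. In the second case ($q \in \riem_1$), $\mathscr{G}$ behaves like $+\log|\phi(w)-\phi(q)|$ near $q$, so $\partial_w\mathscr{G}$ has singular part $+\tfrac{1}{2(w-q)}\,dw$ there, cancelled by the $-\tfrac{1}{2(w-q)}\,dw$ singular part of $+\partial_w G_{\Sigma_1}(\cdot;q)$. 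In either case the expression extends holomorphically across all its would-be singularities to a holomorphic one-form on $\riem_1$.

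For step (ii), by the triangle inequality it suffices to control each summand in $L^2(\riem_1 \setminus N)$ for a small neighborhood $N$ of $\{p\}$ (or $\{p,q\}$ in the second case); step (i) then guarantees the expression is bounded on $N$, hence $L^2$ there. The form $\partial_w \mathscr{G}(\cdot;p,q)$ is smooth on $\mathscr{R} \setminus \{p,q\}$, thus bounded on the compact set $\text{cl}(\riem_1) \setminus N \subset \mathscr{R}$, giving the $L^2$ bound on $\riem_1 \setminus N$. For $\partial_w G_{\Sigma_1}(\cdot;p)$ (and $\partial_w G_{\Sigma_1}(\cdot;q)$, when relevant), the key point is boundary regularity at the quasicircle $\Gamma$: since $G_{\Sigma_1}$ vanishes on the analytic border of $\riem_1$ in its double $\riem_1^d$, Schwarz reflection extends it real-analytically across the border. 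Hence $\partial_w G_{\Sigma_1}$ is bounded on $\text{cl}(\riem_1) \setminus N$ in coordinates of the double, and since the Bergman norm is intrinsic to $\riem_1$ as a Riemann surface (independent of the ambient surface), this bound transfers to an $L^2$ bound on $\riem_1 \setminus N$ inside $\mathscr{R}$. Combining gives the expression in $\mathcal{A}(\riem_1) \subset \mathcal{A}_{\mathrm{harm}}(\riem_1)$. The statement with $\riem_1$ and $\riem_2$ interchanged follows identically by symmetry of the separation condition. The main subtle point is $L^2$ control of $\partial_w G_{\Sigma_1}$ near the non-rectifiable $\Gamma$; this cannot be done in $\mathscr{R}$-coordinates directly, but is easy after passage to the double where the border is analytic.
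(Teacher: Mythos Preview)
Your proof is correct and follows essentially the same route as the paper's: both arguments cancel the logarithmic singularities directly from the defining properties of the two Green's functions, and both handle the $L^2$ control near $\Gamma$ by passing to the double $\riem_1^d$, where the border becomes an analytic curve. The only difference is in the final step: the paper applies Stokes' theorem on a collar $A$ to reduce $\|\partial_w G_{\Sigma_1}\|_{L^2(A)}^2$ to a finite integral over the inner analytic curves (the boundary term on $\partial\riem_1$ vanishing because $G_{\Sigma_1}=0$ there), whereas you invoke Schwarz reflection to extend $G_{\Sigma_1}$ harmonically across the analytic border and then use boundedness on compacts. Both arguments rely on the same underlying regularity, and yours is arguably a touch more direct; you also note that the form is in fact holomorphic, which is slightly sharper than the harmonic claim in the statement.
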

  \begin{proof}
   By definitions of $\mathscr{G}$ and $g_1$, this is a non-singular harmonic function on $\riem_1$.  So it suffices to show that the function is in $\mathcal{A}_{\mathrm{harm}}(A)$ for some collar neighbourhood of $A= A_1\cup \cdots \cup A_n$ of $\partial \riem_1$. 
   The first term $\partial_w \mathscr{G}(w,w_0;p,q)$ is obviously in $\mathcal{A}_{\mathrm{harm}}(A)$ since it is holomorphic on an open neighbourhood of the closure of $A$. By conformal invariance of Green's function and the Bergman norm, the second term can be evaluated on the double $\riem^d$, where the boundary $\partial \riem$ is then an analytic curve.  Assuming that the inner boundary of $A$ consists of $n$ analytic curves $\Gamma = \Gamma_1 \cup \cdots \cup \Gamma_n$ we get 
   \[ \iint_{A} \partial_{\bar{w}} G_{\Sigma_1}(w;p) \wedge_w \partial_w G_{\Sigma_1}(w;p) = - \int_{\Gamma} G_{\Sigma_1}(w;p)\,  \partial_w G_{\Sigma_1}(w;p) <\infty  \]
   where we have used Stokes' theorem and the fact that $g_1$ vanishes on $\partial \riem_1$.  The proof for $q \in \riem_1$ is similar.
  \end{proof}

  We can now prove Theorem \ref{th:J_bounded_Hconf}.
  \begin{proof}(of Theorem \ref{th:J_bounded_Hconf}.)  
   By Theorem \ref{th:jump_bounded_Dirichlet} and \cite[Lemma 3.21]{Schippers_Staubach_scattering_I}, to prove that $\mathbf{J}_{1,k}^q$ is bounded, it's enough to show that for a $p$ in one of the connected components of $\riem_k$, $|(\mathbf{J}_{1,k}^q h)(p)| \lesssim \| h \|_{H^1_{\mathrm{conf}}}.$ 
   
   We first do the case of $\mathbf{J}_{1,1}^q$. First assume that $q \in \riem_2$, and $p \in \riem_1$. Then, we have using the reproducing property of Green's function (see \cite[Proposition 3.19]{Schippers_Staubach_scattering_I} for the proof in this analytic setting) and Stokes' theorem 
   \begin{align*}
     \mathbf{J}_{11}^q h(p) & = - \lim_{\epsilon \searrow 0} \frac{1}{\pi i}  \int_{\Gamma_\epsilon} \partial_w \mathscr{G}(w;p,q)h(w) \\
     & =  \lim_{\epsilon \searrow 0} \frac{1}{\pi i}  \int_{\Gamma_\epsilon} \left( - \partial_w \mathscr{G}(w;p,q) 
     + \partial_w G_{\Sigma_1}(w;p) \right) h(w) + h(p) \\
     & =   \frac{1}{\pi i}  \iint_{\riem_1} \left( \partial_w \mathscr{G}(w;p,q) 
     - \partial_w G_{\Sigma_1}(w;p) \right) \wedge_w \overline{\partial} h(w) + h(p). \\
   \end{align*}
   By \cite[Lemma 3.21]{Schippers_Staubach_scattering_I} we have $|h(p)|\leq \| h \|_{H^1_{\mathrm{conf}}(\riem_1)}$, and by Cauchy-Schwarz and Lemma 
   \ref{le:difference_Greens_in_Dirichlet} we obtain
   \[  \left|  \frac{1}{\pi i}  \iint_{\riem_1} \left( \partial_w \mathscr{G}(w;p,q) 
     - \partial_w G_{\Sigma_1}(w;p) \right) \wedge_w \overline{\partial} h(w)\right| \leq C \| \overline{\partial} h \|_{\mathcal{A}_{\mathrm{harm}}(\riem_2)} \leq C \| h \|_{H^1_{\mathrm{conf}}(\riem_1)}.  \]
    If on the other hand $q \in \riem_1$, the claim follows similarly from the second part of Lemma \ref{le:difference_Greens_in_Dirichlet} and  
    \[  \mathbf{J}_{11}^q h(p) = \frac{1}{\pi i}  \iint_{\riem_1} \left( \partial_w \mathscr{G}(w;p,q) 
     - \partial_w G_{\Sigma_1}(w;p) + \partial_w G_{\Sigma_1}(w;q) \right) \wedge_w \overline{\partial} h(w) + h(p) - h(q).  \] 
     Because any point can be used in \cite[Lemma 3.21]{Schippers_Staubach_scattering_I} to obtain a norm equivalent to the $H^1_{\mathrm{conf}}$ norm, it holds that $|h(q)| \lesssim \| h \|_{H^1_{\mathrm{conf}(\riem_1)}}$ for the norm determined by $p$. 
     
     Now we estimate $\mathbf{J}_{12}^q$. If $q \in \riem_2$, then for $p \in \riem_2$ we have similarly by Stokes' theorem
     \[  \left| (\mathbf{J}_{11}^q h)(p) \right| = \left| 
       \lim_{\epsilon \searrow 0} \frac{1}{\pi i}  \iint_{\riem_1} \partial_w \mathscr{G}(w;p,q)\wedge_w \overline{\partial} h(w) \right|     \]
     so the claim follows once again by Cauchy-Schwarz and the fact that $\partial_w \mathscr{G}(w;p,q) \in \mathcal{D}(\riem_1)$ for $p,q \in \riem_2$.  
     The case that $q \in \riem_1$ can be dealt with as above.
  \end{proof}

  Like the Cauchy integral, this operator reproduces holomorphic functions (up to constants).
  \begin{theorem} \label{th:jump_on_holomorphic}
   Assume that $h \in \mathcal{D}(\riem_1)$.  If $q \in \riem_1$, 
   let $c_q(z)$ be the function which is equal to $h(q)$ in the connected component of $\riem_k$ containing $q$ and $0$ otherwise.  
   Then 
   \[   \mathbf{J}^q_{1,1} h (z) = \left\{  \begin{array}{ll} h(z) - c_q(z)  & q \in \riem_1 \\
      h(z)   & q \in \riem_2 \end{array} \right. \]
   and 
   \[  \mathbf{J}^q_{1,2} h (z) = \left\{  \begin{array}{ll} - c_q(z)  & q \in \riem_1 \\
      0  & q \in \riem_2 \end{array} \right.  \]
   This holds with the roles of $1$ and $2$ interchanged.
  \end{theorem}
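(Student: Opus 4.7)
The strategy is to combine Theorem~\ref{th:jump_derivatives}, which records the action of $\partial$ and $\overline{\partial}$ on $\mathbf{J}_1^q h$, with a direct residue calculation to pin down the constants on each connected component.

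Since $h \in \mathcal{D}(\riem_1)$ is holomorphic, $\overline{\partial} h = 0$, and Theorem~\ref{th:jump_derivatives} specializes to
\[
  \overline{\partial}\mathbf{J}_1^q h = 0 \text{ on } \riem_1 \cup \riem_2, \quad
  \partial \mathbf{J}_{1,1}^q h = \partial h \text{ on } \riem_1, \quad \partial \mathbf{J}_{1,2}^q h = 0 \text{ on } \riem_2.
\]
The first identity forces $\mathbf{J}_1^q h$ to be holomorphic on each connected component of $\mathscr{R}\setminus \Gamma$. Combined with the other two, $\mathbf{J}_{1,1}^q h - h$ is both holomorphic and annihilated by $\partial$ on $\riem_1$, hence locally constant there; similarly $\mathbf{J}_{1,2}^q h$ is locally constant on each connected component of $\riem_2$. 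This already yields the theorem up to specification of those constants.

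To identify the constants, observe that for holomorphic $h$ the integrand $\partial_w \mathscr{G}(w;z,q)\, h(w)$ is a meromorphic $(1,0)$-form in $w$ on $\riem_1$, with at most two simple poles: at $w=z$ when $z\in \riem_1$, with residue $-h(z)/2$, and at $w=q$ when $q\in \riem_1$, with residue $h(q)/2$, as read off from conditions~(2) and (3) of the definition of $\mathscr{G}$. The plan is to excise small coordinate disks about any such poles, apply Stokes' theorem on what remains (the $\overline{\partial}_w$ of a holomorphic $(1,0)$-form vanishes), and let the radii shrink to zero. Lemma~\ref{le:limiting_circle_Schiffer_identity} then evaluates the boundary contributions around the poles. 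Summing over the connected components of $\riem_1$ and inserting the overall prefactor $-1/(\pi i)$ reproduces the claimed formulas, with the function $c_q(z)$ arising from the residue at $w=q$ in the component containing $q$.

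The principal analytical obstacle is the use of Stokes' theorem when $\partial \riem_1$ is only a complex of quasicircles, which need not be rectifiable. As in the definition of $\mathbf{J}_1^q$ itself, we approximate $\partial \riem_1$ from inside $\riem_1$ by analytic level curves lying in the collar neighbourhoods, apply Stokes' theorem on the resulting smooth subdomain, and then pass to the limit using the anchor lemmas of \cite[Lemmas~3.14, 3.15]{Schippers_Staubach_scattering_I}. With that control in hand, the residue evaluation is essentially formal, and the final combinatorics of which poles lie in which component of $\riem_1$ produces the case distinction between $q\in \riem_1$ and $q\in \riem_2$ in the statement.
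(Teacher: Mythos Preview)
Your proposal is correct, and its second half --- the Stokes/residue calculation over each connected component of $\riem_1$, with limiting curves handling the quasicircle boundary --- is precisely the paper's proof. The paper does only that step: since $h$ is holomorphic the integrand $\partial_w \mathscr{G}(w;z,q)\,h(w)$ is holomorphic in $w$ away from $z$ and $q$, so Stokes' theorem on the approximating subdomain reduces $\mathbf{J}_1^q h(z)$ directly to the residue contributions you identify.

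Your preliminary appeal to Theorem~\ref{th:jump_derivatives} is therefore redundant: the residue computation already evaluates $\mathbf{J}_1^q h(z)$ pointwise, not merely up to a locally constant ambiguity, so there is nothing left to pin down afterward. Invoking Theorem~\ref{th:jump_derivatives} is also a heavier tool than needed here, since its proof uses the same Stokes/residue machinery together with the Schiffer operators. That said, your two-step organization is not wrong, and your explicit remark about passing to analytic level curves near the quasicircle boundary is a point the paper leaves implicit.
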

  \begin{proof}  
   Since $h \in \mathcal{D}(\riem_1)$, the integrand of $\mathbf{J}^q_1 h$ is holomorphic, except for possible singularities at $z$ and $q$ depending on their locations. 
   If $z$ is contained in $\riem_1$, and $C_r$ are curves given by $|w-z|=r$ in local coordinates, positively oriented with respect to $z$, then 
   \[  - \frac{1}{\pi i} \lim_{r \searrow 0}  \int_{C_r} \partial_w \mathscr{G}(w;z,q) h(w) = h(z)    \]
   and if $q$ is in $\riem_1$ and $C_r$ are the curves $|w-q|=r$ then 
   \[   - \frac{1}{\pi i} \lim_{r \searrow 0}  \int_{C_r} \partial_w \mathscr{G}(w;z,q) h(w) = -h(q).    \]
   The claim follows from Stokes' theorem applied to the connected components of $\riem_1$.  
  \end{proof}
  In particular, for any $q \notin \Gamma$, and any locally constant function $c$,  $\mathbf{J}^q_1 c$ is also locally constant.  Thus we obtain a well-defined operator 
  \[  \gls{Jdot}: \dot{\mathcal{D}}_{\mathrm{harm}}(\riem_1) \rightarrow   \dot{\mathcal{D}}(\riem_1 \cup \riem_2).  \]
  The Dirichlet norm becomes a seminorm on the homogeneous space, and $\dot{\mathbf{J}}_1$ is bounded with respect to this norm. It is easily verified that $\dot{\mathbf{J}}_1$ is independent of $q$.

   Next we will prove some results about the interaction with $\mathbf{J}_1^q$ with the bounce and overfare operators. 
  \begin{proposition} \label{prop:J_agrees_with_bounce} Let $A= A_1 \cup \cdots \cup A_n$ be a union of collar neighbourhoods $A_k$ of $\Gamma_k$ in $\riem$.  For $h \in \mathcal{D}_{\mathrm{harm}}(A)$ 
  \[   \mathbf{J}_1^q(\Gamma) h  = \mathbf{J}_1^q(\Gamma) \mathbf{G}_{A,\riem_1} h.     \]
  \end{proposition}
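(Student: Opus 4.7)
The natural plan is to set $u := h - (\mathbf{G}_{A,\riem_1} h)|_A \in \mathcal{D}_{\mathrm{harm}}(A)$ and reduce, by linearity of $\mathbf{J}_1^q(\Gamma)$, to proving $\mathbf{J}_1^q(\Gamma) u = 0$. The defining property of the bounce operator gives that $u$ has CNT boundary values vanishing on $\Gamma$ up to a null set, so the task becomes one of showing that the Cauchy-Royden integral is determined purely by the CNT boundary values of its input.

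First I would fix $z, q \in \mathscr{R} \setminus \Gamma$ and shrink $A$ if necessary so that $z, q \notin \mathrm{cl}(A)$; this is permissible since $\mathbf{J}_1^q(\Gamma) u$ is by construction a limit of integrals over curves approaching $\Gamma$, and those curves lie inside any sub-collar. Then $\omega := \partial_w \mathscr{G}(\cdot; z, q)$ is a closed holomorphic one-form on an open neighborhood of $\mathrm{cl}(A)$. Choosing for each $k$ an inner analytic curve $\gamma_k \subset A_k$ isotopic to $\Gamma_k$, together with approximating curves $\Gamma_k^r = \phi_k^{-1}(|\zeta|=r)$ satisfying $\Gamma_k^r \to \Gamma_k$ as $r \nearrow 1$, Stokes' theorem on the region $W_k^r$ between $\gamma_k$ and $\Gamma_k^r$ yields
\[
\int_{\Gamma_k^r} u\,\omega - \int_{\gamma_k} u\,\omega = \iint_{W_k^r} \bar\partial u \wedge \omega,
\]
where we have used that $\omega$ is closed and of type $(1,0)$, so $d(u\omega) = \bar\partial u \wedge \omega$. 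Dominated convergence (using $\bar\partial u \in L^2(A_k)$ and $\omega$ bounded on the region $V_k$ bounded by $\gamma_k$ and $\Gamma_k$) shows that the right-hand side converges to $\iint_{V_k} \bar\partial u \wedge \omega$ as $r \nearrow 1$, and hence that $\lim_{r\nearrow 1} \int_{\Gamma_k^r} u\omega$ exists.

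The final step is to identify this limit as zero. I would invoke the first anchor lemma \cite[Lemma 3.14]{Schippers_Staubach_scattering_I}, which realizes such boundary limits as pairings depending only on the CNT boundary values of $u$; since those vanish on $\Gamma_k$, the limit is zero for each $k$, and summing with the factor $-1/(\pi i)$ yields $\mathbf{J}_1^q(\Gamma) u = 0$, as desired.

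The main obstacle is this last identification: the Stokes-based reduction and the dominated-convergence step are routine, but extracting vanishing of $\lim_{r\nearrow 1}\int_{\Gamma_k^r}u\omega$ from vanishing of the CNT boundary values of $u$ is the analytic core and depends essentially on the anchor lemma. Should the lemma not apply verbatim, a fallback is to transfer the computation to a collar chart on $\mathbb{A}_{r,1}$, expand $u$ in its Fourier–Laurent series, and verify the vanishing term-by-term using the explicit expression for $\omega$ in those coordinates.
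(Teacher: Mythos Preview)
Your proposal is correct and follows essentially the same route as the paper: both arguments observe that $\partial_w \mathscr{G}(\cdot;z,q)$ is holomorphic on a neighbourhood of $\Gamma$ (hence in $\mathcal{A}(A)$) and then invoke an anchor lemma from \cite{Schippers_Staubach_scattering_I} to conclude that the limiting boundary integral depends only on the CNT boundary values of the input. The paper applies the second anchor lemma \cite[Lemma~3.15]{Schippers_Staubach_scattering_I} directly, in a single line; you instead subtract and reduce to a function $u$ with vanishing CNT boundary values, reprove convergence by hand via Stokes and dominated convergence, and then appeal to the anchor machinery---but the property you actually need (that the limit is determined by the CNT trace) is precisely the content of the \emph{second} anchor lemma, not the first (which only asserts existence of the limit), so your citation should be adjusted.
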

  \begin{proof}
    The kernel $\partial_w \mathscr{G}(w;z)$ is holomorphic in $w$ in an open neighbourhood of the boundary $\Gamma$, so $\partial_w \mathscr{G}(w;z) \in \mathcal{A}(A)$. The claim now follows from the second anchor lemma \cite[Lemma 3.15]{Schippers_Staubach_scattering_I}.
  \end{proof}
  \begin{remark} In fact, this applies for any collection of strip-cutting Jordan curves, but we do not require this here.
  \end{remark}
  A deeper result is that for quasicircles, the limiting integral is the same from both sides up to constants; for BZM quasicircles, they are the same. 
  \begin{theorem}  \label{th:J_same_both_sides} The following statements hold:\\
   \begin{enumerate} 
   \item [$(1)$] If $\Gamma$ consists of \emph{BZM} quasicircles, then 
   for any $h \in \mathcal{D}_{\mathrm{harm}}(\riem_1)$
    \[ \mathbf{J}_1^q(\Gamma) h = - \mathbf{J}_2^q(\Gamma) \mathbf{O}_{1,2} h.  \] 
  \item [$(2)$] If $\riem_1$ is connected and $\Gamma$ is an arbitrary complex of quasicircles, then for any $\dot{h} \in \dot{\mathcal{D}}_{\mathrm{harm}}(\riem_1)$
   \[ \dot{\mathbf{J}}_1(\Gamma) \dot{h} = - \dot{\mathbf{J}}_2(\Gamma) \dot{\mathbf{O}}_{1,2} \dot{h}.  \] 
  \end{enumerate}
  \end{theorem}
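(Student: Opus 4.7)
The strategy is to use that $h$ and $h_2:=\mathbf{O}_{1,2}h$ share CNT boundary values on $\Gamma$ (by definition of overfare) and that $\partial_w\mathscr{G}(\cdot;z,q)$ is holomorphic in $w$ on a two-sided neighbourhood of $\Gamma$, so that Stokes' theorem forces the limiting integrals defining $\mathbf{J}_1^q(\Gamma)h$ and $\mathbf{J}_2^q(\Gamma)h_2$ to cancel, the minus sign coming from the fact that $\partial\riem_1$ and $\partial\riem_2$ are oppositely oriented in $\mathscr{R}$.

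Concretely, I would fix $z\in\riem_1\cup\riem_2$ and $q\in\mathscr{R}\setminus\Gamma$ and handle one quasicircle $\Gamma_k$ at a time inside a doubly-connected neighbourhood $U_k\subset\mathscr{R}$ chosen small enough that $z,q\notin U_k$; then $\alpha:=\partial_w\mathscr{G}(\cdot;z,q)$ is holomorphic on $U_k$. For small $\epsilon>0$ I pick analytic curves $\gamma_{k,1}^\epsilon\subset U_k\cap\riem_1$ and $\gamma_{k,2}^\epsilon\subset U_k\cap\riem_2$ isotopic to $\Gamma_k$, oriented as parts of $\partial\riem_1$ and $\partial\riem_2$ respectively, and let $V_k^\epsilon$ be the doubly-connected region of $U_k$ bounded by them. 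Define a function $H$ on $V_k^\epsilon\setminus\Gamma_k$ by setting $H=h$ on the $\riem_1$-side and $H=h_2$ on the $\riem_2$-side. Because $h$ and $h_2$ have the same CNT boundary values on $\Gamma_k$ and $\Gamma_k$ has two-dimensional measure zero, $H$ determines a function in $H^1(V_k^\epsilon)$ with $dH\in L^2(V_k^\epsilon)$ agreeing with $dh$ and $dh_2$ on each side and without any singular distributional contribution on $\Gamma_k$. Since $\alpha$ is closed in $w$, one has $d(\alpha H)=\alpha\wedge\overline{\partial}H$; with the above orientation convention the positive boundary of $V_k^\epsilon$ is $-(\gamma_{k,1}^\epsilon+\gamma_{k,2}^\epsilon)$, so Stokes yields
\begin{equation*}
\int_{\gamma_{k,1}^\epsilon}\alpha\,h+\int_{\gamma_{k,2}^\epsilon}\alpha\,h_2=-\iint_{V_k^\epsilon}\alpha\wedge\overline{\partial}H.
\end{equation*}
As $\epsilon\to 0$ the right-hand side vanishes by Cauchy--Schwarz, since $\alpha$ is bounded on $U_k$ and $|V_k^\epsilon|\to 0$ while $\overline{\partial}H\in L^2$; multiplying by $-1/(\pi i)$ and summing over $k$ gives (1). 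For (2) the overfare is only defined modulo constants and requires $\riem_1$ connected, but any choice of representatives still satisfies the CNT matching, so the Stokes identity is unchanged, and by Theorem~\ref{th:jump_on_holomorphic} changes of representative modify $\mathbf{J}_j^q$ only by locally constant functions, so the identity descends cleanly to $\dot{\mathcal{D}}$.

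The main obstacle is the regularity claim for $H$ across the non-rectifiable quasicircle $\Gamma_k$, i.e.\ ruling out a singular distributional jump along $\Gamma_k$. This is precisely where the quasicircle hypothesis and the bounded overfare theorems (Theorems~\ref{thm:bounded_overfare_Dirichlet} and \ref{thm:bounded_overfare_conf}) enter; BZM in (1) ensures bounded overfare in $H^1_{\mathrm{conf}}$, while for general quasicircles in (2) one only has bounded overfare on $\dot{\mathcal{D}}$, which explains why (2) must be stated in the homogeneous space. A cleaner alternative, possibly closer to the presentation in the paper, is to avoid the piecewise $H^1$-construction of $H$ altogether by invoking the anchor lemmas \cite[Lemmas 3.14, 3.15]{Schippers_Staubach_scattering_I} on each side of $\Gamma_k$ to rewrite each curve limit as a contour integral along a fixed analytic curve plus a controlled area term, and then reconciling the two expressions using the matching boundary values together with bounded overfare.
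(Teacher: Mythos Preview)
Your core geometric intuition is correct, but your primary approach has a genuine gap at exactly the point you flag: the claim that the piecewise-defined $H$ lies in $H^1(V_k^\epsilon)$ with $dH\in L^2$ and no distributional singularity along $\Gamma_k$. Matching of CNT boundary values up to a null set does \emph{not} by itself imply $H^1$-gluing across a non-rectifiable quasicircle; this is precisely the transmission problem, and invoking bounded overfare does not resolve it in the way you suggest (bounded overfare controls the Dirichlet energy of $h_2$ in terms of that of $h$, not the absence of a jump in the weak derivative of the glued function). Without this, your Stokes identity on $V_k^\epsilon$ is unjustified.

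The paper sidesteps this issue entirely by a density argument. It first proves the identity for $h$ lying in $\mathcal{D}_{\mathrm{harm}}(U)$ where $U$ is the full doubly-connected neighbourhood of $\Gamma$; for such $h$ there is no gluing to do, since $h$ is already smooth across $\Gamma$, and your Stokes/Cauchy--Schwarz argument goes through directly. One then observes that $\mathcal{D}_{\mathrm{harm}}(U_m)$ is dense in $\mathcal{D}_{\mathrm{harm}}(A_m^k)$ in the $H^1_{\mathrm{conf}}$ norm (via Laurent polynomials after pulling back to an annulus), and that $\mathbf{G}_{A^1,\riem_1}\mathcal{D}_{\mathrm{harm}}(A^1)$ is dense in $H^1_{\mathrm{conf}}(\riem_1)$. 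The identity then passes to the closure because $\mathbf{J}^q_k$, $\mathbf{O}_{1,2}$, and $\mathbf{G}_{A^k,\riem_k}$ are all bounded in $H^1_{\mathrm{conf}}$ (Theorems~\ref{th:J_bounded_Hconf}, \ref{thm:bounded_overfare_conf}, \ref{th:bounce_bounded}); this is where the BZM hypothesis is actually used in part~(1). For part~(2), the same density argument is run in $\dot{\mathcal{D}}_{\mathrm{harm}}$ using Theorems~\ref{th:jump_bounded_Dirichlet} and~\ref{thm:bounded_overfare_Dirichlet} instead. Your suggested ``cleaner alternative'' via the anchor lemmas is in fact the linchpin: the second anchor lemma is what converts the limiting integral for $h\in\mathcal{D}_{\mathrm{harm}}(U)$ into the limiting integral for $\mathbf{G}_{A^k,\riem_k}h$, which is how one connects the dense subclass to the full space.
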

  \begin{proof} We prove first claim.
   Choose doubly-connected neighbourhoods $U_1,\ldots,U_n$ of the boundary curves $\Gamma$ with charts $\phi_m:U_m \rightarrow \mathbb{A}_m$, where each $\mathbb{A}_m = \{z : r_m < |z|<R_m \}$ is an annular region in the plane. For $k=1,2$ let $A_m^k = U_m \cap \riem_k$ be collar neighbourhoods of $\Gamma$ in $\riem_k$, and set $B_m^k=\phi_m(A_m^k)$.  We claim that $\mathcal{D}_{\text{harm}}(U_m)$ is dense in
   $\mathcal{D}_{\text{harm}}(A^k_m)$ for each $k=1,\ldots,n$ with respect to the $H^1_{\mathrm{conf}}$ norms. 
   By conformal invariance of the $H^1_{\mathrm{conf}}$-norm, it is enough to prove that $\mathcal{D}_{\mathrm{harm}}(\mathbb{A}_m)$ is dense in $\mathcal{D}_{\mathrm{harm}}(B_m^k)$. 
   This follows immediately from the fact that polynomials 
   \[ p(z) = \sum_{l=s}^t z^{l}, \ \ s,t \in \mathbb{Z}, t\geq s   \]
   are dense in both $\mathcal{D}(\mathbb{A}_m)$ and $\mathcal{D}(B_m^k)$.
   
   Now let $h \in \mathcal{D}_{\text{harm}}$, and let $\Gamma_\epsilon^k$ be the level sets of Green's function $g_k$ for $k=1,2$, which are analytic curves for $\epsilon$ sufficiently close to zero. Letting $E_\epsilon$ be the region enclosed by these analytic curves, we have 
   \[ - \frac{1}{\pi i} \int_{\Gamma^2_\epsilon} \partial_w \mathscr{G}(w;z,q) \,h(w) -  \frac{1}{\pi i} \int_{\Gamma^1_\epsilon} \partial_w \mathscr{G}(w;z,q) \,h(w)  =  \iint_{E_\epsilon} \partial_w \mathscr{G}(w;z,q) \wedge_w \overline{\partial} h(w)  \]
   (note that the reversal of orientation of the contour integrals is taken into account). 
   Applying the Cauchy-Schwarz inequality to the right hand side we get 
   \[ \left| \frac{1}{\pi i} \int_{\Gamma^2_\epsilon} \partial_w \mathscr{G}(w;z,q) \,h(w) +  \frac{1}{\pi i} \int_{\Gamma^1_\epsilon} \partial_w \mathscr{G}(w;z,q) \,h(w) \right| \leq   \|\partial_w \mathscr{G}(w;z,q) \|_{\mathcal{A}_{\mathrm{harm}}(E_\epsilon)}   \| \overline{\partial} h(w) \|_{\mathcal{A}_{\mathrm{harm}}(E_\epsilon)}.   \]
   Since quasicircles have measure zero and $\cap_\epsilon E_\epsilon = \Gamma$, the right hand side goes to zero as $\epsilon \searrow 0$.  Thus
   \[ - \lim_{\epsilon \searrow 0} \frac{1}{\pi i} \int_{\Gamma^2_\epsilon} \partial_w \mathscr{G}(w;z,q) \,h(w) = \lim_{\epsilon \searrow 0}  \frac{1}{\pi i} \int_{\Gamma^1_\epsilon} \partial_w \mathscr{G}(w;z,q) \,h(w). \]  
   Now set $U=U_1 \cup \cdots U_n$ and $A^k=A^k_1 \cup \cdots \cup A^k_n$ and assume that $h \in \mathcal{D}_{\mathrm{harm}}(U)$.
   Using the above, together with the second anchor lemma \cite[Lemma 3.15]{Schippers_Staubach_scattering_I} and the fact that $\mathbf{G}_{A^2,\riem_2} h = \mathbf{O}_{1,2}\mathbf{G}_{A^1,\riem_1} h$, we have
   \begin{align} \label{eq:thingy_temp}
       \mathbf{J}_1^q \mathbf{G}_{A^1,\riem_1} h & = \mathbf{J}_1^q h = - \mathbf{J}_2^q h \nonumber \\
       & = -\mathbf{J}_2^q \mathbf{G}_{A^2,\riem_2} h \nonumber \\
       & = -\mathbf{J}_2^q \mathbf{O}_{1,2} \mathbf{G}_{A^1,\riem_1} h.
   \end{align}
   The proof is completed by the density of $H^1_{\text{conf}}(U)$ in $H^1_{\text{conf}}(A^1)$, the density of $\mathbf{G}_{A^1,\riem_1} H^1_{\text{conf}}(A^1)$ in $H^1_{\text{conf}}(\riem_1)$ \cite[Theorem 3.29]{Schippers_Staubach_scattering_I}, and the boundedness of $\mathbf{J}^q_k$, $\mathbf{O}_{1,2}$, and $\mathbf{G}_{A^k,\riem_k}$ (Theorems \ref{th:J_bounded_Hconf}, \ref{thm:bounded_overfare_conf}, and \ref{th:bounce_bounded}).  
   
   {The proof of the second claim follows the same line, but requires a bit of care with the constants. First, observe that 
   \[  \mathbf{G}_{A^1,\riem_1} :\mathcal{D}_{\mathrm{harm}}(A^1) \rightarrow \dot{\mathcal{D}}_{\mathrm{harm}}(\riem_1)   \]
   is well-defined.  Furthermore, it is bounded with respect to the $H^1_{\mathrm{conf}}(A^1)$ and $\dot{\mathcal{D}}(\riem_1)$ norms, since the $H^1_{\mathrm{conf}}(\riem_1)$ norm dominates the Dirichlet seminorm. The image is dense.  
   
   By \eqref{eq:thingy_temp} we have 
   \[  \dot{\mathbf{J}}_1 \dot{H} = - \dot{\mathbf{J}}_2 \dot{\mathbf{O}}_{1,2} \dot{H}  \]
   for all $\dot{H}$ arising from $H \in \mathbf{G}_{A^1,\riem_1} \mathcal{D}_{\mathrm{harm}}(A^1)$.  The second claim now follows from boundedness of $\dot{J}$ (Theorem \ref{th:jump_bounded_Dirichlet}) and boundedness of $\dot{\mathbf{O}}_{1,2}$ (Theorem \ref{thm:bounded_overfare_Dirichlet}).}

  \end{proof}   
  {
  \begin{remark}
   On the other hand, $\mathbf{G}_{A^1,\riem_1} :\mathcal{D}_{\mathrm{harm}}(A^1) \rightarrow \dot{\mathcal{D}}_{\mathrm{harm}}(\riem_1)$ is not bounded with respect to the Dirichlet seminorms. To see this, let $\riem_1$ be the annulus $\{z : 1<|z|<4 \}$, and let $A^1 = \{ z: 1<|z|<2 \} \cup \{ z: 3<|z|<4 \}$.  The claim is falsified by considering the function which is $1$ on $\{ z: 1<|z|<2 \}$ and $N$ on $\{ z: 3<|z|<4 \}$, and letting $N \rightarrow \infty$.  
  \end{remark}}

  The operator $\mathbf{J}^q_k$ satisfies a Plemelj-Sokhotski jump formula. Although we will not emphasize this role in this paper, the following theorem represents this fact. 
    The following improvement of Theorem 4.13 in \cite{Schippers_Staubach_Plemelj}, can be viewed as a CNT version of the Plemelj-Sokhotski jump formula. However, rather than referring to a function on the curve, we express the result in terms of the extensions into $\riem_1$, with the help of the overfare operator.  
   \begin{theorem} \label{th:Overfare_with_correction_functions}  
    The following statements hold:\\
    \begin{enumerate}
        \item  [$(1)$] Assume that every curve in the complex $\Gamma$ is a \emph{BZM} quasicircle. 
    For any $h \in \mathcal{D}_{\mathrm{harm}}(\riem_1)$, 
    \[  \mathbf{O}_{2,1} \mathbf{J}^q_{1,2} h = \mathbf{J}^q_{1,1} h - h   \]
    and for all $h \in \mathcal{D}_{\mathrm{harm}}(\riem_2)$
    \[  \mathbf{O}_{2,1} \mathbf{J}^q_{2,2} h - \mathbf{J}^q_{2,1} h =\mathbf{O}_{2,1} h.   \]
    \item  [$(2)$] Assume that $\riem_2$ is connected and $\Gamma$ is an arbitrary complex of quasicircles. Then for any $\dot{h} \in \dot{\mathcal{D}}_{\mathrm{harm}}(\riem_1)$, 
    \[  \dot{\mathbf{O}}_{2,1} \dot{\mathbf{J}}_{1,2} \dot{h} = \dot{\mathbf{J}}_{1,1} \dot{h} - \dot{h} \]
    and 
    \[  \dot{\mathbf{O}}_{2,1} \dot{\mathbf{J}}_{2,2} \dot{h} - \dot{\mathbf{J}}_{2,1} \dot{h} = \dot{\mathbf{O}}_{2,1} \dot{h}.   \]
    \end{enumerate}
   \end{theorem}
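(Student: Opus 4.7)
The plan is to recast each identity as an equality of CNT boundary values on $\Gamma$ and then invoke the uniqueness of the overfare operator; the analytic heart of the matter is a Plemelj--Sokhotski jump formula for BZM quasicircles.

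For part (1), fix $h \in \mathcal{D}_{\mathrm{harm}}(\riem_1)$. By Theorems \ref{th:jump_bounded_Dirichlet} and \ref{thm:bounded_overfare_conf}, both $\mathbf{J}^q_{1,1} h - h$ and $\mathbf{O}_{2,1} \mathbf{J}^q_{1,2} h$ lie in $\mathcal{D}_{\mathrm{harm}}(\riem_1)$. The CNT boundary values of $\mathbf{O}_{2,1} \mathbf{J}^q_{1,2} h$ on $\Gamma$ are, by definition of overfare, those of $\mathbf{J}^q_{1,2} h$ taken from the $\riem_2$-side, while the CNT boundary values of $\mathbf{J}^q_{1,1} h - h$ are those of $\mathbf{J}^q_{1,1} h$ from the $\riem_1$-side minus $h|_\Gamma$. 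The first identity in (1) is therefore equivalent, via the uniqueness clause of Theorem \ref{thm:bounded_overfare_existence}, to the CNT jump formula
\[
(\mathbf{J}^q_{1,1} h)\big|_\Gamma^{\riem_1} \;-\; (\mathbf{J}^q_{1,2} h)\big|_\Gamma^{\riem_2} \;=\; h\big|_\Gamma
\]
holding up to a null set on $\Gamma$. The second identity in (1) will then follow by applying the first identity with the roles of $\riem_1$ and $\riem_2$ interchanged, composing with $\mathbf{O}_{2,1}$, and using $\mathbf{O}_{2,1} \mathbf{O}_{1,2} = \text{Id}$.

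To establish the jump formula in the present setting I would imitate the single-curve argument of Theorem~4.13 of \cite{Schippers_Staubach_Plemelj}. On a dense regular subclass---say restrictions to $\riem_1$ of functions harmonic on an open neighbourhood of $\text{cl}(\riem_1)$ in $\mathscr{R}$---the identity is verified by deforming the integral over $\partial \riem_1$ to analytic level curves $\Gamma^1_\epsilon$ of $G_{\riem_1}$ approaching $\Gamma$, extracting the standard half-residue boundary contribution at $z \in \Gamma$ via Lemma \ref{le:limiting_circle_Schiffer_identity}, and matching the remaining principal-value contribution with the analogous one from the $\riem_2$-side using Theorem \ref{th:J_same_both_sides}. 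The passage from this dense class to general $h \in \mathcal{D}_{\mathrm{harm}}(\riem_1)$ is carried out by density in $H^1_{\mathrm{conf}}$, exactly as in the proof of Theorem \ref{th:J_same_both_sides}, together with the $H^1_{\mathrm{conf}}$-boundedness of $\mathbf{J}^q_{1,k}$, $\mathbf{O}_{1,2}$ and the bounce operator (Theorems \ref{th:J_bounded_Hconf}, \ref{thm:bounded_overfare_conf} and \ref{th:bounce_bounded}).

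Part (2) is handled by descending the same argument to the homogeneous spaces. The Cauchy--Royden operator already passes to $\dot{\mathcal{D}}$, while connectedness of $\riem_2$ is precisely what is needed for $\dot{\mathbf{O}}_{2,1}$ to be well-defined and bounded by the Dirichlet-seminorm version (Theorem \ref{thm:bounded_overfare_Dirichlet}). Modulo locally constant functions the jump formula, the uniqueness of overfare, and the resulting identities persist verbatim. The principal obstacle throughout is establishing the jump formula itself on non-rectifiable quasicircles: the classical arc-length Plemelj--Sokhotski proof is unavailable, and it is the anchor-lemma limiting procedure together with the side-independence supplied by Theorem \ref{th:J_same_both_sides} that must take its place.
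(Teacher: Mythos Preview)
Your overall strategy---reduce to a CNT jump formula on $\Gamma$, verify it on a dense class, and extend by $H^1_{\mathrm{conf}}$-boundedness of $\mathbf{J}^q$, $\mathbf{O}$ and the bounce---is correct and is exactly the scaffold the paper uses. The density-and-boundedness closure in both parts is fine, and your remark that the second identity in (1) follows from the first by swapping $\riem_1,\riem_2$ and composing with $\mathbf{O}_{2,1}$ is valid.

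The gap is in how you propose to verify the jump identity on the dense class. You write of ``extracting the standard half-residue boundary contribution at $z \in \Gamma$ via Lemma \ref{le:limiting_circle_Schiffer_identity}'' and then matching a principal-value remainder using Theorem \ref{th:J_same_both_sides}. This does not work, and you yourself note at the end that the classical arc-length argument is unavailable. Lemma \ref{le:limiting_circle_Schiffer_identity} concerns full residues over circles shrinking to an \emph{interior} point; it says nothing about boundary points, and for a non-rectifiable quasicircle there is no half-residue computation to be done at $z\in\Gamma$. Moreover, $\mathbf{J}^q_{1,k} h$ only has CNT (not pointwise) boundary values, so evaluating at $z\in\Gamma$ is not meaningful.

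The paper avoids ever placing $z$ on $\Gamma$. For $h\in\mathcal{D}(A_1)$ holomorphic in a collar $A_1\subset\riem_1$ with interior analytic boundary $\Gamma'$, Stokes' theorem on the region between $\Gamma$ and $\Gamma'$ gives, for $z\in A_1$, the pointwise identity
\[
\mathbf{J}^q_{1,1} h(z) \;+\; \frac{1}{\pi i}\int_{\Gamma'}\partial_w\mathscr{G}(w;z,q)\,h(w) \;=\; h(z).
\]
The second term is a harmonic function of $z$ on all of $A_1\cup\text{cl}(\riem_2)$, and on $\riem_2$ it equals $-\mathbf{J}^q_{1,2} h$; hence it is a harmonic extension of $-\mathbf{J}^q_{1,2} h$ across $\Gamma$ into the collar. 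The equation above is therefore a \emph{pointwise} identity on $A_1$ between functions in $\mathcal{D}_{\mathrm{harm}}$, and taking CNT boundary values (and then applying the anchor lemma and the bounce) immediately yields $\mathbf{O}_{2,1}\mathbf{J}^q_{1,2}\mathbf{G}_{A_1,\riem_1} h=\mathbf{J}^q_{1,1}\mathbf{G}_{A_1,\riem_1} h-\mathbf{G}_{A_1,\riem_1} h$. Theorem \ref{th:J_same_both_sides} is not used here. Your argument will go through once you replace the half-residue step by this extension-into-the-collar step; note also that the relevant dense class is $\mathbf{G}_{A_1,\riem_1}\mathcal{D}(A_1)$, whose density in $H^1_{\mathrm{conf}}(\riem_1)$ is already established, rather than restrictions of functions harmonic on a two-sided neighbourhood of $\text{cl}(\riem_1)$, whose density would require separate justification.
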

   \begin{proof}  We prove (1). 
    Let $A_1$ be a collar neighbourhood of $\Gamma$ in $\riem_1$. 
     Assume that the boundary $\Gamma'$ is an analytic curve which is isotopic in the closure of $A_1$ to $\Gamma$.  Orient both curves positively with respect to $\riem_1$.    By shrinking $A$ and moving $\Gamma'$ we may assume that  $q$ is not in $A_1$. We assume that $z$ is in $A_1$.  Let $\gamma_r$ denote the curve $|w-z|=r$ in local coordinates, oriented positively with respect to $z$.
     
     Applying Stokes' theorem and assuming $h \in \mathcal{D}(A_1)$, for  $z \in A_1$ we have
    {\begin{equation*}
         - \frac{1}{\pi i}  \int_{\Gamma} \partial_w 
         \mathscr{G}(w;z,q) h(w) + \frac{1}{\pi i} \int_{\Gamma'} \partial_w \mathscr{G}(w;z,q) h(w)
          = - \frac{1}{\pi i} \lim_{r \searrow 0} \int_{\gamma_r}  \partial_w \mathscr{G}(w;z,q) h(w) = h(z).  
     \end{equation*}}
     The integrand of the second integral on the left hand side is holomorphic in $w$. Therefore the integral equals the limiting integral $-\mathbf{J}^q_{1,2} h$ for any $z \in \riem_2$, and furthermore, the integral over $\Gamma'$ is a harmonic function $H_2$ in $z$ extending $-\mathbf{J}^q_{1,2} h$ into $A_1 \cup \text{cl} (\riem_2)$. For $z \in A_1$ this function thus satisfies 
     \[  \mathbf{J}^q_{1,1} h(z) - H_2(z) = h(z).        \]
     Since the CNT boundary values of $\mathbf{J}^q_{1,2} h$ equal those of the extension $H_2$, we have proved that 
     \[  \mathbf{O}_{2,1} \mathbf{J}^q_{1,2} h(z) = \mathbf{G}_{A,\riem_1} H_2  =
      \mathbf{J}^q_{1,1} h(z) - \mathbf{G}_{A,\riem_1} h(z)  \]
      by the equation above. 
      
      Applying the second anchor lemma \cite[Lemma 3.15]{Schippers_Staubach_scattering_I}, we obtain for all $h \in \mathcal{D}(A_1)$
      \begin{equation} \label{eq:jump_on_dense_temp1}
        \mathbf{O}_{2,1} \mathbf{J}^q_{1,2} \mathbf{G}_{A_1,\riem_1} h(z)    =
      \mathbf{J}^q_{1,1}  \mathbf{G}_{A_1,\riem_1} h(z) - \mathbf{G}_{A_1,\riem_1} h(z),   
      \end{equation}
      as claimed.   A similar argument shows that for a collar neighbourhood $A_2$ of $\Gamma$ in $\riem_2$, for all $h \in \mathcal{D}(A_2)$ we have
      \begin{equation} \label{eq:jump_on_dense_temp2}
       { \mathbf{O}_{2,1} \mathbf{J}^q_{2,2} \mathbf{G}_{A_2,\riem_2} h(z)    =
      \mathbf{J}^q_{2,1}  \mathbf{G}_{A_2,\riem_2} h(z) +   \mathbf{O}_{2,1}\mathbf{G}_{A_2,\riem_2} h(z).}   
      \end{equation}
      Observe that the derivations of \eqref{eq:jump_on_dense_temp1} and \eqref{eq:jump_on_dense_temp2} required neither the assumption that $\Gamma$ is a BZM quasicircle nor the assumption that $\riem_2$ is connected. 
      
      A density argument completes the proof of the first claim of (1). 
     Now $\mathbf{G}_{A_1,\riem_1} \mathcal{D}(A_1)$ is dense in $H^1_{\mathrm{conf}}(\riem_1)$ by \cite[Theorem 3.29]{Schippers_Staubach_scattering_I}.
    Thus it is enough to prove the claim for $\mathbf{G}_{A_1,\riem_1} h$ for $h \in \mathcal{D}(A_1)$, since $\mathbf{G}_{A_1,\riem_1}$, $\mathbf{O}_{1,2}$, and $\mathbf{J}_{1,k}^q$ are bounded with respect to $H^1_{\mathrm{conf}}$ by Theorems \ref{th:bounce_bounded}, \ref{thm:bounded_overfare_conf}, and \ref{th:J_bounded_Hconf} respectively.  A similar density argument using \eqref{eq:jump_on_dense_temp2} shows the second claim of (1).  
      
      {We now prove the first claim of (2).  For any $h \in \mathcal{D}_{\mathrm{harm}}(A_1)$ we have that \eqref{eq:jump_on_dense_temp1} holds.  Arguing as in the proof of part (2) of Theorem \ref{th:J_same_both_sides}, we have that the set of $\dot{H}$ in $\dot{\mathcal{D}}_{\mathrm{harm}}(\riem_1)$
      of the form $H = \mathbf{G}_{A_1,\riem_1} h$ for $h \in \mathcal{D}(A_1)$ are dense in $\dot{\mathcal{D}}_{\mathrm{harm}}(\riem_1)$.  By \eqref{eq:jump_on_dense_temp1} we have for such $\dot{H}$ that
      \[   \dot{\mathbf{O}}_{2,1} \dot{\mathbf{J}}_{1,2} \dot{H} = \dot{\mathbf{J}}_{1,1} \dot{H} - \dot{H}.   \]
      The claim now follows from boundedness of $\dot{\mathbf{J}}_1$ and $\dot{\mathbf{O}}_{2,1}$, which is Theorems \ref{th:jump_bounded_Dirichlet} and \ref{thm:bounded_overfare_Dirichlet} respectively.  The proof of the second claim is similar. 
      }
   \end{proof}
  
\end{subsection}

\begin{subsection}{Adjoint identities for the Schiffer operators}
 \label{se:Schiffer_adjoint_identities}
 In this section, we prove some identities for the Schiffer operators.

 \begin{theorem}[Adjoint identities]  \label{th:adjoint_identities} For $j,k =1,2$ 
  $($not necessarily distinct$)$, 
  \[    \mathbf{T}_{j,k}^* = \overline{\mathbf{T}}_{k,j}.      \]
  If the genus of $\mathscr{R}$ is non-zero, then for $k=1,2$ we have 
  \[  \mathbf{R}_k^* = \mathbf{S}_k.      \]
 \end{theorem}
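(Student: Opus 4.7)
The plan is to establish each identity by unfolding the definition of the adjoint, rewriting the resulting iterated integrals, and then exploiting the symmetry properties of the kernels together with a reproducing property. Throughout, I will use the Hilbert space identity $(\gamma,\delta) = i \iint \gamma \wedge \overline{\delta}$ on the holomorphic Bergman space, and the conjugate identity $(\overline{\alpha},\overline{\gamma}) = \overline{(\alpha,\gamma)}$ on the anti-holomorphic side, which is a direct consequence of \eqref{eq:form_inner_product} and the fact that $\overline{\overline{\mathbf{T}}_{k,j}\beta} = \mathbf{T}_{k,j}\overline{\beta}$.

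For the identity $\mathbf{T}_{j,k}^* = \overline{\mathbf{T}}_{k,j}$, fix $\overline{\alpha} \in \overline{\mathcal{A}(\riem_j)}$ and $\beta \in \mathcal{A}(\riem_k)$. The first step is to expand the inner products on both sides into iterated integrals of $L_{\mathscr{R}}$. In the off-diagonal case $j \neq k$, the kernel $L_{\mathscr{R}}(z,w)$ is smooth on $\riem_j \times \riem_k$, so Fubini applies directly and the identity reduces to the symmetry $L_{\mathscr{R}}(z,w) = L_{\mathscr{R}}(w,z)$ from property $(4)$, plus a straightforward bookkeeping of the wedge signs (two $(1,0)$-$(0,1)$ pairs commute as 2-forms).

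In the diagonal case $j=k$, the principal-value integral is the main analytic obstacle: Fubini cannot be invoked naively. Here I would desingularize using Schiffer's identity (Theorem \ref{th:Schiffer_vanishing_identity}) and its consequence \eqref{eq:nonsingular_Schiffer}, which replace $L_{\mathscr{R}}$ by the bounded kernel $L_{\mathscr{R}}(z,w) - L_{\riem_k}(z,w)$. Fubini now applies in each iterated integral, and the $L_{\riem_k}$-contribution on each side vanishes by a second application of Schiffer's identity (once $\overline{\alpha}$ is integrated over $\riem_k$ against $L_{\riem_k}(z,w)$, the result is zero pointwise in $z$). Combined with the symmetry of both $L_{\mathscr{R}}$ and $L_{\riem_k}$ in their two variables, this closes the argument.

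For $\mathbf{R}_k^* = \mathbf{S}_k$, I would compute
\[
 (\alpha,\mathbf{S}_k\beta)_{\mathcal{A}(\mathscr{R})} = i\iint_{\mathscr{R},z} \alpha(z) \wedge_z \overline{\iint_{\riem_k, w} K_{\mathscr{R}}(z,w) \wedge_w \beta(w)}
\]
for $\alpha \in \mathcal{A}(\mathscr{R})$ and $\beta \in \mathcal{A}(\riem_k)$. Using the anti-Hermitian symmetry $\overline{K_{\mathscr{R}}(z,w)} = -K_{\mathscr{R}}(w,z)$ from property $(5)$ and Fubini (justified by smoothness of $K_{\mathscr{R}}$ on the compact product $\mathscr{R} \times \mathscr{R}$), I can pull the $z$-integral inside. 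The inner integral then collapses to $\alpha(w)$ for $w \in \riem_k$ by the reproducing property \eqref{eq:Bergman_reproducing} of the Bergman kernel of the compact surface $\mathscr{R}$, and what remains is $i \iint_{\riem_k} \alpha \wedge \overline{\beta} = (\mathbf{R}_k\alpha,\beta)_{\mathcal{A}(\riem_k)}$. The assumption that the genus of $\mathscr{R}$ is nonzero enters only to ensure $\mathcal{A}(\mathscr{R}) \neq \{0\}$, so that the reproducing property is nontrivial (for genus zero both operators vanish by Example \ref{ex:sphere_kernels}, and the identity holds trivially). The main difficulty in this second identity is purely one of signs from the wedge reorderings of the factors $dz,\, d\overline{z},\, dw,\, d\overline{w}$; the analytic content is furnished entirely by the reproducing property.
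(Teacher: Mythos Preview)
Your argument is correct, and it is in fact the standard route: exploit the symmetry $L_{\mathscr{R}}(z,w)=L_{\mathscr{R}}(w,z)$ together with Fubini (desingularized via \eqref{eq:nonsingular_Schiffer} in the diagonal case), and use the anti-Hermitian symmetry $\overline{K_{\mathscr{R}}(z,w)}=-K_{\mathscr{R}}(w,z)$ plus the reproducing property on $\mathscr{R}$ for $\mathbf{R}_k^*=\mathbf{S}_k$. The paper itself does not give a self-contained argument here; it simply cites \cite[Theorems 3.11, 3.12]{Schippers_Staubach_Plemelj} for the single-curve case and notes that the proofs extend verbatim. Your sketch is essentially what those cited proofs do, so there is no meaningful difference in approach to report.

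One minor point worth making explicit when you write this up: in the diagonal case you need absolute integrability of $(L_{\mathscr{R}}-L_{\riem_k})(z,w)\wedge\overline{\alpha(w)}\wedge\overline{\beta(z)}$ over $\riem_k\times\riem_k$ to invoke Fubini. This holds because the desingularized kernel is holomorphic in both variables on $\riem_k\times\riem_k$ and hence locally bounded, and because $\overline{\alpha},\overline{\beta}\in L^2$; but it is worth a sentence. Your remark on genus zero is also slightly imprecise: the identity $\mathbf{R}_k^*=\mathbf{S}_k$ is vacuous there since $\mathcal{A}(\mathscr{R})=\{0\}$, so both operators have trivial target; the theorem simply does not assert anything in that case.
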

 \begin{proof}
  In the case of a single quasicircle $\Gamma$, these are \cite[Theorems 3.11, 3.12]{Schippers_Staubach_Plemelj}. The proofs there hold for the case of 
  several quasicircles.    
 \end{proof}
 Also, observe that if we define 
 \begin{equation}\label{defn: Shk}
      \gls{Sharm} = \mathbf{S}_k \mathbf{P}_k + \overline{\mathbf{S}}_k \overline{\mathbf{P}}_k : \mathcal{A}_{\text{harm}}(\riem_k) \rightarrow \mathcal{A}_{\text{harm}}(\mathscr{R}) 
 \end{equation} 
 then we have by an elementary computation
 \begin{corollary} If the genus of $\mathscr{R}$ is non-zero then for $k=1,2$
   $(\mathbf{R}_k^{\mathrm{h}})^* = \mathbf{S}^{\mathrm{h}}_k$.
 \end{corollary}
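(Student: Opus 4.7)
The plan is to reduce the harmonic statement to the already-proven holomorphic adjoint identity $\mathbf{R}_k^* = \mathbf{S}_k$ via the orthogonal decomposition \eqref{direct sum decomposition}. Since $\mathcal{A}_{\mathrm{harm}} = \mathcal{A} \oplus \overline{\mathcal{A}}$ is an orthogonal direct sum on both $\mathscr{R}$ and $\riem_k$, and since restriction of a holomorphic (resp. anti-holomorphic) form is still holomorphic (resp. anti-holomorphic), the harmonic restriction operator splits as
\[
\mathbf{R}_k^{\mathrm{h}}(\alpha + \overline{\beta}) = \mathbf{R}_k \alpha + \overline{\mathbf{R}}_k \overline{\beta}
\]
for $\alpha, \beta \in \mathcal{A}(\mathscr{R})$. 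This is the first observation I would make explicit.

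The second step is to unpack the definition \eqref{defn: Shk}: for $\gamma \in \mathcal{A}(\riem_k)$ and $\delta \in \mathcal{A}(\riem_k)$, one has $\mathbf{P}_k(\gamma + \overline{\delta}) = \gamma$ and $\overline{\mathbf{P}}_k(\gamma + \overline{\delta}) = \overline{\delta}$, hence
\[
\mathbf{S}_k^{\mathrm{h}}(\gamma + \overline{\delta}) = \mathbf{S}_k \gamma + \overline{\mathbf{S}}_k\,\overline{\delta},
\]
and, crucially, $\mathbf{S}_k \gamma \in \mathcal{A}(\mathscr{R})$ while $\overline{\mathbf{S}}_k\overline{\delta} \in \overline{\mathcal{A}(\mathscr{R})}$ by the definition of $\mathbf{S}_k$.

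The final step is a one-line computation using the orthogonality of $\mathcal{A}$ and $\overline{\mathcal{A}}$ (which kills the cross-terms) together with the holomorphic identity $\mathbf{R}_k^* = \mathbf{S}_k$ from Theorem \ref{th:adjoint_identities} and its complex conjugate $\overline{\mathbf{R}}_k^* = \overline{\mathbf{S}}_k$:
\[
\bigl(\mathbf{R}_k^{\mathrm{h}}(\alpha + \overline{\beta}),\, \gamma + \overline{\delta}\bigr) = (\mathbf{R}_k \alpha, \gamma) + (\overline{\mathbf{R}}_k \overline{\beta}, \overline{\delta}) = (\alpha, \mathbf{S}_k \gamma) + (\overline{\beta}, \overline{\mathbf{S}}_k\overline{\delta}) = \bigl(\alpha + \overline{\beta},\, \mathbf{S}_k^{\mathrm{h}}(\gamma + \overline{\delta})\bigr).
\]

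There is no real obstacle; this is a routine bookkeeping argument once one writes down the two splittings. The only point requiring a hypothesis is the invocation of $\mathbf{R}_k^* = \mathbf{S}_k$, which assumes the genus of $\mathscr{R}$ is non-zero so that $\mathcal{A}(\mathscr{R}) \neq \{0\}$ and the identity is non-vacuous; this matches the hypothesis of the corollary.
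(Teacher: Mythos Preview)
Your proof is correct and is precisely the ``elementary computation'' the paper alludes to: split via the orthogonal decomposition $\mathcal{A}_{\mathrm{harm}} = \mathcal{A} \oplus \overline{\mathcal{A}}$, drop the cross-terms by orthogonality, and apply $\mathbf{R}_k^* = \mathbf{S}_k$ together with its conjugate. The paper gives no further detail, so your write-up is exactly what was intended.
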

 
 \begin{theorem}[Quadratic adjoint identities, Part I]  \label{th:quadratic_adjoint_one} 
    If $\mathscr{R}$ is of genus $g>0$ then
  \[  \mathbf{S}_1 \mathbf{S}_1^* + \mathbf{S}_2 \mathbf{S}_2^* = \mathbf{I}  \]
  and 
  \[  \overline{\mathbf{S}}_1 \overline{\mathbf{S}}_1^* + \overline{\mathbf{S}}_2 \overline{\mathbf{S}}_2^* = \mathbf{I}.      \]
 \end{theorem}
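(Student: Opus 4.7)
The plan is to reduce both identities to a single reproducing-kernel statement for $K_\mathscr{R}$. By the adjoint identity $\mathbf{S}_k^* = \mathbf{R}_k$ of Theorem \ref{th:adjoint_identities}, the first asserted identity is equivalent to
\[  \mathbf{S}_1 \mathbf{R}_1 + \mathbf{S}_2 \mathbf{R}_2 = \mathbf{I}  \]
on $\mathcal{A}(\mathscr{R})$. A short calculation using $\overline{\mathbf{S}}_k \overline{\alpha} = \overline{\mathbf{S}_k \alpha}$ and conjugate linearity of the inner product gives $\overline{\mathbf{S}}_k^* = \overline{\mathbf{R}}_k$, so the second identity is the complex conjugate of the first and follows at once.

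For $\alpha \in \mathcal{A}(\mathscr{R})$ and $z \in \mathscr{R} \setminus \Gamma$, I would next write
\[ (\mathbf{S}_1 \mathbf{R}_1 + \mathbf{S}_2 \mathbf{R}_2)\alpha(z) = \iint_{\riem_1} K_\mathscr{R}(z,w) \wedge \alpha(w) + \iint_{\riem_2} K_\mathscr{R}(z,w) \wedge \alpha(w) = \iint_\mathscr{R} K_\mathscr{R}(z,w) \wedge \alpha(w), \]
using that the union of quasicircles $\Gamma$ has planar measure zero. Continuity of both sides in $z$ extends the equality to all of $\mathscr{R}$. It then remains to verify the compact-surface reproducing property $\iint_\mathscr{R} K_\mathscr{R}(z,w) \wedge \alpha(w) = \alpha(z)$ for every holomorphic one-form $\alpha$ on $\mathscr{R}$.

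The last step mirrors the proof of Schiffer's identity, Theorem \ref{th:Schiffer_vanishing_identity}. Since $\alpha$ is $(1,0)$, one has $\partial_w \partial_z \mathscr{G} \wedge \alpha = 0$, and therefore $d_w\!\left(\partial_z \mathscr{G}(w;z,q)\, \alpha(w)\right) = \overline{\partial}_w \partial_z \mathscr{G}(w;z,q) \wedge \alpha(w)$. Moreover the singularity of $\partial_z \mathscr{G}(w;z,q)$ at $w=q$ is $z$-independent and disappears under $\partial_z$, so the only singularity of the integrand is at $w=z$. Applying Stokes' theorem on $\mathscr{R} \setminus B_\epsilon(z)$ and letting $\epsilon \searrow 0$, Lemma \ref{le:limiting_circle_Schiffer_identity} evaluates the limit of the boundary integral to $-\pi i\, \alpha(z)$; multiplying by $-\frac{1}{\pi i}$ from the definition of $K_\mathscr{R}$ and using that $\partial_z$ and $\overline{\partial}_w$ commute then yields the required value $\alpha(z)$. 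The main technical point is the orientation and normalization bookkeeping needed to match the explicit constant in the definition of $K_\mathscr{R}$ with the residue coming from the pole of $\partial_z \mathscr{G}$ at $w=z$; a subsidiary concern is to ensure that the order of $\partial_z$ with the singular integral in the initial reduction is handled correctly, which can be addressed by first isolating the pole at $w=z$ in local coordinates, exactly as is done in the proof of Theorem \ref{th:Schiffer_vanishing_identity}.
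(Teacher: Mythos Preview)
Your proof is correct and follows essentially the same route as the paper: rewrite $\mathbf{S}_k\mathbf{S}_k^*$ as $\mathbf{S}_k\mathbf{R}_k$, split the integral over $\mathscr{R}$ into $\riem_1\cup\riem_2$ using that quasicircles have measure zero, and invoke the reproducing property of $K_\mathscr{R}$; the paper simply cites this reproducing property as known, whereas you additionally sketch its verification via Stokes' theorem and Lemma~\ref{le:limiting_circle_Schiffer_identity}, which is a reasonable (and correct) supplement.
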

 \begin{proof}
  These identities follow from the 
  reproducing property of Bergman kernel.  For $\alpha \in \mathcal{A}(\mathscr{R})$ we have, using the fact that quasicircles have measure zero (see e.g. \cite{Lehto})
  \begin{align*}
    (\mathbf{S}_1 \mathbf{S}_1^* + \mathbf{S}_2 \mathbf{S}_2^* )\, \alpha (z) & = \iint_{\riem_1} K_{\mathscr{R}}(z,w) \alpha(w)  + \iint_{\riem_2} K_{\mathscr{R}}(z,w) \alpha(w) \\ & = \iint_{\mathscr{R}} K_{\mathscr{R}}(z,w) \alpha(w)
    = \alpha(w)
  \end{align*}
  which proves the first identity.  The second identity is the complex conjugate of the first.
 \end{proof}
 We will repeatedly use the fact that quasicircles have measure zero in this way, in order to express an integral over $\mathscr{R}$ as the sum of integrals over $\riem_1$ and $\riem_2$, without mentioning it each time. 
 
 To prove quadratic adjoint identities involving $\mathbf{T}$, we require a lemma.
 \begin{lemma}  \label{le:double_L_vanishes_general_genus}
 For any $w,z \in \mathscr{R}$, 
  \[  \iint_{\mathscr{R},\zeta} L_{\mathscr{R}}(z,\zeta) \wedge \overline{L_\mathscr{R}(\zeta,w)} = K_\mathscr{R}(z,w)      \]
  where the integral is interpreted as a principal value integral.  In particular, if the genus of $\mathscr{R}$ is zero then
   \[  \iint_{\mathscr{R},\zeta} L_{\mathscr{R}}(z,\zeta) \wedge \overline{L_\mathscr{R}(\zeta,w)} = 0.     \]
 \end{lemma}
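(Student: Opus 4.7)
The plan is to apply Stokes' theorem in the integration variable $\zeta$, reducing the bulk integral to boundary contributions around the two singularities $\zeta=z$ and $\zeta=w$ of the integrand. First I would check that, off the singular set, the $(1,1)$-form $L_\mathscr{R}(z,\zeta)\wedge\overline{L_\mathscr{R}(\zeta,w)}$ is exact in $\zeta$. Using harmonicity of $\mathscr{G}$ in $\zeta$, which forces $\overline{\partial}_\zeta\partial_\zeta\mathscr{G}=0$ off the diagonal singularities, and the commutation of $\overline{\partial}_w$ with derivatives in $\zeta$, one gets $\partial_\zeta\overline{L_\mathscr{R}(\zeta,w)}=0$ on $\mathscr{R}\setminus\{\zeta=w\}$; combined with $\partial_\zeta\partial_z\mathscr{G}(\zeta;z)=\pi i\,L_\mathscr{R}(z,\zeta)$ this yields
\[
  L_\mathscr{R}(z,\zeta)\wedge\overline{L_\mathscr{R}(\zeta,w)}
  = d_\zeta\!\left[\tfrac{1}{\pi i}\,\partial_z\mathscr{G}(\zeta;z)\wedge\overline{L_\mathscr{R}(\zeta,w)}\right]
\]
on $\mathscr{R}\setminus\{z,w\}$. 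Note that the auxiliary base point $q$ does not appear since $\partial_z$ and $\partial_w$ annihilate its contribution to $\mathscr{G}$.

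Next I would apply Stokes' theorem on $\mathscr{R}\setminus(D_z^\epsilon\cup D_w^\epsilon)$, where $D_z^\epsilon$ and $D_w^\epsilon$ are small coordinate disks, and pass to the limit $\epsilon\searrow 0$; because $\mathscr{R}$ is closed, the only boundary pieces are the two inwardly oriented circles. Around $\zeta=z$, the form $\overline{L_\mathscr{R}(\cdot,w)}$ is a smooth anti-holomorphic one-form (since $z\neq w$), so Lemma \ref{le:limiting_circle_Schiffer_identity} in its $\overline{\alpha}$-case directly forces the limiting boundary integral to vanish. Around $\zeta=w$, instead, $\partial_z\mathscr{G}(\zeta;z)$ is smooth in $\zeta$ while $\overline{L_\mathscr{R}(\zeta,w)}$ has a double pole. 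Taylor-expanding $\partial_z\mathscr{G}(\zeta;z)$ to first order in $(\zeta-w)$ and $(\overline{\zeta-w})$ in a local chart at $w$ and integrating term by term against the leading singular part of $\overline{L_\mathscr{R}(\zeta,w)}$, the only contribution that survives the limit is the one produced by $\overline{\partial}_\zeta\partial_z\mathscr{G}(\zeta;z)\bigl|_{\zeta=w}$, which by the very definition of the Bergman kernel equals $-\pi i\,K_\mathscr{R}(z,w)$. Reconciling this with the Stokes orientation sign then gives the claimed identity.

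For the second statement: if $\mathscr{R}$ has genus zero then by uniformization it is biholomorphic to $\sphere$, so Example \ref{ex:sphere_kernels} together with the conformal invariance \eqref{eq:kernels_conformally_invariant_compact} of $K_\mathscr{R}$ shows $K_\mathscr{R}\equiv 0$, and the integral vanishes.

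The hard part will be the sign/orientation bookkeeping in the local computation at $\zeta=w$: wedge products in three complex variables $z,w,\zeta$ introduce several Koszul signs, the relation between $\overline{\partial}_\zeta\partial_z\mathscr{G}$ and $K_\mathscr{R}$ carries a factor of $-\pi i$, and Stokes on $\mathscr{R}\setminus D$ flips an orientation sign; it is easy to land on $-K_\mathscr{R}$ instead of $K_\mathscr{R}$. A cleaner variant, which I would try in parallel, is to write $\overline{L_\mathscr{R}(\zeta,w)}=-\tfrac{1}{\pi i}\,\overline{\partial}_\zeta[\overline{\partial}_w\mathscr{G}(\zeta;w)]$, integrate by parts a second time in $\zeta$, and interpret the surviving boundary limit at $\zeta=w$ directly via the $\alpha$-case of Lemma \ref{le:limiting_circle_Schiffer_identity}, bypassing the most delicate part of the sign accounting.
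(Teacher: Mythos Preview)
Your proposal is correct and follows essentially the same approach as the paper: write the integrand as an exact form in $\zeta$ away from the two singularities, apply Stokes' theorem on $\mathscr{R}$ minus two small disks, and evaluate the limiting circle integrals via local expansions. The only difference is that you take the $\zeta$-primitive on the $L$-factor (so the nontrivial contribution appears at $\zeta=w$), whereas the paper takes it on the $\overline{L}$-factor (so it appears at $\zeta=z$); your ``cleaner variant'' is in fact exactly the paper's choice.
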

 {\begin{proof}
  Fix $w=w_0$ and $z=z_0$ in the integrals above.  Let  $\gamma^\varepsilon_{w_0}$ be curves such that $\psi \circ \gamma^\varepsilon_{w_0}$ are given by $|\eta|=\varepsilon$ for a chart $\psi(\zeta)=\eta$ near $w_0$, which takes $w_0$ to $0$. Define $\gamma^\varepsilon_{z_0}$ similarly.  Let $\mathscr{R}^\varepsilon$ be the region in $\mathscr{R}$ bounded by the curves $\gamma^\varepsilon_{z_0}$ and $\gamma^\varepsilon_{w_0}$ but not containing $z_0$ and $w_0$.  We assume these curves have positive orientation with respect to $\mathscr{R}^\varepsilon$. 
  
   In these coordinates, we have (setting $u=\psi(z)$ and $v=\psi(w)$)
 
  \[ \frac{1}{\pi i} \partial_{\bar{w}}    \mathscr{G}(\zeta,w) = - \left( \frac{1}{2\pi i} 
    \frac{1}{\bar{\eta}} + \overline{\phi(\eta)} \right)d\overline{v}  \]
  where $\phi$ is a smooth function of $\eta$ which is uniformly bounded near $z$. We suppress dependence on $u$ and $v$ because we are fixing $w=w_0$ and $z=z_0$; however, we retain $d\bar{v}$ to emphasize that the quantity is a form in the $w$ variable.
  
  We then have
  \begin{align*}
      \iint_\mathscr{R} L_\mathscr{R}(z;\zeta) \wedge_\zeta \overline{L_\mathscr{R}(\zeta;w)} & = \lim_{\varepsilon \searrow 0} \iint_{\mathscr{R}^\varepsilon}  L_\mathscr{R}(z;\zeta) \wedge_\zeta \overline{L_\mathscr{R}(\zeta;w)} \\
      & = \lim_{\varepsilon \searrow 0} \left[ \int_{\gamma^\varepsilon_{w_0}}  L_{\mathscr{R}}(z;\zeta)   \overline{ \frac{1}{\pi i} \partial_w \mathscr{G}(\zeta,w)}   
      + \int_{\gamma^\varepsilon_{z_0}} L_\mathscr{R}(z;\zeta)  \overline{ \frac{1}{\pi i} \partial_w \mathscr{G}(\zeta,w)}   \right]. \\
  \end{align*}
  
  For $\zeta$ near $w_0$, in $\eta$-coordinates we have $L(z;\zeta) = \rho(\eta)\, d\eta$ for some holomorphic function $\rho(\eta)$. So the first term is (where the integral is with respect to $\eta$)
  \[  \lim_{\varepsilon \rightarrow 0} \int_{\psi \circ \gamma^\varepsilon_{w_0}} \left( \rho(\eta)   \left( \frac{1}{2\pi i} 
    \frac{1}{\overline{\eta}} + \overline{\phi(\eta)}  \right)  d\eta \right)\,du \, d\bar{v} =0.   \]
   Here we have used the fact that if $\eta = \varepsilon e^{i\theta}$ then 
  \[  \frac{d\eta}{\overline{\eta}}=e^{2 i \theta} d\theta.      \]     
  
  On the other hand, in the second term it is $L_{\mathscr{R}}$ that is singular while $\partial_w \mathscr{G}$ is non-singular.  Fix $w$ and ignore the $dw$.  Now let $\eta=\phi(\zeta)$ be a holomorphic coordinate vanishing at $z$ and let the level curves $\gamma_z^\varepsilon$ be as above, and let $u=\phi(z)$ and $v=\phi(w)$. We may write 
  \[  \frac{1}{\pi i} \partial_{\bar{w}} \mathscr{G}(\zeta,w) = \left( h_1(\eta) + \overline{h_2(\eta)}\right) d\overline{u}       \]
  where $h_1$ and $h_2$ are holomorphic.  Now writing 
  $\overline{h_2(\eta)} = a_0 + a_1 \bar{\eta} + a_2 \bar{\eta}^2 + \cdots$
  and observing that (suppressing the fixed $z$, but keeping $dv$ to indicate the fact that it is a form) 
  \[  L_\mathscr{R}(z,\zeta) = \left(-  \frac{1}{2 \pi i} \frac{d\eta}{\eta^2} + k(\eta) d\eta  \right) dv    \]
  where $k$ is holomorphic.  Integrating this kernel against $\overline{h_2(\eta)} d\bar{u}$ is zero in the limit, so from this it is easily seen that 
  \begin{align*}
    \lim_{\varepsilon \rightarrow 0} \int_{\gamma^\varepsilon_z} L_\mathscr{R}(z;\zeta)   \overline{ \frac{1}{\pi i} \partial_w \mathscr{G}(\zeta,w)}  &  =   
    - \lim_{\varepsilon \rightarrow 0} \int_{\gamma^\varepsilon_z} L_\mathscr{R}(z;\zeta)    \frac{1}{\pi i} \partial_{\bar{w}} \mathscr{G}(\zeta,w) \, du d \bar{v} \\
    & = \lim_{\varepsilon \rightarrow 0} \int_{\psi \circ \gamma^\varepsilon_z}  
    h_1(\eta) \frac{d \eta}{2 \pi i \eta^2} \, du \, d \bar{v}  =  - h_1'(0)  \, du \, d \bar{v}
  \end{align*}
  where the final sign change results from the fact that the curve $\gamma^\varepsilon_z$ is negatively oriented with respect to $z$.   Now observing that 
  \[  - h_1'(0)  \, du \, d \bar{v}= - \frac{1}{\pi i} \partial_z \partial_{\bar{w}} \mathscr{G}(z,w)        = K_\mathscr{R}(z,w) \]
  the proof of the first claim is complete.   
  
  In the case that $\mathscr{R}$ has genus zero, by  Example \ref{ex:sphere_kernels} we have that $K_{\mathscr{R}}=0$, which proves the second claim. 
 \end{proof}}
 
Using this lemma, we can prove the following. 
 \begin{theorem}[Quadratic adjoint identities, part II]  \label{th:general_adjoint_double_identities}
  If $\mathscr{R}$ has genus $g>0$, then 
  \begin{align*}
    \mathbf{I} & = \mathbf{T}_{1,1}^* \mathbf{T}_{1,1} + \mathbf{T}_{1,2}^* \mathbf{T}_{1,2} + \overline{\mathbf{S}}_1^* \overline{\mathbf{S}}_1 \\
    \mathbf{I} & = \mathbf{T}_{2,1}^* \mathbf{T}_{2,1} + \mathbf{T}_{2,2}^* \mathbf{T}_{2,2} + \overline{\mathbf{S}}_2^* \overline{\mathbf{S}}_2 \\
    0 & = \mathbf{T}_{1,1}^* \mathbf{T}_{2,1} + \mathbf{T}_{1,2}^* \mathbf{T}_{2,2} + \overline{\mathbf{S}}_1^* \overline{\mathbf{S}}_2 \\
    0 & = \mathbf{T}_{2,2}^* \mathbf{T}_{1,2} + \mathbf{T}_{2,1}^* \mathbf{T}_{1,1} + \overline{\mathbf{S}}_2^* \overline{\mathbf{S}}_1. 
  \end{align*}
  If $\mathscr{R}$ has genus $g=0$, then 
   \begin{align*}
    \mathbf{I} & = \mathbf{T}_{1,1}^* \mathbf{T}_{1,1} + \mathbf{T}_{1,2}^* \mathbf{T}_{1,2}  \\
    \mathbf{I} & = \mathbf{T}_{2,1}^* \mathbf{T}_{2,1} + \mathbf{T}_{2,2}^* \mathbf{T}_{2,2}  \\
    0 & = \mathbf{T}_{1,1}^* \mathbf{T}_{2,1} + \mathbf{T}_{1,2}^* \mathbf{T}_{2,2} \\
    0 & = \mathbf{T}_{2,2}^* \mathbf{T}_{1,2} + \mathbf{T}_{2,1}^* \mathbf{T}_{1,1}.
  \end{align*}
 \end{theorem}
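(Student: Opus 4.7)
The approach is to read both families of identities as sesquilinear-form identities. By the adjoint identities from Theorem~\ref{th:adjoint_identities}, the ``$\mathbf{I}$'' identities assert $\sum_{k=1,2}(\mathbf{T}_{l,k}\bar\eta,\mathbf{T}_{l,k}\bar\mu) + \epsilon(\overline{\mathbf{S}}_l\bar\eta,\overline{\mathbf{S}}_l\bar\mu) = (\bar\eta,\bar\mu)$ for $\bar\eta,\bar\mu \in \overline{\mathcal{A}(\riem_l)}$, with $\epsilon=1$ in positive genus and $\epsilon=0$ in genus zero; the ``$0$'' identities are the analogous assertion when the two arguments lie in $\overline{\mathcal{A}(\riem_1)}$ and $\overline{\mathcal{A}(\riem_2)}$ respectively. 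I would extend $\bar\eta,\bar\mu$ by zero to $(0,1)$-forms $\tilde\eta,\tilde\mu \in L^2(\mathscr{R})$---permissible since the separating quasicircles have measure zero---and identify $\sum_{k}(\mathbf{T}_{l,k}\bar\eta,\mathbf{T}_{l,k}\bar\mu)$ with $(\mathbf{L}\tilde\eta,\mathbf{L}\tilde\mu)_{L^2(\mathscr{R})}$, where $\mathbf{L}\omega(z) := \iint_{\mathscr{R}} L_{\mathscr{R}}(z,w)\wedge\omega(w)$ is a Beurling-Ahlfors type singular operator on the compact surface, and $(\overline{\mathbf{S}}_l\bar\eta,\overline{\mathbf{S}}_l\bar\mu)$ with $(\bar{\mathbf{P}}\tilde\eta,\bar{\mathbf{P}}\tilde\mu)$ where $\bar{\mathbf{P}}$ is the orthogonal projection onto $\overline{\mathcal{A}(\mathscr{R})}$ in $(L^2)^{(0,1)}(\mathscr{R})$.

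With these identifications, the diagonal identities reduce to the Pythagorean statement $\|\mathbf{L}\tilde\eta\|^2 + \|\bar{\mathbf{P}}\tilde\eta\|^2 = \|\tilde\eta\|^2$, while the off-diagonal identities follow from $(\tilde\eta,\tilde\mu)_{L^2(\mathscr{R})}=0$ (disjoint supports) after polarization of the same decomposition. Both thus boil down to showing that $\mathbf{L}$ is an isometry from $\overline{\mathcal{A}(\mathscr{R})}^\perp \cap (L^2)^{(0,1)}(\mathscr{R})$ onto its image. I would proceed in three steps. First, $\mathbf{L}$ annihilates $\overline{\mathcal{A}(\mathscr{R})}$: the Stokes/residue argument of Theorem~\ref{th:Schiffer_vanishing_identity}, adapted to the compact surface by excising a small disk around $z$ and using Lemma~\ref{le:limiting_circle_Schiffer_identity} to dispose of the diagonal, gives exactly this. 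Second, for $\omega := \tilde\eta - \bar{\mathbf{P}}\tilde\eta$, Hodge/Dolbeault theory on compact $\mathscr{R}$ yields a unique-modulo-constants $u$ with $\omega = \bar\partial u$; a Stokes computation using the representation of $u$ via Green's function (in the spirit of the proof of Lemma~\ref{le:double_L_vanishes_general_genus}, excising small disks around both singularities of $\mathscr{G}(w;z,q)$) establishes the Beurling-Ahlfors identity $\mathbf{L}\bar\partial u = \partial u$. Third, the Kähler identity $\|\partial u\|_{L^2} = \|\bar\partial u\|_{L^2}$ is immediate from $\iint_{\mathscr{R}} du \wedge \overline{du} = 0$, obtained by Stokes on the closed surface together with $d^2=0$; hence $\|\mathbf{L}\omega\|^2 = \|\partial u\|^2 = \|\bar\partial u\|^2 = \|\omega\|^2$.

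The genus-zero case is an immediate simplification: $\overline{\mathcal{A}(\mathscr{R})}=0$ by Example~\ref{ex:sphere_kernels}, so $\bar{\mathbf{P}}=0$ and $\mathbf{L}$ is isometric on all of $(L^2)^{(0,1)}$, which directly delivers the four identities without the $\overline{\mathbf{S}}^*\overline{\mathbf{S}}$ correction. The principal obstacle is the Beurling-Ahlfors identity $\mathbf{L}\bar\partial u = \partial u$ on $\mathscr{R}$: applying Fubini naively to $(\mathbf{L}^*\mathbf{L}\omega,\omega)$ combined with Lemma~\ref{le:double_L_vanishes_general_genus} recovers only the pointwise kernel $\overline{K_{\mathscr{R}}(z,w)}$ and misses the distributional diagonal contribution which is responsible for the identity operator in the final statement---it is precisely this gap that the Hodge-theoretic detour around the singularities of $\mathscr{G}$ is designed to close.
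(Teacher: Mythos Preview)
Your approach is correct and genuinely different from the paper's.

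The paper proceeds by direct kernel computation. For the first two identities it cites \cite{Schippers_Staubach_Plemelj}; for the third (and by symmetry the fourth) it expands $\langle v, (\mathbf{T}_{1,1}^*\mathbf{T}_{2,1}+\mathbf{T}_{1,2}^*\mathbf{T}_{2,2})u\rangle$ as a triple integral, desingularizes by subtracting the local Schiffer kernels $L_{\riem_k}$ via Theorem~\ref{th:Schiffer_vanishing_identity}, reorganizes so that the singular integrations over $\riem_1$ and $\riem_2$ recombine into a full integral over $\mathscr{R}$, and then applies Lemma~\ref{le:double_L_vanishes_general_genus} to identify $\iint_{\mathscr{R}}L_\mathscr{R}(z,\zeta)\wedge\overline{L_\mathscr{R}(\zeta,w)}$ with $K_\mathscr{R}(z,w)$. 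The terms involving $L_{\riem_k}$ vanish by another application of Schiffer's identity, and the surviving $K_\mathscr{R}$ term is exactly $-\overline{\mathbf{S}}_1^*\overline{\mathbf{S}}_2$.

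Your route replaces this kernel calculus by a global structural statement: the identities are all polarizations of the single assertion that the Beurling--Ahlfors operator $\mathbf{L}$ on $\mathscr{R}$ is an isometry on $\overline{\mathcal{A}(\mathscr{R})}^\perp\subset(L^2)^{(0,1)}$, which you obtain from $\mathbf{L}\overline{\partial}u=\partial u$ together with $\|\partial u\|=\|\overline{\partial}u\|$. This unifies all four identities (the paper treats the diagonal and off-diagonal cases separately and outsources the former), and explains conceptually where the identity operator comes from---precisely the diagonal delta you note is invisible in Lemma~\ref{le:double_L_vanishes_general_genus}. What the paper's argument buys in exchange is that it stays entirely at the kernel level and needs no Hodge or Dolbeault input beyond Lemma~\ref{le:double_L_vanishes_general_genus}; in particular it never has to solve $\overline{\partial}u=\omega$.

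One point to make explicit when you write this up: your $\omega=\tilde\eta-\bar{\mathbf{P}}\tilde\eta$ is only piecewise smooth (it jumps across the quasicircles), so the potential $u$ is only in $W^{1,2}(\mathscr{R})$, and the Stokes derivation of $\mathbf{L}\overline{\partial}u=\partial u$ you sketch requires smooth $u$. This is easily handled by density---prove the isometry $\|\mathbf{L}\omega\|=\|\omega\|$ for smooth $\omega\perp\overline{\mathcal{A}(\mathscr{R})}$ and extend by $L^2$-continuity of $\mathbf{L}$---but it should be said.
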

 \begin{proof}  Assume that $\mathscr{R}$ has genus $g >0$.    
  The first identity was proven in \cite{Schippers_Staubach_Plemelj}, in the case of one boundary curve.  The proof given there extends verbatim to the case of several boundary curves and disconnected components without issue. The second identity is just the first, with the roles of $\riem_1$ and $\riem_2$ switched. 
  The fourth identity is just the third with the roles of $\riem_1$ and $\riem_2$ interchanged.  So it is enough to prove the third identity. 
  
  Let $v \in \mathcal{A}(\riem_1)$ and $u \in \mathcal{A}(\riem_2)$, and denote the Schiffer kernels of $\riem_k$ by $L_k$ for $k=1,2$.  Then setting $M = \mathbf{T}_{1,1}^* \mathbf{T}_{2,1} + \mathbf{T}_{1,2}^*\mathbf{T}_{2,2}$ and applying Theorems \ref{th:Schiffer_vanishing_identity} and \ref{th:adjoint_identities} yields that 
  \begin{align*}
   2i \left< v, M u \right> & = 
    2i \left< \mathbf{T}_{1,1} v , \mathbf{T}_{2,1} u \right> + 2i \left< \mathbf{T}_{1,2} v,\mathbf{T}_{2,2} u \right> \\
    & =  \iint_{1,z} \iint_{1,w} \iint_{2,\zeta} L_{\mathscr{R}}(z,w) \wedge_w \overline{v(w)}
    \wedge_z \overline{L_{\mathscr{R}}(z,\zeta)} \wedge_\zeta u(\zeta)  \\
    & \ \ \ + \iint_{2,z} \iint_{1,w} \iint_{2,\zeta} L_{\mathscr{R}}(z,w) \wedge_w \overline{v(w)}
    \wedge_z \overline{L_{\mathscr{R}}(z,\zeta)} \wedge_\zeta u(\zeta) \\
    & = \iint_{1,z} \iint_{1,w} \iint_{2,\zeta} \left(L_{\mathscr{R}}(z,w) - L_1(z,w) \right) \wedge_w \overline{v(w)} \wedge_z
    \overline{L_{\mathscr{R}}(z,\zeta)}\wedge_\zeta u(\zeta) \\
    & \ \ \ + \iint_{2,z} \iint_{1,w} \iint_{2,\zeta} L_{\mathscr{R}}(z,w) \wedge_w \overline{v(w)}
    \wedge_z \overline{\left( L_{\mathscr{R}}(z,\zeta) -L_2(z,\zeta) \right)} \wedge_\zeta u(\zeta)  \\
    & =  \iint_{1,w} \iint_{2,\zeta}  \overline{v(w)} \wedge_w u(\zeta) \wedge_\zeta \iint_{1,z} \left(L_{\mathscr{R}}(z,w) - L_1(z,w) \right) \wedge_z 
    \overline{L_{\mathscr{R}}(z,\zeta)}      \\
    & \ \ \ + \iint_{1,w} \iint_{2,\zeta} \overline{v(w)}  \wedge_w u(\zeta) \wedge_\zeta \iint_{2,z} L_{\mathscr{R}}(z,w) \wedge_z 
    \overline{\left( L_{\mathscr{R}}(z,\zeta) -L_2(z,\zeta) \right)}. 
  \end{align*}
  Reorganizing the two terms above we obtain 
  \begin{align*}
   2i \left< v, M u \right> & = - \iint_{1,w} \iint_{2,\zeta}  \overline{v(w)} \wedge_w u(\zeta) \wedge_\zeta \iint_{1,z} L_1(z,w) \wedge_z
    \overline{L_{\mathscr{R}}(z,\zeta)}      \\
    & \ \ \ - \iint_{1,w} \iint_{2,\zeta} \overline{v(w)} \wedge_w u(\zeta) \wedge_\zeta \iint_{2,z} L_{\mathscr{R}}(z,w) \wedge_z 
    \overline{L_2(z,\zeta)}  \\
    & \ \ \ + 2 \iint_{1,w} \iint_{2,\zeta} \overline{v(w)} \wedge_w u(\zeta) \wedge_\zeta \iint_{\mathscr{R},z} L_{\mathscr{R}}(z,w) \wedge_z
    \overline{L_{\mathscr{R}}(z,\zeta)}.
  \end{align*}
  Observing that $\zeta$ is not in the closure of $\riem_1$, the first term vanishes by Schiffer's identity (i.e. Theorem \ref{th:Schiffer_vanishing_identity}) applied to $L_1$.  Similarly the second term vanishes because $z$ is not in the closure of $\Sigma_2$.  Thus applying Lemma \ref{le:double_L_vanishes_general_genus}  and Theorem \ref{th:adjoint_identities} yield that
  \begin{align*}
   2i \left< v,Mu \right> & =  2 \iint_{1,w} \iint_{2,\zeta} \overline{v(w)} \wedge_w u(\zeta) \wedge_\zeta K_{\mathscr{R}}(w,\zeta) \\
   & = - {2}  \iint_{1,w} \iint_{2,\zeta} \overline{v(w)} \wedge_w K_{\mathscr{R}}(w,\zeta)  \wedge_\zeta u(\zeta) \\
   & = - 2i \left< v, \overline{\mathbf{R}}_1  \overline{\mathbf{S}}_2 u\right>.
  \end{align*}
  This completes the proof in the case of non-zero genus.  If $\mathscr{R}$ has genus zero, then all the computations above are still valid.  We need only observe that in the last step $K_{\mathscr{R}}=0$ by Example \ref{ex:sphere_kernels}.    
 \end{proof}

 Taking complex conjugates and using the adjoint identities of Theorem \ref{th:adjoint_identities}, we also have for non-zero genus: 
 \begin{align} \label{eq:other_general_adjoint_double}
    \mathbf{I} & = \mathbf{T}_{1,1} \mathbf{T}_{1,1}^* + \mathbf{T}_{2,1} \mathbf{T}_{2,1}^* + {\mathbf{S}}_1^* {\mathbf{S}}_1 \nonumber \\
    \mathbf{I} & = \mathbf{T}_{1,2} \mathbf{T}_{1,2}^* + \mathbf{T}_{2,2} \mathbf{T}_{2,2}^* + {\mathbf{S}}_2^* {\mathbf{S}}_2 \nonumber \\
    0 & = \mathbf{T}_{1,1} \mathbf{T}_{1,2}^* + \mathbf{T}_{2,1} \mathbf{T}_{2,2}^* + {\mathbf{S}}_1^* {\mathbf{S}}_2 \\ \nonumber
    0 & = \mathbf{T}_{2,2} \mathbf{T}_{2,1}^* + \mathbf{T}_{1,2} \mathbf{T}_{1,1}^* + {\mathbf{S}}_2^* {\mathbf{S}}_1, 
  \end{align}
 and in the genus zero case we have 
  \begin{align*}  
    \mathbf{I} & = \mathbf{T}_{1,1} \mathbf{T}_{1,1}^* + \mathbf{T}_{2,1} \mathbf{T}_{2,1}^*  \\
    \mathbf{I} & = \mathbf{T}_{1,2} \mathbf{T}_{1,2}^* + \mathbf{T}_{2,2} \mathbf{T}_{2,2}^*  \\
    0 & = \mathbf{T}_{1,1} \mathbf{T}_{1,2}^* + \mathbf{T}_{2,1} \mathbf{T}_{2,2}^*  \\ 
    0 & = \mathbf{T}_{2,2} \mathbf{T}_{2,1}^* + \mathbf{T}_{1,2} \mathbf{T}_{1,1}^*.
  \end{align*}
  
 Finally we have the following identity:
  \begin{theorem}[Quadratic adjoint identities, part III]  \label{th:general_adjoint_identity_double_final}  If $\mathscr{R}$ has non-zero genus, then
   \begin{align*}
     0 & = \mathbf{T}_{1,1} \overline{\mathbf{S}}_1^* + \mathbf{T}_{2,1} \overline{\mathbf{S}}_2^* \\
     0 & = \mathbf{T}_{1,2} \overline{\mathbf{S}}_1^* + \mathbf{T}_{2,2} \overline{\mathbf{S}}_2^*.
   \end{align*}
  \end{theorem}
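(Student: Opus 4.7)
The plan is to view these identities as Schiffer's vanishing identity (Theorem \ref{th:Schiffer_vanishing_identity}) promoted to the closed compact surface $\mathscr{R}$, where the fact that $\mathscr{R}$ has no boundary makes the corresponding boundary term disappear entirely. First I would use Theorem \ref{th:adjoint_identities} in its complex-conjugated form, giving $\overline{\mathbf{S}}_k^{\,*} = \overline{\mathbf{R}}_k$ for $k=1,2$. Fix $\overline{\beta} \in \overline{\mathcal{A}(\mathscr{R})}$ and a point $z \in \riem_1$. Unwinding the definition $\mathbf{T}_{j,1} = \mathbf{R}^0_{\riem_1} \mathbf{T}_{\riem_j}$ and using that quasicircles have measure zero (as in the proof of Theorem \ref{th:quadratic_adjoint_one}), I would write
\begin{align*}
(\mathbf{T}_{1,1}\overline{\mathbf{S}}_1^{\,*} + \mathbf{T}_{2,1}\overline{\mathbf{S}}_2^{\,*})\overline{\beta}(z)
&= \iint_{\riem_1,w} L_{\mathscr{R}}(z,w) \wedge \overline{\beta(w)} + \iint_{\riem_2,w} L_{\mathscr{R}}(z,w) \wedge \overline{\beta(w)} \\
&= \iint_{\mathscr{R},w} L_{\mathscr{R}}(z,w) \wedge \overline{\beta(w)},
\end{align*}
where the first integral is a principal value around the pole of $L_{\mathscr{R}}$ at $w=z$.

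The task reduces to showing that this principal value integral over $\mathscr{R}$ vanishes. I would mimic the proof of Theorem \ref{th:Schiffer_vanishing_identity}: in a coordinate chart $\phi$ centered at $z$, let $\gamma_{\varepsilon}$ be the positively oriented loop $|\phi(w)|=\varepsilon$ and apply Stokes' theorem on $\mathscr{R} \setminus \overline{B_\varepsilon(z)}$ to the one-form $\tfrac{1}{\pi i}\partial_z \mathscr{G}(w;z,q)\,\overline{\beta(w)}$. Because $\overline{\beta}$ is $d$-closed and because wedging two $(0,1)$-forms in the $w$-variable vanishes in one complex dimension, the $\overline{\partial}_w\partial_z\mathscr{G}$ piece of $d_w[\partial_z \mathscr{G}(w;z,q) \overline{\beta(w)}]$ disappears, leaving only $\pi i\, L_{\mathscr{R}}(z,w) \wedge \overline{\beta(w)}$. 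Since $\mathscr{R}$ is closed, the Stokes boundary consists solely of $-\gamma_{\varepsilon}$, so
\[
\iint_{\mathscr{R}\setminus\overline{B_\varepsilon(z)}} L_{\mathscr{R}}(z,w) \wedge \overline{\beta(w)} \;=\; -\int_{\gamma_{\varepsilon}} \frac{1}{\pi i}\partial_z \mathscr{G}(w;z,q)\,\overline{\beta(w)},
\]
and the right-hand side tends to $0$ as $\varepsilon \searrow 0$ by Lemma \ref{le:limiting_circle_Schiffer_identity}. This gives the first identity; the second is obtained verbatim by taking $z \in \riem_2$ instead.

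The computation is essentially a direct application of tools already assembled, so I do not anticipate a serious obstacle; the main points requiring care are bookkeeping of orientations (in particular, that $\partial(\mathscr{R}\setminus\overline{B_\varepsilon(z)})$ gives $\gamma_\varepsilon$ with reversed orientation) and the clean verification that the $\overline{\partial}_w \partial_z \mathscr{G}$ contribution really drops out against $\overline{\beta}$. This last point is precisely what distinguishes these identities from those in Theorems \ref{th:quadratic_adjoint_one} and \ref{th:general_adjoint_double_identities}: the Bergman-kernel term $K_{\mathscr{R}}$ is killed off by the wedge product at the level of the integrand, rather than surviving as an explicit $\mathbf{S}$-contribution on the other side. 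This also explains why no genus assumption is logically needed in the argument, even though the statement is restricted to $g>0$ (in the $g=0$ case both sides vanish trivially since $\mathbf{S}_k = 0$ on the sphere by Example \ref{ex:sphere_kernels}).
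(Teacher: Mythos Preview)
Your proposal is correct and follows essentially the same approach as the paper: both reduce the identities via $\overline{\mathbf{S}}_k^{\,*}=\overline{\mathbf{R}}_k$ to showing that the principal value integral $\iint_{\mathscr{R}} L_{\mathscr{R}}(z,w)\wedge\overline{\beta(w)}$ vanishes, then apply Stokes' theorem on the compact surface with a small disk around $z$ removed and invoke Lemma~\ref{le:limiting_circle_Schiffer_identity}. Your write-up is somewhat more explicit about why the $\overline{\partial}_w\partial_z\mathscr{G}$ contribution drops out and about the orientation of $\gamma_\varepsilon$, and your closing observation that the genus hypothesis is not logically required is accurate.
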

  \begin{proof}
   First, recall that $\mathbf{S}_k^* = \mathbf{R}_k$ by Theorem \ref{th:adjoint_identities}. Thus the identities are equivalent to showing that  
   \[  \iint_{\mathscr{R}} L_{\mathscr{R}}(z,\zeta) \wedge_\zeta \overline{\alpha(\zeta)} = 0   \]
   for $z \in \riem_k$, $k=1,2$ and all $\alpha\in \mathcal{A}(\mathscr{R})$.  
  
   Fix $z \in \riem_k$.  Let $\gamma_\varepsilon$ be a curve given by 
   $|z-\zeta|=\varepsilon$ in a local coordinate chart, with orientation chosen to be positive with respect to $z$.  
  Stokes' theorem yields that the principal value integral is given by 
   \[   -\frac{1}{\pi i} \lim_{\varepsilon \searrow 0}  \int_{\gamma_\varepsilon}  \partial_z \mathscr{G}(z;\zeta, q) \overline{\alpha(\zeta)}  \]
   where $\mathscr{G}$ is Green's function of $\mathscr{R}$.  
   By Lemma \ref{le:limiting_circle_Schiffer_identity} this is zero.  
  \end{proof}
  
Taking complex conjugates and using \ref{th:adjoint_identities} we also obtain 
  \begin{align} \label{eq:general_adjoint_identity_double_final}
   0 & = \mathbf{S}_1 \mathbf{T}_{1,1} + \mathbf{S}_{2} \mathbf{T}_{1,2} \nonumber \\
   0 & = \mathbf{S}_{1} \mathbf{T}_{2,1} + \mathbf{S}_{2} \mathbf{T}_{2,2}.
  \end{align}
\end{subsection}
\begin{subsection}{The Schiffer operators on harmonic measures}  \label{se:Schiffer_on_harmonic_measures}
A relationship between the Schiffer operators and the harmonic measure is established in the following result:
\begin{theorem} \label{th:T_and_S_on_harmonic_measures} 
 Let  $d\omega$ be a harmonic measure on $\riem_1$.  Then
 \[  \mathbf{T}_{1,1} \overline{\partial} \omega = - \partial \omega + \mathbf{R}_1 \mathbf{S}_1 {\partial} \omega      \]
 and
     \[  \mathbf{T}_{1,2} \overline{\partial} \omega = \mathbf{R}_2 \mathbf{S}_1 \partial \omega. \]
\end{theorem}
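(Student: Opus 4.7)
My plan is to reduce both identities to the single relation $\partial \mathbf{J}_1^q(\Gamma)\omega = \mathbf{S}_1 \partial \omega$ on $\mathscr{R} \setminus \Gamma$, and then to establish this by showing that $\mathbf{J}_1^q(\Gamma)\omega$ is real-valued and invoking the $\overline{\partial}$-part of Theorem \ref{th:jump_derivatives}. That theorem gives $\partial \mathbf{J}_1^q \omega = \partial \omega + \mathbf{T}_{1,1}\overline{\partial}\omega$ on $\riem_1$ and $\partial \mathbf{J}_1^q \omega = \mathbf{T}_{1,2}\overline{\partial}\omega$ on $\riem_2$, so once the reduced relation is known, restricting to each side (and subtracting $\partial\omega$ in the $\riem_1$ case) immediately yields both desired identities.

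The central observation is that $\mathbf{J}_1^q\omega$ is real-valued. Since $\omega$ is a harmonic measure $\omega_k$, its CNT boundary values are $1$ on $\partial_k \riem_1$ and $0$ on the other components of $\partial \riem_1$, so the defining integral collapses to
\[
  \mathbf{J}_1^q(\Gamma)\omega(z) \;=\; -\frac{1}{\pi i}\int_{\partial_k \riem_1} \partial_w \mathscr{G}(w;z),
\]
where the integral over the quasicircle is interpreted as the limit along analytic approach curves. Because $\mathscr{G}(\,\cdot\,;z)$ is a single-valued real function on $\mathscr{R}\setminus\{z,q\}$, its differential $d_w \mathscr{G}$ integrates to zero over the closed curve $\partial_k \riem_1$, giving $\int_{\partial_k \riem_1}\partial_w \mathscr{G} = -\int_{\partial_k \riem_1}\overline{\partial}_w \mathscr{G}$. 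Combined with $\overline{\partial_w \mathscr{G}} = \overline{\partial}_w \mathscr{G}$, this forces $\int_{\partial_k \riem_1}\partial_w \mathscr{G}$ to be purely imaginary, so $\mathbf{J}_1^q\omega$ is real.

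Theorem \ref{th:jump_derivatives} also supplies $\overline{\partial}\mathbf{J}_1^q\omega = \overline{\mathbf{S}}_1\overline{\partial}\omega = \overline{\mathbf{S}_1\partial\omega}$, the second equality using the realness of $\omega$ and the convention $\overline{\mathbf{S}}_1\overline{\alpha} = \overline{\mathbf{S}_1\alpha}$. For any real function $F$ one has the identity $\partial F = \overline{\overline{\partial} F}$, so applying this to $F = \mathbf{J}_1^q\omega$ yields
\[
  \partial\mathbf{J}_1^q\omega \;=\; \overline{\overline{\partial}\mathbf{J}_1^q\omega}\;=\; \overline{\overline{\mathbf{S}_1\partial\omega}}\;=\; \mathbf{S}_1\partial\omega.
\]
Substituting this into the two cases of Theorem \ref{th:jump_derivatives} gives exactly the claimed identities.

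The main delicate point is the justification of the collapse of the Cauchy--Royden integral to a single boundary component and the Stokes-type vanishing of $\int_{\partial_k \riem_1} d_w \mathscr{G}$ around a quasicircle. Both are handled by replacing $\partial_k \riem_1$ with an analytic curve homotopic to it in a collar on which $\mathscr{G}(\,\cdot\,;z)$ is holomorphic and single-valued in $w$, and by invoking the anchor lemma to pass to the limit; no machinery beyond what has been established in the paper is required.
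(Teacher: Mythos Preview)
Your proof is correct and takes a genuinely different route from the paper's. The paper argues directly: it applies Stokes' theorem to the integral defining $\mathbf{T}_{1,1}\overline{\partial}\omega$, converting it to integrals over the level curves $\Gamma_\varepsilon = \{\omega = \varepsilon\} \cup \{\omega = 1-\varepsilon\}$, and then exploits the pointwise identity $\overline{\partial}\omega = -\partial\omega$ along these level curves (since $d\omega$ is real and tangent-annihilating there) together with Lemma~\ref{le:limiting_circle_Schiffer_identity} to rewrite the result as $\mathbf{R}_1\mathbf{S}_1\partial\omega - \partial\omega$. Your approach instead routes everything through the Cauchy--Royden operator: Theorem~\ref{th:jump_derivatives} already packages the Stokes computation, and the key new input is the observation that $\mathbf{J}_1^q\omega_k$ is real-valued, which you obtain from the exactness of $d_w\mathscr{G}$ and the reality of $\mathscr{G}$. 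This lets you convert the $\overline{\partial}$-identity of Theorem~\ref{th:jump_derivatives} into the desired $\partial$-identity by conjugation.

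What each approach buys: the paper's level-curve argument is self-contained and makes the role of the harmonic-measure boundary condition geometrically transparent (the level curves are where $\overline{\partial}\omega$ and $-\partial\omega$ agree tangentially). Your argument is shorter and more structural, and it surfaces the pleasant fact that $\mathbf{J}_1^q\omega_k$ is real; it also fits naturally with the later uses of $\mathbf{J}_1^q$ in Section~\ref{se:Schiffer_cohomology}. The collapse to a single boundary component and the vanishing of $\int_{\partial_k\riem_1} d_w\mathscr{G}$ are cleanly justified by Proposition~\ref{prop:J_agrees_with_bounce} (replacing $\omega_k$ on the collar by the locally constant function with the same CNT boundary values) and by homotopy invariance, since $\partial_w\mathscr{G}(\,\cdot\,;z,q)$ is holomorphic on a full neighbourhood of $\Gamma_k$ when $z,q\notin\Gamma$.
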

\begin{proof}  Assume that $\omega=1$ on one boundary of $\riem_1$ and $0$ on the others.  It is enough to prove the claim for such $\omega$.  {We will need to use a particular set of limiting curves in computing the boundary integrals, for which the computation simplifies.}
{Let $\Gamma_\varepsilon$ denote the union of the level sets $\omega = \varepsilon$, $\omega= 1-\varepsilon$}. Using \cite[Lemma 3.17]{Schippers_Staubach_scattering_I},  for $\varepsilon$ sufficiently small this consists of $n$ disjoint curves each homotopic to $\partial_k \riem_1$ for a particular $k$.  Also, let $\gamma_r$ be as in Lemma \ref{le:limiting_circle_Schiffer_identity}.  
 We then have that, fixing $q \in \riem_2$, 
 \begin{align*}
    \mathbf{T}_{1,1} \overline{\partial} \omega & = \frac{1}{ \pi i } \iint_{\riem_1} 
    \partial_z \partial_w \mathscr{G}(w;z,q) \wedge_w \overline{\partial} \omega (w) \\
    & = \lim_{\varepsilon \searrow 0} \frac{1}{ \pi i } \int_{\Gamma_\varepsilon,w} \partial_z \mathscr{G}(w;z,q)\, \overline{\partial} \omega(w) - 
    \lim_{r \searrow 0}\frac{1}{ \pi i } \int_{\gamma_r,w} \partial_z \mathscr{G}(w;z,q)\, \overline{\partial} \omega(w).
 \end{align*}
 Applying Lemma \ref{le:limiting_circle_Schiffer_identity} twice and using the fact that $\overline{\partial}\omega(w)=- \partial \omega(w)$ on the level curves $\Gamma_\varepsilon$, we see that 
 \begin{align*}
     \mathbf{T}_{1,1} \overline{\partial} \omega(z) & = - \lim_{\varepsilon \searrow 0} \frac{1}{ \pi i } \int_{\Gamma_\varepsilon,w} \partial_z \mathscr{G}(w;z,q)\, {\partial}\, \omega(w) \\
     & = - \lim_{\varepsilon \searrow 0} \frac{1}{ \pi i } \int_{\Gamma_\varepsilon,w} \partial_z \mathscr{G}(w;z,q)\, {\partial} \omega(w) + 
    \lim_{r \searrow 0}\frac{1}{ \pi i } \int_{\gamma_r,w} \partial_z \mathscr{G}(w;z,q)\, {\partial} \omega(w) \\
     &  \ \ \ \ \ \    - 
    \lim_{r \searrow 0}\frac{1}{ \pi i } \int_{\gamma_r,w} \partial_z \mathscr{G}(w;z,q)\, {\partial} \omega(w) \\
    & = - \frac{1}{ \pi i } \iint_{\riem_1} 
    \partial_z \overline{\partial}_w \mathscr{G}(w;z,q) \wedge_w  {\partial} \omega (w) - \partial \omega(z)  \\
    & = \mathbf{R}_1 \mathbf{S}_1 \partial \omega (z) - \partial \omega (z).
 \end{align*}
 The proof of the second claim is similar, except with the integrals over $\gamma_r$ removed, since for $z \in \riem_2$ there are no singularities in $\riem_1$.
\end{proof}

\begin{definition}
 We say that $u \in \mathcal{A}_{\text{harm}}(\mathscr{R})$ is \emph{piecewise exact} if 
\[  \mathbf{R}_k^{\mathrm{h}} u  \in \mathcal{A}^{\mathrm{e}}_{\mathrm{harm}}(\riem_k)   \]
for $k=1,2$, where $\mathbf{R}_k^{\mathrm{h}}$ is as in Definition \ref{def: restriction ops}.  We denote the space of piecewise exact harmonic forms on $\mathscr{R}$ by $\gls{peform}(\mathscr{R})$. 
\end{definition}

{\begin{definition}\label{def:exact overfare}
  If $\riem_2$ is connected, the \emph{exact overfare} can be defined as follows. Given 
 the spaces $\mathcal{A}^\mathrm{e}$ of Definition \ref{def: exact holo and harm forms} we define 
 \[  \gls{exacto}: \mathcal{A}^{\mathrm{e}}(\riem_2) \rightarrow \mathcal{A}^{\mathrm{e}}(\riem_1) \]
 to be the unique operator satisfying 
 \begin{equation}\label{charac of exact overfare}
   \mathbf{O}^{\mathrm{e}}_{2,1} d = d \mathbf{O}_{2,1}.    
 \end{equation}      
 If $\riem_1$ is connected we may define $\mathbf{O}^\mathrm{e}_{1,2}$ in the same way.
\end{definition}}
\begin{corollary} \label{co:overfare_piecewise_exact} 
 For any harmonic measure $d\omega \in \mathcal{A}_{\mathrm{harm}}(\riem_1)$, $\mathbf{S}_1 \partial \omega + \overline{\mathbf{S}}_1 \overline{\partial} \omega = \mathbf{S}_1^{\mathrm{h}} d\omega \in \mathcal{A}_{\mathrm{harm}}^{\mathrm{pe}}(\mathscr{R})$.  
 Furthermore, if $\riem_2$ is connected, then
 \[  \mathbf{O}^{\mathrm{e}}_{2,1} \left(  \mathbf{R}^{\mathrm{h}}_2 \mathbf{S}_1 d \omega \right) =   \mathbf{R}^\mathrm{h}_1 \mathbf{S}^\mathrm{h}_1  d \omega + d\omega.   \]
\end{corollary}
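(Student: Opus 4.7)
The plan is to exhibit an explicit primitive for $\mathbf{S}_1^{\mathrm{h}} d\omega$ on each piece by applying the Cauchy--Royden operator $\mathbf{J}_1^q$ to the antiderivative $\omega$ itself, and then to propagate the resulting identity through the overfare/correction formula of Theorem~\ref{th:Overfare_with_correction_functions}.

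The first step is to observe that, by the very definition of harmonic measure, $\omega \in \mathcal{D}_{\mathrm{harm}}(\riem_1)$ is a primitive of $d\omega$, so the operator $\mathbf{J}_1^q$ can legitimately be applied to it. Theorem~\ref{th:jump_derivatives} then furnishes
\[
  \partial \mathbf{J}_{1,2}^q\omega = \mathbf{T}_{1,2}\overline{\partial}\omega,\qquad \partial \mathbf{J}_{1,1}^q\omega = \partial\omega + \mathbf{T}_{1,1}\overline{\partial}\omega,\qquad \overline{\partial}\mathbf{J}_{1}^q\omega = \overline{\mathbf{S}}_1 \overline{\partial}\omega
\]
on the appropriate components. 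Taking $d = \partial + \overline{\partial}$ and using Theorem~\ref{th:T_and_S_on_harmonic_measures} to rewrite $\mathbf{T}_{1,2}\overline{\partial}\omega = \mathbf{R}_2 \mathbf{S}_1 \partial\omega$ and $\partial\omega + \mathbf{T}_{1,1}\overline{\partial}\omega = \mathbf{R}_1 \mathbf{S}_1 \partial\omega$, together with the complex conjugate of the same theorem applied to $\overline{\partial}\omega$ (here one uses that $\omega$ is real, so $\overline{\partial\omega} = \overline{\partial}\omega$, and consequently $\overline{\mathbf{R}}_k \overline{\mathbf{S}}_1 \overline{\partial}\omega$ equals the conjugate of $\mathbf{R}_k \mathbf{S}_1 \partial\omega$), one obtains the clean identification
\[
  d \mathbf{J}_{1,k}^q\omega \;=\; \mathbf{R}_k \mathbf{S}_1 \partial\omega + \overline{\mathbf{R}}_k \overline{\mathbf{S}}_1 \overline{\partial}\omega \;=\; \mathbf{R}_k^{\mathrm{h}}\bigl(\mathbf{S}_1^{\mathrm{h}} d\omega\bigr),\qquad k=1,2.
\]
This exhibits $\mathbf{J}_{1,k}^q \omega$ as an honest primitive of $\mathbf{R}_k^{\mathrm{h}}(\mathbf{S}_1^{\mathrm{h}} d\omega)$ on $\riem_k$, which immediately yields the first assertion, namely $\mathbf{S}_1^{\mathrm{h}} d\omega \in \mathcal{A}_{\mathrm{harm}}^{\mathrm{pe}}(\mathscr{R})$.

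For the second assertion, with $\riem_2$ connected I would invoke part~(2) of Theorem~\ref{th:Overfare_with_correction_functions} applied to $\dot{h} = \dot{\omega}$, which gives
\[
  \dot{\mathbf{O}}_{2,1}\dot{\mathbf{J}}_{1,2}\dot{\omega} \;=\; \dot{\mathbf{J}}_{1,1}\dot{\omega} - \dot{\omega}
\]
in $\dot{\mathcal{D}}_{\mathrm{harm}}(\riem_1)$. Exterior differentiation descends to the quotient and, combined with the defining relation $d\dot{\mathbf{O}}_{2,1} = \mathbf{O}_{2,1}^{\mathrm{e}} d$ from Definition~\ref{def:exact overfare} and the identification of $d\mathbf{J}_{1,k}^q\omega$ established above, yields the desired identity relating $\mathbf{O}_{2,1}^{\mathrm{e}}(\mathbf{R}_2^{\mathrm{h}} \mathbf{S}_1^{\mathrm{h}} d\omega)$ and $\mathbf{R}_1^{\mathrm{h}} \mathbf{S}_1^{\mathrm{h}} d\omega$, modulo a $d\omega$ correction term whose sign is dictated by the $-h$ on the right-hand side of Theorem~\ref{th:Overfare_with_correction_functions}(2).

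The principal obstacle is purely bookkeeping: keeping straight the actions of $\partial$, $\overline{\partial}$, and complex conjugation, in particular exploiting the reality of $\omega$ so that the conjugated form of Theorem~\ref{th:T_and_S_on_harmonic_measures} can be applied cleanly; and ensuring that the passage from the dot-space identity of Theorem~\ref{th:Overfare_with_correction_functions}(2) down to an identity of exact one-forms through $\mathbf{O}_{2,1}^{\mathrm{e}}$ is legitimate. A secondary point is that $\mathbf{R}_2^{\mathrm{h}} \mathbf{S}_1^{\mathrm{h}} d\omega$ is harmonic rather than holomorphic, so one must either view $\mathbf{O}_{2,1}^{\mathrm{e}}$ as extended to the harmonic setting via $d \dot{\mathbf{O}}_{2,1} = \mathbf{O}_{2,1}^{\mathrm{e}} d$, or split the form into holomorphic and anti-holomorphic parts and overfare each piece separately before reassembling.
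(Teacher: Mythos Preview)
Your proposal is correct and follows essentially the same route as the paper's proof: both reduce to real harmonic measures by linearity, apply $\mathbf{J}_1^q$ to $\omega$, combine Theorems~\ref{th:jump_derivatives} and~\ref{th:T_and_S_on_harmonic_measures} to identify $d\mathbf{J}_{1,k}^q\omega = \mathbf{R}_k^{\mathrm{h}}\mathbf{S}_1^{\mathrm{h}}d\omega$, and then differentiate the jump formula of Theorem~\ref{th:Overfare_with_correction_functions}(2) through the defining relation \eqref{charac of exact overfare}. Your closing remarks about the bookkeeping obstacles and the harmonic-versus-holomorphic domain of $\mathbf{O}_{2,1}^{\mathrm{e}}$ are apt but do not indicate any genuine gap.
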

\begin{proof}
 Since all operators involved are complex linear, it is enough to prove this for real harmonic measures $d\omega \in \mathcal{A}_{\mathrm{hm}}(\riem_1)$.  For such harmonic measures, by Theorems \ref{th:jump_derivatives} and \ref{th:T_and_S_on_harmonic_measures},
 \[     d\mathbf{J}^q_{1,1}\, \omega  = \partial \omega + \mathbf{T}_{1,1} \overline{\partial} \omega + \overline{\mathbf{R}}_1 \overline{\mathbf{S}}_1 \overline{\partial} \omega = \mathbf{R}_1 \mathbf{S}_1 \partial \omega + \overline{\mathbf{R}}_1 \overline{\mathbf{S}}_1 \overline{\partial} \omega  \]
 and 
 \[  d \mathbf{J}^q_{1,2}\, \omega = \mathbf{T}_{1,2} \overline{\partial} \omega + \overline{\mathbf{R}}_2 \overline{\mathbf{S}}_1 \overline{\partial} \omega = \mathbf{R}_2 \mathbf{S}_1 \partial \omega + \overline{\mathbf{R}}_2 \overline{\mathbf{S}}_1 \overline{\partial} \omega  \]
 which proves the first claim. 
 
 Now if $\riem_2$ is connected, then $\mathbf{O}_{2,1}^{\mathrm{e}}$ is well-defined (if not, it's only defined up to addition of a harmonic measure on $\riem_1$.) 
 By the transmitted jump formula (Theorem \ref{th:Overfare_with_correction_functions}), 
 \begin{align*}
     \dot{\mathbf{O}}_{2,1} \dot{\mathbf{J}}_{1,2} \, \dot{\omega} = \dot{\mathbf{J}}_{1,1} \, \dot{\omega} - \dot{\omega}.
 \end{align*}
 Taking $d$ of both sides and applying the previous two equations and \eqref{charac of exact overfare}, completes the proof of the theorem. 
\end{proof}
\begin{remark}  \label{re:nonself-overfare}
 Note that this shows that an element of $\mathcal{A}^{\mathrm{pe}}_{\text{harm}}$ need not be its own overfare.
\end{remark}

Another identity for the harmonic measures and $\mathbf{S}$ operator is the following. 

\begin{theorem} \label{th:S_on_harmonic_measures}
 Let $\omega_1$ be a harmonic function on $\riem_1$ which is constant on each boundary curve.  Let $\omega_2 = \mathbf{O}_{1,2}\, \omega_1$.  If either $\riem_1$ or $\riem_2$ is connected, then 
 \[ \mathbf{S}_1^{\mathrm{h}} \, d\omega_1 = -\mathbf{S}_2^{\mathrm{h}} \, d \omega_2. \]
\end{theorem}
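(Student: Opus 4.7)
The plan is to express both sides of the claimed identity as the exterior derivative of a Cauchy--Royden image, and then invoke the two-sided equality from Theorem \ref{th:J_same_both_sides}(2). By complex linearity it suffices to treat the case where $\omega_1$ is a single real harmonic measure of a connected component of $\riem_1$; a locally constant summand is annihilated by $d$, and all operators in the statement are complex linear. Under this reduction $\omega_1 \in \dot{\mathcal{D}}_{\mathrm{harm}}(\riem_1)$, and $\omega_2 = \mathbf{O}_{1,2}\omega_1 \in \dot{\mathcal{D}}_{\mathrm{harm}}(\riem_2)$ has the same constant boundary values on each component of $\Gamma$, hence is likewise a complex linear combination of harmonic measures of $\riem_2$.

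The first key step is to establish the two identities
\[
 d\mathbf{J}_{1,k}^{q}\omega_1 = \mathbf{R}_k^{\mathrm{h}}\mathbf{S}_1^{\mathrm{h}}\, d\omega_1, \qquad d\mathbf{J}_{2,k}^{q}\omega_2 = \mathbf{R}_k^{\mathrm{h}}\mathbf{S}_2^{\mathrm{h}}\, d\omega_2, \quad k=1,2,
\]
which is precisely the computation appearing in the proof of Corollary \ref{co:overfare_piecewise_exact}. Splitting $d = \partial + \overline{\partial}$ and applying Theorem \ref{th:jump_derivatives} yields, on $\riem_1$, the expression $\partial\omega_1 + \mathbf{T}_{1,1}\overline{\partial}\omega_1 + \overline{\mathbf{R}}_1\overline{\mathbf{S}}_1\overline{\partial}\omega_1$, while on $\riem_2$ one gets $\mathbf{T}_{1,2}\overline{\partial}\omega_1 + \overline{\mathbf{R}}_2\overline{\mathbf{S}}_1\overline{\partial}\omega_1$. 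Theorem \ref{th:T_and_S_on_harmonic_measures} then rewrites $\mathbf{T}_{1,1}\overline{\partial}\omega_1 = -\partial\omega_1 + \mathbf{R}_1\mathbf{S}_1\partial\omega_1$ and $\mathbf{T}_{1,2}\overline{\partial}\omega_1 = \mathbf{R}_2\mathbf{S}_1\partial\omega_1$; the singular $\partial\omega_1$ term cancels on $\riem_1$, and the identities follow after recognizing $\mathbf{R}_k \mathbf{S}_1 \partial\omega_1 + \overline{\mathbf{R}}_k\overline{\mathbf{S}}_1\overline{\partial}\omega_1 = \mathbf{R}_k^{\mathrm{h}}\mathbf{S}_1^{\mathrm{h}}d\omega_1$. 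The identity for $\omega_2$ is obtained by the same computation with the roles of $\riem_1$ and $\riem_2$ interchanged.

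Next, I would invoke Theorem \ref{th:J_same_both_sides}(2)---applied directly to $\omega_1$ if $\riem_1$ is connected, or to $\omega_2$ after swapping the roles of the two surfaces if instead $\riem_2$ is connected---to obtain $\dot{\mathbf{J}}_1 \dot{\omega}_1 = -\dot{\mathbf{J}}_2 \dot{\omega}_2$ in $\dot{\mathcal{D}}(\riem_1 \cup \riem_2)$. Taking $d$ of both sides (well-defined on the homogeneous space) and restricting to $\riem_k$ gives $\mathbf{R}_k^{\mathrm{h}}\mathbf{S}_1^{\mathrm{h}}d\omega_1 = -\mathbf{R}_k^{\mathrm{h}}\mathbf{S}_2^{\mathrm{h}}d\omega_2$ for $k=1,2$. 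Since $\mathbf{S}_1^{\mathrm{h}}d\omega_1$ and $-\mathbf{S}_2^{\mathrm{h}}d\omega_2$ are globally harmonic (hence real analytic) one-forms on the compact surface $\mathscr{R}$ that agree on the open dense complement of the null set $\Gamma$, they must agree on all of $\mathscr{R}$.

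The only delicate point is ensuring the connectedness hypothesis is used in the right direction: Theorem \ref{th:J_same_both_sides}(2) requires the \emph{originating} surface to be connected, which is precisely why the hypothesis is phrased as a disjunction over $\riem_1$ and $\riem_2$. Beyond this, the remaining ingredients are algebraic, using only the boundedness of the operators involved and that they descend to the quotient by locally constant functions.
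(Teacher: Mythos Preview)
Your proposal is correct and follows essentially the same approach as the paper: both apply Theorem~\ref{th:J_same_both_sides}(2) to the harmonic measure, differentiate, and use Theorem~\ref{th:T_and_S_on_harmonic_measures} to rewrite the $\mathbf{T}$-terms as $\mathbf{S}$-terms, then extend from $\riem_k$ to $\mathscr{R}$ by analytic continuation. The paper carries out the computation only on $\riem_2$ (which suffices), whereas you do both $k=1,2$; otherwise the arguments are the same, including the handling of the connectedness disjunction by swapping the roles of $\riem_1$ and $\riem_2$.
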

\begin{proof} Assume that $\riem_1$ is connected. 
 By Theorem \ref{th:J_same_both_sides}
 \[ \dot{\mathbf{J}}_{1,2}\, \dot{\omega}_1 = - \dot{\mathbf{J}}_{2,2} \dot{\mathbf{O}}_{1,2}\, \dot{\omega}_1 = - \dot{\mathbf{J}}_{2,2}\, \dot{\omega}_2.    \]
 Applying $d$ to both sides we get
 \begin{align*}
   \mathbf{R}_2 \mathbf{S}_1  \partial \omega_1 +  \overline{\mathbf{R}}_2 \overline{\mathbf{S}}_1 \overline{\partial}   \omega_1 & = \mathbf{T}_{1,2}\, \overline{\partial} \omega_1 +  {\mathbf{R}}_2  {\mathbf{S}}_1 \partial \omega_1 =
  d \mathbf{J}_{1,2}\, \omega_1 = - d \mathbf{J}_{2,2}\, \omega_2  \\
  & = - \partial \omega_2 - \mathbf{T}_{22}\, \overline{\partial} \omega_2 - \overline{\mathbf{R}}_{2} \overline{\mathbf{S}}_2\, \overline{\partial} \omega_2 \\ &  =  -{\mathbf{R}}_{2} {\mathbf{S}}_2 {\partial} \omega_2 - \overline{\mathbf{R}}_{2} \overline{\mathbf{S}}_2 \, \overline{\partial} \omega_2
 \end{align*}
 where we have used Theorem \ref{th:T_and_S_on_harmonic_measures}.  Therefore $\mathbf{R}_2^\mathrm{h} \mathbf{S}_1^\mathrm{h}\, d \omega_1 = - \mathbf{R}_2^\mathrm{h} \mathbf{S}_2^\mathrm{h} \, \omega_2$ which proves the claim by analytic continuation to $\mathscr{R}$. 
 
 If, on the other hand, $\riem_2$ is connected, we have $\omega_1=\mathbf{O}_{2,1} \omega_2$. One can now repeat the proof with the roles of $1$ and $2$ switched.
\end{proof}

This immediately leads to a characterization of which harmonic measures lie in the kernel of $\mathbf{S}$ and $\mathbf{T}_{12}$. 
{
\begin{corollary} \label{co:bridgeworthy_TFAE}
Let $\omega_1 \in \mathcal{D}_{\mathrm{harm}}(\riem_1)$ have constant boundary values. 
The following statements are equivalent. 
\begin{enumerate}[label=(\arabic*),font=\upshape]
    \item $\mathbf{S}_1\, \partial \omega_1 =0$; 
    \item $\mathbf{T}_{1,2}\, \overline{\partial} \omega_1 =0$;
    \item $\mathbf{T}_{1,1}\, \overline{\partial} \omega_1 = -\partial \omega_1$.
\end{enumerate}
If at least one of $\riem_1$ or $\riem_2$ is connected, then $(1)-(3)$ are also equivalent to each of the following: 
\begin{enumerate}[label=(\arabic*),font=\upshape] \setcounter{enumi}{3} 
    \item $\mathbf{S}_2 \, \partial\, \mathbf{O}_{1,2}\, \omega_1 =0$;
    \item $\mathbf{T}_{2,1}\, \overline{\partial}\, \mathbf{O}_{1,2}\, \omega_1 =0$;
    \item $\mathbf{T}_{2,2}\, \overline{\partial}\, \mathbf{O}_{1,2}\, \omega_1 = -\partial \, \mathbf{O}_{1,2}\, \omega_1$.
\end{enumerate}

The complex conjugates of the statements above also hold.
\end{corollary}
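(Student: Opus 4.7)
The plan is to use Theorem \ref{th:T_and_S_on_harmonic_measures} to reduce the first three conditions to statements about $\mathbf{S}_1 \partial \omega_1$, and then use Theorem \ref{th:S_on_harmonic_measures} to transfer to conditions on $\riem_2$. Note first that although Theorem \ref{th:T_and_S_on_harmonic_measures} is formally stated for a single harmonic measure $d\omega$, by complex linearity it applies to any $\omega_1 \in \mathcal{D}_{\mathrm{harm}}(\riem_1)$ which is constant on each boundary component, since such $\omega_1$ is a complex linear combination of harmonic measures.

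For (1)--(3), Theorem \ref{th:T_and_S_on_harmonic_measures} gives the two identities
\[ \mathbf{T}_{1,1}\overline{\partial}\omega_1 + \partial \omega_1 = \mathbf{R}_1 \mathbf{S}_1 \partial \omega_1, \qquad \mathbf{T}_{1,2}\overline{\partial}\omega_1 = \mathbf{R}_2 \mathbf{S}_1 \partial \omega_1. \]
Hence (3) is equivalent to $\mathbf{R}_1 \mathbf{S}_1 \partial \omega_1 = 0$, and (2) is equivalent to $\mathbf{R}_2 \mathbf{S}_1 \partial \omega_1 = 0$. Since $\mathbf{S}_1 \partial \omega_1 \in \mathcal{A}(\mathscr{R})$ is holomorphic on the connected compact surface $\mathscr{R}$, the identity theorem for holomorphic one-forms implies that the restriction maps $\mathbf{R}_k$ are injective on $\mathcal{A}(\mathscr{R})$. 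Therefore (1), (2), (3) are all equivalent to $\mathbf{S}_1\partial \omega_1 = 0$.

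For the equivalence with (4)--(6), I would set $\omega_2 := \mathbf{O}_{1,2} \omega_1$. Since overfare preserves CNT boundary values and $\omega_1$ has constant boundary values on each $\Gamma_k$, so does $\omega_2$; thus $\omega_2$ is also a complex linear combination of harmonic measures (of $\riem_2$), and the argument of the previous paragraph applies verbatim to show (4), (5), (6) are each equivalent to $\mathbf{S}_2 \partial \omega_2 = 0$. To bridge the two clusters, apply Theorem \ref{th:S_on_harmonic_measures} (which requires one of $\riem_1, \riem_2$ connected) to obtain
\[ \mathbf{S}_1 \partial \omega_1 + \overline{\mathbf{S}}_1 \overline{\partial}\omega_1 \;=\; \mathbf{S}_1^{\mathrm{h}} d\omega_1 \;=\; -\mathbf{S}_2^{\mathrm{h}} d\omega_2 \;=\; -\mathbf{S}_2 \partial \omega_2 - \overline{\mathbf{S}}_2\overline{\partial}\omega_2. \]
Because $\mathbf{S}_k \partial \omega_k \in \mathcal{A}(\mathscr{R})$ is holomorphic while $\overline{\mathbf{S}}_k \overline{\partial}\omega_k \in \overline{\mathcal{A}(\mathscr{R})}$ is anti-holomorphic, projecting onto the direct summands of $\mathcal{A}_{\mathrm{harm}}(\mathscr{R})= \mathcal{A}(\mathscr{R}) \oplus \overline{\mathcal{A}(\mathscr{R})}$ yields $\mathbf{S}_1 \partial \omega_1 = -\mathbf{S}_2 \partial \omega_2$, so (1) $\Leftrightarrow$ (4), completing the ring of equivalences. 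The final sentence about complex conjugates is immediate, since $\overline{\mathbf{T}_{j,k} \overline{\alpha}} = \overline{\mathbf{T}}_{j,k} \alpha$ and likewise for $\mathbf{S}_k$, and $\omega_1$ may be taken real without loss of generality.

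No step looks likely to cause serious difficulty; the mildly subtle point is justifying injectivity of $\mathbf{R}_k$ on $\mathcal{A}(\mathscr{R})$ (requiring connectedness of $\mathscr{R}$) and cleanly separating the holomorphic and anti-holomorphic parts in Theorem \ref{th:S_on_harmonic_measures} to extract the identity $\mathbf{S}_1 \partial \omega_1 = -\mathbf{S}_2 \partial \omega_2$ for the holomorphic piece alone.
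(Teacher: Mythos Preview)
Your proposal is correct and follows essentially the same route as the paper: invoke Theorem \ref{th:T_and_S_on_harmonic_measures} to reduce (1)--(3) (and analogously (4)--(6)) to the single condition $\mathbf{S}_k\partial\omega_k=0$ via injectivity of $\mathbf{R}_k$ on $\mathcal{A}(\mathscr{R})$ (this is Theorem \ref{th:S_kernel_range}), then use Theorem \ref{th:S_on_harmonic_measures} and the holomorphic/anti-holomorphic decomposition to link (1) with (4). The only cosmetic wrinkle is your closing remark that ``$\omega_1$ may be taken real without loss of generality'': the cleaner way to get the conjugate statements is to apply the proved equivalences to $\overline{\omega_1}$ (which also has constant boundary values) and conjugate, as your operator identity $\overline{\mathbf{T}_{j,k}\overline{\alpha}}=\overline{\mathbf{T}}_{j,k}\alpha$ already suggests.
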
 
\begin{proof}
  The equivalence of the first three claims follows from Theorem \ref{th:T_and_S_on_harmonic_measures}, as does the equivalence of claims (4) through (6). If one of $\riem_1$ and $\riem_2$ is connected, then by Theorem \ref{th:S_on_harmonic_measures}, the holomorphic part of $\mathbf{S}_1^{\mathrm{h}} d \omega_1$ is zero if and only if the holomorphic part of $\mathbf{S}_2^{\mathrm{h}} d \mathbf{O}_{1,2} \omega_1$ is zero. This proves the equivalence of (1) and (4).
  The remaining claim is obvious.
\end{proof} 
}

 We review some facts about harmonic one-forms on compact Riemann surfaces; see for example \cite[Chapter III]{Farkas_Kra}. We start by recalling  the standard way to define harmonic one-forms $H_C$ such that 
\begin{equation}\label{eq:H_forms_definition}
     (\alpha,\ast H_C) = \int_C \alpha 
\end{equation}   
for all $\alpha \in \mathcal{A}_{\text{harm}}(\mathscr{R})$.  
Given a curve $C$, let $\Omega$ be a strip to the left of $C$, and let $f$ a real-valued function which is $1$ on $C$, smooth on $\Omega$, and $0$ outside of $\Omega$.  Thus, there is a steady increase from $0$ to $1$ as one approaches $C$ from the left, and a jump back down to $0$ as one crosses the curve.  Then $df$ is smooth, and we have 
\begin{align*}
 (\alpha,\ast df)_{\mathscr{R}} & = \iint_{\mathscr{R}} \alpha \wedge \ast \ast \overline{df} = -\iint_{\mathscr{R}} \alpha \wedge   {df}  \\
 & = -\iint_{\Omega} \alpha \wedge   {df} = \int_C f \alpha \\
 &= \int_C \alpha.
\end{align*} 
By the Hodge theorem, there is a unique harmonic one-form $H_C$ in the same equivalence class as $df$.  Now co-exact forms are orthogonal to closed forms because 
\[  (\alpha,\ast dg )_{\mathscr{R}} ={-} \iint_{\mathscr{R}} \alpha \wedge dg =  \iint_{\mathscr{R}} d (g \alpha) =0.    \]
Thus 
\[  (\alpha,\ast H_C)_{\mathscr{R}}  = (\alpha,\ast df)_{\mathscr{R}} = \int_C \alpha. \]

Now let $\mathscr{R}$ be separated into two surfaces $\riem_1$ and $\riem_2$ by a collection of curves $\partial_k \riem_1$ as in Figure \ref{fig:boundary_curves}.  We assume that $\riem_1$ and $\riem_2$ are both connected.
\begin{figure}
     \includegraphics[width=9cm]{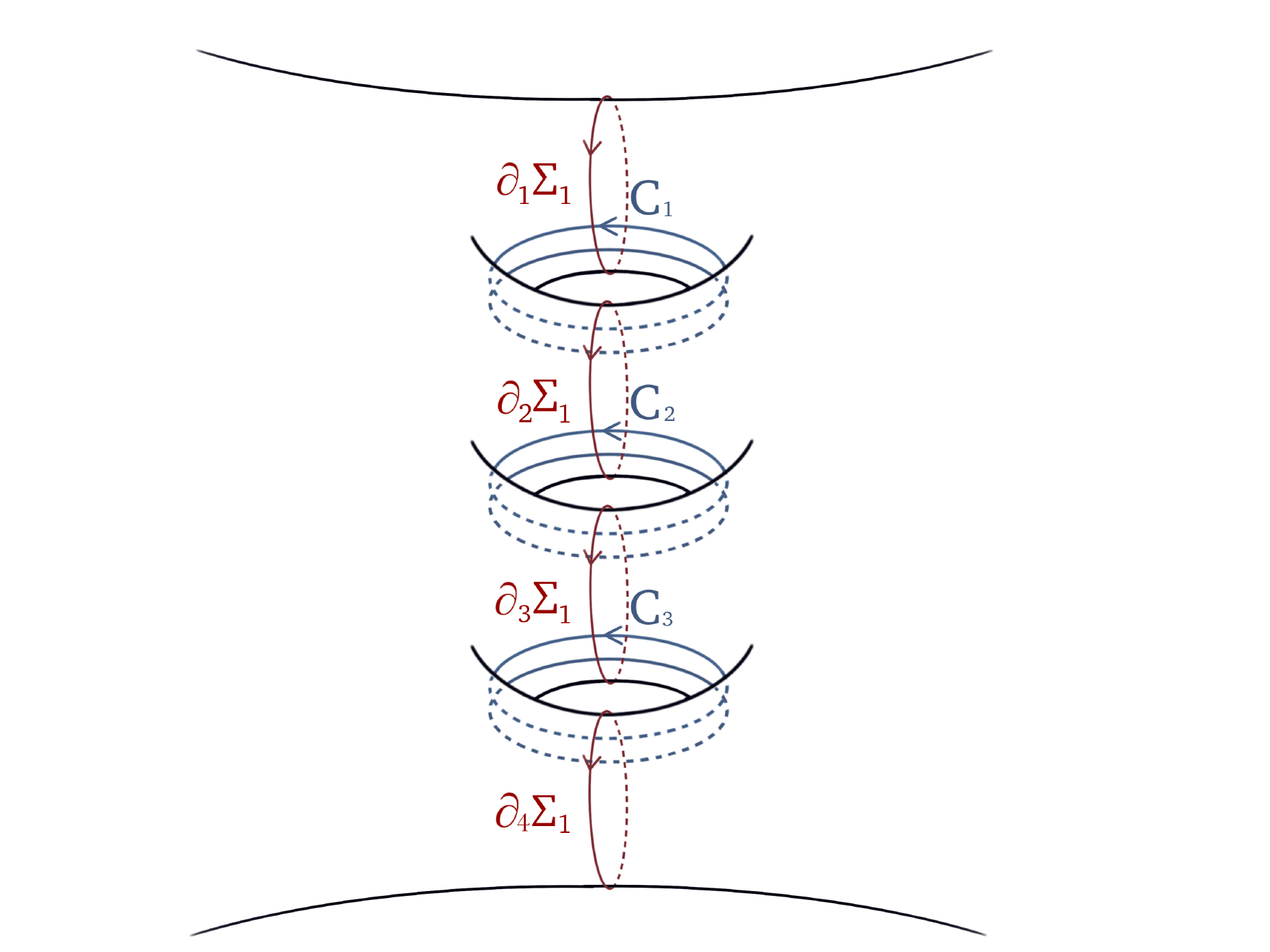}
     \caption{The one-forms $df_{k}$}
     \label{fig:boundary_curves}
 \end{figure} 
The strips defining $df_k$, $k=1,2,3$ are the horizontal strips and the boundary curves are the vertical curves. Since $H_{C_k}$ and $df_k$ are in the same cohomology class, we have (by taking analytic limiting curves approaching $\partial_m \riem_1$)
\[ \int_{\partial_m \riem_1}  H_{C_k} = \int_{\partial_m \riem_1} df_k. \]
Here we let $k=1,\ldots,n-1$ where $n$ is the number of boundary curves (in Figure \ref{fig:boundary_curves} we have $n=4$).
 
We see that
\begin{align}  \label{eq:integrals_of_Hcs} \nonumber
 \int_{\partial_1 \riem_1} H_{C_k} & = \left\{ \begin{array}{ll} -1 & k=1 \\ 0 & \text{otherwise} \end{array} \right. \\  \nonumber
\int_{\partial_2 \riem_1} H_{C_k} & = \left\{ \begin{array}{ll} 1 & k=1 \\ -1 & k=2 \\ 0 & \text{otherwise} \end{array} \right. \\ 
\vdots & \ \ \ \ \ \ \ \vdots \\ \nonumber
\int_{\partial_{n-1}  \riem_1} H_{C_k} & = \left\{ \begin{array}{ll} 1 & k={n-2} \\ -1 & k=n-1 \\ 0 & \text{otherwise} \end{array} \right. \\ \nonumber 
\int_{\partial_n \riem_1} H_{C_k} & = \left\{ \begin{array}{ll} 1 & k={n-1}  \\  0 & \text{otherwise} \end{array} \right. 
\end{align}
Furthermore, the integral around any $C_k$ or internal homology curve is zero.
\begin{remark}
 The reason for the differing first and last integrals is that we haven't included the redundant $H_{C_n}$ which corresponds to a curve traversing the outside of the entire surface.
\end{remark}
 
We have the following result:\\
\begin{theorem} \label{th:piecewise_exact_characterize}  
Assume that $\riem_1$ and $\riem_2$ are connected.   Then  
\[ \mathcal{A}_{\mathrm{harm}}^{\mathrm{pe}}(\mathscr{R})  = \{   \mathbf{S}_1 \partial \omega +   \overline{\mathbf{S}}_1 \overline{\partial} \omega : d\omega \in { \mathcal{A}_{\mathrm{harm}}(\riem_1) } \}.   \] 
\end{theorem}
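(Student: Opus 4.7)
The plan is to prove the two inclusions using the Cauchy--Royden operator $\mathbf{J}_1^q\omega$ of Section~\ref{subsec:defSchiffer} as a bridge between the Schiffer images on $\mathscr{R}$ and the piecewise-defined structure on $\riem_1\cup\riem_2$.

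For the inclusion $\supseteq$, given $\omega\in\mathcal{D}_{\mathrm{harm}}(\riem_1)$, I would use the jump derivative identities of Theorem~\ref{th:jump_derivatives}. Since $\overline{\partial}\mathbf{J}_1^q\omega = \overline{\mathbf{S}}_1\overline{\partial}\omega$ on $\riem_1\cup\riem_2$, the antiholomorphic parts of $\mathbf{S}_1^{\mathrm{h}}d\omega$ and $d\mathbf{J}_1^q\omega$ already agree. For the holomorphic parts, applying $\partial$ to $\overline{\mathbf{J}_1^q\overline{\omega}}$ and using the identity $\partial\overline{g} = \overline{\overline{\partial}g}$ together with Theorem~\ref{th:jump_derivatives} (for $\overline{\omega}$) yields $\partial\overline{\mathbf{J}_1^q\overline{\omega}} = \mathbf{S}_1\partial\omega$. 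Setting $F := -\mathbf{J}_1^q\omega + \overline{\mathbf{J}_1^q\overline{\omega}}$, this gives the decomposition
\[
  \mathbf{S}_1^{\mathrm{h}}d\omega = d\mathbf{J}_1^q\omega + \partial F \quad\text{on } \riem_1\cup\riem_2.
\]
By the overfared jump formula (Theorem~\ref{th:Overfare_with_correction_functions}) the jump of $\mathbf{J}_1^q\omega$ across $\Gamma$ is $\omega$, and symmetrically the jump of $\overline{\mathbf{J}_1^q\overline{\omega}}$ is $\omega$, so the jumps cancel in $F$. Hence $F$ has matching CNT boundary values across $\Gamma$, and $F|_{\riem_k}\in\mathcal{D}_{\mathrm{harm}}(\riem_k)$ by boundedness of the Cauchy--Royden operator (Theorem~\ref{th:jump_bounded_Dirichlet}). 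Since $d\mathbf{J}_1^q\omega$ is automatically exact on each $\riem_k$ (being the differential of a single-valued function there), piecewise exactness of $\mathbf{S}_1^{\mathrm{h}}d\omega$ reduces to showing $\partial F|_{\riem_k}$ is exact on $\riem_k$.

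For the reverse inclusion $\subseteq$, given $u\in\mathcal{A}_{\mathrm{harm}}^{\mathrm{pe}}(\mathscr{R})$, connectedness of $\riem_1$ implies there exists $\omega\in\mathcal{D}_{\mathrm{harm}}(\riem_1)$, unique up to constants, with $d\omega = u|_{\riem_1}$. The claim is $u = \mathbf{S}_1^{\mathrm{h}}d\omega$. I would consider the difference $v = u - \mathbf{S}_1^{\mathrm{h}}d\omega\in\mathcal{A}_{\mathrm{harm}}(\mathscr{R})$. By the first inclusion and the construction of $\omega$, $v|_{\riem_1}$ is an exact harmonic form whose holomorphic and antiholomorphic components can be matched against $d\omega$; using connectedness of $\riem_2$, the analogous analysis on $\riem_2$ via the companion potential (through overfare and Theorem~\ref{th:S_on_harmonic_measures}) forces the Bergman projections of $v$ onto both $\mathbf{S}_1\mathcal{A}(\riem_1)$ and $\mathbf{S}_2\mathcal{A}(\riem_2)$ to vanish, and the quadratic adjoint identity $\mathbf{S}_1\mathbf{S}_1^* + \mathbf{S}_2\mathbf{S}_2^* = \mathbf{I}$ of Theorem~\ref{th:quadratic_adjoint_one} then yields $v\equiv 0$.

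The main obstacle is the direction $\supseteq$: concretely, proving that $\partial F|_{\riem_k}$ is exact on each $\riem_k$. This is a period computation on $\riem_k$, and I expect it to follow from the fact that $F$ extends continuously across $\Gamma$ together with the reproducing identities for $\partial_z\mathscr{G}$ over closed cycles, but tracking the contributions of the Cauchy--Royden jump to the periods is the delicate step.
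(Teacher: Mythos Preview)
Your proposal has genuine gaps in both directions, and in each case the paper takes a different and more direct route.

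For the inclusion $\supseteq$, the key point you are missing is that the relevant $\omega$ are harmonic measures (functions constant on each boundary curve; the paper's proof explicitly works with $d\omega \in \mathcal{A}_{\mathrm{hm}}(\riem_1)$), and for these Theorem~\ref{th:T_and_S_on_harmonic_measures} gives the special identities $\mathbf{T}_{1,2}\overline{\partial}\omega = \mathbf{R}_2\mathbf{S}_1\partial\omega$ and $\mathbf{T}_{1,1}\overline{\partial}\omega = -\partial\omega + \mathbf{R}_1\mathbf{S}_1\partial\omega$. Substituting these into Theorem~\ref{th:jump_derivatives} yields directly $d\mathbf{J}_{1,k}^q\omega = \mathbf{R}_k^{\mathrm{h}}\mathbf{S}_1^{\mathrm{h}}d\omega$ for $k=1,2$, so $\mathbf{S}_1^{\mathrm{h}}d\omega$ is piecewise exact in one line (this is Corollary~\ref{co:overfare_piecewise_exact}). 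Your decomposition $\mathbf{S}_1^{\mathrm{h}}d\omega = d\mathbf{J}_1^q\omega + \partial F$ never invokes Theorem~\ref{th:T_and_S_on_harmonic_measures}, and the exactness of $\partial F$ that you flag as the ``main obstacle'' is precisely what that theorem supplies. For a generic $\omega\in\mathcal{D}_{\mathrm{harm}}(\riem_1)$ with no boundary condition, there is no reason for $\mathbf{S}_1^{\mathrm{h}}d\omega$ to be piecewise exact, so your period computation for $\partial F$ cannot succeed at that level of generality.

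For the inclusion $\subseteq$, your plan is to take $\omega$ with $d\omega = u|_{\riem_1}$ and argue $u - \mathbf{S}_1^{\mathrm{h}}d\omega = 0$ via the quadratic identity $\mathbf{S}_1\mathbf{S}_1^* + \mathbf{S}_2\mathbf{S}_2^* = \mathbf{I}$. But the $\omega$ produced this way is not a harmonic measure, so by the previous paragraph $\mathbf{S}_1^{\mathrm{h}}d\omega$ may not even be piecewise exact, and nothing in your sketch forces the projections $\mathbf{S}_k^*v$ to vanish. The paper instead runs a finite-dimensional spanning argument: setting $\beta_k = \mathbf{S}_1^{\mathrm{h}}d\omega_k$ for the basic harmonic measures $\omega_k$, it computes
\[
 \int_{C_j}\beta_k \;=\; (\mathbf{S}_1^{\mathrm{h}}d\omega_k,\ast H_{C_j})_{\mathscr{R}} \;=\; (d\omega_k,\mathbf{R}_1^{\mathrm{h}}\ast H_{C_j})_{\riem_1} \;=\; -\int_{\partial_k\riem_1}H_{C_j},
\]
using the adjoint relation $(\mathbf{R}_1^{\mathrm{h}})^* = \mathbf{S}_1^{\mathrm{h}}$ and Stokes. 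The triangular pattern of the boundary periods $\int_{\partial_k\riem_1}H_{C_j}$ recorded in~\eqref{eq:integrals_of_Hcs} then shows that the $\beta_k$ span $\mathcal{A}_{\mathrm{harm}}^{\mathrm{pe}}(\mathscr{R})$.
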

\begin{proof}
 The fact that $\mathbf{S}_1 \partial \omega +   \overline{\mathbf{S}}_1 \overline{\partial} \omega$ is piecewise exact for $d\omega \in \mathcal{A}_{\mathrm{hm}}(\riem_1)$ follows from Theorem \ref{th:jump_derivatives}. We need to show that such forms span the space. 
 
 To do so we first need an identity. 
 Let $\omega_k$ be the harmonic function which is one on the $k$th boundary curve and zero on the remaining boundary curves. Set 
 \[  \beta_k = \mathbf{S}_1 \partial \omega_k +   \overline{\mathbf{S}}_1 \overline{\partial} \omega_k   \]
 We then have that by definition of $H_{C_j}$  
 \begin{align*}
   \int_{C_j} \beta_k & = (\beta_k,\ast H_{C_j})_{\mathscr{R}} \\
   & = (\mathbf{S}^{\mathrm{h}}_1 d\omega,\ast H_{C_j})_{\mathscr{R}} \\
   & = (d\omega, \mathbf{R}_1^{\mathrm{h}} \ast H_{C_j})_{\riem_1} \\
   & = \left(d\omega, \mathbf{R}_1^{\mathrm{h}}\left[ \frac{1}{2} \left(\ast H_{C_j} -i H_{C_j} \right)+  \frac{1}{2} \left(\ast H_{C_j} +i H_{C_j} \right) \right] \right)_{\riem_1}.
 \end{align*}
 So 
 \begin{align*}
   \int_{C_j} \beta_k & =  \left(  \partial \omega_k, \frac{1}{2} \mathbf{R}_1\left( \ast H_{C_j} -i H_{C_j} \right)  \right)_{\riem_1} +
   \left(  \, \overline{\partial} \omega_k, \frac{1}{2}\overline{\mathbf{R}_1}  \left( \ast H_{C_j} + i H_{C_j} \right)  \right)_{\riem_1}
   \\  & =  \left( d \omega_k, \frac{1}{2} \mathbf{R}_1\left( \ast H_{C_j} -i H_{C_j} \right)  \right)_{\riem_1} +
   \left(  \, d\omega_k, \frac{1}{2}\overline{\mathbf{R}_1}  \left( \ast H_{C_j} + i H_{C_j} \right)  \right)_{\riem_1}
   \\
   & = \left(  d \omega_k,  \mathbf{R}^{\mathrm{h}}_1 \ast H_{C_j}  \right)_{\riem_1}.
 \end{align*}
 
 So we have
 \begin{align} \label{eq:integral_beta_is_integral_H}
  \int_{C_j} \beta_k  
  & = - \iint_{\riem_1} d \omega_k \wedge H_{C_j} \nonumber \\
  & = - \int_{\partial \riem_1} \omega_k H_{C_j} \nonumber \\
  & = - \int_{\partial_k \riem_1} H_{C_j}.
 \end{align}
 It now follows immediately from \eqref{eq:integrals_of_Hcs} that the $\beta_k$ span $\mathcal{A}^{\mathrm{pe}}_{\mathrm{harm}}(\mathscr{R})$. This completes the proof.
\end{proof}

We also have the following elementary fact.
\begin{proposition} \label{pr:period_submatrix_also_positive}
 Fix a bordered Riemann surface $\riem$ of type $(g,n)$. Fix a subcollection $\gamma_1,\ldots,\gamma_m$ of the boundary curves $\{ \partial_1 \riem,\ldots, \partial_n \riem \}$. For any $c_1,\ldots,c_m \in \mathbb{C}$, there is an $\omega \in \mathcal{A}_{\mathrm{hm}}(\riem)$ whose boundary values are only non-zero on $\gamma_1,\ldots,\gamma_m$, such that 
 \[  \int_{\gamma_k} \omega = c_k.   \]
\end{proposition}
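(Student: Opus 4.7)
The plan is to reduce the existence question to a linear-algebra statement about a submatrix of the boundary period matrix $\Pi_{jk}$ of Definition \ref{periodmatrix} and then appeal to Theorem \ref{th:period_matrix_invertible}. I would first flag that the statement ``$\omega\in\mathcal{A}_{\mathrm{hm}}(\riem)$'' looks inconsistent as written, because elements of $\mathcal{A}_{\mathrm{hm}}(\riem)$ are exact and so $\int_{\gamma_k}\omega=0$ automatically; the intended space should be $\ast\mathcal{A}_{\mathrm{hm}}(\riem)$, matching the framework of Corollary \ref{co:boundary_periods_specified_starmeasure}.

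After reindexing so that $\gamma_k=\partial_{j_k}\riem$ for $k=1,\ldots,m$, I would take the ansatz
\[
\omega=\ast d\Omega,\qquad \Omega=\sum_{l=1}^{m}a_l\,\omega_{j_l},
\]
where $\omega_{j_l}$ are the fundamental harmonic functions of Definition \ref{def:harmonic measure}. With this choice $\Omega$ has constant value $a_l$ on $\gamma_l$ and vanishes on all other boundary components, so the requirement that the ``boundary values are only nonzero on $\gamma_1,\ldots,\gamma_m$'' is built in. Unwinding Definition \ref{periodmatrix}, the conditions $\int_{\gamma_k}\omega=c_k$ collapse to the $m\times m$ linear system
\[
\sum_{l=1}^{m}\Pi_{j_k,j_l}\,a_l=c_k,\qquad k=1,\ldots,m.
\]

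The crux is then to invert the $m\times m$ principal submatrix $\bigl[\Pi_{j_k,j_l}\bigr]$. Assuming $m\leq n-1$ (the interesting case; if $m=n$ the same scheme yields solvability precisely under the necessary compatibility $\sum c_k=0$, since $\sum_k\omega_k\equiv 1$ puts $(1,\ldots,1)\in\ker\Pi$), I would pick any index $m_0\in\{1,\ldots,n\}\setminus\{j_1,\ldots,j_m\}$, apply Theorem \ref{th:period_matrix_invertible} with this omitted index to conclude that the resulting $(n-1)\times(n-1)$ matrix is symmetric and positive definite, and observe that $\bigl[\Pi_{j_k,j_l}\bigr]$ sits inside it as a principal submatrix. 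The main obstacle---really the only non-bookkeeping step---is passing positive-definiteness to a principal submatrix, which is the standard observation that restricting a positive-definite quadratic form to a coordinate subspace preserves positive-definiteness. Invertibility then gives a unique solution $(a_l)_{l=1}^m$, and the resulting $\omega=\ast d\Omega$ satisfies every clause of the proposition.
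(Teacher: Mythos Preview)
Your approach is essentially identical to the paper's: the same ansatz $\omega=\ast d\Omega$ with $\Omega=\sum_{l=1}^m a_l\,\omega_{j_l}$, reduction to the linear system $\sum_l \Pi_{j_k,j_l}a_l=c_k$, and appeal to the fact that a principal submatrix of the positive-definite matrix from Theorem~\ref{th:period_matrix_invertible} is itself positive-definite. Your flag about the statement is well taken---the paper's own proof integrates $\ast d\omega_{j_l}$, confirming that the intended space is $\ast\mathcal{A}_{\mathrm{hm}}(\riem)$---and your phrasing ``principal submatrix'' is more accurate than the paper's ``any square submatrix''.
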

\begin{proof}
 Let $j_1,\ldots,j_m$ be the indices of the subcollection of curves; that is, $\gamma_{l} =\partial_{j_l} \riem$ for $l=1,\ldots,m$. Following a similar strategy as in the proof of Corollary \ref{co:boundary_periods_specified_starmeasure}, it is readily seen that we need to prove the existence of a solution to the system of equations 
  \[  c_k=  \int_{\gamma_k}  \sum_{l=1}^n a_l \ast d \omega_{j_l} =\sum_{l=1}^n \int_{\partial_{j_k} \riem}   a_l \ast d \omega_{j_l}= \sum_{l=1}^n\Pi_{j_k j_l} a_l .      \]
 This follows from the fact that any square submatrix of a positive-definite matrix is also positive-definite.
\end{proof}

Now we define a subclass of the harmonic measures which lie in the kernel of $\mathbf{T}_{1,2}$. 
  
 \begin{definition}  
  We say that $\omega \in \mathcal{D}_{\text{harm}}(\riem_1)$ is \emph{bridgeworthy} if
  \begin{enumerate}
   \item it is constant on each boundary curve;
   \item on any pair of boundary curves $\partial_k \riem_1$ and $\partial_m \riem_1$ that bound the same connected component of $\riem_2$, the boundary values of $\omega$ are equal.
  \end{enumerate}
  
  We say that $\alpha \in \mathcal{A}_{\mathrm{hm}}(\riem_1)$ is bridgeworthy if $\alpha = d\omega$ for some bridgeworthy harmonic function $\omega$.  Denote the collection of bridgeworthy harmonic functions by
 $\gls{Dbw}(\riem_1)$,  and the collection of bridgeworthy  harmonic one-forms by 
 $\gls{Abw}(\riem_1)$.  The same definitions apply to $\riem_2$.
 \end{definition}
 The name is meant to invoke the following geometric picture: $\omega$ is bridgeworthy if it has the same constant value on any pair of boundary curves which are connected by a ``bridge'' in $\riem_2$. 
 
 \begin{remark} If $\riem_1$ has more than one connected component, then  a bridgeworthy harmonic one-form has anti-derivatives which are not bridgeworthy. This is because one can add to a bridgeworthy harmonic function $\omega$ any function which is constant on connected components without changing $d\omega$. 
 \end{remark}

We will also need the following characterization of the kernel of $\mathbf{S}_k^{\mathrm{h}}$.
\begin{proposition}  \label{pr:kernel_S1h}   { Assume that either $\riem_1$ or $\riem_2$ is connected}.  Fix $k=1$ or $k=2$.  
 Let $d\omega_k \in \mathcal{A}_{\mathrm{hm}}(\riem_k)$.
 The following are equivalent.
 \begin{enumerate}[label=(\arabic*),font=\upshape]
     \item $\overline{\partial} \omega_k \in \overline{\partial} \mathcal{D}_{\mathrm{bw}}(\riem_k)$.
     \item $\partial \omega_k \in \partial \mathcal{D}_{\mathrm{bw}}(\riem_k)$.
     \item $\mathbf{S}_k^\mathrm{h} d\omega_k =0$.
     \item {$d\omega_k \in \mathcal{A}_{\mathrm {bw}}(\riem_k)$.}
 \end{enumerate}
\end{proposition}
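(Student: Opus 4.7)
The plan is to prove $(4)\Rightarrow (1),(2),(3)$ directly, and then establish each of $(1),(2),(3)\Rightarrow (4)$: the first two by a boundary uniqueness argument for holomorphic Dirichlet functions, and the last by the overfared jump formula of Theorem~\ref{th:Overfare_with_correction_functions}.

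Assume $(4)$: $d\omega_k=d\omega'$ for some $\omega'\in\mathcal{D}_{\mathrm{bw}}(\riem_k)$. Taking $\partial$ and $\overline{\partial}$ gives $(2)$ and $(1)$. For $(3)$, note that $\mathbf{S}_k^{\mathrm{h}}d\omega_k$ depends only on $d\omega_k$, so we may replace $\omega_k$ by $\omega'$. The bridgeworthy condition forces $\mathbf{O}_{k,k'}\omega'$ to be locally constant on $\riem_{k'}$: on each connected component $\riem_{k'}^{(s)}$, every boundary curve of $\riem_{k'}^{(s)}$ carries the same constant boundary value of $\omega'$, so the harmonic extension into $\riem_{k'}^{(s)}$ is that constant. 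Hence $d\mathbf{O}_{k,k'}\omega'=0$, and Theorem~\ref{th:S_on_harmonic_measures}, which applies because one of $\riem_1,\riem_2$ is connected, yields $\mathbf{S}_k^{\mathrm{h}}d\omega_k=-\mathbf{S}_{k'}^{\mathrm{h}}d\mathbf{O}_{k,k'}\omega'=0$.

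To prove $(1)\Rightarrow (4)$, write $f:=\omega_k-\omega'\in\mathcal{D}(\riem_k)$, which is holomorphic since $\overline{\partial}f=0$. Because $d\omega_k\in\mathcal{A}_{\mathrm{hm}}(\riem_k)$, $\omega_k$ has constant CNT boundary values on each $\partial_j\riem_k$, and so does $\omega'$ by bridgeworthiness; thus $f$ has constant CNT boundary values on each boundary curve. By a Privalov-type boundary uniqueness theorem (equivalently, Schwarz reflection applied on the double of each connected component), a holomorphic function in $\mathcal{D}$ with constant CNT boundary value on a full boundary curve must be constant on the connected component containing that curve. Hence $f$ is locally constant, so $d\omega_k=d\omega'\in\mathcal{A}_{\mathrm{bw}}(\riem_k)$. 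The implication $(2)\Rightarrow (4)$ is identical after observing that $\overline{\omega_k-\omega''}$ is holomorphic with constant CNT boundary values whenever $\partial\omega_k=\partial\omega''$.

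Finally, for $(3)\Rightarrow (4)$, combining Theorems~\ref{th:jump_derivatives} and~\ref{th:T_and_S_on_harmonic_measures} yields $d\mathbf{J}_k^q\omega_k\big|_{\riem_l}=\mathbf{R}_l^{\mathrm{h}}\mathbf{S}_k^{\mathrm{h}}d\omega_k$ for $l=1,2$, so $(3)$ implies $\mathbf{J}_k^q\omega_k$ is locally constant on $\riem_1\cup\riem_2$; equivalently, $\dot{\mathbf{J}}_{k,1}\dot\omega_k=\dot{\mathbf{J}}_{k,2}\dot\omega_k=0$. Taking $k=1$ (the other case is symmetric), if $\riem_2$ is connected, Theorem~\ref{th:Overfare_with_correction_functions}(2) gives $\dot{\mathbf{O}}_{2,1}\dot{\mathbf{J}}_{1,2}\dot\omega_1=\dot{\mathbf{J}}_{1,1}\dot\omega_1-\dot\omega_1$, which forces $\dot\omega_1=0$; hence $\omega_1$ is locally constant and $d\omega_1=0\in\mathcal{A}_{\mathrm{bw}}(\riem_1)$. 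If instead $\riem_1$ is connected, the analogous formula with $1$ and $2$ interchanged gives $\dot{\mathbf{O}}_{1,2}\dot{\mathbf{J}}_{1,1}\dot\omega_1-\dot{\mathbf{J}}_{1,2}\dot\omega_1=\dot{\mathbf{O}}_{1,2}\dot\omega_1$, so $\dot{\mathbf{O}}_{1,2}\dot\omega_1=0$, meaning $\mathbf{O}_{1,2}\omega_1$ is locally constant on $\riem_2$; running the bridgeworthy argument of the second paragraph in reverse then shows the boundary values of $\omega_1$ agree on bridged pairs, i.e., $\omega_1$ is bridgeworthy, giving $(4)$. The main obstacle is precisely this last step: one must apply the overfared jump formula in the direction appropriate to which surface is connected, and the two cases produce structurally different intermediate conclusions ($d\omega_k=0$ versus bridgeworthiness of $\omega_k$) that must both be verified to collapse to $d\omega_k\in\mathcal{A}_{\mathrm{bw}}(\riem_k)$.
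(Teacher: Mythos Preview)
Your proof is correct and is organized more cleanly than the paper's. The key simplification is your direct use of the identity $d\mathbf{J}_k^q\omega_k\big|_{\riem_l}=\mathbf{R}_l^{\mathrm{h}}\mathbf{S}_k^{\mathrm{h}}d\omega_k$ (obtained by combining Theorems~\ref{th:jump_derivatives} and~\ref{th:T_and_S_on_harmonic_measures}), which lets you pass from $(3)$ straight to the vanishing of $\dot{\mathbf{J}}_{k,1}\dot\omega_k$ and $\dot{\mathbf{J}}_{k,2}\dot\omega_k$ and then apply Theorem~\ref{th:Overfare_with_correction_functions} once. The paper instead routes through Corollary~\ref{co:bridgeworthy_TFAE}, proving $(1)\Leftrightarrow(3)$ and $(2)\Leftrightarrow(3)$ by separate case analyses, and in the case $\riem_1$ connected first passes to the overfared side via Theorem~\ref{th:S_on_harmonic_measures} and then applies the jump formula to $\mathbf{O}_{1,2}\omega_1$ rather than to $\omega_1$ directly. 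Your $(4)\Rightarrow(3)$ via Theorem~\ref{th:S_on_harmonic_measures} is also neater than the paper's $(1)\Rightarrow(3)$. Both proofs share the same Schwarz-reflection step for $(1)\Rightarrow(4)$ (your Privalov remark is justified since $\omega_k$ and $\omega'$, being harmonic with constant boundary values, extend continuously to the closure, so the reflection argument on the double applies as in the paper). The paper's approach has the minor advantage of making the equivalences in Corollary~\ref{co:bridgeworthy_TFAE} explicit along the way, but your structure isolates the essential mechanism more transparently.
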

\begin{proof}  We assume throughout that $k=1$. It is then necessary and  sufficient to show the equivalence for both the cases that $\riem_1$ is connected and that $\riem_2$ is connected. The case $k=2$ is obtained by symmetry. 

  We first show that (1) implies (3); assume that (1) holds. {If $\riem_2$ is connected, then $\omega_1$ is constant, so $d\omega_1=0$ and (3) follows trivially.  Now assume that $\riem_1$ is connected.}   Let $H \in \mathcal{D}_{\mathrm{bw}}(\riem_1)$ be such that $\overline{\partial} H= \overline{\partial} \omega_1$.  Then $\mathbf{O}_{1,2} H$ is constant on connected components of $\riem_2$, so by Theorem \ref{th:J_same_both_sides}, $\dot{\mathbf{J}}_{1,2} H = - \dot{\mathbf{J}}_{2,2} \dot{\mathbf{O}}_{1,2} \dot{H}$ =0.  So by Theorem \ref{th:jump_derivatives}
  \[  \mathbf{T}_{1,2} \overline{\partial} H + \overline{\mathbf{R}}_1 \overline{\mathbf{S}}_1 \overline{\partial} H = d \dot{\mathbf{J}}_{1,2} \dot{H} = 0.   \]
  Since the holomorphic and anti-holomorphic parts must both be zero, we have $\mathbf{T}_{1,2} \overline{\partial} \omega_1 = 0$ and $\overline{\mathbf{R}}_1 \overline{\mathbf{S}}_1 \overline{\partial} \omega_1 =0$.  The latter implies that  $\overline{\mathbf{S}}_1 \overline{\partial} \omega_1 =0$ by analytic continuation.  The former together with Corollary \ref{co:bridgeworthy_TFAE} to $\overline{\omega}_1$, implies that
  \[  \mathbf{R}_1 \mathbf{S}_1 \partial \omega_1 =  0.  \]
  Hence $\mathbf{S}_1 \partial \omega_1 =0$ and therefore $\mathbf{S}_1^\mathrm{h} d\omega_1 =0$. This shows that (1) implies (3).  A similar argument shows that (2) implies (3). 
  
 {Now assume that (3) holds, and that $\riem_2$ is connected.} We will show that both (1) and (2) hold. Since holomorphic and anti-holomorphic parts of $\mathbf{S}_1^\mathrm{h} d\omega_1$ are zero, we have $\mathbf{S}_1 \partial \omega_1 = 0$ and 
  $\overline{\mathbf{S}}_1 \overline{\partial} \omega_1=0$.  By Corollary \ref{co:bridgeworthy_TFAE}, we also have that 
\[  \mathbf{T}_{1,2} \overline{\partial} \omega_1 = 0\ \ \ \text{and} \ \ \ \overline{\mathbf{T}}_{1,2} \partial \omega_1 =0.  \]
Here, to show the left equation, we have applied the equivalence of parts (1) and (3) of Corollary \ref{co:bridgeworthy_TFAE} directly; whereas to show the right equation, we applied the conjugates of the equivalence of parts (1) and (3) to $\overline{\omega_1}$ to see that 
\[ 0 = \overline{\mathbf{S}}_1 \overline{\partial} \omega_1   =  \overline{\mathbf{S}_1 \partial \overline{\omega_1}} \ \ \Rightarrow  \ \ 0 = \overline{\mathbf{T}_{1,2} \overline{\partial} \overline{\omega_1}}  =\overline{\mathbf{T}}_{1,2} \partial \omega_1.  \]

We thus have that
\[ d \mathbf{J}^q_{1,2} \omega_1 = \mathbf{T}_{1,2} \overline{\partial} \omega_1 + \overline{\mathbf{R}}_1 \overline{\mathbf{S}}_1 \overline{\partial} \omega_1 = 0,  \]
and so $\mathbf{J}^q_{1,2} \omega_1$ is constant on connected components of $\riem_2$. Hence $H = \mathbf{O}_{2,1} \mathbf{J}_{1,2}^q \omega_1$ is bridgeworthy.  Applying Theorem \ref{th:Overfare_with_correction_functions} yields 
\[  \dot{\mathbf{J}}_{1,1} \dot{\omega}_1 - \dot{H} = \dot{\omega}_1,  \]
from which we obtain
\[  d \dot{\mathbf{J}}_{1,1} \dot{\omega}_1 - d \dot{H}  = d \dot{\omega}_1.   \]
Now Theorems \ref{th:jump_derivatives} and \ref{th:S_on_harmonic_measures} yield that
\[ d \dot{\mathbf{J}}_{1,1} \dot{\omega}_1 =\partial \dot{\omega}_1 + \mathbf{T}_{1,1} \overline{\partial} \dot{\omega}_1 + \overline{\mathbf{R}}_1 \overline{\mathbf{S}}_1 \overline{\partial} \dot{\omega}_1 = 0,  \]
and inserting this in the above equation we obtain that $\overline{\partial} \omega_1 = - \overline{\partial} H$. This proves that (3) implies (1) {in the case that $\riem_2$ is connected.} 
A similar argument, using the fact that $\overline{\mathbf{J}}_{1,2}^q \omega_1$ is constant on connected components of $\riem_2$ shows that $\partial \omega_1 = - \partial G$ 
where $G = \mathbf{O}_{2,1} \mathbf{J}_{1,2}^q \omega_1$, so (3) implies (2) {in the case that $\riem_2$ is connected.}

{Now assume that $\riem_1$ is connected and that (3) holds.  We will show that (4) holds. We have that $\mathbf{S}_1^h d\omega_1 = 0$.  Then by Theorem \ref{th:S_on_harmonic_measures} $\mathbf{S}_2^h d\mathbf{O}_{2,1} \omega_1 =0$.  Thus
\[   \mathbf{S}_2 \partial \mathbf{O}_{1,2} \omega_1 = 0 \ \ \ \text{and} \ \ \ \overline{\mathbf{S}}_2 \overline{\partial} \mathbf{O}_{1,2} \omega_1 = 0.    \]
Applying Corollary \ref{co:bridgeworthy_TFAE} parts (1)--(3) and Theorem \ref{th:jump_derivatives} we see that
\[ d {\mathbf{J}}_{2,1} {\mathbf{O}}_{1,2} \omega_1 = \mathbf{T}_{2,1} \overline{\partial} \mathbf{O}_{2,1} \omega_1 + \overline{\mathbf{R}}_1
 \overline{\mathbf{S}}_2  \overline{\partial} \mathbf{O}_{1,2} \omega_1 =0. \]
Thus $\mathbf{J}_{2,1} \mathbf{O}_{1,2}$ is locally constant.  Similarly
\[   d \mathbf{J}_{2,2} \mathbf{O}_{1,2} \omega_1 = \overline{\partial} \mathbf{O}_{1,2} \omega_1 + \mathbf{T}_{2,2} \overline{\partial} \mathbf{O}_{1,2} \omega_1 + \overline{\mathbf{R}}_2 \overline{\mathbf{S}}_2 \overline{\partial} \mathbf{O}_{1.2} \omega_1 =0    \]
so $\mathbf{J}_{2,2} \mathbf{O}_{1,2} \omega$ is constant.  Thus by Theorem 
\ref{th:Overfare_with_correction_functions} part (2) we see that
\[ 0= - \dot{\mathbf{O}}_{1,2} \dot{\mathbf{J}}_{2,1} \dot{\mathbf{O}}_{1,2} \dot{\omega}_1 + \dot{\mathbf{J}}_{2,2} \dot{\mathbf{O}}_{1,2} \dot{\omega}_1 = \dot{\mathbf{O}}_{1,2} \dot{\omega}_1. \]
Thus $\mathbf{O}_{1,2} \omega$ is constant, that is, $\omega_1$ is bridgeworthy.  Thus (3) implies (4) in the case that $\riem_1$ is connected. 
 It is obvious that (4) implies (1) and (4) implies (2) independently of the connectivity assumption.  
 
 In summary, we have shown the equivalence of (1), (2), and (3), and furthermore that (4) implies (1) and (2).  It remains to show that (1) implies (4).   }
Assuming that (1) holds, we have that $\omega_1 = \tilde{\omega_1} + h$ where $\tilde{\omega_1}$ is bridgeworthy and $h$ is holomorphic on $\riem_1$.  So $h=\omega_1-\tilde{\omega_1}$ has constant boundary values on $\partial \riem_1$. Fix a connected component $\riem_1^0$ and treat it as a subset of its double, so that the boundary is an analytic curve. We have that by Schwarz reflection $h$ extends to a holomorphic function on a neighbourhood of $\partial \riem_1^0$. Since $h$ is constant on the boundary it is constant on $\riem_1^0$.
{We have shown that $\omega_1 = \tilde{\omega_1} + c$ where $c$ is constant on connected components and $\tilde{\omega_1}$ is bridgeworthy; thus $d\omega_1 = d \tilde{\omega_1} \in \mathcal{A}_{\mathrm bw}(\riem_1)$.
}      
\end{proof}

\end{subsection}
\end{section} 
\begin{section}{Schiffer operators: cohomology and index theorems} \label{se:index_cohomology}
\begin{subsection}{Schiffer operators and cohomology}  \label{se:Schiffer_cohomology}

 In this section, we investigate the effect of the Schiffer operators with the cohomology classes of the one-forms on $\riem_1$ and $\riem_2$. The main tool to do so is the overfared jump formula. The results of this section will be used in the next two sections in determining their kernels, cokernels, and Fredholm indices.   

 We begin by characterizing the kernels and images of the restriction operator $\mathbf{R}_k$ and its adjoint $\mathbf{S}_k$.  

 \begin{theorem}  \label{th:S_kernel_range}   The Schiffer operators $\mathbf{R}_k$ and $\mathbf{S}_k$ satisfy
  \begin{align*} 
    \mathrm{Ker}(\mathbf{R}_k) & = \{0\} \\
    \mathrm{Im}(\mathbf{S}_k) & = \mathcal{A}(\mathscr{R}) \\
    \mathrm{Ker}(\mathbf{S}_k) & = [\mathbf{R}_k \mathcal{A}(\mathscr{R}) ]^\perp 
  \end{align*}
  for $k=1,2$.  The image of $\mathbf{R}_k$ is a $g$-dimensional subspace. 
  The corresponding statements hold for the complex conjugates.
 \end{theorem}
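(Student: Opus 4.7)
The strategy is to reduce the four assertions to the adjoint identity $\mathbf{R}_k^* = \mathbf{S}_k$ from Theorem \ref{th:adjoint_identities} together with the finite-dimensionality of $\mathcal{A}(\mathscr{R})$, which has dimension $g$. The genus-zero case can be dismissed first: by Example \ref{ex:sphere_kernels} one has $\mathbf{S}_k = 0$ and $\mathcal{A}(\mathscr{R}) = \{0\}$, so every assertion holds trivially (in particular, the image of $\mathbf{R}_k$ is $0$-dimensional, matching $g=0$, and $[\mathbf{R}_k \mathcal{A}(\mathscr{R})]^\perp = \{0\}^\perp = \mathcal{A}(\riem_k) = \mathrm{Ker}(\mathbf{S}_k)$).

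Assume now $g > 0$. I would first establish that $\mathbf{R}_k$ is injective: if $\alpha \in \mathcal{A}(\mathscr{R})$ vanishes on the nonempty open set $\riem_k \subseteq \mathscr{R}$, then by unique continuation for holomorphic one-forms on the connected compact surface $\mathscr{R}$, $\alpha$ vanishes identically. This gives $\mathrm{Ker}(\mathbf{R}_k) = \{0\}$. Since $\dim \mathcal{A}(\mathscr{R}) = g$, injectivity immediately implies that $\mathbf{R}_k \mathcal{A}(\mathscr{R})$ is a $g$-dimensional subspace of $\mathcal{A}(\riem_k)$, establishing the final assertion.

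For the two remaining identities I would invoke the standard Hilbert-space facts $\mathrm{Ker}(T^*) = \mathrm{Im}(T)^\perp$ and $\overline{\mathrm{Im}(T^*)} = \mathrm{Ker}(T)^\perp$ for a bounded operator $T$ between Hilbert spaces. Applied to $T = \mathbf{R}_k$ with $\mathbf{R}_k^* = \mathbf{S}_k$, the first yields at once $\mathrm{Ker}(\mathbf{S}_k) = [\mathbf{R}_k \mathcal{A}(\mathscr{R})]^\perp$, while the second yields $\overline{\mathrm{Im}(\mathbf{S}_k)} = \{0\}^\perp = \mathcal{A}(\mathscr{R})$. Since the target $\mathcal{A}(\mathscr{R})$ is finite-dimensional, every subspace is closed and the closure is superfluous, so $\mathrm{Im}(\mathbf{S}_k) = \mathcal{A}(\mathscr{R})$. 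The complex-conjugate statements follow by applying complex conjugation to the entire argument, using that the Bergman inner product pairs $\mathcal{A}$ with $\overline{\mathcal{A}}$ symmetrically.

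No step presents a genuine obstacle; the only delicate point is that the adjoint identity of Theorem \ref{th:adjoint_identities} is stated only for $g > 0$, which is precisely why the genus-zero case must be handled separately, and that unique continuation requires the underlying compact surface $\mathscr{R}$ to be connected (a standing convention in the paper).
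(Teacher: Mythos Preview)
Your proof is correct and follows essentially the same route as the paper: injectivity of $\mathbf{R}_k$ by analytic continuation, then the adjoint identity $\mathbf{R}_k^* = \mathbf{S}_k$ from Theorem~\ref{th:adjoint_identities} combined with the standard Hilbert-space relations between $\mathrm{Ker}(T^*)$, $\mathrm{Im}(T)$, and finite-dimensionality of $\mathcal{A}(\mathscr{R})$. Your explicit separation of the genus-zero case is a useful clarification, since Theorem~\ref{th:adjoint_identities} is indeed stated only for $g>0$; the paper's proof glosses over this point.
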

 \begin{proof} 
  The first statement is proven using analytic continuation.  The second statement follows from the first, and {Theorem \ref{th:adjoint_identities} which in turn yields that $\text{Im}(\mathbf{S}_k) = [\text{Ker} \, \mathbf{R}_k]^\perp$.}  The remaining statements are elementary.
 \end{proof}
 
 This yields the following:
 \begin{theorem}   \label{th:S_an_isomorphism} We have
  \begin{align*}
      \mathbf{S}_k \mathbf{R}_k : \mathcal{A}(\mathscr{R}) & \rightarrow \mathcal{A}(\mathscr{R}) \\
      \overline{\mathbf{S}}_k \overline{\mathbf{R}}_k : \overline{\mathcal{A}(\mathscr{R})} & \rightarrow \overline{\mathcal{A}(\mathscr{R})}
  \end{align*}
  are isomorphisms.
 \end{theorem}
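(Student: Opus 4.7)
The plan is to reduce this to a two-line computation using the adjoint identity $\mathbf{R}_k^* = \mathbf{S}_k$ from Theorem \ref{th:adjoint_identities} together with the injectivity of $\mathbf{R}_k$ from Theorem \ref{th:S_kernel_range}, plus the fact that $\mathcal{A}(\mathscr{R})$ is finite-dimensional (of dimension $g$, the genus of $\mathscr{R}$).

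First I would dispose of the genus-zero case: if $g=0$ then $\mathscr{R}$ is biholomorphic to $\sphere$, so $\mathcal{A}(\mathscr{R}) = \{0\}$ and both statements are vacuously true. (This also avoids the hypothesis in Theorem \ref{th:adjoint_identities} that $g>0$.)

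For the main case $g>0$, the key observation is that for $\alpha \in \mathcal{A}(\mathscr{R})$, using $\mathbf{S}_k = \mathbf{R}_k^*$, we have
\[
   \langle \mathbf{S}_k \mathbf{R}_k \alpha, \alpha \rangle_{\mathcal{A}(\mathscr{R})} = \langle \mathbf{R}_k \alpha, \mathbf{R}_k \alpha \rangle_{\mathcal{A}(\riem_k)} = \| \mathbf{R}_k \alpha \|^2.
\]
Hence if $\mathbf{S}_k \mathbf{R}_k \alpha = 0$, then $\mathbf{R}_k \alpha = 0$, and by the injectivity of $\mathbf{R}_k$ established in Theorem \ref{th:S_kernel_range}, $\alpha = 0$. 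Thus $\mathbf{S}_k \mathbf{R}_k$ is injective on the finite-dimensional space $\mathcal{A}(\mathscr{R})$, which forces it to be a linear bijection, hence an isomorphism (topologically as well, since the space is finite-dimensional).

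The corresponding statement for $\overline{\mathbf{S}}_k \overline{\mathbf{R}}_k$ follows either by running the identical argument with bars throughout, or simply by taking complex conjugates of the previous statement using the definition $\overline{\mathbf{M}}\overline{\alpha} = \overline{\mathbf{M}\alpha}$. There should be no real obstacle here — the only subtlety is remembering to check the $g=0$ case separately, since the adjoint identity $\mathbf{R}_k^* = \mathbf{S}_k$ is only asserted under $g>0$ in Theorem \ref{th:adjoint_identities}.
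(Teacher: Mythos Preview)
Your proof is correct. Both your argument and the paper's hinge on the same two ingredients from Theorems \ref{th:adjoint_identities} and \ref{th:S_kernel_range}, namely $\mathbf{S}_k = \mathbf{R}_k^*$ and $\mathrm{Ker}(\mathbf{R}_k)=\{0\}$, but the logic is run in opposite directions. The paper first proves surjectivity of $\mathbf{S}_k\mathbf{R}_k$ directly (using that $\mathbf{S}_k$ is onto and that its kernel is exactly $[\mathbf{R}_k\mathcal{A}(\mathscr{R})]^\perp$), and then deduces injectivity from self-adjointness via $\mathrm{Ker}(\mathbf{S}_k\mathbf{R}_k) = (\mathrm{Im}\,\mathbf{S}_k\mathbf{R}_k)^\perp = \{0\}$. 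You instead prove injectivity first via the positivity identity $\langle \mathbf{S}_k\mathbf{R}_k\alpha,\alpha\rangle = \|\mathbf{R}_k\alpha\|^2$, and then invoke finite-dimensionality of $\mathcal{A}(\mathscr{R})$ to get surjectivity. Your route is slightly shorter and makes the positive-semidefiniteness of $\mathbf{S}_k\mathbf{R}_k$ explicit; the paper's route avoids appealing to finite-dimensionality and would generalize verbatim if $\mathcal{A}(\mathscr{R})$ were infinite-dimensional. Your separate handling of $g=0$ is a nice touch that the paper leaves implicit.
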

 \begin{proof} It is enough to prove the first claim. 
  By Theorem \ref{th:S_kernel_range}, we have that $\mathbf{S}_k$ is surjective.  Thus for any $y \in \mathcal{A}(\mathscr{R})$ there is a $u \in \mathcal{A}(\riem_k)$ such that $\mathbf{S}_k u = y$.  Writing $u=v+w$ in terms of the orthogonal decomposition $\mathcal{A}(\riem_k) = [\mathbf{R}_k\mathcal{A}(\mathscr{R})]^\perp \oplus [\mathbf{R}_k \mathcal{A}(\mathscr{R})]$, since by Theorem \ref{th:S_kernel_range} we also have that $\mathbf{S}_k v=0$, we see that 
  \[ y = \mathbf{S}_k u = \mathbf{S}_k w   \]
  so $y \in \mathrm{Im}(\mathbf{S}_k \mathbf{R}_k),$ and thus $\mathbf{S}_k \mathbf{R}_k$ is surjective. Now since $\text{Ker}(\mathbf{S}_k \mathbf{R}_k )=(\mathrm{Im} (\mathbf{S}_k \mathbf{R}_k ))^{\perp}= \mathcal{A}(\mathscr{R}) ^{\perp}=0 $, we also have that $\mathbf{S}_k \mathbf{R}_k$ is injective.
  
 \end{proof}
  In what follows, we will apply the identities of Section \ref{se:Schiffer_Cauchy} to investigate how the Schiffer operators affect the cohomology classes of the one-forms to which they are applied.  The spaces
  \[  [\mathbf{R}_k \mathcal{A}(\mathscr{R})]^\perp = \{   {\alpha} \in \mathcal{A}(\riem_k) : (\alpha,{R}_k \beta) =0 \ \ \ \  \forall \beta \in \mathcal{A}(\mathscr{R}) \}= 
  [\mathbf{R}_k \mathcal{A}(\mathscr{R})]^\perp   \]
  and their complex conjugates
 \[   [\overline{\mathbf{R}}_k \overline{\mathcal{A}(\mathscr{R})}]^\perp  
    =  \{   \overline{\alpha} \in \overline{\mathcal{A}(\riem_k)} : (\overline{\alpha},{R}_k \overline{\beta}) =0 \ \ \ \  \forall \overline{\beta} \in \overline{\mathcal{A}(\mathscr{R})} \}  \]
  introduced in the proof above will play an important role.    
  \begin{remark}
    Throughout,  $[\mathbf{R}_k \mathcal{A}(\mathscr{R})]^\perp$ will always refer to the orthogonal complement in $\mathcal{A}(\riem_k)$ rather that in $\mathcal{A}_{\mathrm{harm}}(\riem_k)$, and similarly for $[\overline{\mathbf{R}}_k \overline{\mathcal{A}(\mathscr{R})}]^\perp$.  
  \end{remark}

     By a capped surface, we mean the special case that $\riem_1$ consists of $n$ simply-connected domains.   We say that $\riem_2$ is capped by $\riem_1$. 
    
    Recalling  $\mathbf{O}^\mathrm{e}_{2,1}$ from {Definition \ref{def:exact overfare}} and the projection operator \[  \overline{\mathbf{P}}_{1}: \mathcal{A}_{\mathrm{harm}}(\riem_1) \rightarrow \overline{\mathcal{A}(\riem_1)}, \] we have the following theorem of   M. Shirazi \cite{Shirazi_thesis,Schippers_Shirazi_Staubach}. 
  \begin{theorem} \label{th:Mohammad_isomorphism}
    Assume that $\riem_2$ is capped by $\riem_1$.  Then 
   \[\mathbf{T}_{1,2}([\overline{\mathbf{R}}_1 \overline{\mathcal{A}(\mathscr{R})}]^\perp  ) = 
   \mathcal{A}^\mathrm{e}(\riem_2) \]
   and 
   $\mathbf{T}_{1,2}:[\overline{\mathbf{R}}_1 \overline{\mathcal{A}(\mathscr{R})}]^\perp   \rightarrow \mathcal{A}^\mathrm{e}(\riem_2)$ is an isomorphism with inverse $-\overline{\mathbf{P}}_1 \mathbf{O}^\mathrm{e}_{2,1}$. In particular, 
   \[ \mathrm{Im}(\overline{\mathbf{P}}_1\mathbf{O}_{2,1}^{\mathrm e}) = [\overline{\mathbf{R}}_1 \overline{\mathcal{A}(\mathscr{R})}]^\perp.  \]
  \end{theorem}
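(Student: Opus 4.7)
The plan is to combine Theorem \ref{th:S_kernel_range}, which gives $[\overline{\mathbf{R}}_1 \overline{\mathcal{A}(\mathscr{R})}]^\perp = \operatorname{Ker}(\overline{\mathbf{S}}_1)$, with the fact that each connected component of $\riem_1$ is simply connected, so every $\overline{\alpha} \in \overline{\mathcal{A}(\riem_1)}$ is exact. For $\overline{\alpha} \in [\overline{\mathbf{R}}_1 \overline{\mathcal{A}(\mathscr{R})}]^\perp$ I write $\overline{\alpha} = \overline{\partial} H$ with $H$ antiholomorphic on $\riem_1$ (so $\partial H = 0$). Since $\overline{\mathbf{S}}_1 \overline{\alpha} = 0$, Theorem \ref{th:jump_derivatives} gives $d \mathbf{J}^q_{1,2} H = \mathbf{T}_{1,2}\overline{\alpha}$ on $\riem_2$, so the image lands in $\mathcal{A}^{\mathrm{e}}(\riem_2)$.

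For surjectivity and the explicit inverse, given $\beta = df \in \mathcal{A}^{\mathrm{e}}(\riem_2)$ with $f \in \mathcal{D}(\riem_2)$ holomorphic, I set $h := \mathbf{O}_{2,1}f$ and propose $\tilde{\alpha} := -\overline{\partial}h$ as preimage. By Definition \ref{def:exact overfare}, $\mathbf{O}^{\mathrm{e}}_{2,1}\beta = d\mathbf{O}_{2,1} f = dh$, hence $-\overline{\mathbf{P}}_1 \mathbf{O}^{\mathrm{e}}_{2,1}\beta = -\overline{\partial}h = \tilde{\alpha}$, matching the stated inverse formula. Choosing $q \in \riem_1$, Theorem \ref{th:jump_on_holomorphic} yields $\mathbf{J}^q_{2,2} f = f$ and $\mathbf{J}^q_{2,1} f = 0$. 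Since $\riem_2$ is connected (implicit in the capped setting), Theorem \ref{th:J_same_both_sides}(2) originating from $\riem_2$ gives $\dot{\mathbf{J}}_{1}(\Gamma)\dot{h} = -\dot{\mathbf{J}}_{2}(\Gamma)\dot{f}$, so $\mathbf{J}^q_{1,2}h \equiv -f$ modulo constants while $\mathbf{J}^q_{1,1}h$ is locally constant. Theorem \ref{th:jump_derivatives} then gives $\mathbf{T}_{1,2}\tilde{\alpha} = -\partial \mathbf{J}^q_{1,2} h = \partial f = \beta$ and $\overline{\mathbf{S}}_1 \tilde{\alpha} = -\overline{\partial} \mathbf{J}^q_{1,1} h = 0$, so $\tilde{\alpha} \in \operatorname{Ker}(\overline{\mathbf{S}}_1) = [\overline{\mathbf{R}}_1 \overline{\mathcal{A}(\mathscr{R})}]^\perp$.

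For injectivity, suppose $\overline{\alpha} \in \operatorname{Ker}(\overline{\mathbf{S}}_1)$ satisfies $\mathbf{T}_{1,2}\overline{\alpha} = 0$ and write $\overline{\alpha} = \overline{\partial}H$ with $H$ antiholomorphic on $\riem_1$. By Theorem \ref{th:jump_derivatives}, both holomorphic and antiholomorphic parts of $d\mathbf{J}^q_{1,2}H$ vanish on $\riem_2$, so $\mathbf{J}^q_{1,2}H$ is locally constant there. The overfared jump formula (Theorem \ref{th:Overfare_with_correction_functions}(2), applicable because $\riem_2$ is connected) then yields $\dot{\mathbf{J}}_{1,1}\dot{H} = \dot{H}$, hence $d\mathbf{J}^q_{1,1}H = dH$. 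Comparing antiholomorphic parts, the left side equals $\overline{\mathbf{R}}_1 \overline{\mathbf{S}}_1\overline{\alpha} = 0$ by Theorem \ref{th:jump_derivatives} while the right side equals $\overline{\alpha}$, forcing $\overline{\alpha} = 0$. The main obstacle throughout is the bookkeeping of constants forced by the disconnectedness of $\riem_1$ in the capped setting: every application of Theorems \ref{th:J_same_both_sides} and \ref{th:Overfare_with_correction_functions} must be read in the dotted (modulo locally constant) category, but this ambiguity is always annihilated whenever one subsequently applies $d$, $\partial$, or $\overline{\partial}$.
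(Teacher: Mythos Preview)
Your proof is correct and follows essentially the same route as the paper's: exactness via $d\mathbf{J}^q_{1,2}H$ and Theorem \ref{th:S_kernel_range}, surjectivity by overfaring a holomorphic primitive of $\beta$ and invoking Theorems \ref{th:J_same_both_sides} and \ref{th:jump_on_holomorphic}, and the inverse identification via the overfared jump formula. The only organizational difference is that the paper establishes $-\overline{\mathbf{P}}_1\mathbf{O}^{\mathrm e}_{2,1}$ as a \emph{left} inverse (thereby obtaining injectivity directly), whereas you exhibit it as a right inverse in the surjectivity step and prove injectivity separately by the argument that later reappears as Theorem \ref{th:general_T12_kernel_on_perp}; both amount to the same computation.
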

  \begin{proof}
   We show that $\mathbf{T}_{1,2}([\overline{\mathbf{R}}_1 \overline{\mathcal{A}(\mathscr{R})}]^\perp) \subseteq  
   \mathcal{A}^\mathrm{e}(\riem_2)$.  Since each connected component of $\riem_1$ is simply connected, for any $\overline{\alpha} \in [\overline{\mathbf{R}}_1 \overline{\mathcal{A}(\mathscr{R})}]^\perp$, there is an $H \in \overline{\mathcal{D}(\riem_1)}$ such that $\overline{\alpha} =\overline{\partial} H$. Since by Theorem \ref{th:S_kernel_range} $\overline{\mathbf{S}}_1 \overline{\alpha} =0$, Theorem   \ref{th:jump_derivatives} yields that
   \[ \mathbf{T}_{1,2} \overline{\alpha} = \mathbf{T}_{1,2} \overline{\alpha}  + \overline{\mathbf{S}}_1 \overline{\alpha} = d \mathbf{J}^q_{1,2} H \in \mathcal{A}^\mathrm{e}(\riem_2).   \]
   Note that the computation is valid for any fixed value of $q$.
   
   To show that it is onto, let ${\beta} \in \mathcal{A}^\mathrm{e}(\riem_2)$, so that there is some $h \in \mathcal{D}(\riem_2)$ such that $\partial h = \beta$. 
   Setting $H= - \mathbf{O}_{2,1} h$, we have by Theorem \ref{th:J_same_both_sides} 
   \[  \dot{\mathbf{J}}_{1,2} \dot{\mathbf{O}}_{1,2} H = - \dot{\mathbf{J}}_{1,2} \dot{\mathbf{O}}_{2,1} h = + \dot{\mathbf{J}}_{2,2}  h = \dot{h}. \]
    where we have used Theorem \ref{th:jump_on_holomorphic}.  
   Since $\beta = \partial h = dh$, Theorem
   \ref{th:jump_derivatives} yields
   \[  \beta = d \dot{\mathbf{J}}_{1,2} \dot{H} = \mathbf{T}_{1,2} \overline{\partial} H + \overline{\mathbf{R}}_2 \overline{\mathbf{S}}_1 \overline{\partial} H.  \]
   Since the left hand side is holomorphic, $\overline{\mathbf{R}}_2 \overline{\mathbf{S}}_1 \overline{\partial} H=0$, so by Theorem \ref{th:S_kernel_range} $\overline{\partial} H \in
  [\overline{\mathbf{R}}_1 \overline{\mathcal{A}(\mathscr{R})}]^\perp$.
  
  Next we show that $\mathbf{T}_{1,2}$ is injective on $[\overline{\mathbf{R}}_1 \overline{\mathcal{A}(\mathscr{R})}]^\perp$. Let $\overline{\alpha}  \in [\overline{\mathbf{R}}_1 \overline{\mathcal{A}(\mathscr{R})}]^\perp$. Again since the components of $\riem_1$ are simply connected, we can assume that $\overline{\alpha} = \overline{\partial} H$ for some $H \in \overline{\mathcal{D}(\riem_1)}$. By Theorem
  \ref{th:Overfare_with_correction_functions} 
  \[   \dot{\mathbf{O}}_{2,1} \dot{\mathbf{J}}_{12}  \dot{H} = \dot{\mathbf{J}}_{1,1} \dot{H} - \dot{H}
         \]
  so differentiating and applying Theorems \ref{th:jump_derivatives}
  and \ref{th:S_kernel_range} we obtain
  \[  \mathbf{O}^e_{1,2} \mathbf{T}_{1,2} \overline{\alpha}  = d \mathbf{J}^q_{1,1} H = \partial H + \mathbf{T}_{1,1} \overline{\partial} H - dH = -\overline{\alpha} + \mathbf{T}_{1,1} \overline{\alpha}.   \]
  Thus $-\overline{\mathbf{P}}_1\mathbf{O}^\mathrm{e}_{2,1}$ is a left inverse for the restriction of $\mathbf{T}_{1,2}$ to $[\overline{\mathbf{R}}_1 \overline{\mathcal{A}(\mathscr{R})}]^\perp$. This proves injectivity, and since we already have surjectivity and boundedness, the restriction of $\mathbf{T}_{1,2}$ is invertible with inverse $-\overline{\mathbf{P}}_1\mathbf{O}^\mathrm{e}_{2,1}$ as claimed.
  \end{proof}
 
 We will improve and extend this theorem in different ways below. 
 The following corollary and lemma allows us to make use of the jump formula to examine cohomology classes.  
 
\begin{corollary} \label{co:specify_internal_periods}
 Let $\riem$ be a Riemann surface of type $(g,n)$ with internal homology basis $\{\gamma_1,\ldots,\gamma_{2g}\}$. For any constants $\lambda_1,\ldots,\lambda_{2g} \in \mathbb{C}$ there is an $\alpha \in \mathcal{A}(\riem)$ such that
 \[  \int_{\gamma_k} \alpha =  \lambda_k,  \ \ \ \ \ k=1,\ldots,2g.    \]
 The same claim obviously holds for $\overline{\mathcal{A}(\riem)}$.  
\end{corollary}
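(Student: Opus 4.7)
The plan is to prove that the complex-linear period map
\[
\mathbf{\Upsilon}: \mathcal{A}(\riem) \longrightarrow \mathbb{C}^{2g}, \qquad \alpha \longmapsto \left(\int_{\gamma_k}\alpha\right)_{k=1}^{2g},
\]
is surjective, from which the conclusion follows by taking $\alpha$ to be any preimage of $(\lambda_1,\ldots,\lambda_{2g})$. Equivalently, I would construct a dual basis $\zeta_1,\ldots,\zeta_{2g}\in\mathcal{A}(\riem)$ with $\int_{\gamma_j}\zeta_k=\delta_{jk}$, and then set $\alpha=\sum_k\lambda_k\zeta_k$.

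For the dual basis, I would pass to the double $\riem^d$ (Definition \ref{defn:gn-type surface}), a compact Riemann surface of genus $g_d=2g+n-1$. The cycles $\gamma_1,\ldots,\gamma_{2g}$ remain $\mathbb{C}$-linearly independent in $H_1(\riem^d;\mathbb{C})$: the Mayer--Vietoris sequence for $\riem^d=\riem\cup_{\partial\riem}\sigma(\riem)$ shows that the kernel of the map $H_1(\riem)\to H_1(\riem^d)$ is generated by boundary-parallel classes, so the internal cycles inject. Because $\mathrm{cl}(\riem)\subset\riem^d$ is compact, every holomorphic one-form $\omega$ on $\riem^d$ restricts to an element of $\mathcal{A}(\riem)$ preserving periods over cycles lying in $\riem$. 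Thus existence of the dual basis reduces to showing the $2g$ functionals $\omega\mapsto\int_{\gamma_k}\omega$ on the $g_d$-dimensional space $\mathcal{A}(\riem^d)$ are $\mathbb{C}$-linearly independent, equivalently that the $2g\times g_d$ period matrix on $\{\gamma_k\}$ has rank $2g$.

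The rank step is a Hodge-theoretic consequence of the following chain of reasoning. Suppose $\gamma:=\sum_k c_k\gamma_k\in H_1(\riem^d;\mathbb{C})$ satisfies $\int_\gamma\omega=0$ for all $\omega\in\mathcal{A}(\riem^d)=H^{1,0}(\riem^d)$. Taking complex conjugates and using reality of the $\gamma_k$ yields $\int_{\bar\gamma}\eta=0$ for all $\eta\in\overline{\mathcal{A}(\riem^d)}=H^{0,1}(\riem^d)$, where $\bar\gamma=\sum_k\bar c_k\gamma_k$. Using the antiholomorphic involution $\sigma:\riem^d\to\riem^d$ fixing $\partial\riem$, a symmetrization argument combines these two vanishings to show that a suitable real combination of $\gamma_k$'s annihilates all of $H^1(\riem^d;\mathbb{C})=H^{1,0}\oplus H^{0,1}$, which by non-degeneracy of the Poincar\'e pairing forces it to vanish in $H_1(\riem^d;\mathbb{R})$; the $\mathbb{R}$-linear independence of the $\gamma_k$ then forces $c_k=0$. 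Inverting the resulting non-singular $2g\times 2g$ submatrix of periods and restricting to $\riem$ produces the $\zeta_k$.

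The main obstacle is the symmetrization step on complex coefficients $c_k$: a naive conjugation argument gives constraints on $\gamma$ and $\bar\gamma$ separately, not on $\gamma$ itself, and additional care (using either the $\sigma$-eigenspace decomposition of $\mathcal{A}(\riem^d)$ or a linear algebra argument on the Hodge components $\gamma_k^{1,0}\in H_1^{1,0}(\riem^d)$) is required to conclude $c_k=0$. In cases where this rank argument on $\mathcal{A}(\riem^d)$ degenerates on highly symmetric surfaces, the same conclusion can be reached more robustly by exploiting the infinite-dimensionality of $\mathcal{A}(\riem)$: one constructs additional forms with prescribed internal periods as Cauchy--Royden-type integrals against kernels with singularities placed in $\riem^d\setminus\mathrm{cl}(\riem)$, whose periods over the $\gamma_k$ can be computed explicitly by residues and combined with the forms coming from $\mathcal{A}(\riem^d)$ to achieve full rank.
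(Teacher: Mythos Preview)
Your approach via the double is quite different from the paper's, and the core step you yourself flag---the ``symmetrization'' argument---is not actually carried out. Knowing that $\gamma=\sum_k c_k\gamma_k$ annihilates $H^{1,0}(\riem^d)$ while $\bar\gamma$ annihilates $H^{0,1}(\riem^d)$ does not by itself force $\gamma=0$; you would need to show that the complex span of the internal cycles meets the $g_d$-dimensional subspace $(H^{0,1})^*\subset H_1(\riem^d;\mathbb{C})$ trivially, and this depends on how $\gamma_1,\ldots,\gamma_{2g}$ and their $\sigma$-images sit inside the homology and the period lattice of $\riem^d$. Your suggestions (the $\sigma$-eigenspace decomposition, or placing poles in $\riem^d\setminus\text{cl}(\riem)$) are reasonable directions, but as written neither is executed, so the proof is incomplete at exactly the point where the content lies.

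For comparison, the paper avoids the double entirely. It \emph{caps} $\riem$ by simply connected disks to obtain a compact surface $\mathscr{R}$ of genus $g$ (so $\riem_2=\riem$ and $\riem_1$ is the caps), takes the harmonic Hodge representative $\zeta=\xi+\overline{\eta}\in\mathcal{A}_{\mathrm{harm}}(\mathscr{R})$ with the prescribed periods, and then uses the Schiffer machinery to trade the antiholomorphic part for a holomorphic one: since $\overline{\mathbf{S}}_1\overline{\mathbf{R}}_1$ is an isomorphism (Theorem \ref{th:S_an_isomorphism}) one finds $\overline{\sigma}$ with $\overline{\mathbf{S}}_1\overline{\mathbf{R}}_1\overline{\sigma}=\overline{\eta}$, and because the caps are simply connected, Theorem \ref{th:jump_derivatives} gives that $\mathbf{T}_{1,2}\overline{\mathbf{R}}_1\overline{\sigma}+\overline{\mathbf{R}}_2\overline{\eta}$ is exact on $\riem_2$. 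Thus $\mathbf{R}_2\xi-\mathbf{T}_{1,2}\overline{\mathbf{R}}_1\overline{\sigma}\in\mathcal{A}(\riem)$ has exactly the periods of $\zeta$. The point is that capping makes the outer surface genus $g$ (so Hodge theory immediately gives the right number of periods), and the Schiffer comparison operator does the holomorphic correction; no rank computation on a larger period matrix is needed.
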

\begin{proof}  Sew on caps to $\riem$ to obtain a compact surface $\mathscr{R}$ of genus $g$, where $\riem_1$ are the caps and $\riem_2=\riem$.  So we prove the claim for $\riem= \riem_2$.  

 By the Hodge theorem applied to $\mathscr{R}$, there is a $\zeta = \xi + \overline{\eta} \in \mathcal{A}_{\text{harm}}(\mathscr{R})$ such that
 \[   \int_{\gamma_k} \zeta = \lambda_k      \]
 for $k=1,\ldots,n$.  Now since $\mathbf{S}_1 \mathbf{R}_1:\mathcal{A}(\mathscr{R}) \rightarrow \mathcal{A}(\mathscr{R})$ is an isomorphism by Theorem \ref{th:S_kernel_range},   there is a $\overline{\sigma} \in \overline{\mathcal{A}(\mathscr{R})}$ such that $\overline{\mathbf{S}}_1 \overline{\mathbf{R}}_1 \overline{\sigma} = \overline{\eta}$. 
 
 Since the components of $\riem_1$ are simply connected, $\overline{\mathbf{R}}_1 \overline{\sigma}$ is exact, so there is an $H \in \overline{\mathcal{D}_{\mathrm{harm}}(\riem_1)}$ such that $\overline{\partial} H = \overline{\mathbf{R}}_1 \overline{\sigma}$.  So 
 by Theorem \ref{th:jump_derivatives} we have 
 \[ \overline{\mathbf{R}}_2 \overline{\mathbf{S}}_1 \overline{\mathbf{R}}_1 \overline{\sigma}  + \mathbf{T}_{1,2} \overline{\mathbf{R}}_1 \overline{\sigma} = d \mathbf{J}^q_{1,2} H \in \mathcal{A}^\mathrm{e}_{\text{harm}}(\riem_2)  \]
 so 
 $\mathbf{R}_2 \xi - \mathbf{T}_{1,2} \overline{\mathbf{R}}_1 \overline{\sigma}$
 has the desired periods. 
\end{proof}

 \begin{lemma} \label{le:dbar_representative}
  Let $\riem$ be an arbitrary Riemann surface of type $(g,n)$.  Given any $\overline{\alpha} \in \overline{\mathcal{A}(\riem)}$, there is an $h \in \mathcal{D}_{\mathrm{harm}}(\riem)$ such that $\overline{\partial} h = \overline{\alpha}$.  Any other such $\tilde{h}$ is such that $h-\tilde{h} \in \mathcal{D}(\riem)$.  
  
  The corresponding statement holds for $\alpha \in \mathcal{A}(\riem)$, replacing $\overline{\partial}$ with $\partial$.  
 \end{lemma}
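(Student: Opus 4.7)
The plan is to separate the proof into uniqueness and existence, reducing the latter to a period-matching question for holomorphic one-forms in $\mathcal{A}(\riem)$.

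For uniqueness I would argue that if $h,\tilde h\in\mathcal{D}_{\mathrm{harm}}(\riem)$ both satisfy $\overline\partial h=\overline\partial \tilde h=\overline\alpha$, then $\overline\partial(h-\tilde h)=0$, so $h-\tilde h$ is harmonic with vanishing $\overline\partial$ and is therefore holomorphic. Since both $h$ and $\tilde h$ have $L^2$ differentials, so does $h-\tilde h$, placing it in $\mathcal{D}(\riem)$.

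For existence my strategy is to find $\alpha_0\in\mathcal{A}(\riem)$ such that $\alpha_0+\overline\alpha$ is exact; once this is done, integration along paths (well-defined by exactness) yields a single-valued complex-valued function $h$, automatically harmonic because $\alpha_0+\overline\alpha\in \mathcal{A}_{\mathrm{harm}}(\riem)$, and the identity $dh=\alpha_0+\overline\alpha$ places $h$ in $\mathcal{D}_{\mathrm{harm}}(\riem)$ with $\overline\alpha$ as its $(0,1)$-part. Finding $\alpha_0$ reduces to matching $2g+n-1$ linear period conditions against a basis of $H_1(\riem,\mathbb{Z})$ consisting of $2g$ internal cycles and $n-1$ of the boundary curves $\partial_k\riem$ (the remaining one being dependent). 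I would prescribe the internal periods via Corollary \ref{co:specify_internal_periods} and the boundary periods via linear combinations of the forms $\partial\omega_k$; the computation $\int_{\partial_j\riem}\partial\omega_k=\tfrac{i}{2}\Pi_{jk}$ combined with Theorem \ref{th:period_matrix_invertible} on invertibility of the reduced period matrix shows that arbitrary boundary periods are realizable.

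The main obstacle I anticipate is combining these two ingredients, since Corollary \ref{co:specify_internal_periods} does not simultaneously control boundary periods and the $\partial\omega_k$ generally produce nonzero internal periods. Resolving this requires a linear-algebra cancellation, invoking once more the invertibility of the reduced period matrix to absorb the unwanted cross-terms, thereby showing that the total period map $\mathcal{A}(\riem)\to\mathbb{C}^{2g+n-1}$ is surjective; alternatively, one can appeal to the classical Hodge-theoretic fact for bordered surfaces (cf.\ Ahlfors-Sario) that $\mathcal{A}(\riem)/\mathcal{A}^{\mathrm{e}}(\riem)$ has complex dimension equal to $2g+n-1=\mathrm{rk}\,H^1(\riem,\mathbb{C})$, which yields the required surjectivity at once.
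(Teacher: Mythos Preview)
Your proposal is correct and follows essentially the same route as the paper's proof: both reduce existence to finding a holomorphic form in $\mathcal{A}(\riem)$ whose periods match those of $\overline\alpha$, handling the $2g$ internal periods via Corollary~\ref{co:specify_internal_periods} and the $n-1$ boundary periods via the harmonic-measure forms $\partial\omega_k$ together with the invertibility in Theorem~\ref{th:period_matrix_invertible}, and the uniqueness argument is identical. Your explicit attention to the cross-term issue between the two period-fixing steps is, if anything, more careful than the paper's write-up, and either of the resolutions you sketch suffices.
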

\begin{proof} Fix $\overline{\alpha} \in \overline{\mathcal{A}(\riem)}$.
 First, we show that there is a $\beta \in \mathcal{A}(\riem)$ such that $\overline{\alpha} - \beta$ is exact. 
 By Corollary \ref{co:specify_internal_periods}, for any $\overline{\alpha} \in \overline{\mathcal{A}(\riem)}$ we may find a ${\delta} \in {\mathcal{A}(\riem)}$ such that 
 \[  \int_{\gamma_k} ( \overline{\alpha} - {\delta} ) = 0    \]
 for $k=1,\ldots,2g$.  So it is enough to show that for any constants $\mu_1,\ldots,\mu_n$ such that $\mu_1 + \cdots + \mu_n = 0$  there is a ${\nu} \in {\mathcal{A}(\riem_2)}$ such that 
 \begin{equation} \label{eq:rep_lemma_temp}
    \int_{\partial_k \riem} \nu = \mu_k    
 \end{equation}
 for $k=1,\ldots,n$.  By Corollary \ref{co:boundary_periods_specified_starmeasure} there is a harmonic measure $d\omega \in \mathcal{A}_{\text{hm}}(\riem)$ such that $\ast d \omega$ satisfies \eqref{eq:rep_lemma_temp}.  Setting
 \[   \nu = \ast d \omega + i d\omega \in {\mathcal{A}(\riem)},      \]
 since $d\omega$ is exact, $\nu$ has the same periods as $\ast d \omega$. Setting $\beta = \delta + \nu$ proves the claim. 
 
 So let $\beta$ be such that $\overline{\alpha} - \beta$ is exact.  Letting $h$ be such that $d h= \overline{\alpha} - \beta$ we then have that $\overline{\partial} h=\overline{\alpha}$ as claimed. If $\overline{\partial} \tilde{h} = \overline{\alpha}$ then $\overline{\partial} (h-\tilde{h}) =0$ so $h-\tilde{h} \in \mathcal{D}(\riem)$.  
\end{proof}

 \begin{theorem} \label{th:Schiffer_cohomology}
  For any $\overline{\alpha} \in \overline{\mathcal{A}(\riem_1)}$,
  \begin{enumerate}
      \item  [$(1)$] $\mathbf{T}_{1,2} \overline{\alpha}+\overline{\mathbf{R}}_2 \overline{\mathbf{S}}_1 \overline{\alpha}$ is exact on $\riem_2$; and 
      \item [$(2)$] $-\overline{\alpha} + \mathbf{T}_{1,1} \overline{\alpha}+\overline{\mathbf{R}}_1 \overline{\mathbf{S}}_1 \overline{\alpha}$ is exact on $\riem_1$.
  \end{enumerate}
  If $\mathscr{R}$ has genus zero, then $\mathbf{T}_{1,2} \overline{\alpha}$ and $-\overline{\alpha} + \mathbf{T}_{1,2} \overline{\alpha}$ are exact. 
  
  The same statements apply to the complex conjugates, and all statements hold with $1$ and $2$ interchanged.  
 \end{theorem}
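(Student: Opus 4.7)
The plan is to read both identities directly off the jump formulas of Theorem \ref{th:jump_derivatives} by applying $\mathbf{J}^q_1$ to a harmonic primitive of $\overline{\alpha}$. By Lemma \ref{le:dbar_representative}, there exists $h \in \mathcal{D}_{\mathrm{harm}}(\riem_1)$ with $\overline{\partial} h = \overline{\alpha}$; fix any $q \in \mathscr{R} \setminus \Gamma$ so that $\mathbf{J}^q_1 h$ is a well-defined Dirichlet-bounded function on $\riem_1 \cup \riem_2$ (this is precisely the regularity in which Theorem \ref{th:jump_derivatives} operates, which is the only point requiring any care).

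For claim (1), I will compute $d \mathbf{J}^q_{1,2} h = \partial \mathbf{J}^q_{1,2} h + \overline{\partial} \mathbf{J}^q_{1,2} h$ on $\riem_2$. The first summand equals $\mathbf{T}_{1,2}\overline{\alpha}$ by the first formula of Theorem \ref{th:jump_derivatives}, and the second, restricted to $\riem_2$, equals $\overline{\mathbf{R}}_2 \overline{\mathbf{S}}_1 \overline{\alpha}$ by the third formula. Since the sum is manifestly exact (it is $d$ of something), we obtain
\[ \mathbf{T}_{1,2}\overline{\alpha} + \overline{\mathbf{R}}_2 \overline{\mathbf{S}}_1 \overline{\alpha} = d\mathbf{J}^q_{1,2} h \in \mathcal{A}^{\mathrm{e}}_{\mathrm{harm}}(\riem_2). \]

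For claim (2), the same procedure on $\riem_1$ yields, via the second and third formulas of Theorem \ref{th:jump_derivatives},
\[ d\mathbf{J}^q_{1,1} h = \partial h + \mathbf{T}_{1,1}\overline{\alpha} + \overline{\mathbf{R}}_1 \overline{\mathbf{S}}_1 \overline{\alpha}. \]
Now the key algebraic step: since $dh = \partial h + \overline{\partial} h = \partial h + \overline{\alpha}$, substituting $\partial h = dh - \overline{\alpha}$ and rearranging gives
\[ -\overline{\alpha} + \mathbf{T}_{1,1}\overline{\alpha} + \overline{\mathbf{R}}_1 \overline{\mathbf{S}}_1 \overline{\alpha} = d\bigl(\mathbf{J}^q_{1,1} h - h\bigr), \]
which is exact on $\riem_1$.

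The genus zero simplification follows from Example \ref{ex:sphere_kernels} together with \eqref{eq:kernels_conformally_invariant_compact}: any genus zero compact surface is biholomorphic to $\sphere$, so $K_\mathscr{R} \equiv 0$ and hence $\overline{\mathbf{S}}_1 = 0$, collapsing both identities to the stated form. The complex-conjugate statements follow by conjugating the entire argument (or by starting from a primitive of $\alpha$ rather than $\overline{\alpha}$), and the statements with $\riem_1$ and $\riem_2$ interchanged follow by the same proof applied to $\mathbf{J}_2^q$. No real obstacle arises; the argument is essentially a bookkeeping of holomorphic and anti-holomorphic parts in the jump formula, and the only thing to verify is the trivial identity $dh - \overline{\alpha} = \partial h$.
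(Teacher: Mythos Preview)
Your proof is correct and follows essentially the same approach as the paper: invoke Lemma \ref{le:dbar_representative} to obtain a harmonic primitive $h$ with $\overline{\partial}h=\overline{\alpha}$, apply the jump formulas of Theorem \ref{th:jump_derivatives} to compute $d\mathbf{J}^q_{1,1}h$ and $d\mathbf{J}^q_{1,2}h$, and for claim (2) substitute $\partial h = dh - \overline{\alpha}$ to isolate an exact expression. The genus zero reduction via $K_{\mathscr{R}}=0$ and the handling of conjugates and role-switching are likewise identical to the paper's argument.
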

 \begin{proof}
  By Lemma \ref{le:dbar_representative} there is an $h \in \mathcal{D}_{\mathrm{harm}}(\riem_1)$ such that $\overline{\partial} h = \overline{\alpha}$. 
  Also, Theorem \ref{th:jump_derivatives} yields that
  \begin{equation} \label{eq:jump_cohomology_identity}
    d \mathbf{J}^q_1 h = \left\{ \begin{array}{ll}  \partial h + \mathbf{T}_{1,1} \overline{\partial} h + \overline{\mathbf{R}}_1 \overline{\mathbf{S}}_1   \overline{\partial} h  & \riem_1 \\
    \mathbf{T}_{1,2} \overline{\partial} h + \overline{\mathbf{R}}_2 
    \overline{\mathbf{S}}_1 \overline{\partial} h & \riem_2. 
    \end{array} \right.         
  \end{equation}
  Now using $dh= \partial h + \overline{\partial} h = \partial h + \overline{\alpha}$ we see that 
  \begin{equation} \label{eq:jump_cohomology_identity_11}
   d \mathbf{J}^q_{1,1} h =  dh - \overline{\alpha} + \mathbf{T}_{1,1} \overline{\alpha}  + \overline{\mathbf{R}}_1 \overline{\mathbf{S}}_1   \overline{\alpha}   
  \end{equation}
  and 
  \begin{equation} \label{eq:jump_cohomology_identity_12} 
       d \mathbf{J}^q_{1,2} h =  \mathbf{T}_{1,2} \overline{\alpha} + \overline{\mathbf{R}}_2 
    \overline{\mathbf{S}}_1 \overline{\alpha}. 
  \end{equation}
  This proves claims (1) and (2).  If $\mathscr{R}$ has genus zero, then the third claim follows from the fact that $K_\mathscr{R}=0$ (see Example \ref{ex:sphere_kernels}).  
  The remaining claims are obvious.
 \end{proof}
 
 This simple fact is surprisingly illuminating.  We list two immediate corollaries. 
 \begin{corollary}  \label{co:one_plus_T_periods}
  Let $\overline{\alpha} \in \overline{A(\riem_1)}$.  
  For any curve $c$ in $\riem_1$, 
  \begin{align*}
  \int_{c} (\overline{\alpha} - \mathbf{T}_{1,1} \overline{\alpha}) & =  \int_c \overline{\mathbf{S}}_1 \overline{\alpha} \\ & =  \left<  \overline{\alpha}, \ast \overline{\mathbf{R}}_1 H_c  \right>_{\riem_1} \\ & =  \int_c \left[ \mathbf{I} - \mathbf{T}_{1,1}^* \mathbf{T}_{1,1} - \mathbf{T}_{1,2}^* \mathbf{T}_{1,2} \right] \overline{\alpha} 
  \end{align*}
  where $H_c$ is associated to $c$ by \emph{(\ref{eq:H_forms_definition})}.  The same formulas hold with $1$ and $2$ interchanged, as do the complex conjugates.  
 \end{corollary}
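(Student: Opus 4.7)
The plan is to establish the three equalities in order, each a short derivation from identities already at hand, with $c$ interpreted as a cycle in $\riem_1$.

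For the first equality, Theorem \ref{th:Schiffer_cohomology}(2) asserts that $-\overline{\alpha} + \mathbf{T}_{1,1}\overline{\alpha} + \overline{\mathbf{R}}_1 \overline{\mathbf{S}}_1 \overline{\alpha}$ is exact on $\riem_1$. Integrating over the closed curve $c \subset \riem_1$ kills the exact term, giving
\[
 \int_c (\overline{\alpha} - \mathbf{T}_{1,1}\overline{\alpha}) = \int_c \overline{\mathbf{R}}_1 \overline{\mathbf{S}}_1 \overline{\alpha}.
\]
Because $\overline{\mathbf{R}}_1$ is restriction and $c \subset \riem_1$, the right-hand side coincides with $\int_c \overline{\mathbf{S}}_1 \overline{\alpha}$.

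For the second equality, apply the defining relation \eqref{eq:H_forms_definition} of $H_c$ to the harmonic one-form $\overline{\mathbf{S}}_1 \overline{\alpha} \in \overline{\mathcal{A}(\mathscr{R})} \subset \mathcal{A}_{\mathrm{harm}}(\mathscr{R})$, obtaining $\int_c \overline{\mathbf{S}}_1 \overline{\alpha} = (\overline{\mathbf{S}}_1 \overline{\alpha}, \ast H_c)_{\mathscr{R}}$. Split $\ast H_c = \phi + \overline{\psi}$ via the direct sum \eqref{direct sum decomposition} on $\mathscr{R}$; since $\overline{\mathbf{S}}_1 \overline{\alpha}$ is orthogonal to $\phi \in \mathcal{A}(\mathscr{R})$, only $\overline{\psi}$ contributes. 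The adjoint identity $\overline{\mathbf{S}}_1^* = \overline{\mathbf{R}}_1$ (the complex conjugate of Theorem \ref{th:adjoint_identities}) then yields $(\overline{\mathbf{S}}_1 \overline{\alpha}, \overline{\psi})_{\mathscr{R}} = (\overline{\alpha}, \overline{\mathbf{R}}_1 \overline{\psi})_{\riem_1}$. Since the Bergman projection commutes with restriction, $\overline{\mathbf{R}}_1 \overline{\psi}$ is the antiholomorphic part of $(\ast H_c)|_{\riem_1}$; and because $\overline{\alpha} \in \overline{\mathcal{A}(\riem_1)}$ is orthogonal to $\mathcal{A}(\riem_1)$, we may freely add back the holomorphic part, obtaining $(\overline{\alpha}, \ast \overline{\mathbf{R}}_1 H_c)_{\riem_1}$.

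For the third equality, invoke the quadratic adjoint identity in Theorem \ref{th:general_adjoint_double_identities}: in positive genus, $\mathbf{I} - \mathbf{T}_{1,1}^* \mathbf{T}_{1,1} - \mathbf{T}_{1,2}^* \mathbf{T}_{1,2} = \overline{\mathbf{S}}_1^* \overline{\mathbf{S}}_1 = \overline{\mathbf{R}}_1 \overline{\mathbf{S}}_1$, again by Theorem \ref{th:adjoint_identities}. Applying this operator to $\overline{\alpha}$ and integrating over $c \subset \riem_1$ recovers $\int_c \overline{\mathbf{S}}_1 \overline{\alpha}$, matching the first equality. If $\mathscr{R}$ has genus zero, Example \ref{ex:sphere_kernels} gives $\overline{\mathbf{S}}_1 \overline{\alpha} = 0$ and the corresponding identity $\mathbf{I} = \mathbf{T}_{1,1}^* \mathbf{T}_{1,1} + \mathbf{T}_{1,2}^* \mathbf{T}_{1,2}$ makes both sides vanish consistently. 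The roles of $1$ and $2$ can be interchanged by the symmetric versions of Theorem \ref{th:Schiffer_cohomology} and Theorem \ref{th:general_adjoint_double_identities}, and the complex conjugate statements follow by applying the argument to $\alpha \in \mathcal{A}(\riem_1)$ instead.

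The main obstacle is the second equality, where one must keep track of what is holomorphic, antiholomorphic, or merely harmonic, and exploit the fact that the restriction $\overline{\mathbf{R}}_1$ commutes with the antiholomorphic projection. The first and third equalities are essentially direct substitutions, respectively of the cohomological identity from Theorem \ref{th:Schiffer_cohomology}(2) and of the quadratic identity from Theorem \ref{th:general_adjoint_double_identities}, once the closed-curve annihilation of exact forms is invoked.
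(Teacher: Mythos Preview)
Your proof is correct and follows essentially the same route as the paper's own argument: the first equality via Theorem \ref{th:Schiffer_cohomology}, the second via the defining property \eqref{eq:H_forms_definition} of $H_c$ combined with the adjoint identity $\overline{\mathbf{S}}_1^* = \overline{\mathbf{R}}_1$, and the third via Theorem \ref{th:general_adjoint_double_identities}. Your treatment of the second equality is more explicit than the paper's (which simply notes that $\ast$ commutes with restriction and invokes Theorem \ref{th:adjoint_identities}); your decomposition of $\ast H_c$ into holomorphic and antiholomorphic parts and the two-step use of orthogonality spell out precisely the bookkeeping the paper leaves implicit, and your separate handling of the genus-zero case is a helpful clarification the paper omits.
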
 
 \begin{proof}
  The first equality follows directly from Theorem \ref{th:Schiffer_cohomology}.  The second equality follows from the definition of $H_c$ and Theorem \ref{th:adjoint_identities}, observing that $\ast$ commutes with $\overline{\mathbf{R}}_1$.  The final equality follows from the identity $\mathbf{T}_{1,1}^* \mathbf{T}_{1,1} + \mathbf{T}_{1,2}^* \mathbf{T}_{1,2} = \mathbf{I} - \overline{\mathbf{R}}_1 \overline{\mathbf{S}}_1$ given in Theorem \ref{th:general_adjoint_double_identities}.  
 \end{proof}

 \begin{corollary}   \label{co:image_Tonetwo_periods}
  For any curve $c$ in $\riem_2$ and $\overline{\alpha} \in 
  \overline{\mathcal{A}(\riem_2)}$
  \begin{align*}
   -\int_{c} \mathbf{T}_{1,2}   \overline{\alpha} & =  \int_c \overline{\mathbf{S}}_2 \overline{\alpha} \\
   & = \left< \overline{\alpha}, \ast \overline{\mathbf{R}}_2 H_c \right>_{\riem_2}  \\
   & = \int_c \left[ \mathbf{I} - \mathbf{T}_{2,2}^* \mathbf{T}_{2,2} - \mathbf{T}_{2,1}^* \mathbf{T}_{2,1} \right] \overline{\alpha}.
  \end{align*}
  The same statement holds with $1$ and $2$ interchanged, and with complex conjugates.
 \end{corollary}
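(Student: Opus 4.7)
The plan is to prove the three equalities by the same three-step strategy as Corollary \ref{co:one_plus_T_periods}, now with the cross-operator $\mathbf{T}_{1,2}$ in place of the self operator and with integration taken over a curve in the image side $\riem_2$. The ingredients are the overfared jump formula of Theorem \ref{th:Schiffer_cohomology}, the adjoint identity $\overline{\mathbf{S}}^{\ast}=\overline{\mathbf{R}}$ from Theorem \ref{th:adjoint_identities} combined with the defining relation \eqref{eq:H_forms_definition} of $H_c$, and one of the quadratic adjoint identities of Theorem \ref{th:general_adjoint_double_identities}.

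For the first equality I would invoke claim~(1) of Theorem \ref{th:Schiffer_cohomology}, which asserts that $\mathbf{T}_{1,2}\overline{\alpha}+\overline{\mathbf{R}}_2\overline{\mathbf{S}}\overline{\alpha}$ is exact on $\riem_2$. Integrating this exact form over the closed curve $c\subset\riem_2$ annihilates it, and because $c$ lies entirely in $\riem_2$ the restriction $\overline{\mathbf{R}}_2$ is transparent inside the integral. Rearranging gives $-\int_c\mathbf{T}_{1,2}\overline{\alpha}=\int_c\overline{\mathbf{S}}\overline{\alpha}$, where $\overline{\mathbf{S}}\overline{\alpha}$ is interpreted as the element of $\overline{\mathcal{A}(\mathscr{R})}$ restricted to $c$.

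For the second equality I would apply the defining identity \eqref{eq:H_forms_definition} of $H_c$: since $\overline{\mathbf{S}}\overline{\alpha}$ is a closed one-form on $\mathscr{R}$, $\int_c\overline{\mathbf{S}}\overline{\alpha}=(\overline{\mathbf{S}}\overline{\alpha},\ast H_c)_{\mathscr{R}}$. The adjoint identity $\overline{\mathbf{S}}^{\ast}=\overline{\mathbf{R}}$ of Theorem \ref{th:adjoint_identities} transfers $\overline{\mathbf{S}}$ onto the second factor, and commutation of Hodge $\ast$ with restriction produces the stated inner product $\langle\overline{\alpha},\ast\overline{\mathbf{R}}H_c\rangle$ on the source surface carrying $\overline{\alpha}$. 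The third equality then follows by substituting the factorisation $\overline{\mathbf{S}}^{\ast}\overline{\mathbf{S}}=\mathbf{I}-\mathbf{T}^{\ast}\mathbf{T}-\mathbf{T}^{\ast}\mathbf{T}$ from Theorem \ref{th:general_adjoint_double_identities} inside the inner product, and then running \eqref{eq:H_forms_definition} in reverse to repackage the pairing back as an integral over $c$ of the operator applied to $\overline{\alpha}$.

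The main obstacle is the bookkeeping of surfaces between the three equalities: $\mathbf{T}_{1,2}\overline{\alpha}$ is a form on $\riem_2$, the inner-product step pulls the computation back to the source surface of $\overline{\alpha}$, and the quadratic identity then re-expresses the quantity as an integral over $c\subset\riem_2$ of a form ultimately determined by source-side data. The key observation that glues these apparently mismatched computations into a single chain is that the period of $\overline{\mathbf{S}}\overline{\alpha}\in\overline{\mathcal{A}(\mathscr{R})}$ over $c$ is independent of which side one restricts it to, so that the restriction operators that appear and disappear are harmless once $c$ is fixed. Once this is tracked carefully, each of the three equalities is a direct invocation of a previously established result.
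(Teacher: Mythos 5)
Your proposal follows exactly the paper's argument: the paper proves this corollary by declaring it identical to Corollary \ref{co:one_plus_T_periods} — exactness from Theorem \ref{th:Schiffer_cohomology} for the first equality, the defining property \eqref{eq:H_forms_definition} of $H_c$ together with the adjoint identity $\overline{\mathbf{S}}^*=\overline{\mathbf{R}}$ for the second, and the quadratic identity $\mathbf{T}_{2,2}^*\mathbf{T}_{2,2}+\mathbf{T}_{2,1}^*\mathbf{T}_{2,1}+\overline{\mathbf{R}}_2\overline{\mathbf{S}}_2=\mathbf{I}$ from Theorem \ref{th:general_adjoint_double_identities} for the third. The subscript bookkeeping you flag as the main obstacle is a real (typographical) inconsistency in the statement itself rather than a defect of your argument, and you resolve it the same way the paper implicitly does.
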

 \begin{proof}  The proof is identical to that of Corollary \ref{co:one_plus_T_periods}, except in the last step we use the identity
   $\mathbf{T}_{2,2}^* \mathbf{T}_{2,2} + \mathbf{T}_{2,1}^* \mathbf{T}_{2,1} + \overline{\mathbf{R}}_2 \overline{\mathbf{S}}_2= \mathbf{I}$
   of Theorem \ref{th:general_adjoint_double_identities}.  
 \end{proof}

   If $\riem_2$ is connected, recall Definition \ref{def:exact overfare}: for 
 $\alpha \in \mathcal{A}^{\mathrm e}_{\text{harm}}(\riem_2)$, we define
 \[   \mathbf{O}^{\mathrm{e}}_{2,1} \alpha= d \mathbf{O}_{2,1} h      \]
 for $dh = \alpha$.  
  \begin{proposition} \label{prop:exact_transmitted_jump} Assume that $\riem_2$ is connected.  
  For $\overline{\alpha} \in \overline{\mathcal{A}(\riem_1)}$ we have
  \[  \mathbf{O}^{\mathrm{e}}_{2,1} \left[ \mathbf{T}_{1,2} \overline{\alpha} + \overline{\mathbf{R}}_{2} \overline{\mathbf{S}}_1 \overline{\alpha} \right] = -\overline{\alpha} + \mathbf{T}_{1,1} \overline{\alpha} + \overline{\mathbf{R}}_{1} \overline{\mathbf{S}}_1 \overline{\alpha}. \]
  In particular, if $\overline{\alpha} \in (\overline{\mathbf{R}}_1 \overline{\mathcal{A}(\mathscr{R})})^\perp$ we have
   \[  \mathbf{O}^{\mathrm{e}}_{2,1}  \mathbf{T}_{1,2} \overline{\alpha} = - \overline{\alpha} + \mathbf{T}_{1,1} \overline{\alpha}.   \]
   The complex conjugate statements hold, as do the statements with the roles of $1$ and $2$ interchanged.
  \end{proposition}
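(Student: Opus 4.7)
The plan is to reduce the identity to the jump-formula relations already established. By Lemma \ref{le:dbar_representative}, choose any $h \in \mathcal{D}_{\mathrm{harm}}(\riem_1)$ with $\overline{\partial} h = \overline{\alpha}$; such an $h$ exists because $\overline{\alpha} \in \overline{\mathcal{A}(\riem_1)}$, and is unique up to addition of a holomorphic function. All subsequent computations will be independent of this choice, since we only use $d$-derivatives and the relevant operators depend only on the cohomology class.

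Next I would apply Theorem \ref{th:jump_derivatives} to this $h$, which gives on $\riem_1$
\[
 d \mathbf{J}^q_{1,1} h = \partial h + \mathbf{T}_{1,1} \overline{\alpha} + \overline{\mathbf{R}}_1 \overline{\mathbf{S}}_1 \overline{\alpha},
\]
and on $\riem_2$
\[
 d \mathbf{J}^q_{1,2} h = \mathbf{T}_{1,2} \overline{\alpha} + \overline{\mathbf{R}}_2 \overline{\mathbf{S}}_1 \overline{\alpha}.
\]
Both right-hand sides are exact forms on the respective surfaces (this reaffirms Theorem \ref{th:Schiffer_cohomology}), so in particular the bracket on the left-hand side of the claimed identity lies in $\mathcal{A}^{\mathrm{e}}_{\mathrm{harm}}(\riem_2)$, and $\mathbf{O}^{\mathrm{e}}_{2,1}$ applied to it is well-defined by Definition \ref{def:exact overfare} (using the assumption that $\riem_2$ is connected).

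The crucial step is to invoke the transmitted jump formula in the form of Theorem \ref{th:Overfare_with_correction_functions}(2): since $\riem_2$ is connected, one has $\dot{\mathbf{O}}_{2,1} \dot{\mathbf{J}}_{1,2} \dot{h} = \dot{\mathbf{J}}_{1,1} \dot{h} - \dot{h}$. Applying $d$ and using the characterizing property $\mathbf{O}^{\mathrm{e}}_{2,1} d = d \mathbf{O}_{2,1}$ from \eqref{charac of exact overfare}, the two displays above give
\[
 \mathbf{O}^{\mathrm{e}}_{2,1} \bigl[ \mathbf{T}_{1,2} \overline{\alpha} + \overline{\mathbf{R}}_2 \overline{\mathbf{S}}_1 \overline{\alpha} \bigr] = d \mathbf{J}^q_{1,1} h - d h = \partial h + \mathbf{T}_{1,1} \overline{\alpha} + \overline{\mathbf{R}}_1 \overline{\mathbf{S}}_1 \overline{\alpha} - \partial h - \overline{\alpha},
\]
which collapses to $-\overline{\alpha} + \mathbf{T}_{1,1} \overline{\alpha} + \overline{\mathbf{R}}_1 \overline{\mathbf{S}}_1 \overline{\alpha}$, establishing the first identity. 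The second statement then follows immediately: if $\overline{\alpha} \in [\overline{\mathbf{R}}_1 \overline{\mathcal{A}(\mathscr{R})}]^\perp$, then Theorem \ref{th:S_kernel_range} yields $\overline{\mathbf{S}}_1 \overline{\alpha} = 0$, so both terms $\overline{\mathbf{R}}_k \overline{\mathbf{S}}_1 \overline{\alpha}$ vanish. The conjugate and swapped-index versions are routine by symmetry.

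I do not anticipate a serious obstacle: the only delicate point is making sure that we are permitted to apply $\mathbf{O}^{\mathrm{e}}_{2,1}$ and the transmitted jump formula, which requires $\riem_2$ connected (hypothesized) but not $\riem_1$ connected, so the argument is completely general on the $\riem_1$ side. The passage from the homogeneous identity $\dot{\mathbf{O}}_{2,1} \dot{\mathbf{J}}_{1,2} \dot{h} = \dot{\mathbf{J}}_{1,1} \dot{h} - \dot{h}$ to an identity of genuine one-forms via the application of $d$ is clean because $d$ annihilates locally constant functions.
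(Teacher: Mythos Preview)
Your proposal is correct and follows essentially the same route as the paper: choose $h$ via Lemma~\ref{le:dbar_representative}, differentiate the transmitted jump identity of Theorem~\ref{th:Overfare_with_correction_functions}, and read off the result using Theorem~\ref{th:jump_derivatives} and the defining property~\eqref{charac of exact overfare}. In fact you are slightly more careful than the paper's own proof, which cites part~(1) of Theorem~\ref{th:Overfare_with_correction_functions} (the BZM case) whereas the hypothesis here is only that $\riem_2$ is connected; your use of part~(2) is the correct citation, and your observation that applying $d$ kills the locally constant ambiguity is exactly what makes the homogeneous version sufficient.
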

  \begin{proof}
   By Lemma \ref{le:dbar_representative} there is an $h \in \mathcal{D}_{\mathrm{harm}}(\riem_1)$ such that $\overline{\partial} h = \overline{\alpha}$.   Differentiating both sides of the first expression appearing in Theorem \ref{th:Overfare_with_correction_functions} part (1)
   proves the first claim.  By Theorem \ref{th:S_kernel_range}, $\mathbf{S}_1 \overline{\alpha} = 0$ when $\overline{\alpha} \in (\overline{\mathbf{R}}_1 \overline{\mathcal{A}(\mathscr{R})})^\perp$, which completes the proof. 
  \end{proof}
 \end{subsection}
 \begin{subsection}{Kernel and image of the Schiffer operator $\mathbf{T}_{1,2}$} \label{se:Schiffer_kernel_image}

 In this section, we determine the kernel and image of the Schiffer operator $\mathbf{T}_{1,2}$ in various configurations.
 
 We require a generalization of Theorem \ref{th:Mohammad_isomorphism}. Namely, we would like to characterize the kernel and image of $\mathbf{T}_{1,2}$ in general. We begin with a partial characterization.
 
 \begin{theorem} \label{th:general_T12_kernel_on_perp}  Assume that $\riem_2$ is connected.  Then 
  \[ \mathrm{Ker} ( \mathbf{T}_{1,2}) \cap [\overline{\mathbf{R}}_1 \overline{\mathcal{A}(\mathscr{R})}]^\perp = \{0 \}.   \] 
 \end{theorem}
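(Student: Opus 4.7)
The plan is to exploit the exact transmitted jump formula (Proposition \ref{prop:exact_transmitted_jump}), which is the reason the hypothesis that $\riem_2$ be connected appears in the statement: connectedness of $\riem_2$ is precisely what guarantees that $\mathbf{O}^{\mathrm{e}}_{2,1}$ is well-defined. The entire argument should reduce to combining this identity with the kernel characterization of $\overline{\mathbf{S}}_1$ established in Theorem \ref{th:S_kernel_range}.

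First I would translate the orthogonality hypothesis into a kernel statement. By Theorem \ref{th:S_kernel_range} one has $[\overline{\mathbf{R}}_1 \overline{\mathcal{A}(\mathscr{R})}]^{\perp} = \mathrm{Ker}(\overline{\mathbf{S}}_1)$, so any $\overline{\alpha}$ in the intersection satisfies both
\[
   \overline{\mathbf{S}}_1 \overline{\alpha} = 0 \qquad \text{and} \qquad \mathbf{T}_{1,2} \overline{\alpha} = 0.
\]

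Next I would feed this $\overline{\alpha}$ into the second identity of Proposition \ref{prop:exact_transmitted_jump}, which says that for $\overline{\alpha} \in [\overline{\mathbf{R}}_1 \overline{\mathcal{A}(\mathscr{R})}]^{\perp}$,
\[
   \mathbf{O}^{\mathrm{e}}_{2,1} \mathbf{T}_{1,2} \overline{\alpha} = -\overline{\alpha} + \mathbf{T}_{1,1} \overline{\alpha}.
\]
Since $\mathbf{T}_{1,2} \overline{\alpha} = 0$ by hypothesis, the left-hand side vanishes, giving the identity $\overline{\alpha} = \mathbf{T}_{1,1} \overline{\alpha}$ in $\mathcal{A}_{\mathrm{harm}}(\riem_1)$.

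The proof then concludes by a type mismatch: the left-hand side lies in $\overline{\mathcal{A}(\riem_1)}$ while the right-hand side lies in $\mathcal{A}(\riem_1)$ (by the mapping properties of $\mathbf{T}_{1,1}$). By the orthogonal direct sum decomposition $\mathcal{A}_{\mathrm{harm}}(\riem_1) = \mathcal{A}(\riem_1) \oplus \overline{\mathcal{A}(\riem_1)}$ recalled in \eqref{direct sum decomposition}, we conclude $\overline{\alpha} = 0$. There is no real obstacle here; the preparatory results have been arranged so that the argument is essentially mechanical once the exact overfare formula is at hand. The only mild subtlety is confirming that Proposition \ref{prop:exact_transmitted_jump} genuinely applies, which follows immediately from the connectedness of $\riem_2$.
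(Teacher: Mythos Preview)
Your proof is correct and follows essentially the same route as the paper's own argument. The paper's proof introduces a potential $H$ with $\overline{\partial} H = \overline{\alpha}$, applies the overfared jump formula (Theorem \ref{th:Overfare_with_correction_functions}) directly, and then differentiates to reach the identity $\overline{\alpha} = \mathbf{T}_{1,1}\overline{\alpha}$; you instead invoke Proposition \ref{prop:exact_transmitted_jump}, which is exactly the differentiated form of that jump formula, and arrive at the same type-mismatch conclusion in one line. Your version is a cleaner packaging of the identical argument.
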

 \begin{proof}
  Assume that $\overline{\alpha} \in \mathrm{Ker} ( \mathbf{T}_{1,2}) \cap [\overline{\mathbf{R}}_1 \overline{\mathcal{A}(\mathscr{R})}]^\perp$. By Lemma \ref{le:dbar_representative} there is a $H \in \mathcal{D}_{\text{harm}}(\riem_1)$ such that $\overline{\partial} H = \overline{\alpha}$. We have $\overline{\mathbf{S}}_1 \overline{\partial} H=0$ by Theorem \ref{th:S_kernel_range} so by Theorem \ref{th:jump_derivatives}
  \[  d \mathbf{J}^q_{1,2} H = \mathbf{T}_{1,2} \overline{\alpha} + \overline{\mathbf{R}}_2 \overline{\mathbf{S}}_1 \overline{\alpha} = 0. \]
  Therefore $\mathbf{J}^q_{1,2} H$ is constant, from which it follows that $\mathbf{O}_{2,1} \mathbf{J}^q_{1,2} H$ is constant. By Theorem \ref{th:Overfare_with_correction_functions} we have
  \[  d(H - \mathbf{J}^q_{1,1} H) = - d \mathbf{O}_{2,1} \mathbf{J}^q_{1,2} H = 0   \]
  so again using Theorem \ref{th:jump_derivatives} and the fact that $\overline{\mathbf{S}}_1 \overline{\partial} H=0$ we obtain 
  \[ \partial H + \overline{\partial} H - \mathbf{T}_{1,1} \overline{\partial} H = 0,    \]
  so equating holomorphic and anti-holomorphic parts
  \[  \overline{\partial} H =0.  \]
  This completes the proof.
 \end{proof}
 
 We also have the following.
 \begin{theorem} \label{th:general_T12_on_perp_surjective_to_exact} 
 Assume that $\riem_2$ is connected.  
 The image of $[\overline{\mathbf{R}}_1 \overline{\mathcal{A}(\mathscr{R})}]^\perp$ under $\mathbf{T}_{1,2}$ is $\mathcal{A}^{\mathrm{e}}(\riem_2)$. 
 \end{theorem}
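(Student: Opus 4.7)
The plan has two parts: containment and surjectivity. The containment $\mathbf{T}_{1,2}\big([\overline{\mathbf{R}}_1\overline{\mathcal{A}(\mathscr{R})}]^\perp\big) \subseteq \mathcal{A}^{\mathrm{e}}(\riem_2)$ follows directly from earlier results. By Theorem \ref{th:S_kernel_range}, $[\overline{\mathbf{R}}_1\overline{\mathcal{A}(\mathscr{R})}]^\perp = \mathrm{Ker}(\overline{\mathbf{S}}_1)$, and by Theorem \ref{th:Schiffer_cohomology} the one-form $\mathbf{T}_{1,2}\overline{\alpha} + \overline{\mathbf{R}}_2\overline{\mathbf{S}}_1\overline{\alpha}$ is exact on $\riem_2$; the second summand vanishes on the kernel, so $\mathbf{T}_{1,2}\overline{\alpha}$ is exact.

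For surjectivity, I would mimic the strategy in the proof of Theorem \ref{th:Mohammad_isomorphism}, but without assuming $\riem_1$ is capped. Given $\beta \in \mathcal{A}^{\mathrm{e}}(\riem_2)$, the exactness and holomorphicity of $\beta$ produce a holomorphic primitive $h \in \mathcal{D}(\riem_2)$ with $dh = \beta$. Define $H = -\mathbf{O}_{2,1}h$, which lies in $\mathcal{D}_{\mathrm{harm}}(\riem_1)$ by Theorem \ref{thm:bounded_overfare_Dirichlet} applied with the connected originating surface $\riem_2$. The same hypothesis lets us invoke part (2) of Theorem \ref{th:J_same_both_sides}, yielding
\[ \dot{\mathbf{J}}_1 \dot{H} = -\dot{\mathbf{J}}_1 \dot{\mathbf{O}}_{2,1}\dot{h} = \dot{\mathbf{J}}_2 \dot{h}, \]
and restriction to $\riem_2$ gives $\dot{\mathbf{J}}_{1,2}\dot{H} = \dot{\mathbf{J}}_{2,2}\dot{h}$. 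By Theorem \ref{th:jump_on_holomorphic}, $d\mathbf{J}_{2,2}^q h = dh = \beta$, so $d\mathbf{J}_{1,2}^q H = \beta$. Applying Theorem \ref{th:jump_derivatives} to the left-hand side gives
\[ \mathbf{T}_{1,2}\overline{\partial} H + \overline{\mathbf{R}}_2\overline{\mathbf{S}}_1\overline{\partial} H = \beta. \]
Matching holomorphic against anti-holomorphic parts (since $\beta$ is holomorphic) forces $\overline{\mathbf{R}}_2\overline{\mathbf{S}}_1\overline{\partial} H = 0$. Since $\overline{\mathbf{S}}_1\overline{\partial} H$ is anti-holomorphic on the connected compact surface $\mathscr{R}$ and vanishes on the open set $\riem_2$, analytic continuation yields $\overline{\mathbf{S}}_1\overline{\partial} H = 0$ on all of $\mathscr{R}$. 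Theorem \ref{th:S_kernel_range} then places $\overline{\partial} H \in [\overline{\mathbf{R}}_1\overline{\mathcal{A}(\mathscr{R})}]^\perp$, and we have $\mathbf{T}_{1,2}\overline{\partial} H = \beta$, as desired.

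The main analytical hurdle is the careful accounting of constants, since $\mathbf{O}_{2,1}$ and $\mathbf{J}^q_k$ are naturally defined only up to additive constants on connected components, and Theorem \ref{th:J_same_both_sides}(2) is formulated on homogeneous Dirichlet spaces. Taking an exterior derivative kills this ambiguity and permits the passage from an identity modulo constants to an identity of one-forms. Connectedness of $\riem_2$ is used in two essential ways: it makes the overfare $\mathbf{O}_{2,1}h$ well-defined up to a single additive constant, and it is the hypothesis under which Theorem \ref{th:J_same_both_sides}(2) applies. These are precisely the ingredients that bypass the capping assumption in Theorem \ref{th:Mohammad_isomorphism}.
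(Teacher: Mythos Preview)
Your argument is correct and follows essentially the same route as the paper: the paper's proof gives only the surjectivity argument (take a holomorphic primitive $h$ of $\beta$, set $H=-\mathbf{O}_{2,1}h$, apply Theorems \ref{th:J_same_both_sides} and \ref{th:jump_on_holomorphic} to get $\dot{\mathbf{J}}_{1,2}\dot{H}=\dot{h}$, differentiate via Theorem \ref{th:jump_derivatives}, and separate holomorphic and anti-holomorphic parts), leaving the containment direction implicit from Theorem \ref{th:Schiffer_cohomology}. Your write-up makes that containment step explicit and is otherwise the same proof.
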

 \begin{proof}
  Given any $\beta \in \mathcal{A}^\mathrm{e}(\riem_2)$, let $h \in \mathcal{D}(\riem_2)$ be such that $\partial h = \beta$, which exists by conjugating Lemma \ref{le:dbar_representative}.  Note that $h$ is not necessarily uniquely defined. Set $H=-\mathbf{O}_{2,1}h$. Applying Theorems \ref{th:J_same_both_sides} and \ref{th:jump_on_holomorphic} we obtain that
  \[  \dot{\mathbf{J}}_{1,2} \dot{H} =  \dot{\mathbf{J}}_{2,2} h = \dot{h}.  \]
   Differentiating using Theorem \ref{th:jump_derivatives} we see that
  \[  \mathbf{T}_{1,2} \overline{\partial} H + \overline{\mathbf{R}}_2 \overline{\mathbf{S}}_1 \overline{\partial} H = \beta.  \]
  Since $\beta$ is holomorphic $\overline{\mathbf{R}}_2 \overline{\mathbf{S}}_1 \overline{\partial} H = 0$ so $\overline{\mathbf{S}}_1 \overline{\partial} H = 0$ and hence $\overline{\partial} H \in [\overline{\mathbf{R}}_1 \overline{\mathcal{A}(\mathscr{R})}]^\perp$ by Theorem \ref{th:S_kernel_range}. Furthermore $\mathbf{T}_{1,2} \overline{\partial} H = \beta$ completing the proof.
 \end{proof}
 
 Theorems \ref{th:general_T12_on_perp_surjective_to_exact} and \ref{th:general_T12_kernel_on_perp} taken together generalize Theorem \ref{th:Mohammad_isomorphism} and \cite[Theorem 4.22]{Schippers_Staubach_Plemelj}. We will extend it still further below. For now, we observe the following corollary.
 Recall the projections $\mathbf{P}_k= \mathbf{P}_{\riem_k}$ and 
 $\overline{\mathbf{P}}_k = \overline{\mathbf{P}}_{\riem_k}$ defined by 
 \eqref{eq:hol_antihol_projections_Bergman}.
 \begin{corollary} \label{co:connected_get_iso}
  Let $\riem_2$ be connected.  Then the restriction of $\mathbf{T}_{1,2}$ to $[\overline{\mathbf{R}}_1 \overline{\mathcal{A}(\mathscr{R})}]^\perp$ is an isomorphism onto $\mathcal{A}^\mathrm{e}(\riem_2)$, with inverse $-\overline{\mathbf{P}}_1 \mathbf{O}^\mathrm{e}_{2,1}$. 
 \end{corollary}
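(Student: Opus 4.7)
The plan is to assemble the statement from the three preceding results. First I would note that the map is well-defined with the claimed codomain: if $\overline{\alpha}\in [\overline{\mathbf{R}}_1\overline{\mathcal{A}(\mathscr{R})}]^\perp$, then by Theorem \ref{th:S_kernel_range} we have $\overline{\mathbf{S}}_1\overline{\alpha}=0$, and Theorem \ref{th:Schiffer_cohomology}(1) then gives that $\mathbf{T}_{1,2}\overline{\alpha}=\mathbf{T}_{1,2}\overline{\alpha}+\overline{\mathbf{R}}_2\overline{\mathbf{S}}_1\overline{\alpha}$ is exact on $\riem_2$, so it lies in $\mathcal{A}^{\mathrm{e}}(\riem_2)$.

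Injectivity is exactly Theorem \ref{th:general_T12_kernel_on_perp} (which uses connectedness of $\riem_2$), and surjectivity onto $\mathcal{A}^{\mathrm{e}}(\riem_2)$ is Theorem \ref{th:general_T12_on_perp_surjective_to_exact}. Since both operators are bounded (Theorem \ref{th:Schiffer_operators_bounded}) and the restriction is a continuous bijection between Hilbert spaces, the open mapping theorem yields that the inverse is bounded.

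The remaining task is to identify the inverse explicitly. Here I would apply Proposition \ref{prop:exact_transmitted_jump}: for any $\overline{\alpha}\in[\overline{\mathbf{R}}_1\overline{\mathcal{A}(\mathscr{R})}]^\perp$, since $\riem_2$ is connected and $\overline{\mathbf{S}}_1\overline{\alpha}=0$, the proposition gives
\[
\mathbf{O}^{\mathrm{e}}_{2,1}\mathbf{T}_{1,2}\overline{\alpha}=-\overline{\alpha}+\mathbf{T}_{1,1}\overline{\alpha}.
\]
Now $\mathbf{T}_{1,1}\overline{\alpha}\in\mathcal{A}(\riem_1)$ is holomorphic while $\overline{\alpha}\in\overline{\mathcal{A}(\riem_1)}$ is anti-holomorphic, so applying the projection $\overline{\mathbf{P}}_1$ onto the anti-holomorphic part kills the $\mathbf{T}_{1,1}\overline{\alpha}$ term and leaves
\[
\overline{\mathbf{P}}_1\mathbf{O}^{\mathrm{e}}_{2,1}\mathbf{T}_{1,2}\overline{\alpha}=-\overline{\alpha}.
\]
Thus $-\overline{\mathbf{P}}_1\mathbf{O}^{\mathrm{e}}_{2,1}$ is a left inverse on $[\overline{\mathbf{R}}_1\overline{\mathcal{A}(\mathscr{R})}]^\perp$, and since $\mathbf{T}_{1,2}$ restricted is already known to be a bijection, this left inverse is the two-sided inverse.

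No step presents a serious obstacle, since the hard analytic work has been done in Theorems \ref{th:general_T12_kernel_on_perp}, \ref{th:general_T12_on_perp_surjective_to_exact}, and Proposition \ref{prop:exact_transmitted_jump}; the main subtlety is simply to remember that $\mathbf{O}^{\mathrm{e}}_{2,1}$ requires connectedness of $\riem_2$ for its well-definedness (Definition \ref{def:exact overfare}), which is precisely the hypothesis of the corollary.
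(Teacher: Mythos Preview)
Your proof is correct and follows essentially the same route as the paper's: surjectivity from Theorem \ref{th:general_T12_on_perp_surjective_to_exact}, injectivity from Theorem \ref{th:general_T12_kernel_on_perp}, and the identification of the inverse via the identity $\mathbf{O}^{\mathrm{e}}_{2,1}\mathbf{T}_{1,2}\overline{\alpha}=-\overline{\alpha}+\mathbf{T}_{1,1}\overline{\alpha}$. If anything your version is tidier, since you invoke Proposition \ref{prop:exact_transmitted_jump} directly rather than re-deriving that identity from Theorems \ref{th:Overfare_with_correction_functions} and \ref{th:jump_derivatives} as the paper does.
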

 \begin{proof} 
  The restriction of $\mathbf{T}_{1,2}$ is surjective by Theorem \ref{th:general_T12_on_perp_surjective_to_exact}. Since $\riem_2$ is connected, any function $\omega$ with the bridge property must have the same constant value on each boundary of $\riem_1$, and hence must be constant. So the kernel is trivial.  
  
  Observe that since $\riem_2$ is connected $\mathbf{O}^\mathrm{e}_{2,1}$ is well-defined by the requirement that $d \mathbf{O}_{2,1} = \mathbf{O}^\mathrm{e}_{2,1}$. The fact that this is the inverse follows as in previous proofs. Let $H \in \mathcal{D}_{\text{harm}}(\riem_1)$ be such that $\overline{\partial} H = \overline{\alpha}$. By Theorems \ref{th:Overfare_with_correction_functions},  \ref{th:jump_derivatives}, and the fact that $\overline{\mathbf{S}}_1 \overline{\alpha}=0$ we see that 
  \[ \mathbf{O}^\mathrm{e}_{2,1} \overline{\alpha} = -\overline{\alpha} + \mathbf{T}_{1,1} \overline{\alpha}. \]
  The claim follows immediately.
 \end{proof}

 In order to determine the image of $\mathbf{T}_{1,2}$, we define certain natural subspaces of $\mathcal{A}(\riem_2)$.  Assume that $\riem_2$ is connected (but not necessarily $\riem_1$).  
 Let $c^k_1,\ldots,c^k_{m_k}$ be a fixed homology basis of simple closed curves for $\riem_k$ for $k=1,2$.  
 
 We then have a linear map 
 \begin{align}\label{capital Xi}
  \Xi_k: \mathcal{A}(\mathscr{R}) & \rightarrow \mathbb{C}^{m_k} \\\nonumber
  u & \mapsto \left( \int_{c^k_1} u,\ldots, \int_{c^k_{m_k}} u \right)
 \end{align}
 and the linear map 
 \begin{align}\label{capital Xibar}
  \overline{\Xi_k}: \overline{\mathcal{A}(\mathscr{R})} & \rightarrow \mathbb{C}^{m_k} \\\nonumber
  \overline{u} & \mapsto \left( \int_{c^k_1} \overline{u},\ldots, \int_{c^k_{m_k}} \overline{u} \right). 
 \end{align}
 We then define the subspaces 
 \begin{align}\label{capital Xk}
    X_k & = \text{Im}(\Xi_k) \subseteq \mathbb{C}^{m_k} \\
    \overline{X}_k & = \text{Im}(\overline{\Xi}_k) \subseteq \mathbb{C}^{m_k}.
 \end{align}
 Although $\Xi_k$ and $\overline{\Xi}_k$ depend on the choice of basis, $X_k$ and $\overline{X}_k$ do not.  
 It will sometimes be convenient to choose specific homology bases and an ordering.  Note that some curves in the homology base of $\mathcal{A}(\mathscr{R})$ may appear in the homology base of both $\riem_1$ and $\riem_2$, and that some curves in the homology base of $\mathcal{A}(\mathscr{R})$ might not appear in the homology base of either $\riem_1$ or $\riem_2$.

 We then define the following subspaces of $\mathcal{A}(\riem_2)$: 
 \begin{equation}\label{svensk a}
\gls{gota}({\riem}_k) = \left\{  \alpha \in \mathcal{A}({\riem}_k) : \left( \int_{c^k_1} \alpha ,\ldots,\int_{c^k_{m_k}} \alpha  \right) \in X_k  \right\}.    
 \end{equation}  
 and 
 \begin{equation}\label{svensk abar}
    \mathrm{\textgoth{A}}^-(\riem_k) = \left\{  \alpha \in \mathcal{A}(\riem_k) : \left( \int_{c^k_1} \alpha ,\ldots,\int_{c^{k}_{m_k}} \alpha  \right) \in \overline{X}_k         \right\}  
 \end{equation}     
 for $k=1,2$.  
 Note that it is most certainly {\bf not} true that $\mathrm{\textgoth{A}}^-(\riem_k) = \overline{\mathrm{\textgoth{A}}(\riem_k)}$.   
 
 The definition immediately implies that
 \begin{proposition}  For $k=1,2$, given $\alpha \in \mathcal{A}(\riem_k)$, it holds that
 $\alpha \in \emph{{\textgoth{A}}}(\riem_k)$ if and only if $\alpha$ is in the same cohomology class as an element of $\mathbf{R}_k\mathcal{A}(\mathscr{R})$.  Similarly, $\alpha \in \emph{{\textgoth{A}}}^-(\riem_k)$ if and only if $\alpha$ is in the same cohomology class as an element of $\overline{\mathbf{R}}_k \overline{\mathcal{A}(\mathscr{R})}$.  
 \end{proposition}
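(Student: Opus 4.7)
The plan is to reduce the claim directly to the standard fact that two closed one-forms on a Riemann surface represent the same de Rham cohomology class if and only if their periods over a basis of the first homology group agree. Since every element of $\mathcal{A}(\riem_k)$ and of $\overline{\mathcal{A}(\riem_k)}$ is $d$-closed (holomorphic and anti-holomorphic one-forms are automatically closed), this period criterion applies throughout. I will prove the assertion about $\mathrm{\textgoth{A}}(\riem_k)$; the argument for $\mathrm{\textgoth{A}}^-(\riem_k)$ is identical after replacing $\mathcal{A}(\mathscr{R})$ by $\overline{\mathcal{A}(\mathscr{R})}$ and $\Xi_k$ by $\overline{\Xi}_k$.

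For the forward direction, suppose $\alpha\in\mathrm{\textgoth{A}}(\riem_k)$. By the defining condition \eqref{svensk a}, the period vector $\bigl(\int_{c^k_1}\alpha,\ldots,\int_{c^k_{m_k}}\alpha\bigr)$ lies in $X_k=\mathrm{Im}(\Xi_k)$, so there exists $u\in\mathcal{A}(\mathscr{R})$ with $\Xi_k(u)_j=\int_{c^k_j}\alpha$ for each $j$. Regarding each $c^k_j$ as a cycle in $\mathscr{R}$ under the inclusion $\riem_k\hookrightarrow\mathscr{R}$, one has $\int_{c^k_j}\mathbf{R}_k u=\int_{c^k_j}u$, so $\alpha$ and $\mathbf{R}_k u$ share the same periods on the homology basis $\{c^k_j\}_{j=1}^{m_k}$ of $\riem_k$, and are therefore cohomologous on $\riem_k$.

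For the reverse direction, suppose $\alpha$ lies in the same cohomology class as $\mathbf{R}_k u$ for some $u\in\mathcal{A}(\mathscr{R})$, so that $\alpha=\mathbf{R}_k u+d\phi$ for some smooth function $\phi$ on $\riem_k$. Integrating along each closed curve $c^k_j$ yields $\int_{c^k_j}\alpha=\int_{c^k_j}\mathbf{R}_k u=\Xi_k(u)_j$, so the period vector of $\alpha$ equals $\Xi_k(u)\in X_k$, whence $\alpha\in\mathrm{\textgoth{A}}(\riem_k)$ by definition.

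The proof is essentially a direct unfolding of the definitions and there is no substantive obstacle. The one point worth highlighting is the tacit appeal to the de Rham pairing between closed one-forms and first homology on $\riem_k$, which converts the prescribed-period condition defining $\mathrm{\textgoth{A}}(\riem_k)$ into the cohomological statement of the proposition. No analytic input (such as boundedness or $L^2$ estimates) is required, only that $\{c^k_j\}$ is a homology basis.
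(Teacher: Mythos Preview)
Your proof is correct and is precisely the unfolding of definitions that the paper has in mind: the paper states only that ``the definition immediately implies'' the proposition and gives no further argument. Your write-up makes explicit the one ingredient tacitly assumed, namely that two closed one-forms on $\riem_k$ are cohomologous exactly when their periods over the chosen homology basis $\{c^k_j\}$ agree.
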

 
 Another useful fact is the following.
 \begin{proposition}  \label{pr:XplusXbar_is_all}  
  For $k=1,2$ 
  \[  X_k {+} \overline{X}_k = \mathbb{C}^{m_k}.  \]
 \end{proposition}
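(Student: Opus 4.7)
The plan is to identify $X_k + \overline{X}_k$ with the image of a period evaluation map on $H^1(\mathscr{R};\mathbb{C})$, reduce the assertion to linear independence of the classes $[c^k_j]$ in $H_1(\mathscr{R};\mathbb{C})$, and verify the latter via a Mayer-Vietoris argument.

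I will combine $\Xi_k$ and $\overline{\Xi}_k$ into a single map $\Psi_k: \mathcal{A}(\mathscr{R}) \oplus \overline{\mathcal{A}(\mathscr{R})} \to \mathbb{C}^{m_k}$, $(u,\overline{v}) \mapsto \Xi_k(u) + \overline{\Xi}_k(\overline{v})$, whose image is precisely $X_k + \overline{X}_k$. Because period integrals depend only on the de Rham cohomology class of the integrand and on the homology class of the cycle, the decomposition $\mathcal{A}_{\mathrm{harm}}(\mathscr{R}) = \mathcal{A}(\mathscr{R}) \oplus \overline{\mathcal{A}(\mathscr{R})}$ together with the de Rham identification $\mathcal{A}_{\mathrm{harm}}(\mathscr{R}) \cong H^1(\mathscr{R};\mathbb{C})$ identifies $\Psi_k$ with the evaluation map $H^1(\mathscr{R};\mathbb{C}) \to \mathbb{C}^{m_k}$, $[\omega] \mapsto (\langle[\omega],[c^k_j]\rangle)_{j=1}^{m_k}$. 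Nondegeneracy of the integration pairing between $H^1(\mathscr{R};\mathbb{C})$ and $H_1(\mathscr{R};\mathbb{C})$ then shows that this map is surjective precisely when $[c^k_1],\ldots,[c^k_{m_k}]$ are linearly independent in $H_1(\mathscr{R};\mathbb{C})$, equivalently, when the inclusion-induced map $i_*: H_1(\riem_k;\mathbb{C}) \to H_1(\mathscr{R};\mathbb{C})$ is injective on their span.

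The remaining topological step is handled by the Mayer-Vietoris sequence for $\mathscr{R} = \mathrm{cl}(\riem_1) \cup \mathrm{cl}(\riem_2)$ with intersection $\Gamma$,
\[
H_1(\Gamma;\mathbb{C}) \xrightarrow{f} H_1(\riem_1;\mathbb{C}) \oplus H_1(\riem_2;\mathbb{C}) \xrightarrow{g} H_1(\mathscr{R};\mathbb{C}).
\]
Exactness identifies $\ker i_*|_{H_1(\riem_k)}$ with those $b$ such that $(b,0)$ (for $k=1$) or $(0,b)$ (for $k=2$) lies in $\mathrm{Im}(f)$, while $f$ sends $[\Gamma_j]$ to $([\partial_j\riem_1],-[\partial_j\riem_2])$ up to signs. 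For each connected component indexed by $S_l \subset \{1,\ldots,n\}$ of $\riem_l$ one has the boundary-sum relation $\sum_{j \in S_l}[\partial_j\riem_l] = 0$ in $H_1(\riem_l)$, and combined with connectedness of the complementary piece these relations force the coefficients of any such preimage to be constant across all boundary curves, which then yields $b=0$ via the analogous global identity on $\riem_k$. The main obstacle lies in this topological step: ruling out extra boundary-cycle relations genuinely requires the complementary surface to be connected (as one sees already with $\mathscr{R}=\overline{\mathbb{C}}$ cut by two circles into an annulus and two disks, where the annulus's core cycle becomes trivial in $H_1(\mathscr{R})$). Once topological independence is in hand, the rest follows at once from the Hodge decomposition and the de Rham integration pairing.
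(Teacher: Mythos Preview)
Your argument and the paper's rest on the same idea: the Hodge decomposition $\mathcal{A}_{\mathrm{harm}}(\mathscr{R})=\mathcal{A}(\mathscr{R})\oplus\overline{\mathcal{A}(\mathscr{R})}\cong H^1(\mathscr{R};\mathbb{C})$ identifies $X_k+\overline{X}_k$ with the image of the period-evaluation map $H^1(\mathscr{R};\mathbb{C})\to\mathbb{C}^{m_k}$. The paper stops there, asserting that ``every possible configuration of periods \ldots\ can be attained''. You go one honest step further: surjectivity of that evaluation map is equivalent, by de Rham duality, to linear independence of $[c^k_1],\ldots,[c^k_{m_k}]$ in $H_1(\mathscr{R};\mathbb{C})$, and you supply the Mayer--Vietoris argument that actually secures this independence. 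So the core approach is the same, but your version fills a gap the paper's one-line proof leaves open.

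More than that, your counterexample is on point. Under the standing hypothesis of this subsection (only $\riem_2$ connected), the proposition as stated is false for $k=2$ when $\riem_1$ is disconnected: take $\mathscr{R}=\overline{\mathbb{C}}$ with $\riem_1$ two disks and $\riem_2$ the intervening annulus; then $m_2=1$ while $\mathcal{A}(\mathscr{R})=0$, so $X_2+\overline{X}_2=\{0\}\neq\mathbb{C}$. Your Mayer--Vietoris argument pinpoints exactly why: the injectivity of $i_*\colon H_1(\riem_k)\to H_1(\mathscr{R})$ requires $\riem_{3-k}$ to be connected. The paper's applications are unaffected, since Theorem~\ref{th:X_k_analysis} and Theorem~\ref{th:Schiffer_index_theorem} explicitly assume both $\riem_1$ and $\riem_2$ connected (and the paper even re-derives $\dim(X_1+\overline{X}_1)=2g_1+n-1$ there with a parenthetical ``note that this depends on the assumptions on the configuration''). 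But you have correctly identified both the missing step and the missing hypothesis in the proposition itself.
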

 \begin{proof}
   This is an immediate consequence of the Hodge theorem, which says that every cohomology class on $\mathscr{R}$ has a representative in $\mathcal{A}_{\text{harm}}(\mathscr{R})$.  Thus every possible configuration of periods of $c^k_1,\ldots,c^k_{m_k}$ in $\mathscr{R}$ (and so, in particular of of $c^k_1,\ldots,c^k_{m_k}$ in $\riem_k$)  can be attained by an element of  $\mathcal{A}_{\text{harm}}(\mathscr{R})$.  
 \end{proof}

 The following theorem establishes the behaviour of $\mathbf{T}_{1,2}$ on its entire domain.    

 \begin{theorem} 
  \label{th:Tonetwo_iso_improved}  Assume that $\riem_2$ is connected.  
  Let 
  \[ \overline{W}_1 = \left\{\overline{\mu} \in \overline{\mathbf{R}}_1 \overline{\mathcal{A}(\mathscr{R})};\,\, \overline{\mathbf{R}}_2 \overline{\mathbf{S}}_1 {\overline{\mu} } \in \overline{\mathcal{A}^{\mathrm{e}}(\riem_2)} \right\}  \]
    We have 
    \begin{equation*}
      \mathrm{Im}(\mathbf{T}_{1,2})  = \emph{{\textgoth{A}}}^-(\riem_2)
    \end{equation*}
    and
    \begin{equation*}
     \mathrm{Ker}(\mathbf{T}_{1,2}) \cong  \overline{W}_1.
    \end{equation*}
    The same claim follows for the complex conjugates and with $1$ and $2$ interchanged.
 \end{theorem}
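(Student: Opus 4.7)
The plan is to exploit the orthogonal decomposition
\[
\overline{\mathcal{A}(\riem_1)} = \overline{\mathbf{R}}_1 \overline{\mathcal{A}(\mathscr{R})} \oplus [\overline{\mathbf{R}}_1 \overline{\mathcal{A}(\mathscr{R})}]^\perp
\]
together with what we already know about the restriction of $\mathbf{T}_{1,2}$ to each summand. On $[\overline{\mathbf{R}}_1 \overline{\mathcal{A}(\mathscr{R})}]^\perp$, Theorems \ref{th:general_T12_kernel_on_perp} and \ref{th:general_T12_on_perp_surjective_to_exact} tell us that $\mathbf{T}_{1,2}$ is an injection onto $\mathcal{A}^{\mathrm{e}}(\riem_2)$. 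So the problem reduces to analysing $\mathbf{T}_{1,2}$ on the complementary summand $\overline{\mathbf{R}}_1 \overline{\mathcal{A}(\mathscr{R})}$, and the essential tool will be the period identity of Theorem \ref{th:Schiffer_cohomology}, which yields that for every $\overline{\alpha} \in \overline{\mathcal{A}(\riem_1)}$ and every closed curve $c$ in $\riem_2$,
\[
\int_c \mathbf{T}_{1,2}\overline{\alpha} = -\int_c \overline{\mathbf{R}}_2\overline{\mathbf{S}}_1\overline{\alpha}.
\]

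For the image inclusion $\mathrm{Im}(\mathbf{T}_{1,2})\subseteq \textgoth{A}^-(\riem_2)$: the period identity displays the periods of any $\mathbf{T}_{1,2}\overline{\alpha}$ as the periods on $\riem_2$-curves of an element of $\overline{\mathcal{A}(\mathscr{R})}$, which by definition lie in $\overline{X}_2$. For the reverse inclusion, given $\gamma \in \textgoth{A}^-(\riem_2)$, I first match its periods: pick $\overline{\beta}\in\overline{\mathcal{A}(\mathscr{R})}$ with $\int_{c^2_j}\overline{\beta}=\int_{c^2_j}\gamma$, then use the fact (Theorem \ref{th:S_an_isomorphism}, conjugated) that $\overline{\mathbf{S}}_1\overline{\mathbf{R}}_1:\overline{\mathcal{A}(\mathscr{R})}\to\overline{\mathcal{A}(\mathscr{R})}$ is an isomorphism to solve $\overline{\mathbf{S}}_1\overline{\mathbf{R}}_1\overline{\sigma}=-\overline{\beta}$. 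Setting $\overline{\mu}=\overline{\mathbf{R}}_1\overline{\sigma}$, the period identity shows $\mathbf{T}_{1,2}\overline{\mu}$ has the same periods as $\gamma$, so $\gamma - \mathbf{T}_{1,2}\overline{\mu} \in \mathcal{A}^{\mathrm{e}}(\riem_2)$. Finally Theorem \ref{th:general_T12_on_perp_surjective_to_exact} produces $\overline{\nu}\in[\overline{\mathbf{R}}_1 \overline{\mathcal{A}(\mathscr{R})}]^\perp$ with $\mathbf{T}_{1,2}\overline{\nu}=\gamma-\mathbf{T}_{1,2}\overline{\mu}$, and $\overline{\mu}+\overline{\nu}$ is the desired preimage.

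For the kernel, write an arbitrary $\overline{\alpha}\in\mathrm{Ker}(\mathbf{T}_{1,2})$ as $\overline{\alpha}=\overline{\mu}+\overline{\nu}$ under the above decomposition. Since $\mathbf{T}_{1,2}\overline{\nu}\in \mathcal{A}^{\mathrm{e}}(\riem_2)$, the condition $\mathbf{T}_{1,2}\overline{\alpha}=0$ forces $\mathbf{T}_{1,2}\overline{\mu}=-\mathbf{T}_{1,2}\overline{\nu} \in \mathcal{A}^{\mathrm{e}}(\riem_2)$; by the period identity this holomorphic form is exact precisely when $\overline{\mathbf{R}}_2\overline{\mathbf{S}}_1\overline{\mu}\in \overline{\mathcal{A}^{\mathrm{e}}(\riem_2)}$, i.e. when $\overline{\mu}\in\overline{W}_1$. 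Conversely, for any $\overline{\mu}\in\overline{W}_1$, $\mathbf{T}_{1,2}\overline{\mu}\in\mathcal{A}^{\mathrm{e}}(\riem_2)$, so Theorem \ref{th:general_T12_on_perp_surjective_to_exact} produces a unique $\overline{\nu}\in[\overline{\mathbf{R}}_1 \overline{\mathcal{A}(\mathscr{R})}]^\perp$ with $\mathbf{T}_{1,2}\overline{\nu}=-\mathbf{T}_{1,2}\overline{\mu}$; uniqueness follows from Theorem \ref{th:general_T12_kernel_on_perp}. The assignment $\overline{\mu}\mapsto \overline{\mu}+\overline{\nu}(\overline{\mu})$ is then a well-defined linear bijection $\overline{W}_1\to\mathrm{Ker}(\mathbf{T}_{1,2})$.

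The main obstacle I anticipate is the bookkeeping for the ``iff'' direction in the kernel: one must verify that every period of $\mathbf{T}_{1,2}\overline{\mu}$ can be killed simultaneously with a single $\overline{\nu}$ from the perpendicular subspace, and that exactness of the holomorphic form $\mathbf{T}_{1,2}\overline{\mu}$ on $\riem_2$ is genuinely equivalent to exactness of $\overline{\mathbf{R}}_2\overline{\mathbf{S}}_1\overline{\mu}$ there (not just that their periods agree on a chosen homology basis). This is precisely what the period identity of Theorem \ref{th:Schiffer_cohomology} delivers when applied to an arbitrary curve in $\riem_2$, so the obstacle is more notational than substantive. The connectedness of $\riem_2$ enters only to invoke Theorems \ref{th:general_T12_kernel_on_perp} and \ref{th:general_T12_on_perp_surjective_to_exact}; it is not used elsewhere in the argument.
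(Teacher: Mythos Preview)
Your proposal is correct and follows essentially the same route as the paper: both use the orthogonal decomposition $\overline{\mathcal{A}(\riem_1)} = \overline{\mathbf{R}}_1 \overline{\mathcal{A}(\mathscr{R})} \oplus [\overline{\mathbf{R}}_1 \overline{\mathcal{A}(\mathscr{R})}]^\perp$, invoke Theorems \ref{th:general_T12_kernel_on_perp} and \ref{th:general_T12_on_perp_surjective_to_exact} on the perpendicular summand, use the exactness statement of Theorem \ref{th:Schiffer_cohomology} to control periods, and appeal to the isomorphism $\overline{\mathbf{S}}_1\overline{\mathbf{R}}_1$ of Theorem \ref{th:S_an_isomorphism} to match periods of a target in $\textgoth{A}^-(\riem_2)$. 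Your bijection $\overline{\mu}\mapsto\overline{\mu}+\overline{\nu}(\overline{\mu})$ is exactly the paper's map $\Phi$, with the same well-definedness, surjectivity, and injectivity arguments.
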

 \begin{proof}
    It follows from Theorem \ref{th:Schiffer_cohomology} that $\text{Im}(\mathbf{T}_{1,2}) \subseteq \mathrm{\textgoth{A}}^-(\riem_2)$.  We show that $\mathrm{\textgoth{A}}^-(\riem_2)  \subseteq  \text{Im}(\mathbf{T}_{1,2})$. 
 Let $\beta \in \mathrm{\textgoth{A}}^-(\riem_2)$. Again applying Theorem  \ref{th:Schiffer_cohomology}, since $\overline{\mathbf{S}}_1 \overline{\mathbf{R}}_1$ is an isomorphism by Theorem \ref{th:S_an_isomorphism}, we can find $\overline{\gamma} \in \overline{\mathcal{A}(\mathscr{R})}$ such that 
   \[  \beta - \mathbf{T}_{1,2}   \overline{\mathbf{R}}_1 \overline{\gamma}  \in \mathcal{A}^e(\riem_2).  \]
 By Theorem \ref{th:general_T12_on_perp_surjective_to_exact} there is an $\overline{\alpha} \in (\overline{\mathbf{R}}_1 \overline{\mathcal{A}(R)})^\perp$ such that 
   \[  \mathbf{T}_{1,2} \overline{\alpha} = \beta - \mathbf{T}_{1,2}  \overline{\mathbf{R}}_1 \overline{\gamma} \]
   which completes the proof of the first claim.
   
   We now prove the second claim. 
   Let $\overline{\mu} \in \overline{W}_1$. Since $\mathbf{T}_{1,2} \overline{\mu} + \overline{\mathbf{R}}_2 \overline{\mathbf{S}}_1 \overline{\mu}$ is exact by Theorem \ref{th:Schiffer_cohomology}, so is $\mathbf{T}_{1,2} \mu$.  Thus by Theorem \ref{th:general_T12_on_perp_surjective_to_exact} there is an $\overline{\alpha} \in  (\overline{\mathbf{R}}_1 \overline{\mathcal{A}(R)})^\perp$ such that $\mathbf{T}_{1,2} \overline{\alpha} = - \mathbf{T}_{1,2} \overline{\mu}$ so $\overline{\alpha} + \overline{\mu} \in \mathrm{Ker} (\mathbf{T}_{1,2})$.  We define 
   \begin{align*}
      \Phi:\overline{W}_1 & \rightarrow \mathrm{Ker}(\mathbf{T}_{1,2}) \\
      \overline{\mu} & \mapsto \overline{\alpha} + \overline{\mu}.
   \end{align*}
   This is well-defined, since if $\overline{\alpha} + \overline{\mu}$ and $\overline{\beta} + \overline{\mu}$ are both in $\mathrm{Ker}( \mathbf{T}_{12})$ for $\overline{\beta},\overline{\alpha} \in [\overline{\mathbf{R}}_1 \overline{\mathcal{A}(\mathscr{R})}]^\perp$ then $\overline{\alpha} - \overline{\beta} \in [\overline{\mathbf{R}}_1 \overline{\mathcal{A}(\mathscr{R})}]^\perp \cap \mathrm{Ker}( \mathbf{T}_{1,2})$ so $\overline{\alpha} -\overline{\beta} =0$ by Theorem \ref{th:general_T12_kernel_on_perp}.  
   
   This map is surjective. Assume that $\overline{\gamma}  \in \mathrm{Ker}(\mathbf{T}_{1,2})$.  Write $\overline{\gamma} = \overline{\alpha} + \overline{\mu}$ for $\overline{\alpha} \in  (\overline{\mathbf{R}}_1 \overline{\mathcal{A}(R)})^\perp$ and $\overline{\mu} \in \overline{\mathbf{R}}_1 \overline{\mathcal{A}(\mathscr{R})}$. Since $\mathbf{T}_{1,2} \overline{\alpha} = -\mathbf{T}_{1,2} \overline{\mu}$ and the former is exact by Theorem \ref{th:general_T12_on_perp_surjective_to_exact} we see that $\mathbf{T}_{1,2} \overline{\mu}$ is exact. Thus by Theorem \ref{th:Schiffer_cohomology} $\overline{\mathbf{R}}_2 \overline{\mathbf{S}}_1 \overline{\mu}$ is exact.  So $\overline{\mu} \in \overline{W}_1$ and $\Phi(\overline{\gamma}) = \overline{\gamma}$. 
   
   This map is injective.  Assume that $\Phi(\overline{\mu})=0$. Then $\overline{\alpha} + \overline{\mu} =0$.  Using Theorem \ref{th:general_T12_kernel_on_perp} and the fact that $\overline{\alpha} \in [\overline{\mathbf{R}}_1 \overline{\mathcal{A}(\mathscr{R})}]^\perp$ we obtain that $\overline{\mu} \in [\overline{\mathbf{R}}_1 \overline{\mathcal{A}(\mathscr{R})}]^\perp$.  So $\overline{\mu}=0$. 
 \end{proof}
 \begin{remark} \label{re:explicit_Tpartialinverse_construction}
  The element $\overline{\alpha}$ corresponding to $\overline{\mu}$ in the definition of $\Phi$ can be constructed explicitly as follows.  Given $\overline{\mu} \in \overline{W}_1$, since $\mathbf{T}_{1,2} \overline{\mu}$ is exact, there is an $h \in \mathcal{D}(\riem_2)$ such that $\partial h = \mathbf{T}_{1,2} \overline{\mu}$. Set $H=  \mathbf{O}_{2,1} h$.   So 
  \[  \mathbf{J}_{1,2}^q H = -\mathbf{J}^q_{2,2} h = -h + c \]
  for some $c$ which is constant on connected components, by Theorems \ref{th:J_same_both_sides} and \ref{th:jump_on_holomorphic}.  So applying Theorem \ref{th:jump_derivatives} we obtain 
  \[  \mathbf{T}_{1,2} \overline{\partial} H + \overline{\mathbf{R}}_2  \overline{\mathbf{S}}_1 \overline{\partial} H =  d \mathbf{J}_{1,2} H = - \partial h = -\mathbf{T}_{1,2} \overline{\mu}. \]
  But since the right hand side is holomorphic, we must have that $\overline{\mathbf{R}}_2  \overline{\mathbf{S}}_1 \overline{\partial} H =0$ so by analytic continuation and Theorem \ref{th:S_kernel_range} we see that $\overline{\alpha} =\overline{\partial} H \in [\overline{\mathbf{R}}_1 \overline{\mathcal{A}(\mathscr{R})}]^\perp$ and
  \[  \mathbf{T}_{1,2} (\overline{\alpha} + \overline{\mu} ) =0. \]
  We may summarize this by saying that 
  \[  \Phi =  \mathbf{I} - \overline{\mathbf{P}}_1 d \mathbf{O}_{2,1} d^{-1} \mathbf{T}_{1,2}     \]
  observing that this is well-defined by the proof of the theorem.
 \end{remark}

   This has the following important consequence.
   \begin{corollary}  \label{co:T_plus_extra_surjective_capped}  Let $\riem_2$ be capped by $\riem_1$.  Then $\mathrm{ker}(\mathbf{T}_{1,2})$ is trivial.
   Furthermore, 
    any $\alpha \in \mathcal{A}(\riem_2)$ can be written 
    \[  \alpha = \mathbf{T}_{1,2} \overline{\gamma} + \mathbf{R}_2  \tau + \partial \omega  \]
    for unique $\overline{\gamma} \in \overline{\mathcal{A}(\riem_1)}$, $\tau \in \mathcal{A}(\mathscr{R})$, and $d\omega \in \mathcal{A}_{\mathrm{hm}}(\riem_2)$.     That is 
    \[    \mathcal{A}(\riem_2) = \emph{\textrm{\textgoth{A}}}^{-}(\riem_2) \oplus \mathbf{R}_2 \mathcal{A}(\mathscr{R}) \oplus \partial \mathcal{D}_{\mathrm{hm}}(\riem_2).       \]
    The same claim holds for complex conjugates.
   \end{corollary}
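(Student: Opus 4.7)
The plan is to split the proof into triviality of the kernel and a period-matching scheme for existence and uniqueness, using Theorem~\ref{th:Tonetwo_iso_improved} as the main tool and exploiting the two key topological features of the capped configuration: the genus $g$ of $\mathscr{R}$ equals that of $\riem_2$, and each boundary cycle $\partial_k\riem_2$ bounds a disk in $\riem_1$ and so is null-homologous in $\mathscr{R}$.

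For kernel triviality, by Theorem~\ref{th:Tonetwo_iso_improved} it suffices to check $\overline{W}_1 = \{0\}$. Given $\overline{\mu} = \overline{\mathbf{R}}_1 \overline{\beta} \in \overline{W}_1$, put $\overline{\gamma} := \overline{\mathbf{S}}_1 \overline{\mathbf{R}}_1 \overline{\beta} \in \overline{\mathcal{A}(\mathscr{R})}$; the condition $\overline{\mathbf{R}}_2\overline{\mathbf{S}}_1\overline{\mu} \in \overline{\mathcal{A}^{\mathrm{e}}(\riem_2)}$ forces $\int_c \overline{\gamma} = 0$ for every cycle $c$ in $\riem_2$. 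Since $\riem_1$ consists of disks, internal cycles of $\riem_2$ generate $H_1(\mathscr{R})$ and boundary cycles are null in $\mathscr{R}$, so $\overline{\gamma}$ has trivial periods throughout $H_1(\mathscr{R})$; by injectivity of the period map on $\overline{\mathcal{A}(\mathscr{R})}$ we get $\overline{\gamma} = 0$, and Theorem~\ref{th:S_an_isomorphism} then gives $\overline{\beta} = 0$, hence $\overline{\mu}=0$.

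For existence I peel the summands off in the order dictated by the period structure. First, invertibility of the reduced boundary period matrix (Theorem~\ref{th:period_matrix_invertible}), combined with the consistency identities $\sum_k \int_{\partial_k\riem_2}\alpha = 0$ and $\sum_k \Pi_{kj} = 0$, produces $\omega = \sum c_j\omega_j$ with $\partial\omega$ matching every boundary period of $\alpha$. Second, the Hodge decomposition on the compact genus-$g$ surface $\mathscr{R}$ gives $X_2 \oplus \overline{X}_2 = \mathbb{C}^{2g}$ on the internal cycles (the direct sum because of the Riemann bilinear relation); this lets me split the internal period vector of $\alpha - \partial\omega$ as $x + \bar y$ with $x \in X_2$ and $\bar y \in \overline{X}_2$, pick $\tau \in \mathcal{A}(\mathscr{R})$ with $\Xi_2(\tau) = x$, and observe that $\alpha - \partial\omega - \mathbf{R}_2\tau$ has zero boundary periods (since $\mathbf{R}_2 \tau$ automatically does) and internal periods in $\overline{X}_2$. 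Hence it lies in $\textrm{\textgoth{A}}^{-}(\riem_2) = \mathrm{Im}(\mathbf{T}_{1,2})$ by Theorem~\ref{th:Tonetwo_iso_improved}, and a preimage $\overline{\gamma}$ yields the decomposition.

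Uniqueness runs in the reverse direction. If $\mathbf{T}_{1,2}\overline{\gamma}+\mathbf{R}_2\tau+\partial\omega = 0$, the first two summands have zero boundary periods, forcing $\partial\omega$ to have zero boundary periods and hence $d\omega=0$ by Theorem~\ref{th:period_matrix_invertible}. The internal-cycle equation then places $\Xi_2(\mathbf{R}_2\tau) = -\Xi_2(\mathbf{T}_{1,2}\overline{\gamma})$ in $X_2 \cap \overline{X}_2 = \{0\}$, so $\tau$ has zero periods throughout $H_1(\mathscr{R})$ and vanishes (the compact period map is injective), after which $\mathbf{T}_{1,2}\overline{\gamma}=0$ gives $\overline{\gamma}=0$ by the kernel triviality of the first step. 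The main subtlety is bookkeeping the passage between $H_1(\riem_2)$ and $H_1(\mathscr{R})$: once the capped-case identifications (boundary cycles trivial, internal cycles a basis of $H_1(\mathscr{R})$) are made explicit, the direct sum $X_2 \oplus \overline{X}_2 = \mathbb{C}^{2g}$ and the independence of boundary-period data from internal-period data handle the rest.
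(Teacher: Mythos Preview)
Your proof is correct and follows essentially the same strategy as the paper: invoke Theorem~\ref{th:Tonetwo_iso_improved} for both the kernel and the image, and then argue a period-based direct sum decomposition of $\mathcal{A}(\riem_2)$ into the three pieces. Your treatment of $\overline{W}_1=\{0\}$ spells out what the paper compresses into ``any exact form on $\mathscr{R}$ is zero,'' and your explicit period-matching construction (first boundary periods via the $\partial\omega$ term, then internal periods split by $X_2\oplus\overline{X}_2=\mathbb{C}^{2g}$) is a constructive version of the paper's dimension-count argument for the direct sum; the uniqueness argument is the same in both. The only stylistic difference is that the paper phrases the middle step as a linear-independence check rather than an explicit construction of $\omega$, $\tau$, and $\overline{\gamma}$, but the underlying content is identical.
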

   \begin{proof}  
    Since any exact form on $\mathscr{R}$ is zero, we see that $\overline{W}_1 = \{0 \}$, so the kernel is zero by Theorem \ref{th:Tonetwo_iso_improved}.  
    Since the periods of elements of $\mathbf{R}_2 \mathscr{A}_{\mathrm{harm}}(\mathscr{R})$ are zero around the boundary curves $\partial_k \riem_2$ for all $k$, we see that $\ast \mathcal{A}_{\mathrm{hm}}(\riem_2)$ and $\mathbf{R}_2 \mathscr{A}_{\mathrm{harm}}(\mathscr{R})$ are linearly independent. Using the decomposition 
    \[  \partial \omega = \frac{1}{2}\left( d\omega + i \ast d\omega \right) \]
    shows that $\partial \mathcal{D}_{\mathrm{harm}}(\riem_2)$ and $\mathbf{R}_2 \mathcal{A}_{\mathrm{harm}}(\mathscr{R})$ are linearly independent. 
    
    Now $X_1$ and $\overline{X}_1$ are linearly independent, since each has dimension $g$, and the dimension of $X_1 + \overline{X}_1$ is $2g$.  Thus since $\text{Im}(\mathbf{T}_{1,2}) = {\mathrm{\textgoth{A}}}^{-}(\riem_2)$ by Theorem \ref{th:Tonetwo_iso_improved}, this proves that 
    \[  \mathcal{A}(\riem_2) = {\textrm{\textgoth{A}}}^{-}(\riem_2)  + \mathbf{R}_2 \mathcal{A}(\mathscr{R}) + \partial \mathcal{D}_{\mathrm{hm}}(\riem_2).  \]
    decomposition. Linear independence proves that the decomposition is a direct sum, and uniqueness of $\tau$. The uniqueness of $\overline{\partial} \omega$ follows from Theorem \ref{th:period_matrix_invertible}, and uniqueness of $\overline{\gamma}$ follows from triviality of the kernel of $\mathbf{T}_{1,2}$.  
   \end{proof}
   
\end{subsection}
\begin{subsection}{Fredholm index of the Schiffer operator} \label{th:index_and_examples}
 In this section, we derive the Fredholm index of the Schiffer operator $\mathbf{T}_{1,2}$ for the case where $\riem_1$ and $\riem_2$ are connected, and for the case that $\riem_2$ is capped by $\riem_1$. 
 We also determine general formulas for the dimensions of the kernel and cokernel which could be used to derive index theorems in other configurations.
 
 In the following, we first observe that 
  \begin{equation} \label{eq:kernel_Xi_and_W}
   \mathrm{dim} \, \mathrm{Ker}(\Xi_1) = \mathrm{dim}\, W_2. 
  \end{equation} 
  This follows directly from the definitions together with the fact that $\mathbf{R}_1 \mathbf{S}_1$ is an isomorphism by Theorem \ref{th:S_kernel_range}.  
 The same claim holds with $1$ and $2$ interchanged, as does the complex conjugate. 
 
 \begin{theorem}  \label{th:X_k_analysis}
  Assume that $\riem_1$ and $\riem_2$ are connected. Let $g$ be the genus of $\mathscr{R}$ and $g_1$,$g_2$ be the genuses of $\riem_1$ and $\riem_2$ respectively. 
  \begin{equation}  \label{eq:dimX1_and_W_2}
   \mathrm{dim}\, X_1   = g - \mathrm{dim} W_2 
  \end{equation}
  and
  \begin{equation}  \label{eq:dimX1_and_X1intersectX1bar}
   \mathrm{dim}\, X_1  = g_1 + \frac{n-1}{2} + \frac{1}{2} \, \mathrm{dim}(X_1 \cap \overline{X}_1). 
  \end{equation}
  The same claims hold with $1$ and $2$ interchanged. 
 \end{theorem}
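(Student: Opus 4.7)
The plan is to derive both identities by elementary linear algebra, with inputs from the rank-nullity theorem, Proposition \ref{pr:XplusXbar_is_all}, and the observation \eqref{eq:kernel_Xi_and_W} recalled just above the theorem.

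For \eqref{eq:dimX1_and_W_2}, I would simply apply the rank-nullity theorem to the complex linear map $\Xi_1 : \mathcal{A}(\mathscr{R}) \to \mathbb{C}^{m_1}$. Its image is $X_1$ by definition, and by \eqref{eq:kernel_Xi_and_W} its kernel has dimension $\dim W_2$. Since $\dim_{\mathbb{C}} \mathcal{A}(\mathscr{R}) = g$, this yields
\[
\dim X_1 \;=\; g - \dim W_2
\]
immediately. The analogous identity for $\riem_2$ follows with the roles of $1$ and $2$ reversed.

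For \eqref{eq:dimX1_and_X1intersectX1bar} I need three ingredients. First, since $\riem_1$ is a bordered surface of type $(g_1,n)$, the rank of its first homology is $m_1 = 2g_1 + n - 1$, so the homology basis $c^1_1,\ldots,c^1_{m_1}$ has this cardinality. Second, Proposition \ref{pr:XplusXbar_is_all} gives $X_1 + \overline{X}_1 = \mathbb{C}^{m_1}$. Third, $\dim X_1 = \dim \overline{X}_1$: this follows because the cycles $c^1_j$ are real, so a direct computation shows $\overline{\Xi}_1(\overline{u}) = \overline{\Xi_1(u)}$, i.e. $\overline{X}_1$ coincides with the complex conjugate of the subspace $X_1$ and hence has the same complex dimension. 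Combining these with the standard inclusion–exclusion identity
\[
\dim X_1 + \dim \overline{X}_1 \;=\; \dim(X_1 + \overline{X}_1) + \dim(X_1 \cap \overline{X}_1)
\]
gives $2\dim X_1 = 2g_1 + n - 1 + \dim(X_1 \cap \overline{X}_1)$, which rearranges to the desired identity. The statement for $\riem_2$ again follows by the same argument.

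I do not anticipate any serious obstacle here: the theorem is a linear-algebraic assembly of facts already in place. The nontrivial content is absorbed into Proposition \ref{pr:XplusXbar_is_all} (which depends on the Hodge theorem applied to $\mathscr{R}$) and into \eqref{eq:kernel_Xi_and_W}. The only subtle point worth spelling out is the equality $\dim X_1 = \dim \overline{X}_1$, but this is an elementary consequence of using real cycles to define the period maps.
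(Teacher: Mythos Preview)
Your proof is correct and matches the paper's approach essentially line for line: rank--nullity plus \eqref{eq:kernel_Xi_and_W} for the first identity, and $m_1=2g_1+n-1$ together with $X_1+\overline{X}_1=\mathbb{C}^{m_1}$, $\dim X_1=\dim\overline{X}_1$, and inclusion--exclusion for the second. The only difference is cosmetic: the paper re-derives $\dim(X_1+\overline{X}_1)=2g_1+n-1$ in situ (noting its dependence on the connectedness hypotheses) rather than citing Proposition~\ref{pr:XplusXbar_is_all}, and leaves the equality $\dim X_1=\dim\overline{X}_1$ implicit, whereas you spell it out.
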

 \begin{proof}
   The first claim follow from the fact that 
   \[  X_1 = \mathrm{Im} (\Xi_1), \ \ \  \]
   equation \eqref{eq:kernel_Xi_and_W}, and the fact that $\mathcal{A}(\mathscr{R})$ has dimension $g$.   
   
   To prove the second claim, first observe that every homology class in $\riem_1$ is represented by a homology class in $\mathscr{R}$ (note that this depends on the assumptions on the configuration $\riem_1$, $\riem_2$, $\mathscr{R}$).  So 
   \[ \mathrm{dim} (X_1+ \overline{X}_1) = 2g_1 + n-1.  \] 
   Using this together with the fact that 
   \[  \mathrm{dim}\, (X_1+\overline{X}_1) = \mathrm{dim}\, X_1 + \mathrm{dim}\,\overline{X}_1 - \mathrm{dim}\, (X_1 \cap \overline{X}_1) = 2 \, \mathrm{dim}\, X_1 - \mathrm{dim}\, (X_1 \cap \overline{X}_1)  \]
   proves the claim.
 \end{proof}
 Combining these two claims, together with properties of the Schiffer operator, results in the following. 
 \begin{theorem}  \label{th:Schiffer_index_theorem}
  Assume that $\riem_1$ and $\riem_2$ are connected. Let $g$ be the genus of $\mathscr{R}$ and $g_1$,$g_2$ be the genuses of $\riem_1$ and $\riem_2$ respectively. Then 
  \[ \mathrm{Index} (\mathbf{T}_{1,2}) = g_1 - g_2.  \]
  The same claim holds with $1$ and $2$ switched.  
 \end{theorem}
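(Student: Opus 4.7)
The plan is to extract the kernel and image from Theorem~\ref{th:Tonetwo_iso_improved}, which applies since $\riem_2$ is connected, and then reduce the dimension computations to the outputs of Theorem~\ref{th:X_k_analysis} and Proposition~\ref{pr:XplusXbar_is_all}. From Theorem~\ref{th:Tonetwo_iso_improved} I immediately get $\mathrm{Ker}(\mathbf{T}_{1,2}) \cong \overline{W}_1$ with $\dim \mathrm{Ker}(\mathbf{T}_{1,2}) = \dim W_1 \le g$, and $\mathrm{Im}(\mathbf{T}_{1,2}) = \mathrm{\textgoth{A}}^{-}(\riem_2)$. So the task reduces to computing $\dim \mathcal{A}(\riem_2)/\mathrm{\textgoth{A}}^{-}(\riem_2)$.

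For the cokernel, I will work through the period map $P_2 : \mathcal{A}(\riem_2) \to \mathbb{C}^{m_2}$ on a fixed basis of $m_2 = 2g_2 + n - 1$ cycles (the first Betti number of the connected type-$(g_2,n)$ surface $\riem_2$, read off from $\chi(\riem_2)=2-2g_2-n$). By the very definition of $\mathrm{\textgoth{A}}^{-}(\riem_2)$ we have $\mathrm{\textgoth{A}}^{-}(\riem_2) = P_2^{-1}(\overline{X}_2)$, so the first isomorphism theorem gives $\mathrm{coker}(\mathbf{T}_{1,2}) \cong \mathrm{Im}(P_2)/(\mathrm{Im}(P_2) \cap \overline{X}_2)$. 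The crucial observation is that $X_2 = P_2(\mathbf{R}_2 \mathcal{A}(\mathscr{R})) \subseteq \mathrm{Im}(P_2)$; combining with Proposition~\ref{pr:XplusXbar_is_all} yields $\mathrm{Im}(P_2) + \overline{X}_2 \supseteq X_2 + \overline{X}_2 = \mathbb{C}^{m_2}$. The second isomorphism theorem then collapses the cokernel to $\mathbb{C}^{m_2}/\overline{X}_2$, of finite dimension $m_2 - \dim \overline{X}_2 = m_2 - \dim X_2$.

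To finish, I combine these dimensions with Theorem~\ref{th:X_k_analysis}, whose version with the roles of $1$ and $2$ interchanged gives $\dim X_2 = g - \dim W_1$, together with the sewing relation $g = g_1 + g_2 + n - 1$ (a direct consequence of the additivity $\chi(\mathscr{R}) = \chi(\riem_1) + \chi(\riem_2)$ across the separating circles, since each boundary circle has $\chi=0$). Substituting,
\begin{align*}
\dim \mathrm{coker}(\mathbf{T}_{1,2}) &= (2g_2 + n - 1) - \bigl(g_1 + g_2 + n - 1 - \dim W_1\bigr) \\
&= g_2 - g_1 + \dim W_1,
\end{align*}
so $\mathrm{Index}(\mathbf{T}_{1,2}) = \dim W_1 - (g_2 - g_1 + \dim W_1) = g_1 - g_2$, as required. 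The main conceptual hurdle is the cokernel reduction: although $\mathrm{Im}(P_2)$ might in principle be a proper subspace of $\mathbb{C}^{m_2}$ (and pinning it down directly would require a separate period-realization argument along the lines of Lemma~\ref{le:dbar_representative}), Proposition~\ref{pr:XplusXbar_is_all} shows that $\overline{X}_2$ already fills out everything $\mathrm{Im}(P_2)$ could miss, so the quotient reduces cleanly to $\mathbb{C}^{m_2}/\overline{X}_2$ with no further input.
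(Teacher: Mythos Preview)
Your argument is correct and in fact cleaner than the route the paper takes. Both proofs start from Theorem~\ref{th:Tonetwo_iso_improved} to identify $\dim\mathrm{Ker}(\mathbf{T}_{1,2})=\dim W_1$ and $\mathrm{Im}(\mathbf{T}_{1,2})=\mathrm{\textgoth{A}}^{-}(\riem_2)$, and both arrive at $\dim\mathrm{Coker}(\mathbf{T}_{1,2})=m_2-\dim X_2$. From there the paths diverge. You feed $\dim X_2=g-\dim W_1$ (equation~\eqref{eq:dimX1_and_W_2} with $1,2$ swapped) and $g=g_1+g_2+n-1$ straight into the cokernel and cancel $\dim W_1$ directly. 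The paper instead invokes the adjoint identity $\mathbf{T}_{1,2}^*=\overline{\mathbf{T}}_{2,1}$ to get a second expression $\dim\mathrm{Coker}(\mathbf{T}_{1,2})=\dim W_2$, uses both parts of Theorem~\ref{th:X_k_analysis} (including \eqref{eq:dimX1_and_X1intersectX1bar}) to derive the auxiliary equality $\dim(X_1\cap\overline{X}_1)=\dim(X_2\cap\overline{X}_2)$, and only then reads off $\mathrm{Index}=\dim W_1-\dim W_2=g_1-g_2$.

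What you gain is economy: your proof needs neither the adjoint identity nor equation~\eqref{eq:dimX1_and_X1intersectX1bar}, and you never touch $\dim(X_k\cap\overline{X}_k)$. What the paper's detour buys is the side fact $\dim(X_1\cap\overline{X}_1)=\dim(X_2\cap\overline{X}_2)$ (and its parity consequence recorded in the remark after the theorem), which is of independent interest. Your second-isomorphism-theorem argument for the cokernel, using $X_2\subseteq\mathrm{Im}(P_2)$ and Proposition~\ref{pr:XplusXbar_is_all}, is also more careful than the paper's one-line passage from $\mathrm{Im}(\mathbf{T}_{1,2})=\mathrm{\textgoth{A}}^{-}(\riem_2)$ to $\dim\mathrm{Coker}=m_2-\dim X_2$, which tacitly relies on the same fact. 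One minor point: you use $\dim\overline{X}_2=\dim X_2$ without comment; this is immediate since $\overline{X}_2$ is the complex conjugate of $X_2$ in $\mathbb{C}^{m_2}$, but it is worth a word.
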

 \begin{proof}
  Combining the two equations in Theorem \ref{th:X_k_analysis}, we obtain \begin{align}  \label{eq:index_temp_zero}
      g- \mathrm{dim}\, W_2 & = g_1 + \frac{n-1}{2} + \frac{1}{2} \mathrm{dim}\,(X_1 \cap \overline{X}_1 ) \nonumber \\ 
      g- \mathrm{dim}\, W_1 & = g_2 + \frac{n-1}{2} + \frac{1}{2} \mathrm{dim}\,(X_2 \cap \overline{X}_2 ) 
  \end{align}
  so since $g_1 + g_2 + n-1 = g$ we obtain 
  \begin{align} \label{eq:index_proof_temp} 
     \mathrm{dim}\, W_2 + \frac{1}{2} \mathrm{dim}\, (X_1\cap \overline{X}_1) & = g_2 + \frac{n-1}{2} \nonumber \\ 
      \mathrm{dim}\, W_1 + \frac{1}{2} \mathrm{dim}\, (X_2\cap \overline{X}_2) & = g_1 + \frac{n-1}{2}
  \end{align}
  
  Next we compute the dimension of the cokernel of $\mathbf{T}_{1,2}$. By Theorem \ref{th:Tonetwo_iso_improved}, we have that all harmonic forms with periods in $\overline{X}_2$ are in the image of $\mathbf{T}_{1,2}$, from which we conclude that
  \begin{align*}
   \mathrm{dim}\, \mathrm{Coker}( \mathbf{T}_{1,2}) & = 2g_2 + n-1 -
   \mathrm{dim}\, X_2  \\
   & = g_2 + \frac{n-1}{2} - \frac{1}{2} \mathrm{dim}\,(X_2 \cap \overline{X}_2)
  \end{align*}
  where we have used equation \eqref{eq:dimX1_and_X1intersectX1bar} with $1$ replaced by $2$.  However, since
  $\mathbf{T}_{1,2}^* = \overline{\mathbf{T}}_{2,1}$, by Theorem \ref{th:adjoint_identities} we have 
  \[  \mathrm{dim}\, \mathrm{Coker}( \mathbf{T}_{1,2}) = \mathrm{dim}\, \mathrm{Ker}\, \mathbf{T}_{2,1} = \mathrm{dim}\, W_2 \]
  where we have used Theorem \ref{th:Tonetwo_iso_improved}. Thus 
  \begin{equation*}
   \mathrm{dim}\, W_2 = g_2 + \frac{n-1}{2} - \frac{1}{2} \mathrm{dim}\,(X_2 \cap \overline{X}_2)
  \end{equation*} 
  which upon comparison with \eqref{eq:index_proof_temp} yields that
  \begin{equation} \label{eq:intersections_X12_equal}
     \frac{1}{2} \, \mathrm{dim}\,(X_1 \cap \overline{X}_1) =  
  \frac{1}{2} \, \mathrm{dim}\,(X_2 \cap \overline{X}_2).   
  \end{equation} 
  Now using this fact together with equation \eqref{eq:index_proof_temp} we obtain  
  \begin{align*}
     \mathrm{dim}\, \mathrm{Ker}( \mathbf{T}_{1,2}) - \mathrm{dim}\, \mathrm{Coker}( \mathbf{T}_{1,2}) & = \mathrm{dim}\, \mathrm{Ker}( \mathbf{T}_{1,2}) - \mathrm{dim}\, \mathrm{Ker}( \overline{\mathbf{T}}_{2,1}) \\
     & =\mathrm{dim}\, W_1 - \mathrm{dim}\, W_2 \\
     & = g_1 - g_2 
  \end{align*}
  as claimed. 
  
  To prove the final claim, just switch the roles of $1$ and $2$ in the proof, which can be done by the symmetry of the conditions.
 \end{proof}
 
 \begin{remark}  Under the same assumptions, the proof also shows the following interesting facts. 
  By equations \eqref{eq:index_temp_zero} and \eqref{eq:index_proof_temp}, together with the fact that
  $\mathrm{dim}\, (X_1 \cap \overline{X}_1) = \mathrm{dim}\, (X_2 \cap \overline{X}_2)$ by \eqref{eq:intersections_X12_equal}, we obtain  
  \[ g_1 -g_2 = \mathrm{dim}\, W_1 - \mathrm{dim}\,W_2 = \mathrm{dim}\, X_1 - \mathrm{dim}\, X_2.  \] 
  Furthermore, \eqref{eq:index_proof_temp} implies that 
  \[  \mathrm{dim}\, (X_k \cap X_k) = n - 1 \ \  \mathrm{mod}\, 2  \]
  for $k=1,2$.  
 \end{remark}
 
 We also have the following.
 \begin{theorem} \label{th:index_capped_surface}
  Let $\riem_2$ be a surface of genus $g$ capped by $\riem_1$, where $\riem_1$ has $n$ connected components.  Then 
  \[  \mathrm{Index} ( \mathbf{T}_{1,2}) = - \mathrm{Index} ( \mathbf{T}_{2,1}) = 1-n-g.  \]
 \end{theorem}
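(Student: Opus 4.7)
The plan is to reduce the index computation to identifying the cokernel and its dimension, using the direct sum decomposition already supplied by Corollary \ref{co:T_plus_extra_surjective_capped}, and then to deduce the second equality by adjointness.

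First I would note that under the capped hypothesis, Corollary \ref{co:T_plus_extra_surjective_capped} gives both $\mathrm{Ker}(\mathbf{T}_{1,2}) = \{0\}$ and the orthogonal-sum decomposition
\[
\mathcal{A}(\riem_2) \;=\; \mathrm{\textgoth{A}}^{-}(\riem_2) \,\oplus\, \mathbf{R}_2 \mathcal{A}(\mathscr{R}) \,\oplus\, \partial \mathcal{D}_{\mathrm{hm}}(\riem_2),
\]
together with the fact, from Theorem \ref{th:Tonetwo_iso_improved} (applicable since $\riem_2$ is connected), that $\mathrm{Im}(\mathbf{T}_{1,2}) = \mathrm{\textgoth{A}}^-(\riem_2)$. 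Hence $\mathrm{Coker}(\mathbf{T}_{1,2}) \cong \mathbf{R}_2 \mathcal{A}(\mathscr{R}) \oplus \partial \mathcal{D}_{\mathrm{hm}}(\riem_2)$, and the index computation reduces to finding the dimensions of these two summands.

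The first dimension is immediate: Theorem \ref{th:S_kernel_range} gives $\mathrm{Ker}(\mathbf{R}_2) = \{0\}$, so $\dim \mathbf{R}_2 \mathcal{A}(\mathscr{R}) = \dim \mathcal{A}(\mathscr{R}) = g$. The second—computing $\dim \partial \mathcal{D}_{\mathrm{hm}}(\riem_2)$—is the main technical step, and the one likely to demand the most care. I would argue that $\mathcal{D}_{\mathrm{hm}}(\riem_2)$ is the complex span of the $n$ harmonic measures $\omega_1,\ldots,\omega_n$, hence of complex dimension $n$, and then show that the kernel of the holomorphic projection $\omega \mapsto \partial \omega$ on this space is exactly the line of complex constants. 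For this, if $\omega = \sum c_k \omega_k$ and $\partial \omega = 0$, then $\overline{\omega}$ is a holomorphic function on $\riem_2$ which is constant on each analytic boundary curve of $\riem_2$ (viewed in its double); Schwarz reflection together with connectedness of $\riem_2$ forces $\overline{\omega}$ to be a single constant, and then the relation $\sum \omega_k = 1$ forces the $c_k$ to be equal. Thus $\dim \partial \mathcal{D}_{\mathrm{hm}}(\riem_2) = n-1$, giving $\dim \mathrm{Coker}(\mathbf{T}_{1,2}) = g + n -1$ and therefore $\mathrm{Index}(\mathbf{T}_{1,2}) = 0 - (g+n-1) = 1 - n - g$.

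For the second equality I would invoke Theorem \ref{th:adjoint_identities} to get $\mathbf{T}_{2,1}^{*} = \overline{\mathbf{T}}_{1,2}$. Since complex conjugation is a conjugate-linear isometry of the relevant Bergman spaces, $\dim \mathrm{Ker}(\overline{\mathbf{T}}_{1,2}) = \dim \mathrm{Ker}(\mathbf{T}_{1,2}) = 0$ and $\dim \mathrm{Coker}(\overline{\mathbf{T}}_{1,2}) = \dim \mathrm{Coker}(\mathbf{T}_{1,2}) = g+n-1$. The standard Hilbert-space identities $\dim \mathrm{Coker}(\mathbf{T}_{2,1}) = \dim \mathrm{Ker}(\mathbf{T}_{2,1}^*)$ and $\dim \mathrm{Ker}(\mathbf{T}_{2,1}) = \dim \mathrm{Coker}(\mathbf{T}_{2,1}^*)$ then yield $\mathrm{Index}(\mathbf{T}_{2,1}) = (g+n-1) - 0 = -\,\mathrm{Index}(\mathbf{T}_{1,2})$, completing the proof. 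The Fredholm property itself is not an extra concern, since both kernel and cokernel of $\mathbf{T}_{1,2}$ are shown explicitly to be finite-dimensional along the way.
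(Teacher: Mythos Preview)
Your argument is correct and follows essentially the same route as the paper: invoke Corollary~\ref{co:T_plus_extra_surjective_capped} for triviality of the kernel and the direct-sum decomposition, read off $\dim \mathrm{Coker}(\mathbf{T}_{1,2}) = g + (n-1)$, and then use the adjoint identity $\mathbf{T}_{2,1}^* = \overline{\mathbf{T}}_{1,2}$ from Theorem~\ref{th:adjoint_identities} to flip the sign. One small slip: the decomposition in Corollary~\ref{co:T_plus_extra_surjective_capped} is only asserted as a direct sum, not an orthogonal one, but since you only use it to count dimensions this does not affect the argument.
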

 \begin{proof}
  The fact that the index of $\mathbf{T}_{1,2}$ is $1-n-g$ follows directly from Corollary \ref{co:T_plus_extra_surjective_capped} together with the facts that the dimension of $\partial \mathcal{D}_{\mathrm{harm}}(\riem_2)$ is $n-1$ and the dimension of $\mathcal{A}(\mathscr{R})$ is $g$. Using \ref{th:adjoint_identities} we have 
  \[ \mathrm{Index} ( \mathbf{T}_{2,1}) = - \mathrm{Index} ( \mathbf{T}_{2,1}^* )=  - \mathrm{Index} ( \overline{\mathbf{T}}_{1,2}) = - \mathrm{Index} ( {\mathbf{T}}_{1,2})\]
  which completes the proof.
 \end{proof}
 
 {The index theorems above connect conformally invariant quantities (the index of $\mathbf{T}_{1,2}$) to topologically invariant quantities. 
 The Schiffer operators are conformally invariant, as we saw in \eqref{eq:Schiffer_operators_conformally_invariant}. Thus their spectra, kernels, images, and indices are all conformally invariant. 
 Because the spaces $W_k$ are conformally but not obviously topologically invariant, it is interesting that they cancel in the proof of Theorem \ref{th:Schiffer_index_theorem}, and only topological data remains. The question then arises: are the dimensions of the cokernel and kernel themselves topological invariants? In other words, is it possible to choose topologically equivalent configurations with distinct dimensions for the cokernel and kernel of $\mathbf{T}_{1,2}$?  Either answer would be of great interest. 
 
 In fact, the kernels and cokernels are related to the image of the period map of $\mathscr{R}$, restricted to homology curves in $\riem_1$ or $\riem_2$.  The following example, the case of a genus two torus sliced by one curve, illustrates this. We also explicitly compute $W_1$ (defined implicitly in Theorem \ref{th:Tonetwo_iso_improved}) for this example, which turns out to be trivial. Although it therefore does not provide a counterexample to the topological invariance of the cokernel and kernel, the approach might however be a promising way to seek one. We leave this as an open problem.
 }

{Returning to the problem of computation of $W_1$, following \cite{Roydenperiods}, we start by recalling some basic facts regarding periods and related matrices. Let the compact Riemann surface \(\mathscr{R}\) have a canonical homology basis \(\left\{A_{j}, B_{j}\right\},\) where the \(A_{j}\) and \(B_{j}\)
are smooth simple closed curves with intersection numbers given
by
$$
\begin{array}{l}{\left[A_{j} \times B_{k}\right]=\delta_{j k}} \\ {\left[A_{j} \times A_{k}\right]=0} \\ {\left[B_{j} \times B_{k}\right]=0}\end{array}.
$$
When we are not interested in the intersection properties, we set $C_{j+g}=B_{j},\, 1 \leq j \leq g.$

If $\alpha$ and $\alpha^{\prime}$ are two closed forms with periods $a_{j}, b_{j}$ and $a_{j}^{\prime}, b_{j}^{\prime},$ respectively, around $\left\{A_{j}, B_{j}\right\}$, then  \emph{Riemann's bilinear relations} (or Riemann's period relations) state that
\begin{equation}\label{Riemann relations}
    \int_{\mathscr{R}} \alpha \wedge \alpha^{\prime}=\sum_{j}\left(a_{j} b_{j}^{\prime}-a_{j}^{\prime} b_{j}\right).
\end{equation}

Now let \(\omega_{j}\) be the harmonic one-form on \(\mathscr{R}\) whose period
around \(C_{k}\) is \(\delta_{j k} \) and \(w_{j}\) be the holomorphic one-form on \(\mathscr{R}\)  whose periods around \(A_{k}\) are \(\delta_{j k} .\) Then the entries of the so-called \emph{Riemann matrix} \(\Pi=\left[\pi_{j k}\right]\)
are given by
$$
\pi_{j k}=\int_{B_{k}} w_{j}.
$$
Since \(w_{j} \wedge w_{k}=0,\) 
\eqref{Riemann relations} yields that
\begin{equation}\label{symmetry of Riemann}
    0=\int w_{j} \wedge w_{k}=\pi_{k j}-\pi_{j k}.
\end{equation}

Thus \(\Pi\) is a symmetric matrix.

Note also that, since period of \(w_{j}\) is \(\delta_{j k}\) around \(A_{k}\) and is \(\pi_{j k}\) around \(B_{k},\)
one has
\begin{equation}\label{Roydens trick}
  w_{j}=\omega_{j}+\sum_{k} \pi_{j k} \omega_{k+g}.  
\end{equation}
\begin{figure}
     \includegraphics[width=9cm]{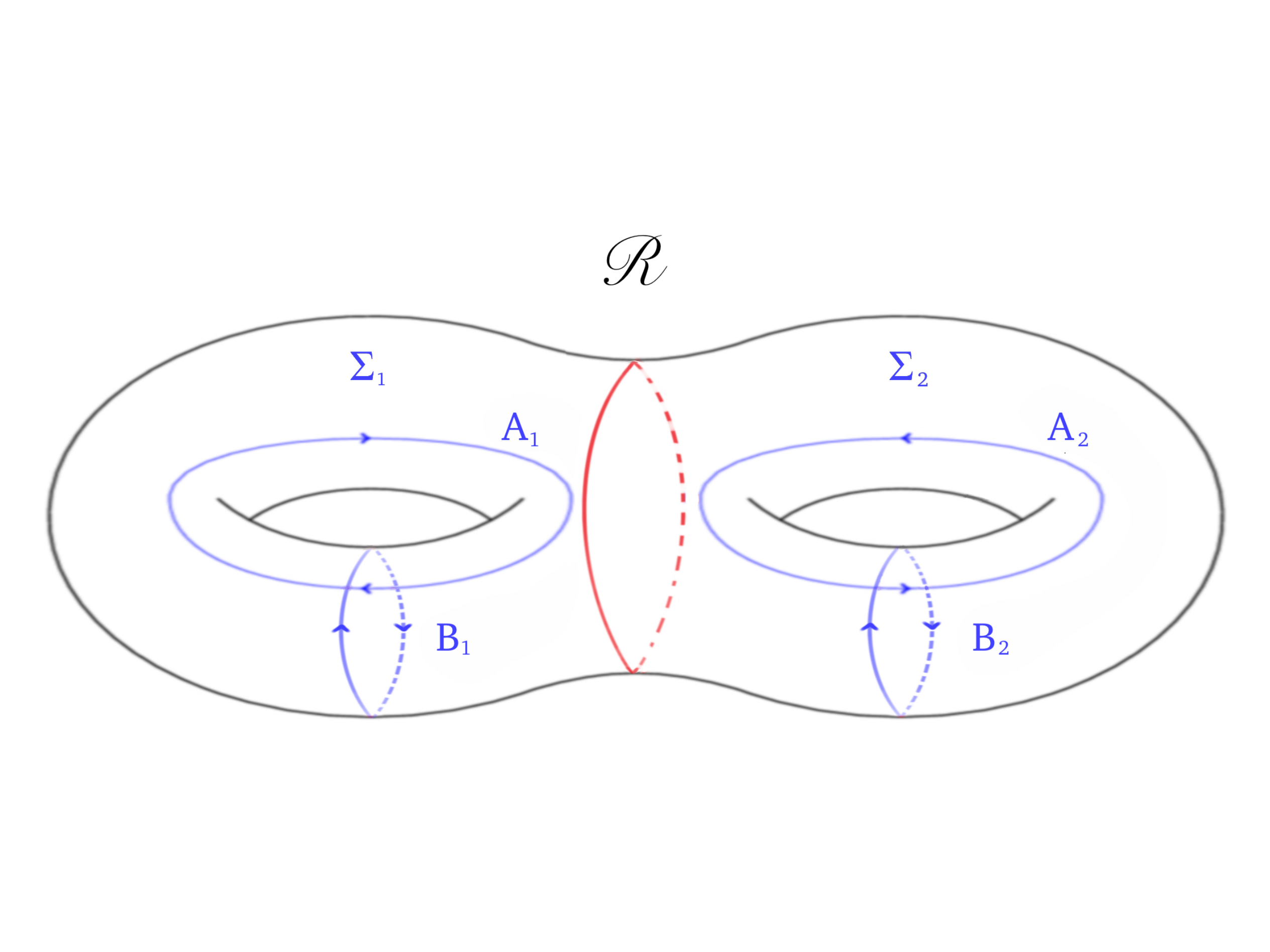}
     \caption{Genus two Riemann surface and its homology basis}
     \label{fig:period_curves}
 \end{figure} 
Now let $\mathscr {R}$ be the Riemann surface depicted in Figure \ref{fig:period_curves} and let us apply the information above to the problem of characterization of the set 
\begin{equation}\label{charac Vj}
    V_2:=\{w\in{\mathcal{A}(\mathscr{R})};\, \mathbf{R}_{2}{w} \in {\mathcal{A}^{\mathrm{e}}(\Sigma_2)}\}.
\end{equation}

We note that in the computation of $W_1$ mentioned above, we can confine ourselves to the computation of $V_2$. Here we have that \(w_{1}=\omega_{1}+\sum_{k=1}^{2}\pi_{1 k} \omega_{k+2}\) and
\(w_{2}=\omega_{2}+\sum_{k=1}^{2} \pi_{2 k} \omega_{k+2}\) and we seek a holomorphic one-form given by \(N_{1} w_{1}+N_{2} w_{2},\) with $N_j \in \mathbb{C},$ which is in \(V_{2}\). Therefore
$$\int_{A_{2}} (N_{1} w_{1}+N_{2} w_{2})=0 .$$ However, \eqref{Roydens trick} yields that $$0=\int_{A_{2}} N_{1} w_{1}+N_{2} w_{2}=\int_{A_{2}} \Big(N_{1} \omega_{1}+N_{1} \sum_{k=1}^{2} \pi_{1 k} \omega_{k+2}+N_{2} \omega_{2}+N_{2} \sum_{k} \pi_{2 k} \omega_{k+2}\Big)
=N_{2}.$$
Furthermore
\(\int_{B_{2}} N_{1} w_{1}=\int_{B_{2}} (N_{1} \omega_{1}+N_{1} \sum_{k=1} ^{2}\pi_{1 k} \omega_{k+2})=N_{1} \pi_{12}\).
Hence \(V_{2}\) is non-empty if and only if \(\pi_{12}=0\). Therefore by \eqref{symmetry of Riemann}, for $\mathscr{R}$ as in Figure \ref{fig:period_curves}, $V_2$ is non-empty if and only if the Riemann matrix has the form
\begin{equation}
  \begin{pmatrix}
\pi_{11} & 0\\
0 & \pi_{22}
\end{pmatrix}
 \end{equation}
 However, as was shown by M. Gerstenhaber in \cite{Gerstenhaber}, no  surface of genus 2 has  a diagonal matrix for a Riemann matrix, and therefore $V_2$ is indeed empty.

} 
\end{subsection}
\end{section}

\printnoidxglossary[sort=def]


\begin{thebibliography}{99}

\bibitem{Ahlfors_Sario} Ahlfors, L. V.; Sario, L.
 { Riemann surfaces}.
   { Princeton Mathematical Series}, No. 26 Princeton University Press, Princeton, N.J. 1960.
   

 
  
\bibitem{AskBar}  Askaripour, N.; and Foth, T. On holomorphic $k$-differentials on open Riemann surfaces. Complex Var. Elliptic Equ. {\bf 57} (2012), no. 10, 1109–-1119.
   
\bibitem{AskBar2} Askaripour, N.; and Barron, T.  On extension of holomorphic $k$-differentials on open Riemann surfaces. Houston J. Math. {\bf 40} (2014), no. 4, 117--1126.
 


  \bibitem{BergmanSchiffer}  Bergman, S.; and Schiffer, M. 
 { Kernel functions and conformal mapping.} Compositio Math. {\bf 8}, (1951), 205--249.

\bibitem{ConwayII}  Conway, J. B. { Functions of one complex variable }II. Graduate Texts in Mathematics, 159. Springer-Verlag, New York, 1995.


\bibitem{Courant_Schiffer}  Courant, R.  {Dirichlet's principle, conformal mapping, and minimal surfaces.} With an appendix by M. Schiffer. Reprint of the 1950 original. Springer-Verlag, New York-Heidelberg, 1977.

\bibitem{Eynard_notes} Eynard, B. Lecture notes on Riemann surfaces.  arXiv:1805.06405.


\bibitem{Farkas_Kra}  Farkas, H. M.; and Kra, I. Riemann surfaces. Second edition. Graduate Texts in Mathematics, 71. Springer-Verlag, New York, 1992.


\bibitem{Gerstenhaber} Gerstenhaber, M.  On  a  theorem on  Haupt and Wirtinger concerning the  periods of  adifferential of the first  kind, anda related topological theorem, Proc. Amer. Math. Soc. {\bf 4} (1953), 476--481.

\bibitem{Lehto}
Lehto, O. {Univalent functions and {T}eichm\"uller spaces}. Graduate Texts
  in Mathematics, Vol. 109, Springer-Verlag, New York, 1987.
  

\bibitem{Nap_Yulm} Napalkov, V. V., Jr.; Yulmukhametov, R. S. On the Hilbert transform in the Bergman space. (Russian) Mat. Zametki {\bf 70} (2001), no. 1, 68--78; translation in Math. Notes {\bf 70} (2001), no. 1-2, 61--70.

   
   \bibitem{Royden}  Royden, H. L. {Function theory on compact Riemann surfaces}. J. Analyse Math. {\bf 18}, (1967), 295--327. 
   
   
\bibitem{Roydenperiods} Royden, H. L. The Variation of Harmonic Differentials and their Periods. Complex analysis, 211–223, Birkh\"auser, Basel, 1988. 


\bibitem{Schiffer_first} Schiffer, M. {The kernel function of an orthonormal system}. Duke Math. J. {\bf 13}, (1946). 529 -- 540.
\bibitem{Schiffer_Spencer} Schiffer, M. and Spencer, D.  {Functionals on finite Riemann surfaces}. Princeton University Press, Princeton, N. J., 1954. 

\bibitem{Schippers_Shirazi_Faber} Schippers, E.; and Shirazi, M. Faber series for $L^2$ holomorphic one-forms on Riemann surfaces with boundary.  arXiv:2303.15677v1. To appear in Comp. Methods and Func. Theor. 

\bibitem{Schippers_Shirazi_Staubach} Schippers, E., Shirazi, M. and Staubach, W.  Schiffer comparison operators and approximations on Riemann surfaces bordered by quasicircles,  J. Geom. Anal. {\bf 31} (2021), no. 6, 5877–-5908.

\bibitem{Schippers_Staubach_Plemelj} Schippers, E.; Staubach, W. Plemelj-Sokhotski isomorphism for quasicircles in Riemann surfaces and the Schiffer operator. Math. Ann. {\bf 378} (2020), no. 3-4, 1613-–1653.



\bibitem{Schippers_Staubach_Grunsky_expository}  Schippers, E.; and Staubach, W.  Analysis on quasicircles-A unified approach through transmission and jump problems. EMS Surv. Math. Sci. {\bf 9} (2022), no. 1, pp. 31--97.

\bibitem{Schippers_Staubach_scattering_arxiv} Schippers, E; and Staubach, W.  A scattering theory of harmonic one-forms on Riemann surfaces.  arXiv:2112.00835v1

\bibitem{Schippers_Staubach_scattering_I} Schippers, E.; and Staubach, W. Overfare of harmonic functions on Riemann surfaces.   
New York J. Math. {\bf 31} (2025) 321--367.


\bibitem{Schippers_Staubach_scattering_II} Schippers, E.; and Staubach, W. Overfare of harmonic one-forms on Riemann surfaces.   
New York J. Math. {\bf 30} (2024) 1437--1478.

\bibitem{Schippers_Staubach_scattering_IV} Schippers, E.; and Staubach, W. Scattering theory on Riemann surfaces II: the scattering matrix and generalized period mappings. Accepted for publication in Communications in Contemporary Mathematics.

\bibitem{Schippers_Staubach_Carlos_paper} Schippers, E; and Staubach, W. A Survey of Scattering Theory on Riemann Surfaces with Applications in Global Analysis and Geometry. Special issue of Vietnam Journal of Mathematics dedicated to Carlos E. Kenig's 70th birthday, {\bf 51} 4 (2023), 911--934.

\bibitem{Shirazi_thesis}Shirazi, M. Faber and Grunsky Operators on Bordered Riemann Surfaces of Arbitrary Genus and the Schiffer Isomorphism". PhD Dissertation, University of Manitoba 2020. 
\bibitem{Shirazi_Grunsky} Shirazi, M. Faber and Grunsky Operators Corresponding to Bordered Riemann Surfaces, Conform. Geom. Dyn. {\bf 24} (2020), 177--201.

\end{thebibliography}
\end{document}